\newcommand{\I}{\mathbf 1}
\newcommand{\C}{\mathbf C}
\newcommand{\F}{\mathbf F}
\newcommand{\Q}{\mathbf Q}
\newcommand{\Z}{\mathbf Z}
\newcommand{\bG}{\mathbf G}
\newcommand{\sA}{\mathcal A}
\newcommand{\sB}{\mathcal B}
\newcommand{\sC}{\mathcal C}
\newcommand{\sD}{\mathcal D}
\newcommand{\sE}{\mathcal E}
\newcommand{\sF}{\mathcal F}
\newcommand{\sH}{\mathcal H}
\newcommand{\sI}{\mathcal I}
\newcommand{\sJ}{\mathcal J}
\newcommand{\sK}{\mathcal K}
\newcommand{\sL}{\mathcal L}
\newcommand{\sM}{\mathcal M}
\newcommand{\sN}{\mathcal N}
\newcommand{\sO}{\mathcal O}
\newcommand{\sP}{\mathcal P}
\newcommand{\sR}{\mathcal R}
\newcommand{\sS}{\mathcal S}
\newcommand{\sV}{\mathcal V}
\newcommand{\iso}{\xrightarrow{\sim}}
\newcommand{\nd}{\nobreakdash-\hspace{0pt}}
\renewcommand{\theenumi}{(\roman{enumi})}
\DeclareMathOperator{\Alb}{Alb}
\DeclareMathOperator*{\colim}{colim}
\DeclareMathOperator{\End}{End}
\DeclareMathOperator{\Fr}{Fr}
\DeclareMathOperator{\Hom}{Hom}
\DeclareMathOperator{\Id}{Id}
\DeclareMathOperator{\id}{id}
\DeclareMathOperator{\Ind}{Ind}
\DeclareMathOperator{\Ker}{Ker}
\DeclareMathOperator{\Ob}{Ob}
\DeclareMathOperator{\Pic}{Pic}
\DeclareMathOperator{\pr}{pr}
\DeclareMathOperator{\Rad}{Rad}
\DeclareMathOperator{\Rep}{Rep}
\DeclareMathOperator{\REP}{REP}
\DeclareMathOperator{\Spec}{Spec}
\DeclareMathOperator{\Sym}{Sym}
\DeclareMathOperator{\Tor}{Tor}
\DeclareMathOperator{\tr}{tr}
\newtheorem{thm}{Theorem}[subsection]
\newtheorem{cor}[thm]{Corollary}
\newtheorem{lem}[thm]{Lemma}
\newtheorem{prop}[thm]{Proposition}
\theoremstyle{definition}
\newtheorem{defn}[thm]{Definition}
\newtheorem*{thm*}{Theorem}
\newtheorem*{defn*}{Definition}
\numberwithin{equation}{subsection}
\begin{document}

\title{Algebraic cycles on an abelian variety}

\author{Peter O'Sullivan}
\address{School of Mathematics and Statistics F07\\
University of Sydney NSW 2006 \\
Australia}
\email{petero@maths.usyd.edu.au}
\thanks{The author was supported by ARC Discovery Project grant DP0774133.}

\subjclass[2000]{Primary 14C25, 14K05; Secondary 18D10}

\keywords{}

\date{}

\dedicatory{}

\begin{abstract}
It is shown that to every $\Q$\nd linear cycle $\overline{\alpha}$
modulo numerical equivalence on an abelian variety $A$ there is canonically
associated a $\Q$\nd linear cycle $\alpha$ modulo rational equivalence on $A$
lying above $\overline{\alpha}$, characterised by a condition on
the spaces of cycles generated by $\alpha$ on products of $A$ with itself.
The assignment $\overline{\alpha} \mapsto \alpha$ respects
the algebraic operations and pullback and push forward along homomorphisms
of abelian varieties.
\end{abstract}

\maketitle

\tableofcontents

\section{Introduction}

Let $A$ be an abelian variety, equipped with a base point, over a field $F$.
Denote by $CH(A)_\Q$ the graded $\Q$\nd algebra of $\Q$\nd linear cycles
on $A$ modulo rational equivalence, and by $\overline{CH}(A)_\Q$ its
quotient consisting of the $\Q$\nd linear cycles modulo numerical equivalence.
Given an $\overline{\alpha}$ in $\overline{CH}{}^i(A)_\Q$, Beauville
has shown (\cite{Bea}, Proposition~5) that there is an $\alpha$ in
$CH^i(A)_\Q$ lying above
$\overline{\alpha}$ such that $(n_A)^*(\alpha) = n^{2i} \alpha$ for every integer
$n$, where $n_A:A \to A$ is multiplication by $n$ on $A$.
He also raised the question whether such an $\alpha$ is unique.
The uniqueness of such an $\alpha$ would imply that every
$\overline{\alpha}$ in $\overline{CH}(A)_\Q$
has a canonical lift $\alpha$ in $CH(A)_\Q$, such that the assignment
$\overline{\alpha} \mapsto \alpha$ respects the algebraic operations and
pullback and push forward along homomorphisms of abelian varieties.
We show in this paper that such a canonical lift $\alpha$ indeed exists.
It is characterised by the property that $\alpha$ is the unique cycle lying
above $\overline{\alpha}$ which is symmetrically distinguished in the sense of
the definition below.
The endomorphism $n_A$ of $A$ is used in this definition only for $n = -1$.
It is thus necessary to consider also cycles generated by $\alpha$
on products $A^m$ of $A$ with itself,
because for $i > 1$ an $\alpha$ with $((-1)_A)^*\alpha = \alpha$
above a given $\overline{\alpha}$ is in general not unique.
In the definition
$\alpha^{r_1} \otimes \alpha^{r_2} \otimes \dots \otimes \alpha^{r_n}$ denotes the product in
$CH(A^n)_\Q$ of the $(\pr_j)^*(\alpha^{r_j})$, where $\pr_j:A^n \to A$ is the $j$th projection.

\begin{defn*}
Let $\alpha$ be a cycle in $CH^i(A)_\Q$.
For each integer $m \ge 0$, denote by $V_m(\alpha)$
the $\Q$-vector subspace of $CH(A^m)_\Q$ generated by elements of the form
\[
p_*(\alpha^{r_1} \otimes \alpha^{r_2} \otimes \dots \otimes \alpha^{r_n}),
\]
where $n \le m$, the $r_j$ are integers $\ge 0$,
and $p:A^n \rightarrow A^m$ is a closed immersion
with each component $A^n \rightarrow A$ either a projection
or the composite of a projection with $(-1)_A:A \rightarrow A$.
Then $\alpha$ will be called \emph{symmetrically distinguished}
if  for every $m$ the restriction of the projection
$CH(A^m)_\Q \rightarrow \overline{CH}(A^m)_\Q$
to $V_m(\alpha)$ is injective.
\end{defn*}

The main result is now the following (cf.\ Corollary~\ref{c:symdist}).

\begin{thm*}
\mbox{}
\begin{enumerate}
\item\label{i:sdexist}
Above every cycle in $\overline{CH}{}^i(A)_\Q$ there lies a unique symmetrically
distinguished cycle in $CH^i(A)_\Q$.
\item\label{i:sdsubalgebra}
The symmetrically distinguished cycles in $CH^i(A)_\Q$ form a $\Q$\nd vector subspace,
and the product of  symmetrically distinguished cycles in $CH^i(A)_\Q$ and $CH^j(A)_\Q$
is symmetrically distinguished in $CH^{i+j}(A)_\Q$.
\item\label{i:sdpushpull}
For any homomorphism of abelian varieties $f:A \to A'$, the pullback $f^*$ and push forward
$f_*$ along $f$ preserve symmetrically distinguished cycles.
\end{enumerate}
\end{thm*}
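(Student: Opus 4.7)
The plan is to leverage the Kimura finite-dimensionality of the Chow motive of the abelian variety $A$, in particular the consequence (due to Kimura and Andr\'e--Kahn) that the two-sided ideal of numerically trivial morphisms in $\End(h(A)^{\otimes m})$ is nilpotent for every $m$, where $h(A)$ denotes the Chow motive of $A$. This nilpotence is the engine that makes both the uniqueness argument and the inductive construction of a canonical lift feasible. A secondary ingredient is Beauville's eigenspace decomposition of $CH(A)_\Q$ under $(n_A)^*$, which handles the behaviour of $(-1)_A^*$ on a putative symmetrically distinguished lift.

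For uniqueness in \eqref{i:sdexist}, let $\alpha_1,\alpha_2\in CH^i(A)_\Q$ be two symmetrically distinguished lifts of $\overline{\alpha}$ and set $\beta=\alpha_1-\alpha_2$, which is numerically trivial. Both $V_m(\alpha_1)$ and $V_m(\alpha_2)$ inject into $\overline{CH}(A^m)_\Q$ and land on the same image, namely the subspace generated by $p_*(\overline{\alpha}^{r_1}\otimes\dots\otimes\overline{\alpha}^{r_n})$. Expanding $\alpha_1^{r_j}=(\alpha_2+\beta)^{r_j}$ binomially expresses each generator of $V_m(\alpha_1)$ as the corresponding generator of $V_m(\alpha_2)$ plus a sum of pushforwards of products in which $\beta$ appears to positive degree. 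Using the hypothesis on $V_m(\alpha_1)$ (injectivity of the numerical projection) together with the nilpotence of numerically trivial morphisms on $h(A)^{\otimes m}$, one concludes inductively that each such correction vanishes, and hence $\beta=0$.

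Existence in \eqref{i:sdexist} is the main obstacle. The idea is to start from an arbitrary lift, first average under $(-1)_A^*$ and decompose via Beauville to obtain a lift with good symmetry, and then iteratively correct by numerically trivial cycles so as to enforce injectivity of the numerical projection on each $V_m$. The correction at each stage lives in the successive quotient of the nilpotent filtration on the numerically trivial ideal, and finite-dimensionality bounds the process. The delicate point is that the condition of symmetric distinguishedness involves every $m$ simultaneously, so the successive corrections must be organised to fit into a single lift $\alpha\in CH^i(A)_\Q$; this coherence is what requires the full strength of Kimura finite-dimensionality, in the form of structural results about the subalgebra of $\End(h(A)^{\otimes\bullet})$ generated by $\alpha_0$ together with the symmetric group and $(-1)_A$ actions.

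Given \eqref{i:sdexist}, parts \eqref{i:sdsubalgebra} and \eqref{i:sdpushpull} are largely formal. For sums and products, one notes that $V_m(\alpha+\gamma)\subseteq V_m(\alpha)+V_m(\gamma)+V_m(\alpha)\cdot V_m(\gamma)$ (in a suitable sense, using multilinearity and the compatibility of $V_m$ with the algebra structure across the factors of $A^m$), and that $V_m(\alpha\cdot\gamma)$ is contained in $V_m(\alpha)\cdot V_m(\gamma)$; injectivity of the numerical projection is preserved by taking subspaces of spaces of the form $V\cdot W$ where both $V$ and $W$ inject. For \eqref{i:sdpushpull}, the maps $f^*\colon CH(A'^m)_\Q\to CH(A^m)_\Q$ and $f_*\colon CH(A^m)_\Q\to CH(A'^m)_\Q$ induced by $f\colon A\to A'$ send the generators of $V_m(\alpha)$ to generators of the same type for $V_m(f^*\alpha)$ or $V_m(f_*\alpha)$ on the appropriate side, because the defining diagrams (projections, $(-1)$, closed immersions) commute with $f^{\times n}$. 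The injectivity of the numerical projection is then transported along these maps, so the symmetrically distinguished property is preserved.
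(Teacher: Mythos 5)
Your proposal identifies the right family of tools (Kimura finite-dimensionality, nilpotence of the numerical radical) but the way you deploy them has several genuine gaps, and the deductions for parts \ref{i:sdsubalgebra} and \ref{i:sdpushpull} contain errors.

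\textbf{Uniqueness in \ref{i:sdexist}.}
Your argument sets $\beta = \alpha_1 - \alpha_2$ and expands binomially, but the correction terms you produce (pushforwards of products involving both $\alpha_2$ and $\beta$) do not lie in $V_m(\alpha_1)$ or in $V_m(\alpha_2)$. The hypothesis of symmetric distinguishedness gives injectivity of the numerical projection \emph{only on those two subspaces separately}, and says nothing about the span $V_m(\alpha_1)+V_m(\alpha_2)$ where the differences live. Already for $m=n=1$, $r_1=1$, $p=\id$, the ``generator'' is just $\alpha_1=\alpha_2+\beta$, and injectivity on $V_1(\alpha_1)$ gives no information about the numerically trivial element $\beta$, which need not lie in $V_1(\alpha_1)$. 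The statement $\alpha_1 = \alpha_2$ is precisely the assertion that the two injective subspaces coincide, so an argument that presupposes access to their sum is circular. (Nilpotence of the numerical radical in $\End(h(A)^{\otimes m})$ is also of no direct use here: $\beta$ is a morphism $\I(-i) \to h(A)$, not an endomorphism, and the ring-theoretic powers $\beta^r$ with respect to the multiplication of $h(A)$ are not governed by that nilpotence.)

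\textbf{Existence in \ref{i:sdexist}.}
You correctly flag this as the main obstacle but do not give an argument; the ``iterative correction'' sketch does not explain how to make the corrections coherent across all $m$ simultaneously, which is the entire difficulty.

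\textbf{Parts \ref{i:sdsubalgebra} and \ref{i:sdpushpull}.}
These are \emph{not} formal consequences of Definition-level manipulations with the spaces $V_m$. If $V,W\subset CH(A^m)_\Q$ both inject into $\overline{CH}(A^m)_\Q$, it does not follow that $V+W$ or $V\cdot W$ injects: new relations can appear (two distinct lines in a plane each injecting into a line, with non-injective sum). Likewise, $f_*$ being compatible with the generating diagrams does not transport injectivity: from $\overline{f_*x}=0$ you cannot conclude $\overline x=0$ and hence $x=0$. So the ``formal'' deductions of \ref{i:sdsubalgebra} and \ref{i:sdpushpull} are invalid as stated.

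\textbf{What the paper actually does.}
The paper's route is structural rather than cycle-by-cycle. It first reformulates symmetric distinguishedness (Lemma~\ref{l:symdistequiv}): $\alpha$ is symmetrically distinguished iff the Chow subtheory $C_\alpha$ generated by $\alpha$ maps isomorphically onto $\overline{C}_{\overline\alpha}$. Existence is then Theorem~\ref{t:Chowsplit}: the projection $C\to\overline C$ on the Chow theory of abelian schemes has a (unique) right inverse $\tau$ \emph{as a morphism of Chow theories}. This is obtained by passing to the associated Poincar\'e duality theories, invoking the Andr\'e--Kahn splitting theorem for Kimura categories (Theorem~\ref{t:splitting}) to split the projection of motive categories, and then correcting the splitting using Theorem~\ref{t:alginvunique} (uniqueness of lifts of morphisms of algebras with involution, which rests on Theorems~\ref{t:abeliansym} and \ref{t:deg1ss}). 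Uniqueness is Theorem~\ref{t:Chowunique}, proved from the unique lifting theorem for Kimura categories (Theorem~\ref{t:uniquelift}) again combined with Theorem~\ref{t:alginvunique}. Once $\tau$ exists, is a morphism of Chow theories, and its image is exactly the symmetrically distinguished cycles (by uniqueness), parts \ref{i:sdsubalgebra} and \ref{i:sdpushpull} are immediate from the fact that a morphism of Chow theories respects the algebra structure, $f^*$ and $f_*$. You cannot obtain \ref{i:sdsubalgebra} and \ref{i:sdpushpull} independently of the existence and uniqueness of the canonical lift; they are consequences of having the right inverse $\tau$, not parallel facts.
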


It follows immediately from this result that the assignment to each
cycle in $\overline{CH}{}^i(A)_\Q$ of the unique symmetrically distinguished cycle
in $CH^i(A)_\Q$ lying above it respects the algebraic operations and pullback and push forward
along homomorphisms of abelian varieties.
In particular the subspace of symmetrically distinguished cycles in $CH^i(A)_\Q$
is contained in the subspace of those cycles $\alpha$ for which
$(n_A)^*(\alpha) = n^{2i} \alpha$ for every $n$, and coincides with it
if there is a unique such $\alpha$ lying above a given element of $\overline{CH}{}^i(A)_\Q$.

The above Theorem will be proved in Section~\ref{s:symdist},
after preparation in the preceding four sections.
A fundamental role in the proof is played by the properties of Kimura categories
in the sense of \cite{AndKah},~\S~9, and by the relations
between motives and algebraic cycles.

To describe the proof, we first introduce some terminology.
Let $\sV$ be category with finite products.
We assume that $\sV$ is equipped with an assignment
to each object $X$ in $\sV$ of an integer $\dim X$, depending only on the isomorphism
class of $X$, such that $\dim X \times Y = \dim X + \dim Y$.
For example we may take for $\sV$ the category $\sA\sS_F$ of abelian varieties over $F$,
or of smooth projective varieties over $F$ which are non-empty and equidimensional,
with $\dim$ the usual dimension.
Then a Chow theory $C$ with source $\sV$ is an assignment to each object
$X$ in $\sV$ of a $\Z$\nd graded commutative $\Q$\nd algebra $C(X)$ and
to each $f:X \to Y$ in $\sV$ of a homomorphism $f^*:C(Y) \to C(X)$ of graded $\Q$\nd algebras
and a homomorphism \mbox{$f_*:C(X) \to C(Y)$} of degree $\dim Y - \dim X$ of graded
$\Q$\nd vector spaces, satisfying appropriate compatibilities.
A morphism $C \to C'$ of such Chow theories is a homomorphism $C(X) \to C'(X)$ of
graded $\Q$\nd algebras for every $X$ in $\sV$, compatible with the homomorphisms
$f^*$ and $f_*$.

Consider in particular the Chow theories $C$ and $\overline{C}$ with source $\sA\sS_F$
which send $A$ respectively to $CH(A)_\Q$ and $\overline{CH}(A)_\Q$, with $f^*$
and $f_*$ the usual pullback and push forward.
The projections  $CH(A)_\Q \to \overline{CH}(A)_\Q$ define a projection morphism
$C \to \overline{C}$.
The Theorem now reduces to the following two statements.
\begin{enumerate}
\renewcommand{\theenumi}{(\arabic{enumi})}
\item\label{i:sdrinverse}
The projection $C \to \overline{C}$ has a right inverse.
\item\label{i:sdulift}
Any two symmetrically distinguished cycles in $CH^i(A)_\Q$ lying above the
same cycle in $\overline{CH}{}^i(A)_\Q$ coincide.
\end{enumerate}
Indeed if  $\tau$ is right inverse to $C \to \overline{C}$, \ref{i:sdexist} will follow because
$\tau(\overline{\alpha})$
is the unique symmetrically distinguished cycle lying above $\overline{\alpha}$,
and \ref{i:sdsubalgebra} and \ref{i:sdpushpull} will then follow because
$\tau$ is a morphism of Chow theories.
We may reformulate \ref{i:sdulift} as follows.
Consider the (non-full) subcategory $\sE_A$ of $\sA\sS_F$  consisting of the powers of
$A^m$ of $A$ and
those morphisms $A^m \to A^n$ for which every component $A^m  \to A$ is either a projection
or the composite of a projection with $(-1)_A$.
By restricting $C$ and $\overline{C}$ to $\sE_A$, we obtain Chow theories
$C_A$ and $\overline{C}_A$ with source $\sE_A$, and a projection
$C_A \to \overline{C}_A$.
Given $\alpha$ in $CH^i(A)_\Q = (C_A)^i(A)$ above $\overline{\alpha}$ in
$\overline{CH}{}^i(A)_\Q$, write
$C_\alpha$ and $\overline{C}_{\overline{\alpha}}$ for the Chow subtheories
of $C_A$ and $\overline{C}_A$ generated by $\alpha$ and $\overline{\alpha}$.
Then $C_A \to \overline{C}_A$ defines
a morphism $C_\alpha \to \overline{C}_{\overline{\alpha}}$.
The space $V_m(\alpha)$ in the above Definition is contained in $C_\alpha(A^m)$
but does not in general coincide with it.
However, it can be shown (cf.\ Lemma~\ref{l:symdistequiv}) that $\alpha$ is
symmetrically distinguished if and only if
$C_\alpha \to \overline{C}_{\overline{\alpha}}$ is an isomorphism.
Then \ref{i:sdulift} is equivalent to the following statement.
\begin{enumerate}
\item[\ref{i:sdulift}$'$]
For $\overline{\alpha}$ in $\overline{CH}{}^i(A)_\Q$,
any two liftings $\overline{C}_{\overline{\alpha}} \to C_A$ of the embedding
$\overline{C}_{\overline{\alpha}} \to \overline{C}_A$ along the projection
$C_A \to \overline{C}_A$ coincide.
\end{enumerate}
Indeed by composing the inverse of $C_\alpha \to \overline{C}_{\overline{\alpha}}$
with the embedding $C_\alpha \to C_A$, we may identify a symmetrically
distinguished $\alpha$ lying above $\overline{\alpha}$ with a lifting
$\overline{C}_{\overline{\alpha}} \to C_A$ of
$\overline{C}_{\overline{\alpha}} \to \overline{C}_A$ along $C_A \to \overline{C}_A$.

The statements \ref{i:sdrinverse} and \ref{i:sdulift}$'$ are proved
by first embedding the category of Chow theories into an
appropriate category of Poincar\'e duality theories, by essentially
the same construction as for the usual categories of motives.
To describe this, we begin with some definitions.

A $\Q$\nd pretensor category is a $\Q$\nd linear category equipped with
a compatible structure of symmetric monoidal category for which the unit $\I$ is strict,
and a $\Q$\nd tensor category is a pseudo-abelian $\Q$\nd pretensor category.
A $\Q$\nd tensor functor between $\Q$\nd pretensor categories is a (strong) symmetric monoidal
functor which strictly preserves the unit $\I$ and is $\Q$\nd linear.
A $\Q$\nd pretensor category is said to be rigid if each of its objects has a dual.
If $\sC$ is a rigid $\Q$\nd pretensor category with $\End(\I) = \Q$, then $\sC$ has a unique
maximal tensor ideal.
We write $\overline{\sC}$ for the quotient of $\sC$
by this ideal.

Define a Tate $\Q$\nd pretensor category as a $\Q$\nd pretensor category $\sC$ equipped with a
family of objects $\I(i)$ for $i$ in $\Z$, such that $\I(0) = \I$ and
$\I(i) \otimes \I(j) = \I(i+j)$ for every $i$ and $j$, and such that those
associativity or commutativity constraints in $\sC$ which involve an object $\I(i)$
are identities.
If $M$ is an object in such a $\sC$, we write $M(i)$ for $M \otimes \I(i)$.
A Tate $\Q$\nd tensor functor is a $\Q$\nd tensor functor $T:\sC \to \sC'$ such that
$T(\I(i)) = \I(i)$ and the isomorphism from $T(M) \otimes T(N)$ to $T(M \otimes N)$
defining the monoidal structure of $T$ is the identity when $M$ or $N$ is $\I(i)$.

A Poincar\'e duality theory with source $\sV$ is a triple $(\sM,h,\nu)$, where $\sM$
a Tate $\Q$\nd pretensor category, $h$ is a symmetric monoidal functor from
$\sV^\mathrm{op}$ to $\sM$,
and $\nu$ is an assignment to each object $X$ of $\sV$ of a morphism
$\nu_X:h(X)(\dim X) \to \I$ in $\sM$.
It is required that the composite of the twist by $\dim X$ of multiplication
of the algebra $h(X)$ with $\nu_X$ should define a duality pairing between $h(X)$
and $h(X)(\dim X)$, and that the $\nu_X$ be compatible with the isomorphisms and
products in $\sV$.
A morphism from $(\sM,h,\nu)$ to $(\sM',h',\nu')$ is a Tate $\Q$\nd tensor functor
$T:\sM \to \sM'$ such that
$Th = h'$ and $T(\nu_X) = \nu'_{}\!_X$ for every $X$.
To a Poincar\'e duality theory $(\sM,h,\nu)$ with source $\sV$ we assign
a Chow theory $\Hom_\sM(\I,h(-)(\cdot))$ with source $\sV$, which sends $X$
to the graded $\Q$\nd algebra with component $\Hom_\sM(\I,h(X)(i))$
of degree $i$.
This assignment extends in an evident way to a functor from
Poincar\'e duality theories to Chow theories with source $\sV$.
The functor so defined has a fully faithful left adjoint, which sends a Chow
theory with source $\sV$ to its associated Poincar\'e duality theory.
If $(\sM,h,\nu)$ is the Poincar\'e duality theory associated to a Chow theory,
then each object of $\sM$ can be written uniquely in the form $h(X)(i)$.
In particular $\sM$ is rigid.
It is however in general not pseudo-abelian:
the usual categories of motives
are obtained by passing to a pseudo-abelian hull.

The targets of the Poincar\'e duality theories associated to the Chow
theories in \ref{i:sdrinverse} and \ref{i:sdulift}$'$ are closely related
to the category $\sM_{F,\Q}^\mathrm{ab}$ of $\Q$\nd linear abelian Chow motives,
generated by the motives abelian varieties over $F$.
The quotient $\overline{\sM}{}_{F,\Q}^\mathrm{ab}$ of $\sM_{F,\Q}^\mathrm{ab}$ is
the category of $\Q$\nd linear abelian motives modulo numerical equivalence.
We have Poincar\'e duality theories $(\sM_{F,\Q}^\mathrm{ab},h{}^\mathrm{ab},\nu{}^\mathrm{ab})$
and $(\overline{\sM}{}_{F,\Q}^\mathrm{ab},\overline{h}{}^\mathrm{ab},\overline{\nu}{}^\mathrm{ab})$
with source $\sA\sS_F$, where $h{}^\mathrm{ab}$ and $\overline{h}{}^\mathrm{ab}$ are the
usual cohomology functors sending an abelian variety to its motive.
The projection $\sM_{F,\Q}^\mathrm{ab} \to \overline{\sM}{}_{F,\Q}^\mathrm{ab}$ is then
a morphism of Poincar\'e duality theories.
For every $A$, the involution $(-1)_A$ of $A$ induces an involution on the
algebras $h{}^\mathrm{ab}(A)$ and $\overline{h}{}^\mathrm{ab}(A)$.
The proofs of both \ref{i:sdrinverse} and \ref{i:sdulift}$'$ rely on the following fact
(cf.\ Theorem~\ref{t:alginvunique}):
if $R$ and $R'$ are commutative algebras with an involution in $\sM_{F,\Q}^\mathrm{ab}$
lying respectively above $\overline{h}{}^\mathrm{ab}(A)$ and $\overline{h}{}^\mathrm{ab}(A')$
in $\overline{\sM}{}_{F,\Q}^\mathrm{ab}$, then above every morphism
$\overline{h}{}^\mathrm{ab}(A) \to \overline{h}{}^\mathrm{ab}(A')$ of algebras with
involution there lies a unique morphism of algebras with involution $R \to R'$.

It is known that $\sM_{F,\Q}^\mathrm{ab}$ is a Kimura category.
A $\Q$\nd tensor category $\sC$ is said to be a Kimura category if it is rigid
with $\End(\I) = \Q$,
and if each object of $\sC$ is a direct sum of an object
for which some exterior power is $0$ and an object for which some symmetric power is $0$.
If $\sC$ is such a category, then $\overline{\sC}$ is a Kimura category which is semisimple
abelian.
The following splitting theorem proved in \cite{AndKah}, (cf.\ Theorem~\ref{t:splitting})
is fundamental for the proof \ref{i:sdrinverse}:
if $\sC$ is a Kimura $\Q$\nd tensor category, then the projection
$\sC \to \overline{\sC}$ has a right inverse.
The following unique lifting theorem (cf.\ Theorem~\ref{t:uniquelift}),
which will be proved in Section~\ref{s:Kimura},
is fundamental for the proof of \ref{i:sdulift}$'$:
if $\sD$ and $\sC$ are Kimura $\Q$\nd tensor categories,
then between any two liftings $\sD \to \sC$ along the projection $\sC \to \overline{\sC}$
of a faithful $\Q$\nd tensor functor $\sD \to \overline{\sC}$ there exists
a tensor isomorphism lying above the identity.

Once the above abstract machinery has been constructed, the proofs of \ref{i:sdrinverse}
and \ref{i:sdulift}$'$ are straightforward.
In the case of \ref{i:sdrinverse} (cf.\ Theorem~\ref{t:Chowsplit}),
we consider the Poincar\'e duality
theory $(\sM,h,\nu)$ associated to $C$.
Then the Poincar\'e duality associated to $\overline{C}$
is of the form $(\overline{\sM},\overline{h},\overline{\nu})$,
and the morphism $P$ from $(\sM,h,\nu)$ to $(\overline{\sM},\overline{h},\overline{\nu})$
induced by the projection $C \to \overline{C}$ is given by
the projection $\sM \to \overline{\sM}$.
Since the associated Poincar\'e duality theory functor is fully faithful,
it is enough to show that $P$ has a right inverse.
Now by the universal property of $(\sM,h,\nu)$ we have an embedding of $\sM$
into $\sM_{F,\Q}^\mathrm{ab}$.
In particular the pseudo-abelian hull of $\sM$ is a Kimura category.
The splitting theorem for such categories then shows that there is a $\Q$\nd tensor
functor $T$ right inverse to $P$.
In general, such a $T$ will not be a morphism from
$(\overline{\sM},\overline{h},\overline{\nu})$ to $(\sM,h,\nu)$.
However the theorem on algebras with involution associated to abelian varieties
gives a canonical isomorphism \mbox{$\xi:h \iso T\overline{h}$}.
We can use $\xi$ to modify $T$ so that the condition $h = T\overline{h}$ is
satisfied.
We can then modify $T$ again so that in addition $T$ is a Tate
$\Q$\nd tensor functor.
Such a $T$ is necessarily compatible with $\overline{\nu}$ and $\nu$,
and hence gives the required right inverse.

In the case of \ref{i:sdulift} (cf.\ Theorem~\ref{t:Chowunique}),
we consider the Poincar\'e duality theories
$(\sM_0,h_0,\nu_0)$ and $(\sM_1,h_1,\nu_1)$  associated
respectively to $\overline{C}_{\overline{\alpha}}$ and $C_A$.
The morphism $P$
induced by the  projection
$C_A \to \overline{C}_A$ is given by the projection
$\sM_1 \to \overline{\sM}_1$,
and the morphism $K$ induced by the embedding
$\overline{C}_{\overline{\alpha}} \to \overline{C}_A$
is given by a faithful $\Q$\nd tensor functor $\sM_0 \to \overline{\sM}_1$.
It is enough to show that any two liftings of $K'$ and $K''$ of $K$
along $P$ coincide.
The pseudo-abelian hulls of both $\sM_1$ and
(since $\sE_A$ contains the $(-1)_{A^n}$) $\sM_0$ are Kimura categories.
By the unique lifting theorem for such categories there is a
tensor isomorphism $\varphi:K' \iso K''$ lying above the identity of $K$.
Since $K'$ and $K''$ are morphisms of Poincar\'e duality theories,
$K'h_0$ and $K''h_0$ coincide with $h_1$.
Thus $\varphi h_0$ is an automorphism of $h_1$ above the identity of $h_0$.
The theorem on algebras with involution associated to abelian varieties
then shows that every $\varphi_{h_0(A^n)} = (\varphi h_0)_{A^n}$ is the identity.
It is immediate that every $\varphi_{\I(i)}$ is the identity.
Since $K'$ and $K''$ are Tate $k$\nd tensor functors, and every object of
$\sM_0$ is of the form $h_0(A^n)(i)$, it follows that $\varphi$ is the identity
and $K' = K''$.

\medskip

This paper is organised as follows.
In Section~\ref{s:tensor}, we recall what is needed
concerning tensor categories and rigid categories.
The only new results are Lemma~\ref{l:frmod}
and those in \ref{ss:Tate}.
Section~\ref{s:ChowPoin} gives the definitions for Chow and
Poincar\'e duality theories
and the construction of the associated Poincar\'e duality theory.
Section~\ref{s:Kimura} contains what is needed concerning Kimura categories.
The main new results are the characterisation of symmetric Hopf algebras in a Kimura
category in \ref{ss:Hopf}, and the unique lifting theorem for Kimura categories
in \ref{ss:splitlift}.
Motives are considered in Section~\ref{s:mot}, particularly from the point of
view of Kimura categories.
We work over a regular noetherian base, subject to some
technical conditions, and with cycles linear over an arbitrary field of characteristic
zero.
The main result  needed for the proof of the
theorems on symmetrically distinguished cycles
is Theorem~\ref{t:abeliansym},
giving the structure of the motive of an abelian scheme.
This is essentially due to K\"unnemann~\cite{KunAbSch},
but is here obtained as an application of the Hopf theorem of \ref{ss:Hopf}.
It is used in \ref{ss:Abmot} to prove Theorem~\ref{t:alginvunique}
on algebras with involution associated to abelian schemes,
on which Theorems~\ref{t:Chowsplit} and \ref{t:Chowunique} depend.
A brief account of the motivic algebra, which is a commutative algebra in the
category of ind-objects of abelian motives modulo numerical equivalence
describing the structure of abelian Chow motives, is given in \ref{ss:motalg}.
This will not be required in Section~\ref{s:symdist}.
Theorems~\ref{t:Chowsplit} and \ref{t:Chowunique}
are proved in Section~\ref{s:symdist} by the method outlined above.
We work with Chow groups of abelian schemes with coefficients in a field of
characteristic $0$, modulo an equivalence relation.
The main remaining difficulty is the proof of the criterion for a symmetrically
distinguished cycle given in Lemma~\ref{l:symdistequiv}.
In \ref{ss:conclrem} some potential applications of the methods
used here to varieties other than abelian varieties are discussed.

Throughout this paper, $k$ denotes a commutative ring, which
from Section~\ref{s:Kimura} on is a field of characteristic zero.
If $\sC$ is a category, we also write $\sC(M,N)$ for $\Hom_\sC(M,N)$,
and if $T:\sC \to \sC'$ is a functor we write $T_{M,N}$ for
the map from $\sC(M,N)$ to $\sC'(T(M),T(N))$ defined by $T$.
A category will be called essentially small if it is equivalent to a
small category.
A morphism in a category will be called a section if it has a left inverse
and a retraction if it has a right inverse.

\section{Tensor categories}\label{s:tensor}

\subsection{Pretensor categories}\label{ss:pretensor}

Recall (see for example \cite{Mac}, XI.1) that
a symmetric monoidal category is a category $\sC$ equipped
with a with a functor $\otimes:\sC \times \sC \to \sC$, the tensor product,
an object $\I$, the unit, natural isomorphisms from
$(L \otimes M) \otimes N$ to $L \otimes (M \otimes N)$, the associativities,
from $M \otimes N$ to $N \otimes M$, the symmetries,
and from $\I \otimes M$ to $M$ and $M \otimes \I$ to $M$,
subject to appropriate compatibility conditions.
If $\sC$ and $\sC'$ symmetric monoidal categories, a symmetric monoidal functor
(called a braided strong monoidal functor in \cite{Mac}, XI.2)
from $\sC$ to $\sC'$ is a functor $T$ from $\sC$ to $\sC'$
equipped with natural isomorphisms,
the structural isomorphisms,
from $\I$ to $T(\I)$ and $T(M) \otimes T(N)$ to $T(M \otimes N)$,
subject to appropriate compatibility conditions with the symmetric monoidal
structures of $\sC$ and $\sC'$.
If $T$ and $T'$ are symmetric monoidal functors from $\sC$ to $\sC'$, then
a monoidal natural transformation from $T$ to $T'$ is a natural transformation
$\varphi$ from $T$ to $T'$ such that, modulo the structural isomorphisms,
$\varphi_\I$ coincides with $1_\I$ and $\varphi_{M \otimes N}$ with
$\varphi_M \otimes \varphi_N$.

Let $\sC$ be a symmetric monoidal category.
Given for each pair of objects $M$ and $N$ of $\sC$
an isomorphism $\zeta_{M,N}$ in $\sC$ with target $M \otimes N$,
there exists a unique symmetric monoidal category $\widetilde{\sC}$ with
the same underlying category as $\sC$ such that
we have a symmetric monoidal functor $\sC \to \widetilde{\sC}$ which is
the identity on underlying categories
and whose structural morphisms are $1_\I$ and the $\zeta_{M,N}$.
We say that $\widetilde{\sC}$ is obtained by modifying the
tensor product of $\sC$ according to the family $(\zeta_{M,N})$.
Let $T$ be a symmetric monoidal functor from $\sC$ to $\sC'$.
Given for each object $M$ of $\sC$ an isomorphism $\xi_M$ in $\sC'$ with
target $T(M)$, there exists a unique symmetric monoidal functor
$\widetilde{T}$ from $\sC$ to $\sC'$ such that the $\xi_M$ are the
components of a monoidal isomorphism from $\widetilde{T}$ to $T$.
We say that $\widetilde{T}$ is obtained by modifying $T$ according
to the family $(\xi_M)$.

The unit $\I$ of a symmetric monoidal category $\sC$ is said to be strict if
the isomorphisms $\I \otimes M \iso M$ and $M \otimes \I \iso M$
are identities.
The symmetric monoidal functor $T$ from $\sC$ to $\sC'$ is said to strictly
preserve the units if $\I \iso T(\I)$ is the identity.
If the units of $\sC$ and $\sC'$ are strict and $T$ strictly preserves them,
then the structural isomorphism $T(M) \otimes T(N) \iso T(M \otimes N)$
of $T$ is the identity whenever $M$ or $N$ is $\I$.
We obtain from any symmetric monoidal category $\sC$ a symmetric monoidal
category whose unit is strict by modifying the tensor product of $\sC$
according to the family $(\zeta_{M,N})$ with $\zeta_{M,N}$ the inverse
of $\I \otimes N \iso N$ when $M = \I$,  the inverse
of $M \otimes \I \iso M$ when $N = \I$, and the identity otherwise.
Similarly we obtain from any symmetric monoidal functor $T$ a symmetric
monoidal functor which strictly preserves the units by modifying $T$
according to the family $(\xi_M)$ with $\xi_M$ the structural
isomorphism $\I \iso T(\I)$ when $M = \I$ and the identity otherwise.

\emph{In what follows, it will always be assumed unless the contrary is stated
that the units of any symmetric monoidal categories that occur are strict
and that any symmetric monoidal functors that occur strictly preserve
the units}.
When modifying the tensor product of a symmetric monoidal category according
to a family $(\zeta_{M,N})$, it will then be necessary to require that $\zeta_{M,N}$
be the identity when either $M$ or $N$ is $\I$.
Similarly when modifying a symmetric monoidal functor according to a family
$(\xi_M)$ it will be necessary to require that $\xi_\I$ be the identity.

A symmetric monoidal category is said to be strict if each of
its associativities is the identity,
and a symmetric monoidal functor is said to be strict if each of its
structural isomorphisms is the identity.
In a strict symmetric monoidal category multiple tensor products may be written
without brackets.
For any symmetric monoidal category $\sC$, there is a strict symmetric monoidal
category $\sC'$ and a strict symmetric monoidal functor from $\sC$ to $\sC'$
which is an equivalence (\cite{Mac}, Theorem~XI.2.1).
Any symmetric monoidal functor $T$ has a factorisation $T = T''T'$, essentially unique,
with $T'$ a strict symmetric monoidal functor which is bijective on objects and $T''$
fully faithful.

Even if a symmetric monoidal category is not strict, we still often write
multiple tensor products without brackets when it is of no importance which
bracketing is chosen.
The tensor product of $n$ copies of an object $M$ will then be written
$M^{\otimes n}$, and similarly for morphisms.
Using the appropriate symmetries and associativities, we obtain an action
of the symmetric group $\mathfrak{S}_n$ of degree $n$ on $M^{\otimes n}$
by permuting the factors.

Let $\sV$ be a category in which finite products exist.
To choose a final object $\I$ and a specific product and projections
for every pair of objects in $\sV$, such that the projections
$X \times \I \to X$ and $\I \times X \to X$
are identities, is the same as to choose a symmetric monoidal structure
on $\sV$ such that $\I$ is a final object
of $\sV$ and for every $X$ and $Y$ the morphisms
$X \otimes Y \to X$ and $X \otimes Y \to Y$
given by tensoring $X$ with $Y \to \I$ and
$X \to \I$ with $Y$ are the projections of a product.
When such a choice has been made, $\sV$
is said to be a \emph{cartesian monoidal category}.
Let $\sV$ and $\sV'$ be cartesian monoidal categories
and $E$ be a functor from $\sV$ to $\sV'$.
Then $E$ will be called product-preserving
if $E(\I) = \I$ and the canonical morphism from $E(X \times Y)$ to
$E(X) \times E(Y)$ is an isomorphism for every $X$ and $Y$ in $\sV$.
The functor $E$ has a symmetric monoidal structure if and only if
it is product-preserving, and when this is so the
symmetric monoidal structure is unique.

Let $k$ be a commutative ring.
Then a $k$\nd linear category is a category with small hom-sets
equipped with a structure
of $k$\nd module on each hom-set such that composition is $k$\nd bilinear.
A $k$\nd linear functor of $k$\nd linear categories is a functor which
is $k$\nd linear on hom-sets.
Recall that idempotents can be lifted along any a surjective homomorphism
$R \to R'$ of (not necessarily commutative) rings whose kernel consists of nilpotent
elements: replacing $R$ by its subring generated by any lifting of an idempotent
in $R'$, we may suppose that $R$ is commutative.
It follows that idempotent endomorphisms can be lifted along any full
$k$\nd linear functor $T$ for which every endomorphism annulled by $T$
is nilpotent.
Similarly such a $T$ reflects isomorphisms, sections and retractions.
The tensor product $\sC_1 \otimes_k \sC_2$ over $k$ of two $k$\nd linear
categories $\sC_1$ and $\sC_2$ is defined as follows.
Its set of objects is $\Ob (\sC_1) \times \Ob (\sC_2)$, and
\[
(\sC_1 \otimes_k \sC_2)((M_1,M_2),(N_1,N_2)) =
\sC_1(M_1,N_1) \otimes_k \sC_2(M_2,N_2).
\]
The identities and  composition of $\sC_1 \otimes_k \sC_2$ are defined
component-wise, so that for example the composite
of $a_1 \otimes_k a_2$ with $b_1 \otimes_k b_2$ is
$(b_1 \circ a_1) \otimes_k (b_2 \circ a_2)$.

We define a \emph{$k$\nd pretensor category} as a $k$\nd linear category
$\sC$ equipped with a structure of symmetric monoidal category
such that the tensor product is $k$\nd bilinear on hom-sets.
The tensor product of $\sC$ may then be regarded as
a $k$\nd linear functor $\sC \otimes_k \sC \to \sC$.
If $\sC$ and $\sC'$ are $k$\nd pretensor categories, then a $k$\nd tensor
functor from $\sC$ to $\sC'$ is a symmetric monoidal functor from
$\sC$ to $\sC'$ whose underlying functor is $k$\nd linear.
A monoidal isomorphism between $k$\nd tensor functors
will also be called a tensor isomorphism.
As with symmetric monoidal categories, it will be assumed unless the contrary
is stated that
\emph{the units of any $k$\nd pretensor categories are strict,
and any $k$\nd tensor functors strictly preserve them}.

A \emph{tensor ideal} $\sJ$ in a $k$\nd pretensor category $\sC$
is an assignment to every pair of objects $M$ and $N$ in $\sC$
of a $k$\nd submodule $\sJ(M,N)$ of $\sC(M,N)$,
such that the composite of any morphism in $\sJ$ on the left or right with
a morphism of $\sC$ lies in $\sJ$ and the tensor product of any morphism of
$\sJ$ on the left or right with a morphism of $\sC$,
or equivalently with an object of $\sC$, lies in $\sJ$.
Factoring out the $\sJ(M,N)$, we obtain the quotient $k$\nd pretensor
category $\sC/\sJ$ of $\sC$ by $\sJ$.
The projection $\sC \to \sC/\sJ$ onto $\sC/\sJ$ is then
a strict $k$\nd tensor functor with kernel $\sJ$.
If $\sC_0$ is a full $k$\nd pretensor of $\sC$, then any tensor ideal
$\sJ_0$ can be extended to a tensor ideal $\sJ$ of $\sC$.
The smallest such $\sJ$ is given by taking as $\sJ(M,N)$ the $k$\nd submodule
of $\sC(M,N)$ generated by the $M \to N$ of the form
$h \circ g \circ f$ with $g$ in $\sJ_0$.
A morphism $f$ of $\sC$ is said to be \emph{tensor nilpotent} if
$f^{\otimes n} = 0$ for some $n$.
The tensor nilpotent morphisms form a tensor ideal of $\sC$.
We denote by $\sC_{\mathrm{red}}$ the quotient of $\sC$ by this tensor ideal.

Let $\sC_1$ and $\sC_2$ be $k$\nd pretensor categories.
Then $\sC_1 \otimes_k \sC_2$ has a canonical structure of $k$\nd pretensor category,
with tensor product, unit, associativities and symmetries
defined component-wise.
There are canonical strict $k$\nd tensor functors
$I_1 = (-,\I)$ from $\sC_1$ to $\sC_1 \otimes_k \sC_2$ and
$I_2 = (\I,-)$ from $\sC_2$ to $\sC_1 \otimes_k \sC_2$.
Given $k$\nd tensor functors $P_1$ from $\sC_1$ to $\sD_1$ and
$P_2$ from $\sC_2$ to $\sD_2$, we define component-wise the $k$\nd tensor functor
$P_1 \otimes_k P_2$ from $\sC_1 \otimes_k \sC_2$ to $\sD_1 \otimes_k \sD_2$.
If $\sD_1 = \sD_2 = \sC$, then there is a $k$\nd tensor functor
$P:\sC_1 \otimes_k \sC_2 \to \sC$ such that $PI_1 = P_1$ and $PI_2 = P_2$.
Indeed by composing with $P_1 \otimes_k P_2$ we reduce to the case where
$\sC_1 = \sC = \sC_2$ and $P_1 = \Id_{\sC} = P_2$.
We may then take as $P$ the functor $\sC \otimes_k \sC \to \sC$ that
sends the object $(M_1,M_2)$ to $M_1 \otimes M_2$
and the morphism $f_1 \otimes_k f_2$ to $f_1 \otimes f_2$,
with structural isomorphisms defined using the appropriate
symmetries and associativities in $\sC$.
Given also $P':\sC_1 \otimes_k \sC_2 \to \sC$ with $P'I_1 = P_1$
and $P'I_2 = P_2$, there is a unique tensor isomorphism $\varphi$
from $P$ to $P'$ with $\varphi I_1$ and $\varphi I_2$ the identity.

An \emph{algebra} in a $k$\nd pretensor category $\sC$ is an object
$R$ in $\sC$ together with morphisms $\iota:\I \to R$, the unit, and
$\mu:R \otimes R \to R$, the multiplication, such that $\mu$ satisfies the usual
associativity condition, and $\iota$ is a left an right identity for $\mu$.
If $R$ and $R'$ are algebras in $\sC$ with units $\iota$ and $\iota'$ and
multiplications $\mu$ and $\mu'$, their tensor product $R \otimes R'$ has a
structure of algebra, with unit $\iota \otimes \iota'$ and multiplication the composite
of the appropriate symmetries and associativities with $\mu \otimes \mu'$.
The algebra $R$ is said to be \emph{commutative} if composing the symmetry
interchanging the two factors in $R \otimes R$ with the multiplication
$\mu$ leaves $\mu$ unchanged.

A $k$\nd linear category is said to be \emph{pseudo-abelian} if it has a zero object
and direct sums, and if every idempotent endomorphism has an image.
We define a \emph{$k$\nd tensor category} as a $k$\nd pretensor category whose
underlying $k$\nd linear category is pseudo-abelian.

Given a commutative monoid $Z$,
a \emph{$Z$\nd graded $k$\nd tensor category} is a $k$\nd tensor category $\sC$
together with a strictly full subcategory $\sC_i$ of $\sC$ for each $i \in Z$,
such that every object of $\sC$ is a coproduct of objects in the $\sC_i$,
and such that $\I$ lies in $\sC_0$, the tensor product of an object in
$\sC_i$ with an object in $\sC_j$ lies in $\sC_{i+j}$,
and $\sC(M,N) = 0$ for $M$ in $\sC_i$ and $N$ in $\sC_j$ with $i \ne j$.
To any $\Z/2$\nd graded $k$\nd tensor category $\sC$ there is associated
a $k$\nd tensor category $\sC^\dagger$, obtained by modifying the symmetry of $\sC$
according to the $\Z/2$\nd grading.
The underlying $k$\nd linear category, tensor product and associativities
of $\sC^\dagger$ are the same as those of $\sC$, but the symmetry
$M \otimes N \iso N \otimes M$ in $\sC^\dagger$ is given by multiplying that in $\sC$
by $(-1)^{ij}$ when $M$ lies in $\sC_i$ and $N$ in $\sC_j$,
and then extending by linearity.

\subsection{Pseudo-abelian hulls and ind-completions}\label{ss:ind}

A \emph{pseudo-abelian hull} of a $k$\nd linear category $\sC$ is a pseudo-abelian
$k$\nd linear category $\sC'$ containing $\sC$ as a full subcategory,
such that every object of $\sC'$ is a direct summand of a direct sum of
objects in $\sC$.
A pseudo-abelian hull exists for any $\sC$ and is unique up to $k$\nd linear
equivalence.
We have in fact the following more precise form of the uniqueness.
Let $\sC$ be a full subcategory of a $k$\nd linear category $\sC'$
such that every object of $\sC'$ is a direct summand of a direct sum of
objects in $\sC$.
Then given two $k$\nd linear functors $\sC' \to \sC''$, any natural transformation
between their restrictions to $\sC$
extends uniquely to a natural transformation between the functors themselves.
Further if $\sC''$ is pseudo-abelian, then any $k$\nd linear functor $\sC \to \sC''$
extends to a $k$\nd linear functor $\sC' \to \sC''$.

A pseudo-abelian hull of a $k$\nd pretensor category $\sC$ is a
$k$\nd pretensor category $\sC'$
containing $\sC$ as a full $k$\nd pretensor subcategory,
such that the underlying $k$\nd linear category of $\sC'$
is a pseudo-abelian hull of the
underlying $k$\nd linear category of $\sC$.
To see that it exists, start with a pseudo-abelian hull $\sC'$ of the
underlying $k$\nd linear category of $\sC$.
Then every object of $\sC' \otimes_k \sC'$ is a direct summand of a
direct sum of objects of $\sC \otimes_k \sC$.
The tensor product $\otimes$ of $\sC$ composed with the embedding $\sC \to \sC'$
thus extends to a $k$\nd linear functor $\otimes'$ from $\sC' \otimes_k \sC'$
to $\sC'$, and we may assume that $\I \otimes' -$ and $- \otimes' \I$ are the
identity of $\sC'$.
The associativities and symmetries for $\otimes$ then extend uniquely
to associativities and symmetries for $\otimes'$, and the required compatibilities
follow from those for $\otimes$.
A similar argument shows that if $\sC'$ is a pseudo-abelian hull of the
$k$\nd pretensor category $\sC$,
then any $k$\nd tensor functor $\sC \to \sC''$ with $\sC''$ pseudo-abelian
extends uniquely up to tensor isomorphism to a $k$\nd tensor functor
$\sC' \to \sC''$.
In particular a pseudo-abelian hull of a $k$\nd pretensor category is unique up to
$k$\nd tensor equivalence.

Let $\sC$ be a category with small hom-sets.
By a filtered system in $\sC$ we mean a functor to $\sC$ from a small filtered
category.
An \emph{ind-completion} of $\sC$ is a
category $\sC'$ with small hom-sets containing $\sC$ as a full subcategory,
such that the colimit
of every filtered system in $\sC$ exists in $\sC'$,
every object of $\sC'$ is the colimit a filtered system in $\sC$,
and $\sC'(M,-)$ preserves colimits of filtered systems in $\sC$
for every $M$ in $\sC$.
The category $\Ind(\sC)$ of ind-objects of $\sC$ (\cite{SGA4-1}, 8.2.4, 8.2.5),
where $\sC$ is identified with the full subcategory of constant ind-objects, is
an ind-completion of $\sC$.
An ind-completion is unique up to equivalence.
We have in fact the following more precise form of the uniqueness.
Let $\sC$ be a full subcategory of a category $\sC'$ with small hom-sets,
such that every object of $\sC'$ is the colimit of a filtered system in $\sC$
and $\sC'(M,-)$ preserves colimits of filtered systems in $\sC$ for every
$M$ in $\sC$.
Then given two functors $\sC' \to \sC''$ which preserve colimits of filtered systems
in $\sC$,
any natural transformation  between their restriction to $\sC$
extends uniquely to a natural transformation between the functors themselves.
Further any functor $\sC \to \sC''$ for which the image in $\sC''$ of every
filtered system in $\sC$ has a colimit
extends to a functor $\sC' \to \sC''$ which preserves colimits of filtered systems
in $\sC$.
Also if $\sC$ has a $k$\nd linear structure it extends uniquely to $\sC'$,
and a functor from $\sC'$ to a $k$\nd linear
category which preserves colimits of filtered systems in $\sC$
is $k$\nd linear when its restriction to $\sC$ is.

An ind-completion of a $k$\nd pretensor category $\sC$ is a $k$\nd pretensor
category $\sC'$ containing $\sC$ as a full $k$\nd pretensor subcategory,
such that the underlying category of $\sC'$ is an ind-completion of the
underlying category of $\sC$, and such that the tensor product with any object of $\sC'$
preserves colimits of filtered systems in $\sC$.
To see that it exists, start with an ind-completion $\sC'$ of the
underlying category of $\sC$.
Then $\sC' \times \sC'$ is an ind-completion of the category $\sC \times \sC$.
Now define the tensor product
$\otimes':\sC' \times \sC' \to \sC'$, associativities and symmetries of $\sC'$
in a similar way to the case of a pseudo-abelian hull
of a $k$\nd pretensor category above.
When $\sC'$ is equipped with the unique $k$\nd linear structure extending that
of $\sC$, it is then easily checked that $\otimes'$ is $k$\nd bilinear on hom-spaces.
A similar argument shows that if $\sC'$ is an ind-completion of the
$k$\nd pretensor category $\sC$,
then any $k$\nd tensor functor $\sC \to \sC''$ for which the image in $\sC''$
of every filtered system in $\sC$ has a colimit
extends uniquely up to tensor isomorphism to a $k$\nd tensor functor
$\sC' \to \sC''$ which preserves colimits of filtered systems in $\sC$.
In particular an ind-completion of a $k$\nd pretensor category is unique up to
$k$\nd tensor equivalence.

Let $\sC$ be a $k$\nd linear category
and $\widehat{\sC}$ be an ind-completion of $\sC$,
equipped with the unique $k$\nd linear structure extending that of $\sC$.
Then applying $(-)^\mathrm{op}$ shows that restriction from $\widehat{\sC}^\mathrm{op}$ to
$\sC^\mathrm{op}$ defines an equivalence between the category $\sH$ of those
$k$\nd linear functors from $\widehat{\sC}^\mathrm{op}$ to $k$\nd modules which preserve the limit
of $\sI^\mathrm{op} \to \sC^\mathrm{op}$ for every filtered system $\sI \to \sC$ in $\sC$
and the category of all $k$\nd linear functors from $\sC^\mathrm{op}$ to $k$\nd modules.
Suppose now that $\sC$ is essentially small and semisimple abelian.
Then every $H$ in $\sH$ is representable.
Indeed define as follows an object $L$
of $\widehat{\sC}$ and an element $l$ of $H(L)$ which
represent $H$.
Choose a small set $\sC_0$ of representatives for isomorphism classes of
objects of $\sC$.
Then we have a small category $\sP$ whose objects are pairs
$(M,m)$ with $M \in \sC_0$ and $m \in H(M)$, where a morphism
$(M,m) \to (M',m')$ is a morphism $f:M \to M'$ in $\sC$ such that
$H(f)$ sends $m'$ to $m$.
There is an evident forgetful functor $\sP \to \sC$.
Since $\sC$ is semisimple abelian, the category $\sC^\mathrm{op}$ has finite
limits, and the restriction of $H$ to
$\sC^\mathrm{op}$ preserves them.
Thus given a finite diagram $\sD \to \sP^\mathrm{op}$, any limiting cone in
$\sC^\mathrm{op}$ with base
$\sD \to \sP^\mathrm{op} \to \sC^\mathrm{op}$
and vertex in $\sC_0$ lifts uniquely to a cone in $\sP^\mathrm{op}$  with base
$\sD \to \sP^\mathrm{op}$, as follows by taking the limit of the diagram of
$k$\nd modules obtained by composing $\sD \to \sC^\mathrm{op}$ with $H$.
Hence $\sP$ is filtered.
Now take as $L$ the colimit $\colim_{(M,m) \in \sP} M$ in $\widehat{\sC}$,
and as $l$ the element of $H(L) = \lim_{(M,m) \in \sP} H(M)$
with component $m$ at $(M,m) \in \sP$.
Then $l$ defines an isomorphism $\widehat{\sC}(M,L) \to H(M)$
for every $M$ in $\sC_0$ and hence for every $M$ in $\widehat{\sC}$.

\subsection{Rigid categories}\label{ss:rigid}

Let $\sC$ be a $k$\nd pretensor category.
A \emph{duality pairing} in $\sC$ is a quadruple $(M,M^\vee,\eta,\varepsilon)$,
consisting of objects $M$ and $M^\vee$ of $\sC$ and a unit
$\eta:\I \to M^\vee \otimes M$ and a counit $\varepsilon:M \otimes M^\vee \to \I$
satisfying triangular identities analogous to those for an adjunction (\cite{Mac}, p.~85).
Explicitly,
$(\varepsilon \otimes M) \circ \alpha^{-1}  \circ (M \otimes \eta)$
is required to be the identity of $M$ and
$(M^\vee \otimes \varepsilon) \circ \widetilde{\alpha}
\circ (\eta \otimes M^\vee)$ the identity
of $M^\vee$, where $\alpha$ and $\widetilde{\alpha}$ are the associativities.
When such a duality pairing exists, $M$ is said to be \emph{dualisable},
and $(M,M^\vee,\eta,\varepsilon)$ to be a duality pairing for $M$ and $M^\vee$ to be a
\emph{dual} for $M$.

Given duality pairings $(M,M^\vee,\eta,\varepsilon)$ for $M$ and
$(M',M'{}^\vee,\eta',\varepsilon')$ for $M'$,
any morphism $f:M \to M'$ has a transpose
$f^\vee:M'{}^\vee \to M^\vee$, characterised by the condition
\begin{equation}\label{e:transpose}
\varepsilon \circ (M \otimes f^\vee) =
\varepsilon' \circ (f \otimes M'{}^\vee),
\end{equation}
or by a similar condition using $\eta$ and $\eta'$.
Explicitly, $f^\vee$ is the composite of $\eta \otimes M'{}^\vee$, the appropriate
associativity, $M^\vee \otimes (f \otimes M'{}^\vee)$,
and $M^\vee \otimes \varepsilon'$.
It follows in particular from \eqref{e:transpose} and the similar condition
using the units that a duality pairing for $M$
is determined uniquely up to unique isomorphism, and that the unit of
a duality pairing is determined uniquely when the counit is given,
and conversely.
Also $(1_M)^\vee = 1_{M^\vee}$ and \mbox{$(f' \circ f)^\vee = f^\vee \circ f'{}^\vee$}.

The identity duality pairing in $\sC$ is $(\I,\I,1_\I,1_\I)$.
The tensor product of the duality pairings $(M',M'{}^\vee,\eta',\varepsilon')$ and
$(M'',M''{}^\vee,\eta'',\varepsilon'')$ is
$(M' \otimes M'',M'{}^\vee \otimes M''{}^\vee,\eta,\varepsilon)$ with $\eta$
and $\varepsilon$ obtained from $\eta \otimes \eta'$ and $\varepsilon \otimes \varepsilon'$
by composing with the appropriate symmetries.
The dual of the duality pairing $(M,M^\vee,\eta,\varepsilon)$ is
$(M^\vee,M,\widetilde{\eta},\widetilde{\varepsilon})$ with $\widetilde{\eta}$ and
$\widetilde{\varepsilon}$ obtained from $\eta$ and $\varepsilon$ by composing with the
appropriate symmetries.
The set of dualisable objects of $\sC$ is closed under the formation of
tensor products and duals, and when $\sC$ is pseudo-abelian
it is closed under the formation of direct sums and direct summands.
Any $k$\nd tensor functor sends dualisable objects to dualisable objects.
If $M$ is dualisable in $\sC$, then the component at $M$ of any monoidal natural
transformation $\varphi$ of $k$\nd tensor functors with source $\sC$ is an isomorphism,
with inverse the transpose of $\varphi_{M^\vee}$.

The \emph{trace} $\tr(f)$ of an endomorphism $f$
of a dualisable object $M$ of $\sC$
is defined as the endomorphism  $\varepsilon \circ (f \otimes M^\vee) \circ \widetilde{\eta}$
of $\I$,
where $\varepsilon$ is the counit of a duality pairing for $M$
and $\widetilde{\eta}$ is the unit of the dual pairing.
The trace does not depend on the duality pairing chosen for $M$, and it is preserved
by $k$\nd tensor functors.
We have $\tr(f^\vee) = \tr(f)$, and if $f'$ is an endomorphism of a dualisable
object then $\tr(f \otimes f') = \tr(f)\tr(f')$.
The \emph{rank} of $M$ is defined as $\tr(1_M)$.
More generally the
\emph{contraction} $N \to N'$ of a morphism $f:N \otimes M \to N' \otimes M$
with respect to $M$ is defined as the composite, modulo the appropriate associativities,
of $N \otimes \widetilde{\eta}$ with $f \otimes M^\vee$ and
$N' \otimes \varepsilon$.
Again it does not depend on the duality chosen for $M$.
For any $j:M \to M$,
the contraction of $l \otimes j$ with respect to $M$ is $\tr(j)l$.
If $M$ is the direct sum of $M_1, M_2, \dots, M_r$, then the contraction
of $N \otimes M \to N' \otimes M$ with respect to $M$ is the sum of
the contractions of the diagonal components $N \otimes M_i \to N' \otimes M_i$
with respect to the $M_i$.

If $(M,M^\vee,\eta,\varepsilon)$ is a duality pairing for $M$ in $\sC$,
then the composite of $g:N \to M$ and $h:M \to L$ is given by the composite
of $N \circ \eta$, the appropriate associativity, $(g \otimes M^\vee) \otimes h$, and
$\varepsilon \otimes L$, or equivalently by the contraction with respect to $M$
of $g \otimes h$ composed with the appropriate symmetry.
To verify this, we reduce to the case where $\sC$ is strict by applying a strict
$k$\nd tensor functor $T:\sC \to \sC'$ with $T$ an equivalence and $\sC'$ strict
and replacing $\sC$ by $\sC'$.
It then suffices to use the triangular identity
$(\varepsilon \otimes M) \circ (M \otimes \eta) = 1_M$.
It follows that if $N$ is also dualisable then $\tr(g \circ h) = \tr(h \circ g)$.
It also follows by induction on $n$ that the composite of $n$ morphisms between dualisable
objects lies in the tensor ideal generated by their tensor product.
A tensor nilpotent endomorphism of a dualisable object
is thus nilpotent.

Given objects $L$, $M$ and $N$ in $\sC$,
and a duality pairing $(M,M^\vee,\eta,\varepsilon)$ for $M$,
we have a canonical isomorphism
\begin{equation}\label{e:adjiso}
\omega_{M,\varepsilon;N,L}:\sC(N,M^\vee \otimes L) \iso
\sC(M \otimes N,L)
\end{equation}
which sends $f:N \to M^\vee \otimes L$ to the composite
\[
M \otimes N \xrightarrow{M \otimes f} M \otimes (M^\vee \otimes L) \iso
(M \otimes M^\vee) \otimes L \xrightarrow{\varepsilon \otimes L} L.
\]
Its inverse sends $g:M \otimes N \to L$ to the composite of $\eta \otimes N$,
the appropriate associativity, and $M^\vee \otimes g$.
This can be checked using the triangular identities, after first reducing to the
case where $\sC$ is strict.
When $N = \I$, we write \eqref{e:adjiso} as
\begin{equation}\label{e:adj1iso}
\omega_{M,\varepsilon;L}:\sC(\I,M^\vee \otimes L)
\iso \sC(M,L).
\end{equation}
If $f:\I \to M^{\prime \vee} \otimes M$ is a morphism in $\sC$ and
$\varepsilon:M \otimes M^\vee \to \I$ is the counit of a duality pairing for $M$ and
$\widetilde{\varepsilon}:M^\vee \otimes M \to \I$ is the counit of its dual, then
\begin{equation}\label{e:adjtrans}
\omega_{M',\varepsilon';M}(f)^\vee =
\omega_{M^\vee,\widetilde{\varepsilon};M^{\prime \vee}}(\sigma \circ f),
\end{equation}
where the transpose is defined using
$\varepsilon$ and $\varepsilon'$, and $\sigma$ is the symmetry
interchanging $M^{\prime \vee}$ and $M$.
Given $f':\I \to M^{\prime \vee} \otimes L'$ and
$f'':\I \to M^{\prime\prime \vee} \otimes L''$, we have
\begin{equation}\label{e:adjtens}
\omega_{M',\varepsilon';L'}(f') \otimes \omega_{M'',\varepsilon'',L''}(f'') =
\omega_{M' \otimes M'',\varepsilon;L' \otimes L''}(\rho \circ (f' \otimes f'')),
\end{equation}
where $\varepsilon:M^\vee \otimes M \to \I$ is the counit of the tensor product of the duality pairings
with counits $\varepsilon':M'{}^\vee \otimes M' \to \I$ and
$\varepsilon'':M''{}^\vee \otimes M'' \to \I$,
and $\rho$ is the symmetry that interchanges the middle two factors
of $(M^{\prime \vee} \otimes L') \otimes (M^{\prime\prime \vee} \otimes L'')$.
Given $f:\I \to M^\vee \otimes M'$ and $f':\I \to M^{\prime \vee} \otimes M''$, we have
\begin{equation}\label{e:adjtenscomp}
\omega_{M',\varepsilon';M''}(f') \circ \omega_{M,\varepsilon;M'}(f) =
\omega_{M,\varepsilon;M''}((M^\vee \otimes (\varepsilon' \otimes M'')) \circ \lambda
                    \circ (f \otimes f')),
\end{equation}
where
$\lambda:(M^\vee \otimes M') \otimes (M^{\prime \vee} \otimes M'') \iso
M^\vee \otimes ((M' \otimes M^{\prime \vee}) \otimes M'')$ is the associativity.
To verify \eqref{e:adjtenscomp}, we may suppose that $\sC$ is strict.
Then the left hand side of \eqref{e:adjtenscomp} is the composite of
$M \otimes \eta'$ with
$\omega_{M,\varepsilon;M'}(f) \otimes M'{}^\vee \otimes \omega_{M',\varepsilon;M''}(f')$
and $\varepsilon' \otimes M''{}^\vee$, where $\eta':\I \to M'{}^\vee \otimes M'$
is the unit,
and it suffices to use the triangular identity
$(\varepsilon' \otimes M') \circ (M' \otimes \eta') = 1_{M'}$.
Given $v:M \to M'$ and duality pairings for $M$ and $M'$ with respective counits
$\varepsilon:M^\vee \otimes M \to \I$ and $\varepsilon':M'{}^\vee \otimes M' \to \I$, we have
\begin{equation}\label{e:adjgraph}
v = \omega_{M,\varepsilon;M'}((v \otimes M'{}^\vee )^\vee \circ \varepsilon'{}^\vee),
\end{equation}
where the transposes are taken using the counits defined by
$\varepsilon$ and $\varepsilon'$.
Indeed
\[
f^\vee = \varepsilon' \circ (\omega_{M,\varepsilon;M'}(f) \otimes M'{}^\vee)
\]
for any $f:\I \to M^\vee \otimes M'$, by the definition of the counits for the dual and
tensor product.
Taking the transpose and applying $\omega_{M,\varepsilon;M'}$ now gives \eqref{e:adjgraph},
because $v =\omega_{M,\varepsilon;M'}(f)$ for some $f$.

A $k$\nd pretensor category is said to be \emph{rigid} if each
of its objects is dualisable.
In a rigid $k$\nd pretensor category, every tensor nilpotent endomorphism is
nilpotent.
In view of the isomorphism \eqref{e:adj1iso}, it is enough for many purposes
to consider only those hom-spaces in a rigid $k$\nd pretensor category $\sC$ of the form
$\sC(\I,N)$, or alternatively $\sC(N,\I)$.
For example the assignment $\sJ \mapsto \sJ(\I,-)$ defines a bijection between tensor
ideals of $\sC$ and $k$\nd linear subfunctors of $\sC(\I,-)$ from $\sC$ to
$k$\nd modules.

Let $\sC$ be a rigid $k$\nd pretensor category
with $\End(\I)$ a field.
Then $\sC$ has a unique maximal tensor ideal $\Rad(\sC)$,
corresponding to the $k$\nd linear subfunctor of $\sC(\I,-)$ that sends
$N$ to the set of those morphisms $\I \to N$ which are not sections.
The morphisms $f:M \to N$ of $\sC$ in $\Rad(\sC)$ are those for which
$\tr(f \circ g) = 0$ for every $g:N \to M$.
We write $\overline{\sC}$ for $\sC/\Rad(\sC)$.

Let $\sD$ be a rigid $k$\nd pretensor category.
If $T:\sD \to \sD'$ is a $k$\nd tensor functor such that
$T_{\I,N}:\sD(\I,N) \to \sD'(\I,T(N))$ is surjective
for every $N$ in $\sD$,
then $T$ is full.
To see this we reduce after factoring $T$ as a strict $k$\nd tensor
functor composed with a fully faithful $k$\nd tensor functor to the
case where $T$ is strict.
If $M$ is an object of $\sD$ and $\varepsilon:M \otimes M^\vee \to \I$ is the counit of
a duality pairing for $M$, then $T(\varepsilon)$ is the counit of
a duality pairing for $T(M)$.
For any $f:\I \to M^\vee \otimes N$ we have
\[
\omega_{T(M),T(\varepsilon);T(N)}(T(f)) = T(\omega_{M,\varepsilon;N}(f)).
\]
Since the omega are isomorphisms, $T_{M,N}$ is thus surjective
when $T_{\I,N \otimes M^\vee}$ is surjective.
That $T$ is faithful when the $T_{\I,N}$ can be seen in the same way,
or deduced from the remark on tensor ideals above.
Similarly if $T_1$ and $T_2$ are strict $k$\nd tensor
functors $\sD \to \sD'$ which coincide on objects of $\sD$ and on
hom-spaces of $\sD$ of the form $\sD(\I,N)$ for $N$ in $\sD$,
then $T_1 = T_2$, because $T_1(\eta) = T_2(\eta)$ for every unit $\eta$
implies $T_1(\varepsilon) = T_2(\varepsilon)$ for every counit $\varepsilon$.
Replacing $\sD$ and $\sD'$ by $\sD^\mathrm{op}$ and $\sD'{}^\mathrm{op}$
shows that similar results hold with $\sD(N,\I)$ instead of
$\sD(\I,N)$.

Let $\sC$ be a $k$\nd pretensor category
and $R$ be commutative algebra in $\sC$.
An $R$\nd module in $\sC$ is an object $M$ of $\sC$ together with a morphism
$R \otimes M \to M$, or equivalently $M \otimes R \to M$, which satisfies the
usual unit and associativity conditions.
The tensor product $M \otimes_R N$ of the $R$\nd modules $M$ and $N$ is defined
as the coequaliser of the two morphisms
\[
R \otimes (M \otimes N) \to M \otimes N
\]
obtained using the actions of $R$ on $M$ and on $N$, provided that this coequaliser
exists and is preserved by tensor product with any object of $\sC$.
If either $M$ or $N$ is free, i.e.\ isomorphic to an $R$\nd module $R \otimes L$
for some object $L$ of $\sC$, then  $M \otimes_R N$ exists:
when $N = R \otimes L$ for example it is given by $M \otimes L$.
Thus we obtain a $k$\nd pretensor category of free $R$\nd modules in $\sC$
with tensor product $\otimes_R$,
and a $k$\nd tensor functor $R \otimes -$ to it from $\sC$.

More generally, suppose that $\sC$ is a full $k$\nd pretensor subcategory of a
$k$\nd pretensor category $\widehat{\sC}$, and let $R$ be a commutative algebra
in $\widehat{\sC}$.
Then we have a $k$\nd pretensor category of free $R$\nd modules in $\widehat{\sC}$
on objects of $\sC$, consisting of the $R$\nd modules isomorphic to $R \otimes M$
with $M$ in $\sC$.
It will be convenient for what follows to consider the $k$\nd tensor equivalent
category $\sF_R$, obtained by factoring the $k$\nd tensor functor $R \otimes -$
from $\sC$ to free $R$\nd modules as a strict $k$\nd tensor functor
$F_R:\sC \to \sF_R$ which is bijective on objects
followed by a fully faithful $k$\nd tensor functor.
Explicitly, each object of $\sF_R$ can be written uniquely in the
form $F_R(M)$ with $M$ an object of $\sC$, and a morphism from $F_R(M)$
to $F_R(N)$ is a morphism of $R$\nd modules from $R \otimes M$ to $R \otimes N$,
with composition that of morphisms of $R$\nd modules.
We then have a canonical isomorphism
\begin{equation}\label{e:frmodhom}
\theta_{R;M,N}:\sF_R(F_R(M),F_R(N)) \iso \widehat{\sC}(M, R \otimes N)
\end{equation}
for every $M$ and $N$ in $\sC$, given by composing with $\iota \otimes M$,
where $\iota:\I \to R$ is the unit of $R$.
The functor $F_R$ sends $f:M \to N$ in $\sC$ to
$R \otimes f$.
The tensor product of $a:F_R(M) \to F_R(N)$ and $a':F_R(M') \to F_R(N')$
has image under $\theta_{R;M \otimes M',N \otimes N'}$
the composite of $\theta_{R;M,N}(a) \otimes \theta_{R;M',N'}(a')$ with the
symmetry interchanging the middle two factors of its target and
$\mu \otimes N \otimes N'$, where $\mu$ is the multiplication $R \otimes R \to R$.
In what follows, $\widehat{\sC}$ will always be an ind-completion of $\sC$.

Let $u:R \to R'$ be a morphism of commutative algebras in $\widehat{\sC}$.
Then we have a strict $k$\nd tensor functor $\sF_u:\sF_R \to \sF_{R'}$
which sends the object $F_R(M)$ of $\sF_R$ to $F_{R'}(M)$ and the
morphism $a:F_R(M) \to F_R(N)$ in $\sF_R$ to $a':F_{R'}(M) \to F_{R'}(N)$
with
\[
\theta_{R';M,N}(a') =(u \otimes N) \circ \theta_{R;M,N}(a).
\]
It is clear that $F_{R'} = \sF_u \circ F_R$.
Also $\sF_{v \circ u} = \sF_v \sF_u$, and $\sF_{1_R}$ is the identity of $\sF_R$.

\begin{lem}\label{l:frmod}
Let $\sC$ be a rigid $k$\nd pretensor category
and $\widehat{\sC}$ be an ind-completion of $\sC$.
\begin{enumerate}
\item\label{i:frmodes}
If $\sC$ is essentially small and semisimple abelian,
then for every $k$\nd tensor functor  $T:\sC \to \sB$ there exists
a commutative algebra $R$ in $\widehat{\sC}$ and a fully faithful
$k$\nd tensor functor $I:\sF_R \to \sB$ such that $T = IF_R$.
\item\label{i:frmodff}
If $R$ and $R'$ are commutative algebras in $\widehat{\sC}$,
then for every $k$\nd tensor functor $U:\sF_R \to \sF_{R'}$ with
$F_{R'} = UF_R$ there exists a unique morphism of algebras
$u:R \to R'$ such that $U = \sF_u$.
\end{enumerate}
\end{lem}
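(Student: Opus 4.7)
For (i), the plan is to use the representability result at the end of \ref{ss:ind}, which applies because $\sC$ is essentially small and semisimple abelian. I define the $k$\nd linear functor $H \colon \sC^\mathrm{op} \to k\text{-Mod}$ by $H(M) = \sB(T(M),\I)$ and let $R \in \widehat{\sC}$ be an object representing $H$, so $\widehat{\sC}(M,R) \iso \sB(T(M),\I)$ naturally in $M \in \sC$. Since $\sC$ is rigid and $T$ is a $k$\nd tensor functor, each $N \in \sC$ has a dual $N^\vee$ in $\sC$ (hence in $\widehat{\sC}$), and $T(N^\vee)$ is dual to $T(N)$ in $\sB$. Combining representability with the rigidity isomorphisms in $\sB$ and $\widehat{\sC}$ then gives a natural isomorphism
\[
\alpha_{M,N} \colon \sB(T(M),T(N)) \iso \sB(T(M \otimes N^\vee),\I) \iso \widehat{\sC}(M \otimes N^\vee, R) \iso \widehat{\sC}(M, R \otimes N)
\]
for $M,N \in \sC$, so that $\alpha_{M,N}$ composed with $\theta_{R;M,N}^{-1}$ identifies $\sB(T(M),T(N))$ with $\sF_R(F_R(M),F_R(N))$.

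The unit $\iota \colon \I \to R$ is defined as the image of $1_\I$ under $\alpha_{\I,\I}$. For the multiplication, write $R = \colim_{i \in \sI} M_i$ as a filtered colimit in $\widehat{\sC}$ with $M_i \in \sC$, so that
\[
\widehat{\sC}(R \otimes R, R) \iso \lim_{i,j} \widehat{\sC}(M_i \otimes M_j, R) \iso \lim_{i,j} \sB(T(M_i) \otimes T(M_j), \I);
\]
I let $\mu \colon R \otimes R \to R$ correspond to the compatible family $\phi_i \otimes \phi_j$, where the $\phi_i \colon T(M_i) \to \I$ are the components of $1_R$ under $\widehat{\sC}(R,R) \iso \lim_i \sB(T(M_i),\I)$. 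Associativity, commutativity, and the unit axioms for $\mu$ then reduce to the same properties of $\otimes$ in $\sB$. I finally define $I \colon \sF_R \to \sB$ on objects by $I(F_R(M)) = T(M)$ and on morphisms by $\alpha_{M,N}^{-1} \circ \theta_{R;M,N}$; it is fully faithful since $\alpha$ and $\theta$ are bijections, and $T = I F_R$ by construction. The main obstacle is to verify that this $\mu$ really makes $I$ into a $k$\nd tensor functor, i.e.\ that under $\alpha$ composition and tensor products of morphisms in $\sB$ correspond to composition and tensor products in $\sF_R$, which by the explicit formulas recalled in the excerpt are expressed through $\mu$. This is a formal check using the triangular identities for duality pairings in $\sC$ and $\sB$ (which let one rewrite any morphism with the tensor unit on one side), the naturality of representability in $\sC$, and that $T$ preserves duality pairings.

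For (ii), given $U \colon \sF_R \to \sF_{R'}$ with $F_{R'} = U F_R$, I construct $u \colon R \to R'$ as follows. Writing $R = \colim_i M_i$ in $\widehat{\sC}$, the structural morphisms $M_i \to R$ correspond via $\theta_{R;M_i,\I}^{-1}$ to morphisms $a_i \in \sF_R(F_R(M_i),F_R(\I))$. The images $\theta_{R';M_i,\I}(U(a_i)) \in \widehat{\sC}(M_i,R')$ form a compatible family by the functoriality of $U$ and the naturality of $\theta$, and so define $u \colon R \to R'$; uniqueness of $u$ is immediate from the universal property of the filtered colimit $R = \colim_i M_i$. The equality $U = \sF_u$ is checked on hom-spaces of $\sF_R$ by expressing a general morphism there through the colimit structure of $R$, then using the hypothesis $U F_R = F_{R'}$ together with the fact that $U$ preserves tensor products. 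Finally, $u$ is a morphism of algebras because $U$ is a $k$\nd tensor functor, so $U$ preserves the units and the tensor products that define $\iota$ and $\mu$ on the $R$ side in terms of the $\phi_i$ and the analogous structure on the $R'$ side.
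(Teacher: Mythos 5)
Your proposal follows essentially the same route as the paper: represent the functor $M \mapsto \sB(T(M),\I)$ by an object $R$ of $\widehat{\sC}$ (using that $\sC$ is essentially small and semisimple abelian so that the extension to $\widehat{\sC}^\mathrm{op}$ is representable), endow $R$ with a commutative algebra structure, and then identify $\sF_R$ with a full $k$\nd tensor subcategory of $\sB$ via an $I$ built from the representing isomorphisms and rigidity. Part (ii) is likewise the same idea: read off $u$ from $U$ on hom-spaces into $\I$ and use the uniqueness of extensions of natural transformations along an ind-completion.

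The one place where the paper's argument is noticeably cleaner is the step you flag as the ``main obstacle.'' Rather than attempting the direct check that $\alpha$ intertwines composition in $\sB$ with composition in $\sF_R$, the paper first replaces $\sB$ by an ind-completion and then by the strict factor of $\widehat{T}$, so that $\widehat{T}$ is strict and $a\colon\widehat{T}(R)\to\I$ is available as a genuine algebra morphism; the compatibility of $I$ with tensor product of morphisms then falls out of strictness of $\widehat{T}$ together with $a$ being an algebra morphism, while compatibility with composition and identities is obtained at one stroke from the characterisation of $\widetilde f$ as the unique morphism of $\widehat T(R)$\nd modules with a prescribed restriction along the unit. Your plan of verifying the compatibilities by hand from triangular identities and naturality should work, but it involves chasing through several adjunction isomorphisms; if you carry it out I would recommend first reducing to a strict situation as the paper does, since otherwise the bookkeeping of structural isomorphisms becomes delicate. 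Similarly, in part (ii) the paper notes at the outset that $U$ is automatically a strict $k$\nd tensor functor (because $F_R$ is bijective on objects and $F_R,F_{R'}$ are strict) and then uses rigidity of $\sF_R$ to reduce the identity $U = \sF_u$ to hom-spaces of the form $\sF_R(F_R(M),F_R(\I))$; this is a cleaner version of your ``expressing a general morphism through the colimit structure.''
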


\begin{proof}
\ref{i:frmodes}
Let $\widehat{\sB}$ be an ind-completion of $\sB$.
Since the functors $F_R$ are bijective on objects, any factorisation of the composite $T_1$ of $T$
with the embedding $\sB \to \widehat{\sB}$ as $I_1F_R$ with $I_1$ fully faithful
gives at the same time a factorisation $IF_R$ of $T$
with $I$ fully faithful.
Hence we may suppose, by replacing $\sB$ by $\widehat{\sB}$ and $T$ by $T_1$,
that $T$ extends to a $k$\nd tensor functor $\widehat{T}:\widehat{\sC} \to \sB$
which preserves colimits of filtered systems in $\sC$.
Factoring $\widehat{T}$ as $T'T''$ with $T':\widehat{\sC} \to \sB'$ strict
and $T'':\sB' \to \sB$ fully faithful,
we may further suppose after replacing $\sB$ by $\sB'$, $\widehat{T}$ by $T'$,
and $T$ by the restriction of $T'$ to $\sC$, that $\widehat{T}$ is strict.
The $k$\nd linear functor
$\sB(\widehat{T}(-),\I)$ from
$\widehat{\sC}^\mathrm{op}$ to $k$\nd modules preserves the limit in $\widehat{\sC}^\mathrm{op}$ of
$\sI^\mathrm{op} \to \sC^\mathrm{op}$ for every filtered system $\sI \to \sC$.
Hence as was seen in \ref{ss:ind} this functor is representable,
because $\sC$ is essentially small and semisimple abelian.
Thus there is an object $R$ of $\widehat{\sC}$ and a morphism
$a:\widehat{T}(R) \to \I$ in $\sB$ such that for each object $M$ of
$\widehat{\sC}$ we obtain an isomorphism
\[
\zeta_M:\widehat{\sC}(M,R) \iso \sB(\widehat{T}(M),\I)
\]
by applying $\widehat{T}$ and then composing with $a$.
The object $R$ has a unique structure of commutative algebra in $\widehat{\sC}$
such that $a$ is a morphism of algebras from $\widehat{T}(R)$ to $\I$
in $\sB$.
Indeed the image  of the multiplication $R \otimes R \to R$ under
$\zeta_{R \otimes R}$ is $a \otimes a$ and the image of the unit
$\I \to R$ under $\zeta_\I$ is $1_\I$.

Given a morphism $f$ from $F_R(M)$ to $F_R(N)$, write
\[
\widetilde{f} = (a \otimes \widehat{T}(N)) \circ \widehat{T}(\theta_{R;M,N}(f)):
\widehat{T}(M) \to \widehat{T}(N),
\]
where $\theta_{R;M,N}$ is the isomorphism \eqref{e:frmodhom}.
Then we have a strict $k$\nd tensor functor $I:\sF_R \to \sB$ with $T = IF_R$, which
sends $F_R(M)$ to $T(M) = \widehat{T}(M)$ and $f$ to $\widetilde{f}$.
That $I$ strictly preserves tensor products of morphisms follows from the fact that
$\widehat{T}$ does and that $a$ is a morphism of algebras.
To see that $I$ preserves composites and identities, and that $T = IF_R$,
note that $\widetilde{f}$ is the unique morphism for which
\[
\widetilde{f} \circ (a \otimes \widehat{T}(M)) =
(a \otimes \widehat{T}(N)) \circ \widehat{T}(f),
\]
where $f$ on the right hand side is regarded as a morphism from $R \otimes M$
to $R \otimes N$ in $\widehat{\sC}$.
Indeed if $\widehat{T}(N)$ is regarded as a
$\widehat{T}(R)$\nd module in $\sB$ by means of $a:\widehat{T}(R) \to \I$,
then both sides are morphisms $\widehat{T}(R) \otimes \widehat{T}(M) \to \widehat{T}(N)$
of $\widehat{T}(R)$\nd modules whose composites with the tensor product
of the unit $\I \to \widehat{T}(R)$ and $\widehat{T}(M)$ coincide.
That $I$ strictly preserves associativities and symmetries is clear
because $T = IF_R$ with $F_R$ bijective on objects and $F_R$ and $T$ strict.

For every $M$ the homomorphism $I_{F_R(M),F_R(\I)}$ of hom-spaces
is an isomorphism,
because it is the composite of the isomorphism
$\theta_{R;M,\I}$ with $\zeta_M$.
Since $\sF_R$ is rigid, $I$ is thus fully faithful.

\ref{i:frmodff}
Since $F_{R'} = UF_R$ with $F_R$ bijective on objects and $F_R$ and $F_{R'}$ strict,
$U$ is a strict $k$\nd tensor functor.
The $k$\nd homomorphisms
\begin{equation}\label{e:xidef}
\xi_M = \theta_{R';M,\I} \circ U_{F_R(M),F_R(\I)} \circ (\theta_{R;M,\I})^{-1}
\end{equation}
for $M$ in $\sC$ are the components of a natural transformation $\xi$
from $\widehat{\sC}(-,R)$ to $\widehat{\sC}(-,R')$ on $\sC^\mathrm{op}$.
As was seen in \ref{ss:ind}, such a $\xi$ extends uniquely to a natural
transformation of functors on $\widehat{\sC}^\mathrm{op}$,
because the hom-functors on $\widehat{\sC}^\mathrm{op}$ preserve limits.
Thus $\xi = \widehat{\sC}(-,u)$ for a unique morphism $u:R \to R'$
in $\widehat{\sC}$.
Considering the image of $\theta_{R;\I,\I}(1)$ under $\xi_\I$
shows that $u$ sends the identity of $R$ to the identity of $R'$.
Write $\mu$ and $\mu'$ for the multiplications of $R$ and $R'$.
If $b:M \to R$ and $c:N \to R$ are morphisms in $\widehat{\sC}$ with $M$
and $N$ in $\sC$, then since $U$ strictly preserves tensor products of morphisms
we have
\[
\xi_{M \otimes N}(\mu \circ (b \otimes c)) = \mu' \circ (\xi_M(b) \otimes \xi_N(c)).
\]
It follows that the composites of $b \otimes c$
with $u \circ \mu$ and with $\mu' \circ (u \otimes u)$ coincide.
Writing $R \otimes R$ as the filtered colimit of objects $M \otimes N$
thus shows that $u \circ \mu$ and $\mu' \circ (u \otimes u)$ coincide, so that $u$ is a
morphism of algebras.

Since $\xi = \widehat{\sC}(-,u)$, the equality
\eqref{e:xidef} holds for every $M$ with $U$ replaced by $\sF_u$.
The strict $k$\nd tensor functors $U$ and $\sF_u$,
which coincide on objects of $\sF_R$, thus also coincide on hom-spaces
of the form $\sF_R(F_R(M),F_R(\I))$.
Since $\sF_R$ is rigid, it follows that $U = \sF_u$.
If $U = \sF_{u'}$ for a morphism of algebras $u':R \to R'$,
then $\xi = \widehat{\sC}(-,u')$, so that $u' = u$.
\end{proof}

\subsection{Free rigid categories}\label{ss:frrigid}

Let $(\alpha_\lambda)_{\lambda \in \Lambda}$ be a family of elements of $k$.
A \emph{free rigid $k$\nd tensor category of type
$(\alpha_\lambda)_{\lambda \in \Lambda}$} is a $k$\nd tensor
category $\sR$ together with a family $(N_\lambda)_{\lambda \in \Lambda}$
of objects in $\sR$ with $N_\lambda$ dualisable of rank $\alpha_\lambda$
(i.e.\ the rank of $M_\lambda$ is the image of $\alpha_\lambda$
under $k \to \End(\I)$), such that the
following universal property holds:
given a $k$\nd tensor category $\sC$, every family
$(M_\lambda)_{\lambda \in \Lambda}$ in $\sC$ with $M_\lambda$
dualisable of rank $\alpha_\lambda$ is the image of
$(N_\lambda)_{\lambda \in \Lambda}$ under some $k$\nd tensor functor $\sR \to \sC$,
and for every pair of $k$\nd tensor functors $T,T':\sR \to \sC$ and family
$(\varphi_\lambda)_{\lambda \in \Lambda}$ of isomorphisms
$\varphi_\lambda:T(N_\lambda) \iso T'(N_\lambda)$ there exists a unique tensor isomorphism
$\varphi:T \iso T'$ with $\varphi_{N_\lambda} = \varphi_\lambda$ for each $\lambda$.
Such an $\sR$, equipped with its universal family $(N_\lambda)_{\lambda \in \Lambda}$,
is unique up to $k$\nd tensor equivalence.
Since the identity $\sR \to \sR$
factors up to tensor isomorphism through any full $k$\nd tensor subcategory of $\sR$
containing the $N_\lambda$ and their duals,
$\sR$ is rigid, and generated as a rigid
$k$\nd tensor category by the $N_\lambda$.

Results such as Proposition~\ref{p:freerigid} below are well known.
We include a proof here because there seems to be no convenient reference giving
the form required.
For the proof we need the following notion.
A strict rigid $k$\nd pretensor category is a strict $k$\nd pretensor
category together with an assignment to each of its objects $M$
of a duality pairing
\[
P_M = (M,M^\vee,\eta_M,\varepsilon_M)
\]
for $M$, such that
$P_\I$ is the unit pairing, $P_{M \otimes M'}$ is the tensor
product of the pairings $P_M$ and $P_{M'}$, and $P_{M^\vee}$ is the dual
pairing of $P_M$.
A strict rigid $k$\nd tensor functor between two such categories
is a strict $k$\nd tensor functor which preserves the assigned pairings.
Every rigid $k$\nd pretensor category $\sC$
is $k$\nd tensor equivalent to a strict rigid $k$\nd pretensor category.
To see this we may suppose that $\sC$ is a strict $k$\nd pretensor category.
Choose for each object $M$ in $\sC$ a dual $M^\vee$.
If $\sC$ has monoid of objects $\Gamma$, then
the map from $\Gamma \amalg \Gamma$ to $\Gamma$ with components
$M \mapsto M$ and $M \mapsto M^\vee$
extends uniquely to a homomorphism $j:\Gamma' \to \Gamma$
from the free monoid $\Gamma'$ on $\Gamma \amalg \Gamma$.
By pulling back the structure of $\sC$ along $j$,
we obtain a strict $k$\nd pretensor category $\sC'$ with monoid
of objects $\Gamma'$ and a strict $k$\nd tensor functor $\sC' \to \sC$
which is an equivalence and coincides with $j$ on monoids of objects.
If $e_1$ and $e_2$ are the components
$\Gamma \to \Gamma'$ of $\Gamma \amalg \Gamma \to \Gamma'$,
then there exists for each $M$ in $\sC$ a duality pairing $P'{}\!_{e_1(M)}$
for $e_1(M)$ with dual $e_2(M)$.
Any object $M'$ of $\sC'$ can be written uniquely as
$M'{}\!_1 \otimes \dots \otimes M'{}\!_n$ with each $M'{}\!_r$
either $e_1(M)$ or $e_2(M)$ for some $M$.
We obtain a strict rigid structure on $\sC'$ by assigning to
$M'$ the tensor product of the pairings $P'{}\!_{M'{}\!_r}$,
where $P'{}\!_{e_2(M)}$ is defined as the pairing dual to  $P'{}\!_{e_1(M)}$.

\begin{prop}\label{p:freerigid}
For every family $(\alpha_\lambda)_{\lambda \in \Lambda}$ of elements of $k$
there exists a free rigid $k$\nd pretensor category of type
$(\alpha_\lambda)_{\lambda \in \Lambda}$.
If $\sR$ is such a category and
$(N_\lambda)_{\lambda \in \Lambda}$ is its universal family of objects,
and if $\lambda_1,\lambda_2, \dots,\lambda_n$ are distinct elements of $\Lambda$,
then
\begin{equation}\label{e:freerigid}
\sR(N_{\lambda_1}{}\!^{\otimes r_1} \otimes N_{\lambda_2}{}\!^{\otimes r_2}
    \otimes    \dots  \otimes   N_{\lambda_n}{}\!^{\otimes r_n},
N_{\lambda_1}{}\!^{\otimes s_1} \otimes N_{\lambda_2}{}\!^{\otimes s_2}
    \otimes    \dots  \otimes   N_{\lambda_n}{}\!^{\otimes s_n})
\end{equation}
is $0$ unless $r_i = s_i$ for $i = 1,2,\dots,n$, when it is
generated as a $k$\nd module by the elements
$\sigma_1 \otimes \sigma_2 \otimes \dots \otimes \sigma_n$,
with $\sigma_i$ a symmetry permuting the factors $N_{\lambda_i}$ of
$N_{\lambda_i}{}^{\otimes r_i}$.
\end{prop}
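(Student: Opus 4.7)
The plan is to realise $\sR$ explicitly as a diagrammatic category of oriented $\Lambda$-labelled Brauer diagrams. Objects will be finite words in $\Lambda \times \{+,-\}$, with concatenation as the tensor product, the empty word as $\I$, the letter $(\lambda,+)$ as $N_\lambda$, and $(\lambda,-)$ as its intended dual $N_\lambda^\vee$. A morphism from a word $A$ to a word $B$ will be a formal $k$-linear combination of perfect matchings of the letters of $A \amalg B$ subject to the constraint that each matched pair carries a common $\Lambda$-label and consistent orientation (so every strand is labelled by a single $\lambda$ and oriented). Vertical stacking gives composition, with the convention that any closed loop labelled $\lambda$ appearing in the stack is deleted and replaced by the scalar factor $\alpha_\lambda$, and horizontal juxtaposition gives the tensor product. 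Checking that this yields a well-defined strict $k$\nd pretensor category reduces to verifying that the result of stacking three diagrams is independent of the order in which the two stackings are performed, which follows because both orders produce the same residual matching and the same multiset of labelled closed loops.

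Next I would equip $\sR$ with a strict rigid structure, using the procedure recalled just before the proposition: the dual of the word $(\lambda_1,\epsilon_1)\cdots(\lambda_n,\epsilon_n)$ is $(\lambda_n,-\epsilon_n)\cdots(\lambda_1,-\epsilon_1)$, and the unit and counit are built by tensoring elementary cups $\eta_\lambda : \I \to (\lambda,-)(\lambda,+)$ and caps $\varepsilon_\lambda : (\lambda,+)(\lambda,-) \to \I$. The triangular identities hold because the corresponding diagrams are isotopic to a single vertical strand, and the rank of $N_\lambda$ equals $\varepsilon_\lambda \circ \widetilde{\eta}_\lambda$, which is the single closed loop with label $\lambda$ and hence evaluates to $\alpha_\lambda$ by construction.

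For the universal property, given a family $(M_\lambda)$ in a $k$\nd tensor category $\sC$ with $M_\lambda$ dualisable of rank $\alpha_\lambda$, I would choose duality pairings for each $M_\lambda$ and pass to a strict rigid $k$\nd pretensor category $\sC'$ together with an equivalence $\sC \to \sC'$, both furnished by the construction recalled in \ref{ss:frrigid}. Sending each elementary diagram of $\sR$ (an identity, symmetry, $\eta_\lambda$, or $\varepsilon_\lambda$) to the corresponding structural morphism in $\sC'$ and extending $k$\nd linearly and by horizontal juxtaposition gives a strict $k$\nd tensor functor $\sR \to \sC'$; it is well-defined because the only non-isotopy relation imposed in $\sR$ is the loop identity, which in $\sC'$ is guaranteed by $\tr(1_{M_\lambda}) = \alpha_\lambda$. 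Composing with a quasi-inverse $\sC' \to \sC$ and modifying as in \ref{ss:pretensor} yields a $k$\nd tensor functor $T : \sR \to \sC$ with $T(N_\lambda) = M_\lambda$. For the uniqueness and existence of the tensor isomorphism $\varphi : T \iso T'$ with $\varphi_{N_\lambda} = \varphi_\lambda$, I would use that every object of $\sR$ is a tensor product of $N_\lambda$'s and their duals, together with the fact (recalled in \ref{ss:rigid}) that the component of a monoidal natural transformation at a dualisable object is determined as the transpose of the inverse of its component at the dual; this determines $\varphi$ on objects and morphisms, and naturality then follows by tracing the prescription through the elementary diagrams.

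For the hom-space computation \eqref{e:freerigid}, the source and target words both consist exclusively of positively oriented letters, so no strand of any matching can start and end on the same side (such a strand would have to be capped by a negatively oriented letter). Every strand therefore joins a source point to a target point, and the matching reduces to a label-preserving bijection between the $r_1+\cdots+r_n$ source points and the $s_1+\cdots+s_n$ target points. Such a bijection exists exactly when $r_i = s_i$ for all $i$, in which case it is an element of $\mathfrak{S}_{r_1} \times \cdots \times \mathfrak{S}_{r_n}$ permuting each block of equally-labelled points and is realised as a tensor product of symmetries permuting the factors of each $N_{\lambda_i}^{\otimes r_i}$. Since no closed loops arise in compositions of such diagrams, the scalars $\alpha_\lambda$ never enter, and these bijections form a $k$\nd basis of \eqref{e:freerigid} directly from the definition. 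The main obstacle in the whole argument is the careful verification that the diagrammatic composition is well-defined, associative, and tensor-compatible, and that no identities beyond the triangular equations, the symmetric-group coherence, and the loop rule are forced on the hom-spaces; once the diagrammatic construction is in place, it furnishes an explicit basis and everything else is essentially formal.
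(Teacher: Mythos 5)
Your approach is genuinely different from the paper's, and there is a real gap in it.

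The paper constructs $\sR_0$ abstractly, as the initial object in a category $\sP$ of small strict rigid $k$\nd pretensor categories equipped with a family of objects of the prescribed ranks, using a solution-set/limit argument, and then computes the hom-spaces from above: the $k$\nd spans $\sR'{}\!_0(P,Q)$ of diagrams assembled from symmetries, units and counits are shown directly to be closed under composition and tensor, hence form a subcategory, which by initiality must be all of $\sR_0$. This establishes only that the $\sigma_1 \otimes \cdots \otimes \sigma_n$ \emph{generate} \eqref{e:freerigid}, which is all the proposition asserts and all the paper uses. You instead construct $\sR$ explicitly as a labelled oriented Brauer-diagram category and aim to exhibit the $\sigma_1 \otimes \cdots \otimes \sigma_n$ as a \emph{basis}, which is strictly more than is asked; the paper relegates freeness to a separate remark after the proposition, with a different suggested proof (specialising $k$ to a polynomial ring and mapping to vector spaces).

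The gap is where you assert that the functor $T : \sR \to \sC'$ is ``well-defined because the only non-isotopy relation imposed in $\sR$ is the loop identity.'' A morphism of your $\sR$ is a single matching, not a formal composite of elementary diagrams, so to define $T$ on it you must choose a decomposition into vertical stacks of horizontal juxtapositions of cups, caps, crossings and identities, map the pieces, and then prove the result is independent of the chosen decomposition. That independence is precisely the coherence theorem for free compact closed symmetric monoidal categories (Kelly--Laplaza); it is a genuine theorem, not a formality, and you neither cite it nor give an argument for it. The paper's construction is designed exactly to sidestep this: initiality gives the universal property for free, and the closure argument gives generation without ever needing to know that the diagram calculus is consistent. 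Your proof could be completed by invoking Kelly--Laplaza explicitly, after which the basis computation for \eqref{e:freerigid} (all-positive words force every strand to run from source to target, so matchings are label-preserving bijections) is exactly as you describe. A smaller point: the free rigid $k$\nd tensor category of the definition in \ref{ss:frrigid} is required to be pseudo-abelian, so you should pass to a pseudo-abelian hull of your diagram category; this does not change the hom-spaces in \eqref{e:freerigid} since the hull is fully faithful on the original objects.
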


\begin{proof}
Denote by $\sP$ the category whose objects are pairs
$(\sC,(M_\lambda)_{\lambda \in \Lambda})$ consisting of a small strict rigid
$k$\nd pretensor category $\sC$ and a family $(M_\lambda)_{\lambda \in \Lambda}$
of objects in $\sC$ with $M_\lambda$ of rank $\alpha_\lambda$,
where a morphism from $(\sC,(M_\lambda)_{\lambda \in \Lambda})$ to
$(\sC',(M'{}\!_\lambda)_{\lambda \in \Lambda})$ is a strict rigid $k$\nd tensor
functor from $\sC$ to $\sC'$ which sends $M_\lambda$ to $M'{}\!_\lambda$.
Then $\sP$ has small hom sets.
Small limits in $\sP$ exist: they are given by limits of sets of objects
and sets of morphisms equipped with the evident tensor product, symmetries
and dualities.
Consider the full subcategory $\sP_0$ of $\sP$ consisting of those
$(\sC,(M_\lambda)_{\lambda \in \Lambda})$ such that $\sC$ is generated
by $(M_\lambda)_{\lambda \in \Lambda}$, in the sense that $\sC$ has
no strictly smaller strictly rigid $k$\nd pretensor subcategory
(not necessarily full) containing each $M_\lambda$.
It is essentially small, because if
$(\sC,(M_\lambda)_{\lambda \in \Lambda})$ lies in $\sP_0$ then the sets
of objects and of morphisms of $\sC$ are either countable or
of cardinality bounded by that of $\Lambda$.
Further every object $(\sC,(M_\lambda)_{\lambda \in \Lambda})$ of $\sP$ is the
target of a morphism with source
$(\sC_0,(M_{0\lambda})_{\lambda \in \Lambda})$ in $\sP_0$, with
$\sC_0$ for example the subcategory of $\sC$ generated by the $M_\lambda$.
Thus (\cite{Mac}, V.6 Theorem~1) $\sP$ has an initial object
$(\sR_0,(N_\lambda)_{\lambda \in \Lambda})$.

The map from $\Lambda \amalg \Lambda$ to $\Ob \sR_0$ with components
$\lambda \mapsto N_\lambda$ and $\lambda \mapsto (N_\lambda)^\vee$
extends uniquely to a homomorphism $h$
from the free monoid on $\Lambda \amalg \Lambda$ to the monoid
$\Ob \sR_0$ under tensor product.
By pulling back the structure of $\sR_0$ along $h$,
we obtain a strict rigid $k$\nd tensor functor $\sR' \to \sR_0$
which coincides with $h$ on monoids of objects.
It defines a morphism
from $(\sR',(N'{}\!_\lambda)_{\lambda \in \Lambda})$ to
$(\sR_0,(N_\lambda)_{\lambda \in \Lambda})$ in $\sP$,
with $\Ob \sR'$ freely generated by the $N'{}\!_\lambda$ and
$(N'{}\!_\lambda)^\vee$.
The identity of $(\sR_0,(N_\lambda)_{\lambda \in \Lambda})$ factors (uniquely)
through this morphism.
It follows that $h$ is an isomorphism from $\Ob \sR'$ to $\Ob \sR_0$,
so that $\Ob \sR_0$ is freely generated by the $N_\lambda$ and $(N_\lambda)^\vee$.
Thus any object $N$ of $\sR$ can be written uniquely as
\[
N = N_1 \otimes N_2 \otimes \dots \otimes N_{r(N)}
\]
with each $N_i$ either $N_\lambda$
or $(N_\lambda)^\vee$ for some $\lambda$.

If $\sR$ is a pseudo-abelian hull of the underlying $k$\nd
pretensor category of $\sR_0$, we show that $\sR$ together with
$(N_\lambda)_{\lambda \in \Lambda}$ is a free rigid category of type
$(\alpha_\lambda)_{\lambda \in \Lambda}$.
It will be enough for this to show that the following two properties hold
for every essentially small rigid $k$\nd pretensor category $\sC$.
\begin{enumerate}
\renewcommand{\theenumi}{(\arabic{enumi})}
\item\label{i:freerigides}
If $(M_\lambda)_{\lambda \in \Lambda}$ is a family of dualisable objects in $\sC$
with $M_\lambda$ of rank $\alpha_\lambda$, then there is a $k$\nd tensor
functor $\sR_0 \to \sC$ which sends $(N_\lambda)_{\lambda \in \Lambda}$
to $(M_\lambda)_{\lambda \in \Lambda}$.
\item\label{i:freerigidff}
If $T$ and $T'$ are $k$\nd tensor functors $\sR_0 \to \sC$ and
$(\varphi_\lambda)_{\lambda \in \Lambda}$ is a family of isomorphisms
$\varphi_\lambda:T(N_\lambda) \iso T'(N_\lambda)$, then there is a unique tensor isomorphism
$\varphi:T \iso T'$ with $\varphi_{N_\lambda} = \varphi_\lambda$
for each $\lambda$.
\end{enumerate}
Replacing $\sC$ by an equivalent category, we may suppose that $\sC$ is small
and has a structure of strict rigid $k$\nd pretensor category.
Then \ref{i:freerigides} holds with a strict rigid $\sR_0 \to \sC$.
We may thus suppose in \ref{i:freerigidff} that $T$ is strict rigid.
If $\delta_\lambda$ is the canonical isomorphism from $T'(N_\lambda)^\vee$
to $T'((N_\lambda)^\vee)$,
then the tensor product of the isomorphisms from
$T(N_i)$ to $T'(N_i)$ defined by $\varphi_\lambda$ when $N_i = N_\lambda$
and $\delta_\lambda \circ (\varphi_\lambda{}^{-1})^\vee$ when
$N_i = (N_\lambda)^\vee$ is an isomorphism
\[
T(N) = T(N_1) \otimes T(N_2) \otimes \dots \otimes T(N_{r(N)})
 \iso T'(N_1) \otimes T'(N_2) \otimes \dots \otimes T'(N_{r(N)}).
\]
Denote by $\varphi_N$ its composite with the structural isomorphism
\[
T'(N_1) \otimes T'(N_2) \otimes \dots \otimes T'(N_{r(N)}) \iso
T'(N_1 \otimes N_2 \otimes \dots \otimes N_{r(N)}) = T'(N)
\]
of $T'$.
Then $\varphi_\I = 1_\I$, so that there is a unique $k$\nd tensor functor
$\widetilde{T}:\sR_0 \to \sC$ such that the
$\varphi_N$ are the components of a tensor
isomorphism $\varphi$ from $\widetilde{T}$ to $T'$.
By construction, $\varphi_{N' \otimes N''}$ is
$\varphi_{N'} \otimes \varphi_{N''}$ composed with the structural isomorphism of $T$.
Thus $\widetilde{T}$ is a strict $k$\nd tensor functor.
Since $\varphi_{N_\lambda}$ is $\varphi_\lambda$ and
$\varphi_{(N_\lambda)^\vee}$ is $\delta_\lambda \circ (\varphi_\lambda{}^{-1})^\vee$,
the canonical
isomorphism from $\widetilde{T}(N_\lambda)^\vee$ to $\widetilde{T}((N_\lambda)^\vee)$
is the identity,
so that $\widetilde{T}$ preserves the duality pairings associated to the $N_\lambda$
and hence to all objects of $\sR_0$.
Thus $\widetilde{T}$ is a strict rigid $k$\nd tensor functor.
Since $\widetilde{T}(N_\lambda) = T(N_\lambda)$ for every $\lambda$,
we thus have $\widetilde{T} = T$,
so that $\varphi$ is a tensor isomorphism $T \iso T'$ with
$\varphi_{N_\lambda} = \varphi_\lambda$.
The only tensor automorphism $\theta$ of $T$ with
each $\theta_{N_\lambda}$ the identity is the identity, because
$\theta_{N^\vee} = ((\theta_N)^\vee)^{-1}$ and
$\theta_{N' \otimes N''} = \theta_{N'} \otimes \theta_{N''}$ for every
$N$, $N'$ and $N''$.
Thus $\varphi$ is unique.
This proves \ref{i:freerigidff}.

It is enough to prove the statement on hom $k$\nd modules with $\sR$ replaced by
$\sR_0$ in \eqref{e:freerigid}.
If $P$ and $Q$ are objects of $\sR_0$,
then to each bijection $\sigma$ from $[1,r(P)]$ to $[1,r(Q)]$ such that
$Q_i = P_{\sigma(i)}$ there is associated a symmetry $P \iso Q$ in $\sR_0$.
Denote by $\sS(P,Q)$ the set of such symmetries.
The $\sS(P,Q)$ contain all identities
and symmetries of $\sR_0$ and are stable under tensor product and
composition.
Denote by $\sR'{}\!_0(P,Q)$ the $k$\nd submodule of $\sR_0(P,Q)$
spanned by the elements
\[
q \circ (f_1 \otimes f_2 \otimes \dots \otimes f_n) \circ p
\]
with $p$ in $\sS(P,P')$ and $q$ in $\sS(Q,Q')$ for some $P'$ and $Q'$,
where each $f_i$ is an identity $1_{N_\lambda}$ or $1_{(N_\lambda)^\vee}$,
or a unit $\eta_{N_\lambda}:\I \to (N_\lambda)^\vee \otimes N_\lambda$ or counit
$\varepsilon_{N_\lambda}:N_\lambda \otimes (N_\lambda)^\vee \to \I$ for some $\lambda$.
The $\sR'{}\!_0(P,Q)$
contain all identities, symmetries, units $\eta_N$ and counits $\varepsilon_N$ of
$\sR_0$, and are
stable under tensor product.
To see that $g \circ f$ lies in $\sR'{}\!_0(P,R)$ when $f$ lies in $\sR'{}\!_0(P,Q)$
and $g$ lies in $\sR'{}\!_0(Q,R)$, we reduce first to the case where
$f$ is of the form $f_1 \otimes f_2 \otimes \dots \otimes f_n$ as above,
and then by writing $f$ as a composite to the two cases where $f$ is
$N' \otimes \varepsilon_{N_\lambda} \otimes N''$ or
$N' \otimes \eta_{N_\lambda} \otimes N''$ for some $N'$ and $N''$.
The first case is clear.
For the second, we reduce using the
naturality of the symmetries and the stability of the
$\sR'{}\!_0(P,Q)$ under tensor
product to one of the five  cases where $N' = N'' = \I$ and $g$ is the identity,
or $N' = N'' = \I$ and $g$ is $\varepsilon_{(N_\lambda)^\vee}$,
or $N' = N_\lambda$, $N'' = \I$ and $g$ is
$\varepsilon_{N_\lambda} \otimes N_\lambda$,
or $N' = \I$, $N'' = (N_\lambda)^\vee$ and $g$ is
$(N_\lambda)^\vee \otimes \varepsilon_{N_\lambda}$,
or $N' = N_\lambda$, $N'' =  (N_\lambda)^\vee$ and $g$ is
$\varepsilon_{N_\lambda} \otimes \varepsilon_{N_\lambda}$.
The respective composites $g \circ f$ for these five cases are
$\eta_{N_\lambda}$, $\alpha_\lambda 1_\I$, $1_{N_\lambda}$, $1_{(N_\lambda)^\vee}$,
and $\varepsilon_{N_\lambda}$.
The $\sR'{}\!_0(P,Q)$ thus define a (not necessarily full) strict
rigid $k$\nd tensor subcategory $\sR'{}\!_0$ of $\sR_0$ with the same objects.

The embedding $\sR'{}\!_0 \to \sR_0$ is a morphism
$(\sR'{}\!_0,(N_\lambda)_{\lambda \in \Lambda}) \to
(\sR'{}\!_0,(N_\lambda)_{\lambda \in \Lambda})$ in $\sP$,
through which the identity of $(\sR_0,(N_\lambda)_{\lambda \in \Lambda})$
necessarily factors.
Thus $\sR'{}\!_0 = \sR_0$,
and $\sR'{}\!_0(P,Q) = \sR_0(P,Q)$ for every $P$ and $Q$.
Suppose now that $P$ is the source and $Q$ the target in the
hom $k$\nd module \eqref{e:freerigid}.
Then $\sR'{}\!_0(P,Q)$ and hence \eqref{e:freerigid} is generated by $\sS(P,Q)$.
Since $\sS(P,Q)$ is empty unless $r_i = s_i$ for $i = 1,2,\dots,n$, when it
coincides with the set of
$\sigma_1 \otimes \sigma_2 \otimes \dots \otimes \sigma_n$,
with $\sigma_i$ a symmetry of $N_{\lambda_i}{}^{\otimes r_i}$,
the statement on hom $k$\nd modules follows.
\end{proof}

It is easily shown that the
hom $k$\nd module \eqref{e:freerigid} of Proposition~\ref{p:freerigid}
is in fact freely generated by
the $\sigma_1 \otimes \sigma_2 \otimes \dots \otimes \sigma_n$ when
$r_i = s_i$ for $i = 1,2,\dots,n$,
by reducing to the case where $k$ is a polynomial algebra over $\Z$ freely
generated by the $\alpha_\lambda$ and then considering $k$\nd tensor functors
to the category of $\Q$\nd vector spaces.
However this will not be required for what follows.

\subsection{Tate categories}\label{ss:Tate}

Let $\sC$ be a $k$\nd pretensor category.
An object $L$ of $\sC$ is called \emph{invertible} if there
is an object $L'$ of $\sC$ such that $L \otimes L'$ is
isomorphic to $\I$.
Every invertible object of $\sC$ has a dual: any isomorphism
$\varepsilon:L \otimes L' \to \I$ is the counit of a duality pairing between
$L$ and $L'$ with unit the composite of $\varepsilon^{-1}$ and the symmetry
$L \otimes L' \iso L' \otimes L$.
An invertible object $L$ in $\sC$ is said to be \emph{even} if the
symmetry that interchanges the two factors in $L \otimes L$ is the identity.

We denote by $\Z_k$ the strict $k$\nd pretensor category with monoid of
objects the additive group $\Z$, endomorphism $k$\nd algebras $k$
and all other hom $k$\nd spaces $0$, and symmetries the identities.
Then $L$ is an even invertible object of $\sC$ if and only if there
exists a $k$\nd tensor functor $\Z_k \to \sC$ which sends the object $1$
of $\Z_k$ to $L$.
In fact evaluation at $1 \in \Z_k$ defines an equivalence from the category
of $k$\nd tensor functors $\Z_k \to \sC$ and tensor isomorphisms between
them to the category of even invertible objects of $\sC$ and isomorphisms
between them.

An object $L$ of $\sC$ will be said to \emph{strictly commute} with all
objects of $\sC$ if the symmetry $N \otimes N' \iso N' \otimes N$ is the
identity when either $N$ or $N'$ is $L$, and to \emph{strictly associate}
with all objects of $\sC$ if the associativity
$(N \otimes N') \otimes N'' \iso N \otimes (N' \otimes N'')$ is the
identity when one of $N$, $N'$ or $N''$ is $L$.
The identity $\I$ always strictly commutes and strictly associates with
all objects of $\sC$.

\begin{defn}
A \emph{Tate $k$\nd pretensor category} is a $k$\nd pretensor category $\sC$
equipped with a family of objects $(\I(m))_{m \in \Z}$ with $\I(0) = \I$
and $\I(m) \otimes \I(n) = \I(m+n)$ for every $m, n \in \Z$, such that $\I(m)$
strictly commutes and strictly associates with all objects of $\sC$ for every
$m \in \Z$.
A \emph{Tate $k$\nd tensor functor} from a Tate $k$\nd pretensor category $\sC$
to a Tate $k$\nd pretensor category $\sC'$ is a $k$\nd tensor functor
$T:\sC \to \sC'$ with $T(\I(m)) = \I(m)$ for every $m \in \Z$,
such that the structural isomorphism
$T(M) \otimes T(N) \iso T(M \otimes N)$ is the identity when either $M$ or $N$
is $\I(m)$ for some $m \in \Z$.
\end{defn}

There is a canonical structure of Tate $k$\nd pretensor category on $\Z_k$,
with $\I(m)$ the object $m$ of $\Z_k$.
Let $\sC$ be a Tate $k$\nd pretensor category.
Then $\I(m)$ is an even invertible object  in $\sC$ for every $m$.
In fact there is a unique Tate $k$\nd tensor
functor from $\Z_k$ to $\sC$.
For any object $M$ of $\sC$ and integer $m$, we write
\[
M(m) = M \otimes \I(m) = \I(m) \otimes M,
\]
and similarly for morphisms of $\sC$.
We then have
\[
M(m) \otimes N(n) = (M \otimes N)(m + n)
\]
for every pair of objects $M$ and $N$ of $\sC$ and of integers $m$ and $n$.
If $\sigma_{M,N}$ is the symmetry $M \otimes N \iso N \otimes M$ in $\sC$,
then $\sigma_{M(i),N(j)} = \sigma_{M,N}(i+j)$, and similarly for
the associativities and for the structural isomorphisms of a Tate $k$\nd tensor functor.

\begin{lem}\label{l:cattwist}
Let $\sB$ be a $k$\nd pretensor category
and $L$ be an even invertible object of $\sB$.
Then there exists a Tate $k$\nd pretensor category $\widetilde{\sB}$
and a strict $k$\nd tensor functor $E:\sB \rightarrow \widetilde{\sB}$,
such that
\begin{enumerate}
\renewcommand{\theenumi}{(\alph{enumi})}
\item
$E$ is an equivalence, and every object of $\widetilde{\sB}$ can be
written uniquely in the form $E(M)(m)$ with $M \in \sB$ and $m \in \Z$
\item
$E(L)$ is isomorphic to $\I(1)$ in  $\widetilde{\sB}$.
\end{enumerate}
\end{lem}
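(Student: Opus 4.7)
The plan is to construct $\widetilde{\sB}$ as the category whose objects are formal pairs $(M,m)$ with $M \in \Ob \sB$ and $m \in \Z$, with $E(M) = (M,0)$ and $\I(m) = (\I,m)$, and with a hom-structure designed so that $(L,0) \cong (\I,1)$. First, using invertibility, I fix an isomorphism $\varepsilon : L \otimes L' \iso \I$ (which makes $L$ dual to $L'$ with a suitable unit $\eta : \I \to L' \otimes L$) and then define an object $L^{(m)}$ of $\sB$ for each $m \in \Z$ by taking $L^{(0)} = \I$, iterated tensor powers of $L$ for $m > 0$, and iterated tensor powers of $L'$ for $m < 0$, with a fixed bracketing. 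From this data I build a coherent family of isomorphisms $\theta_{a,b} : L^{(a)} \otimes L^{(b)} \iso L^{(a+b)}$ satisfying associativity (with $\theta_{a,0}$ and $\theta_{0,b}$ the obvious identifications); the evenness hypothesis on $L$ is what guarantees that these are compatible with the symmetries $L^{(a)} \otimes L^{(b)} \iso L^{(b)} \otimes L^{(a)}$.

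Second, I define
\[
\widetilde{\sB}((M,m),(N,n)) = \sB(M, N \otimes L^{(n-m)}),
\]
with identity $1_M \in \sB(M,M) = \sB(M, M \otimes L^{(0)})$, and with the composite of $f:(M,m) \to (N,n)$ and $g:(N,n) \to (P,p)$ defined (modulo associativities in $\sB$) by postcomposing $f$ with $g \otimes 1_{L^{(n-m)}}$ and then with $1_P \otimes \theta_{p-n,n-m}$. I put a tensor product $(M,m) \otimes (N,n) = (M \otimes N, m+n)$, with tensor products of morphisms defined by means of the symmetries of $\sB$ (to interchange the middle factors) and the $\theta_{a,b}$ (to recombine the twists). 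The associativities and symmetries of $\widetilde{\sB}$ are inherited from those of $\sB$; the crucial modification is that whenever one of the factors in such a structural map is $\I(m)$, the source and target pairs literally coincide and the underlying $\sB$\nd morphism is forced to be an identity, so I declare the structural map itself to be the identity. This makes every associativity and symmetry involving $\I(m)$ strict.

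Third, I define $E:\sB \to \widetilde{\sB}$ by $E(M) = (M,0)$ and $E(f) = f$ regarded as an element of $\sB(M, N \otimes L^{(0)}) = \sB(M,N)$. By construction $E$ is a strict $k$\nd tensor functor, is faithful because $E_{M,N}$ is the identity, and is an equivalence since $(M,m) = E(M) \otimes \I(m) = E(M)(m)$; the decomposition $(M,m) = E(M)(m)$ is literally unique. The required isomorphism $E(L) \iso \I(1)$ is $1_L$ regarded as an element of $\sB(L,L) = \widetilde{\sB}((L,0),(\I,1))$, with inverse built from $\varepsilon^{-1}$ composed with the symmetry $L' \otimes L \iso L \otimes L'$.

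The main obstacle is coherence: checking that composition in $\widetilde{\sB}$ is associative, that the tensor product of morphisms is functorial and $k$\nd bilinear, that the pentagon and hexagon axioms hold, and that the structural maps involving $\I(m)$ are consistently identities. By MacLane-type coherence for symmetric monoidal categories, this reduces to the combinatorial statement that the $\theta_{a,b}$ form a coherent family under the associativities and symmetries of $\sB$ applied to powers of $L$; the evenness hypothesis on $L$ is exactly what makes the symmetry hexagons involving the $L^{(a)}$ commute, and without it the construction would fail at this point.
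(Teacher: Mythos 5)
Your construction is correct but takes a genuinely different, more explicit route than the paper's. You build $\widetilde{\sB}$ by hand, with objects the pairs $(M,m)$ and hom-spaces $\sB(M, N \otimes L^{(n-m)})$, and transfer the entire coherence burden onto the family $\theta_{a,b} : L^{(a)} \otimes L^{(b)} \iso L^{(a+b)}$. The paper instead forms $\sB \otimes_k \Z_k$ — which is automatically a $k$\nd pretensor category in which every $(\I,m)$ already strictly commutes and strictly associates with everything, since $\Z_k$ is strict and $\I$ is strict in $\sB$ — takes the $k$\nd tensor functor $P : \sB \otimes_k \Z_k \to \sB$ determined by $\Id_{\sB}$ and a $k$\nd tensor functor $I : \Z_k \to \sB$ with $I(1) = L$, and factors $P$ as a strict bijective-on-objects $P'$ followed by a fully faithful $P''$; the intermediate category is $\widetilde{\sB}$, with $E = P'K_1$ and $\I(m) = P'(\I,m)$, and the Tate-category properties, the equivalence of $E$, and $E(L) \iso \I(1)$ are all inherited formally through $P'$ and $P''$ with no computation. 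The step you gesture at but do not carry out — that the $\theta_{a,b}$ form a coherent family, with the hexagons commuting because $L$ is even — is not a consequence of MacLane coherence alone (the $\theta$'s involve the duality data $\eta,\varepsilon$, not just free rebracketings, and the hexagons fail for a general invertible object); it is precisely the content of the fact, stated just before the lemma in the paper, that even invertible objects of $\sB$ correspond to $k$\nd tensor functors $\Z_k \to \sB$: the structural isomorphisms of such an $I$ are exactly your $\theta_{a,b}$, already packaged coherently. So both proofs ultimately rest on the same coherence statement; the $\sB \otimes_k \Z_k$ factorisation lets the paper invoke it once and be done, whereas your construction needs it re-derived and then threaded through the definitions of composition, tensor product, associativities and symmetries in $\widetilde{\sB}$, which is considerably more work to make rigorous.
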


\begin{proof}
There exists a $k$\nd tensor functor $I:\Z_k \to \sB$ with $I(1) = L$.
Denote by $K_1:\sB \to \sB \otimes_k \Z_k$  and $K_2:\Z_k \to \sB \otimes_k \Z_k$
the strict $k$\nd tensor functors $(-,0)$ and $(\I,-)$.
Then there is a $k$\nd tensor functor $P:\sB \otimes_k \Z_k \to \sB$
with $PK_1 = \Id_{\sB}$ and $PK_2 = I$.
Factor $P$ as
\[
\sB \otimes_k \Z_k \xrightarrow{P'} \widetilde{\sB} \xrightarrow{P''} \sB
\]
with $P'$ a strict $k$\nd tensor functor which is bijective on objects and $P''$
fully faithful.
If we write $E = P'K_1$ and $\widetilde{I} = P'K_2$, then
$P''E = \Id_{\sB}$ and $P''\widetilde{I} = I$.
Since $P''$ is fully faithful, it follows that $E$ is an
equivalence and that $\widetilde{I}(1)$ is isomorphic to $E(L) = (EI)(1)$.
The objects $K_2(m) = (\I,m)$ of $\sB \otimes_k \Z_k$
strictly commute and strictly associate with all other objects.
Since $P'$ is strict and bijective on objects the same
holds for the $\widetilde{I}(m) = P'(K_2(m))$ in $\widetilde{\sB}$.
Thus $\widetilde{\sB}$ is a Tate $k$\nd pretensor category with
$\I(m) = \widetilde{I}(m)$.
\end{proof}

\begin{lem}\label{l:compatibletwist}
Let $\sB$ and $\sC$ be Tate $k$\nd pretensor categories,
$T:\sB \to \sC$ be a $k$\nd tensor functor,
$\sB_0$ be a subset of $\Ob \sB$,
and $t:\I(1) \iso T(\I(1))$ be an isomorphism.
Suppose that $\sB_0$ contains $\I$, and that every object of
$\sB$ can be written uniquely in the
form $M(r)$, with $M \in \sB_0$ and $r \in \Z$.
Then there is a unique pair $(T',\varphi)$ consisting of a
$k$\nd tensor functor $T':\sB \to \sC$ and
a tensor isomorphism $\varphi:T' \iso T$,
such that
\begin{enumerate}
\renewcommand{\theenumi}{(\alph{enumi})}
\item\label{i:compatibletwist}
$T'$ is a Tate $k$\nd tensor functor
\item\label{i:Midentity}
$(T'(M),\varphi_M) = (T(M),1_{T(M)})$ when $M \in \sB_0$
\item\label{i:Tate}
$\varphi_{\I(1)} = t$.
\end{enumerate}
\end{lem}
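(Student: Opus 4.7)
The plan is to reduce this to the modification-of-functor construction reviewed at the start of Section~\ref{ss:pretensor}: once we specify, for each object $N$ of $\sB$, an isomorphism $\varphi_N$ in $\sC$ with target $T(N)$ and with $\varphi_\I = 1_\I$, there is a unique $T'$ such that $\varphi:T' \iso T$ is a tensor isomorphism. The conditions \ref{i:Midentity} and \ref{i:Tate} pin down $\varphi_M$ for $M \in \sB_0$ and $\varphi_{\I(1)}$, and once we know $\varphi$ on all $\I(r)$ we can read off $\varphi_N$ for arbitrary $N = M(r)$ from the monoidality condition. So the main task is to define a coherent family of isomorphisms $u_r:\I(r) \iso T(\I(r))$ extending $u_0 = 1_\I$ and $u_1 = t$, and then to check that the resulting $T'$ is a Tate $k$\nd tensor functor.

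First I would build the $u_r$ inductively. For $r \ge 2$, set $u_r = \mu^T_{\I(1),\I(r-1)} \circ (u_1 \otimes u_{r-1})$, where $\mu^T_{M,N}:T(M)\otimes T(N) \iso T(M\otimes N)$ denotes the structural isomorphism of $T$; this makes sense because $\I(1) \otimes \I(r-1) = \I(r)$ in $\sB$ by the Tate axiom. For $r < 0$, define $u_r$ as the inverse of the transpose of $u_{-r}$, using that $T$ preserves duality and that $\I(-r)$ is canonically dual to $\I(r)$. A standard coherence argument with symmetric monoidal functors then gives, for all $r,s \in \Z$, the identity
\[
\mu^T_{\I(r),\I(s)} \circ (u_r \otimes u_s) = u_{r+s}.
\]
Next, using the unique decomposition $N = M(r)$ with $M \in \sB_0$, set
\[
\varphi_N \;=\; \mu^T_{M,\I(r)} \circ (1_{T(M)} \otimes u_r).
\]
Taking $M = \I \in \sB_0$ shows $\varphi_{\I(r)} = u_r$, in particular $\varphi_{\I(1)} = t$; taking $r = 0$ and $M \in \sB_0$ gives $\varphi_M = 1_{T(M)}$. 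Since $\I \in \sB_0$ yields $\varphi_\I = 1_\I$, the modification construction produces a unique $k$\nd tensor functor $T':\sB \to \sC$ with $\varphi:T' \iso T$ a tensor isomorphism.

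The key step, and where the main routine calculation lies, is verifying that $T'$ is Tate. On objects we have $T'(\I(r)) = T(\I)(r) = \I(r)$ because $\I \in \sB_0$ and $T(\I) = \I$. For the structural isomorphism $\mu^{T'}_{N,\I(s)}:T'(N) \otimes \I(s) \to T'(N(s))$ with $N = M(r)$, $M \in \sB_0$, both source and target equal $T(M)(r+s)$ in $\sC$ by its Tate structure. The defining equation
\[
\varphi_{N(s)} \circ \mu^{T'}_{N,\I(s)} = \mu^T_{N,\I(s)} \circ (\varphi_N \otimes u_s)
\]
combined with the compatibility $u_{r+s} = \mu^T_{\I(r),\I(s)} \circ (u_r \otimes u_s)$ and the coherence pentagon for $T$ (which, given the strict associativity of $\I(m)$ with all objects, collapses to the needed identity) forces $\mu^{T'}_{N,\I(s)} = 1$. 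The case $\mu^{T'}_{\I(r),N}$ is handled the same way, so $T'$ is indeed Tate.

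For uniqueness, suppose $(T'',\psi)$ also satisfies \ref{i:compatibletwist}--\ref{i:Tate}. Monoidality of $\psi$ together with $\psi_{\I(1)} = t$ and $\psi_\I = 1_\I$ forces $\psi_{\I(r)} = u_r$ inductively (and for $r<0$ using transposes), and then monoidality together with $\psi_M = 1_{T(M)}$ for $M \in \sB_0$ forces $\psi_{M(r)} = \mu^T_{M,\I(r)} \circ (1 \otimes u_r) = \varphi_{M(r)}$. Hence $\psi = \varphi$, and since $T''$ is determined by $\psi$ via $T''(f) = \psi_{N'}^{-1} \circ T(f) \circ \psi_N$, also $T'' = T'$.
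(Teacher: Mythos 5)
Your proof is correct and essentially the same as the paper's. The paper packages your family $(u_r)_{r\in\Z}$ as the unique tensor isomorphism $\tau:K\iso TI$ between the Tate $k$-tensor functors $K:\Z_k\to\sC$ and $TI:\Z_k\to\sC$ with $\tau_1=t$ (existence and uniqueness of $\tau$ being immediate from the equivalence, noted earlier in \ref{ss:Tate}, between $k$-tensor functors out of $\Z_k$ and even invertible objects); you instead build the $u_r$ by hand via iterated multiplication and transposes. The defining formula $\varphi_{M(r)}=\psi_{M,\I(r)}\circ(T(M)\otimes u_r)$ and the subsequent verification that the structural isomorphisms of the modified functor $T'$ at pairs $(M(r),\I(s))$ are identities are the same; the paper does this in two steps (first $\psi'_{M_0,\I(m)}=1$, then $\psi'_{\I(m'),\I(m)}=1$ from strictness of $T'I=K$, then the pentagon), whereas you fold the pentagon and the naturality of the associativity of $\sC$ into a single computation. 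Both are fine. The uniqueness arguments are also the same in substance (the paper phrases it via $\theta=\varphi_2^{-1}\circ\varphi_1$ being forced to be the identity), and both correctly use condition \ref{i:compatibletwist} on the competing lift to propagate triviality of $\psi$ from $\sB_0\cup\{\I(1)\}$ to all of $\sB$.
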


\begin{proof}
Suppose that \ref{i:compatibletwist}, \ref{i:Midentity} and
\ref{i:Tate} hold with $(T',\varphi)$ replaced by
$(T_1,\varphi_1)$ and by $(T_2,\varphi_2)$.
Then $\theta = (\varphi_2)^{-1} \circ \varphi_1$ is a tensor
isomorphism $T_1 \iso T_2$ such that $\theta_M$ is the
identity when either $M \in \sB_0$ or $M = \I(1)$.
Since $T_1$ and $T_2$ are Tate $k$\nd tensor functors,
this implies that $\theta$ is the identity and $T_1 = T_2$,
so that $(T_1,\varphi_1) = (T_2,\varphi_2)$.
Thus the pair $(T',\varphi)$ is unique if it exists.

Denote the unique Tate $k$\nd tensor functors from $\Z_k$ to $\sB$ and $\sC$
by $I:\Z_k \to \sB$ and $K:\Z_k \to \sC$, and the
structural isomorphisms of $T$ by
\[
\psi_{M,N}:T(M) \otimes T(N) \iso T(M \otimes N).
\]
There is a unique tensor isomorphism
$\tau:K \iso TI$ such that $\tau_1 = t$.
By the hypotheses on $\sB_0$, there is a (unique) family
$(\varphi_M)_{M \in \Ob \sB}$ of isomorphisms
\[
\varphi_{M}:T'(M) \iso T(M)
\]
in $\sC$ such that for every $M_0 \in \sB_0$ and $m \in \Z$ we have
\(
T'(M_0(m)) = T(M_0)(m)
\)
and
\begin{equation}\label{e:phiM}
\varphi_{M_0(m)} = \psi_{M_0,\I(m)} \circ (T(M_0) \otimes \tau_m).
\end{equation}
Then $\varphi_\I = 1_\I$, so that the assignment $M \mapsto T'(M)$ extends
uniquely to a $k$\nd tensor functor $T':\sB \to \sC$
such that the $\varphi_M$ are the components of a tensor
isomorphism $\varphi:T' \iso T$.
Clearly \ref{i:Midentity} and \ref{i:Tate} hold for the pair $(T',\varphi)$.
To show that \ref{i:compatibletwist} holds for $(T',\varphi)$, it remains to check
that the structural isomorphism
\[
{\psi'}_{M,N}:T'(M) \otimes T'(N) \iso T'(M \otimes N)
\]
of $T'$ is the identity when $N = \I(m)$ with $m \in \Z$.
We have $\varphi_{M_0} = 1_{T(M_0)}$ for every $M_0 \in \sB_0$,
and  $\varphi_{\I(m)} = \tau_m$ for every $m \in \Z$.
By compatibility of $\varphi$ with the tensor products, \eqref{e:phiM}
thus also holds if its left hand side is composed on the right with
${\psi'}_{M_0,\I(m)}$.
Hence
\[
{\psi'}_{M_0,\I(m)} = 1
\]
for every $M_0 \in \sB_0$ and $m \in \Z$.
Further $\varphi I = \tau$, so that $T'I = K$ is strict.
Hence
\[
{\psi'}_{\I(m'),\I(m)} = 1
\]
for every $m,m' \in \Z$.
By the compatibility of $\psi'$ with the associativities we have
\[
{\psi'}_{M(m'),\I(m)} \circ ({\psi'}_{M,\I(m')} \otimes T'(\I(m)))
= {\psi'}_{M,\I(m+m')} \circ (T'(M) \otimes {\psi'}_{\I(m'),\I(m)})
\]
for every $M \in \sB$ and $m,m' \in \Z$,
because the relevant associativities are identities.
Taking $M = M_0 \in \sB_0$ then shows that
${\psi'}_{M_0(m'),\I(m)} = 1$ for every $m,m' \in \Z$.
Hence \ref{i:compatibletwist} holds.
\end{proof}

Call a Tate $k$\nd pretensor category \emph{separated} when
$M(i) = M$ implies $i = 0$ for every object $M$ and integer $i$.
Let $\sB$ be a Tate $k$\nd pretensor category.
Then using Lemmas~\ref{l:cattwist} and
\ref{l:compatibletwist} we can construct as follows a separated Tate $k$\nd pretensor
category $\sB'$ and a Tate $k$\nd tensor functor $T:\sB' \to \sB$ such that
the underlying $k$\nd pretensor category of $\sB'$ is strict and
the underlying  $k$\nd tensor functor of $T$ is an equivalence.
Let $E_1:\sB \to \sB_1$ be a $k$\nd tensor equivalence to
a strict $k$\nd pretensor category $\sB_1$.
By Lemma~\ref{l:cattwist} there is a Tate $k$\nd pretensor category
$\sB'$ and a strict $k$\nd tensor functor $E':\sB_1 \to \sB'$ such that
$E'$ is an equivalence, every object of $\sB'$ can be written uniquely in the
form $E'(M)(m)$ for some $M$ in $\sB_1$ and $m \in \Z$, and $(E'E_1)(\I(1))$
is isomorphic to $\I(1)$.
The associativity
\[
\alpha:(L' \otimes M') \otimes N' \iso L' \otimes (M' \otimes N')
\]
in $\sB'$ is the
identity when each of $L'$, $M'$ and $N'$ lies in the image of $E'$ by strictness of
$\sB_1$ and $E'$, and also when one of $L'$, $M'$ or $N'$ is of the form $\I(m)$
because $\sB'$ is a Tate $k$\nd pretensor category.
Appropriate pentagonal diagrams for the associativities in $\sB'$
then show that $\alpha$ is the identity for every $L'$, $M'$ and $N'$ in $\sB'$.
Thus $\sB'$ is strict.
Let $T':\sB' \to \sB$ be a $k$\nd tensor functor quasi-inverse to $E'E_1$.
Then $T'(\I(1))$ is isomorphic to $\I(1)$.
Thus by Lemma~\ref{l:compatibletwist} there is a Tate $k$\nd tensor functor
$T:\sB' \to \sB$ which is tensor isomorphic to $T'$, and hence
an equivalence.

Let $\sC$ be a Tate $k$\nd pretensor category, and $(\zeta_{M,N})$ be a family
parametrised by pairs of objects of $\sC$, with $\zeta_{M,N}$ an isomorphism
with target $M \otimes N$ such that $\zeta_{M,N}$ is the identity when either $M$
or $N$ is $\I$ and $\zeta_{M(i),N(j)} = \zeta_{M,N}(i+j)$ for every
$M$, $N$, $i$ and $j$.
Then by modifying the tensor product of $\sC$ according to $(\zeta_{M,N})$ as
in \ref{ss:pretensor},
we obtain a Tate $k$\nd tensor functor $\sC \to \sC'$ which is
the identity on underlying $k$\nd linear categories
and has structural isomorphisms the $\zeta_{M,N}$.
Let $T:\sB \to \sC$ be a Tate $k$\nd tensor functor which is injective on objects.
Take for $\zeta_{M,N}$ the inverse
\[
T(M_1 \otimes N_1) \iso T(M_1) \otimes T(N_1)
\]
of the structural isomorphism of $T$ when $M = T(M_1)$ and $N = T(N_1)$,
and the identity of $M \otimes N$ when either $M$ or $N$ is not in the
image of $T$.
Then the composite $T':\sB \to \sC'$ of $T$ with $\sC \to \sC'$
is a strict Tate $k$\nd tensor functor,
and $\sC'$ is separated if $\sC$ is.

Let $\sB$ be a separated Tate $k$\nd pretensor category.
If $\sC$ is a $k$\nd pretensor
category and $T:\sB \to \sC$ is a fully faithful $k$\nd tensor functor,
then there is a separated Tate $k$\nd pretensor category $\sB'$ containing
$\sB$ as a full $k$\nd pretensor subcategory
and a $k$\nd tensor equivalence $E:\sC \to \sB'$, such that
the embedding $\sB \to \sB'$ is a Tate $k$\nd tensor functor
which is tensor isomorphic $ET$.
Indeed taking a $k$\nd tensor equivalence $F:\sC \to \sC_1$
with the set of objects of $\sC_1$ sufficiently large,
and replacing $\sC$ by $\sC_1$ and $T$ by an appropriate $k$\nd tensor functor
tensor isomorphic to $FT$, we may suppose that $T$ is injective on objects.
Replacing $T$ by its composite with an appropriate $k$\nd tensor functor as in
Lemma~\ref{l:cattwist}, we may further suppose that $\sC$ is a
separated Tate $k$\nd pretensor category with $\I(1)$ isomorphic to $T(\I(1))$.
Since $\sB$ is separated, we may apply Lemma~\ref{l:compatibletwist}
with a suitable $\sB_0$.
Thus after replacing $T$ may also suppose that $T$
is a Tate $k$\nd tensor functor.
Composing with $\sC \to \sC'$ as above and replacing $\sC$ and $T$ by
$\sC'$ and $T'$ we may suppose in addition that
$T$ is strict.
Relabelling appropriately the objects of $\sC$ now gives a $\sB'$
and an isomorphism $E:\sC \to \sB'$ of $k$\nd pretensor categories
with the required properties.
Using this construction, we obtain for example a pseudo-abelian hull or
ind-completion $\sB'$ for $\sB$
with $\sB'$ a Tate $k$\nd pretensor category and the embedding $\sB \to \sB'$
a Tate $k$\nd tensor functor.

\section{Chow theories and Poincar\'e duality theories}\label{s:ChowPoin}

\subsection{Chow theories}

Recall that a $\Z$\nd graded $k$\nd module is a family of $k$\nd modules
parametrised by $\Z$, or equivalently a $k$\nd module $M$ together
a family $(M_i)_{i \in \Z}$ of $k$\nd submodules of $M$ such that
$M$ is the coproduct of the $M_i$.
An element of $M$ is be said to be \emph{homogeneous} if it lies $M_i$
for some $i$.
A homomorphism $M \to M'$ of degree $d$ of $\Z$\nd graded $k$\nd modules
a homomorphism $M \to M'$ of $k$\nd modules which sends $M_i$
into $M'{}_{i+d}$.
When $d = 0$ we speak simply of a homomorphism of
$\Z$\nd graded $k$\nd modules.
A (commutative) $\Z$\nd graded $k$\nd algebra is a $\Z$\nd graded
$k$\nd module $R$ together with a structure of (commutative)
algebra on its underlying $k$\nd module such that
$x.y \in R_{i+j}$ when $x \in R_i$ and $y \in R_j$.
A homomorphism of $\Z$\nd graded $k$\nd algebras is
a homomorphism of their underlying $\Z$\nd graded $k$\nd modules
which is at the same time a homomorphism of $k$\nd algebras.

A \emph{dimension function} on a cartesian monoidal category
$\sV$
is an assignment to each object $X$ of $\sV$ of an integer $\dim X$,
such that $\dim X' = \dim X$ if $X'$ is isomorphic to $X$, and
$\dim (X \times Y) = \dim X + \dim Y$
for every $X$ and $Y$.

\begin{defn}\label{d:Chow}
Let $\sV$ be a cartesian monoidal category
with a dimension function.
A \emph{$k$\nd linear Chow theory with source $\sV$} is an assignment $C$
to each object $X$ in $\sV$ of a commutative $\Z$\nd graded $k$\nd algebra $C(X)$
and to each morphism $p:X \to Y$ in $\sV$ of a homomorphism
$p^*:C(Y) \to C(X)$ of graded $k$\nd algebras and a homomorphism
$p_*:C(X) \to C(Y)$ of degree $\dim Y - \dim X$ of graded $k$\nd modules,
such that the following conditions hold.
\begin{enumerate}
\renewcommand{\theenumi}{(\alph{enumi})}
\item\label{i:Chowfun}
$(1_X)^* = 1_{C(X)}$ and $(1_X)_* = 1_{C(X)}$ for every $X$ in $\sV$,
and $(q\circ p)^* = p^* \circ q^*$ and $(q \circ p)_* = q_* \circ p_*$
for every $p:X\to Y$ and $q:Y \to Z$ in $\sV$.
\item\label{i:Chowproj}
$p_*(x.p^*(y)) = p_*(x).y$ for every $p:X \to Y$ in $\sV$ and $x \in C(X)$ and $y \in C(Y)$.
\item\label{i:Chowiso}
$i_* = (i^{-1})^*$ for every isomorphism $i:X \iso X'$ in $\sV$.
\item\label{i:Chowtens}
$(r \times s)_*(x \otimes y) = r_*(x) \otimes s_*(y)$
for every $r:X \rightarrow X'$ and $s:Y \rightarrow Y'$ in $\sV$
and $x \in C(X)$ and $y \in C(Y)$,
where $z_1 \otimes z_2$ denotes $(\pr_1)^*(z_1).(\pr_2)^*(z_2)$
in $C(Z_1 \times Z_2)$
for $Z_1$ and $Z_2$ in $\sV$ and $z_1 \in C(Z_1)$ and $z_2 \in C(Z_2)$.
\end{enumerate}
\end{defn}

In the presence of \ref{i:Chowfun} and \ref{i:Chowproj},
condition  \ref{i:Chowiso} of Definition~\ref{d:Chow} is equivalent
to the following one:
\begin{enumerate}
\item[\ref{i:Chowiso}$'$]
$i_*(1) = 1$ for every isomorphism $i:X \iso X'$ in $\sV$.
\end{enumerate}
This can be seen by taking $x = 1$ in \ref{i:Chowproj}.
In the presence of \ref{i:Chowfun}, \ref{i:Chowproj} and \ref{i:Chowiso},
condition \ref{i:Chowtens} of Definition~\ref{d:Chow} is equivalent
to the following one:
\begin{enumerate}
\item[\ref{i:Chowtens}$'$]
$(r \times Y)_* \circ (\pr_1)^* = (\pr_1)^* \circ r_*$ for every
$r:X \rightarrow X'$ and $Y$ in $\sV$, where the projections are
from $X \times Y$ and $X' \times Y$.
\end{enumerate}
Indeed \ref{i:Chowtens}$'$ is the particular case
of \ref{i:Chowtens} where  $s = 1_Y$ and $y = 1$.
Conversely if \ref{i:Chowtens} holds with  $s = 1_Y$ and $y = 1$
it holds with  $s = 1_Y$ and arbitrary $y$ by \ref{i:Chowproj},
so that applying the symmetries interchanging the two factors and using
\ref{i:Chowiso} shows that it holds also with $r = 1_X$ and arbitrary $s$,
and hence in general.

A $k$\nd linear Chow theory with source $\sV$ will often be called simply
a Chow theory with source $\sV$.
If $C$ is such a Chow theory,
we write $C^i(X)$ for $C(X)_i$.
The homomorphisms $p^*:C(Y) \to C(X)$ and $p_*:C(X) \to C(Y)$ associated to
\mbox{$p:X \to Y$} in $\sV$ will be called the pullback and push forward along $p$.

\begin{defn}\label{d:Chowmor}
Let $C$ and $C'$ be $k$\nd linear Chow theories with source $\sV$.
A \emph{morphism from $C$ to $C'$} is an assignment $\varphi$ to
every object $X$ of $\sV$
of a homomorphism $\varphi_X:C(X) \rightarrow C'(X)$ of $\Z$\nd graded $k$\nd algebras
such that for every $p:X \rightarrow Y$ in $\sV$
\begin{enumerate}
\renewcommand{\theenumi}{(\alph{enumi})}
\item\label{i:Chowmorpull}
$\varphi_X \circ p^* = p^* \circ \varphi_Y$
\item\label{i:Chowmorpush}
$\varphi_Y \circ p_* = p_* \circ \varphi_X$.
\end{enumerate}
\end{defn}

We thus have a category of Chow theories with source $\sV$, with composition
of morphisms defined component-wise.

Let $C$ be a Chow theory $C$ with source $\sV$.
An \emph{ideal} of $C$ is an assignment $J$
to every object $X$ in $\sV$ of a graded ideal $J(X)$ of $C(X)$ such that $p^*$ sends
$J(Y)$ into $J(X)$ and $p_*$ sends $J(X)$ into $J(Y)$ for every $p:X \to Y$ in $\sV$.
Similarly we define a \emph{Chow subtheory} of $C$ by considering graded $k$\nd subalgebras
instead of graded ideals.
In the case of ideals, it is enough require stability under pullback and push forward
along projections in $\sV$, because
\begin{equation}\label{e:pullproj}
p^* = (\pr_1)_*((\Gamma_p)_*(1) . (\pr_2)^*(-)),
\end{equation}
where $\Gamma_p$ denotes the graph of $p$, and
\begin{equation}\label{e:pushproj}
p_* = (\pr_2)_*((\pr_1)^*(-) . (\Gamma_p)_*(1)).
\end{equation}
If $J$ is an ideal in $C$, then by factoring out the $J(X)$ we obtain the quotient Chow
theory $C/J$ of $C$ with source $\sV$.
The ideals of $C$ behave in many ways like the ideals of a commutative $k$\nd algebra.
For example call an ideal $J$ of $C$ prime if $J \ne C$ and if
for every $x_1 \in C(X_1)$ and $x_2 \in C(X_2)$ with $x_1 \otimes x_2 \in J(X_1 \times X_2)$
either $x_1 \in J(X_1)$ or $x_2 \in J(X_2)$.
Then the intersection of all prime ideals of $C$ is the nilradical of $C$,
which assigns to $X$ the set $x \in C(X)$ with
$x^{\otimes n} = 0$ for some $n$.
When $C^0(\I)$ is a field, $C$ has a unique maximal ideal:
the elements $x$ of $C^i(X)$ which lie in this ideal are those
which are ``numerically equivalent to 0'', i.e.\ those for which $a_*(x.x') = 0$
for every $x'$ in $C^{\dim X - i}(X)$, where $a$ is the unique morphism $X \to \I$.
The quotient of such a $C$ by its unique maximal ideal will be written $\overline{C}$.

Let $E:\sV' \to \sV$ be a functor which preserves products and
dimensions and $C$ be a Chow theory with source $\sV$ .
Then the \emph{pullback of $C$ along $E$}
is the Chow theory $CE$ with source $\sV'$ defined by
$(CE)(X') = C(E(X'))$ for $X'$ in $\sV'$, and $p'{}^* = E(p')^*$ and
$p'{}\!_* = E(p')_*$ for $p':X' \to Y'$ in $\sV'$.
The pullback along $E$ of a morphism $\varphi:C \to C'$ of Chow theories
with source $\sV$ is the morphism $\varphi E:CE \to C'E$ with
$(\varphi E)_{X'} = \varphi_{E(X')}$ for $X'$ in $\sV'$.
If $E$ is an equivalence, it induces by pullback an equivalence on categories
of Chow theories.
The pullback $JE$ of the ideal $J$ of $C$ along $E$
is defined by $(JE)(X') = J(E(X'))$.
Suppose that $E$ is essentially surjective.
Then the assignment $J \mapsto JE$ defines a bijection from ideals of $C$
to ideals of $CE$.
The inverse of $J \mapsto JE$ sends the ideal $J'$ of $CE$ to the unique ideal
$J$ of $C$ with $J(X) = i^*(J'(X'))$ for every isomorphism $i:X \iso E(X')$.
That such a $J$ exists follows from \eqref{e:pullproj} and \eqref{e:pushproj}.

Although it will not be necessary for what follows,
it sometimes useful to consider instead of a dimension \emph{function}
on $\sV$ more generally a dimension \emph{functor} on $\sV$.
Explicitly, a dimension functor is a symmetric monoidal functor $(\sV^*)^{\mathrm{op}} \to \Z_k$,
where $\sV^*$ has objects the objects of $\sV$ and morphisms
the isomorphisms of $\sV$, and $\Z_k$ is as in Section~\ref{ss:Tate}.
Suppose for example that $\sV$ is obtained from the category
$\sV_0$ of abelian varieties over a field by formally inverting the isogenies,
and that the choice of products in $\sV$ is the same as that in $\sV_0$.
If $k$ contains $\Q$,
the usual dimension function on $\sV_0$ extends to a strict symmetric
monoidal functor $(\sV^*)^{\mathrm{op}} \to \Z_k$ which sends $A$ to
$\dim A$ in $\Z = \Ob \Z_k$
and $f:A \iso A'$ to the endomorphism $d(f)$ of $\dim A = \dim A'$,
where $d(f)^{-1}$ is the degree of $f$.
Then provided that condition \ref{i:Chowiso} of Definition~\ref{d:Chow} of a Chow theory
is replaced by the condition $i_* = d(i)^{-1} (i^{-1})^*$,
the Chow theory $CH(-) \otimes k$ with source $\sV_0$ can be extended to a Chow theory
with source $\sV$.

\subsection{Poincar\'e algebras}

Let $\sC$ be a Tate $k$\nd pretensor category.
If $d$ is an integer, then a \emph{Poincar\'e algebra of dimension $d$}
in $\sC$ is a pair $(R,\nu)$ consisting of a commutative algebra $R$ in
$\sC$ and a morphism $\nu:R(d) \to \I$ in $\sC$
such that the composite
\[
R \otimes R(d) = (R \otimes R)(d) \xrightarrow{\mu(d)} R(d)
                                      \xrightarrow{\nu}  \I,
\]
where $\mu:R \otimes R \to R$ is the multiplication,
is the counit of a duality pairing between $R$ and $R(d)$.
Often we omit the $\nu$ from the notation.
For a Poincar\'e algebra $(R,\nu)$ of dimension $d$, the object $R$ has thus
a canonical dual $R^\vee = R(d)$ and a canonical duality pairing between
$R$ and $R(d)$.
The transpose of the identity $\I \to R$ of $R$ for this pairing is $\nu$.
More generally we have a canonical duality pairing between $R(j)$
and $R(d-j)$ for every $j$, with the same unit and counit as for that
between $R$ and $R(d)$.
Since the multiplication is commutative, the counit for this pairing
is symmetric, in the sense that composing the symmetry interchanging
the two factors in $R(d-j) \otimes R(j)$ with the counit leaves the counit unchanged.
Hence the dual pairing, with $R(j)$ the dual $R(d-j)^\vee$ of $R(d-j)$,
coincides with the pairing between $R(d-j)$ and $R(j)$.

If $T:\sC \to \sC'$ is a Tate $k$\nd tensor functor and $(R,\nu)$ is
a Poincar\'e algebra of dimension $d$ in $\sC$, then
$(T(R),T(\nu))$ is a Poincar\'e algebra of dimension $d$ in $\sC'$.

Let $(R,\nu)$ and $(R',\nu')$ be Poincar\'e algebras in
$\sC$ of dimensions respectively $d$ and $d'$.
Then for a morphism $f:R \rightarrow R'$ in $\sC$ we define using the
canonical duality pairings the transpose
\[
f^\vee:R'(d') \rightarrow R(d)
\]
of $f$.
The morphism
\[
\nu \otimes \nu':(R \otimes R')(d+d') = R(d) \otimes R'(d') \to \I
\]
defines a structure of Poincar\'e algebra $(R \otimes R',\nu \otimes \nu')$
on $R \otimes R'$ of dimension $d+d'$.
The duality pairing between $R \otimes R'$ and $(R \otimes R')(d + d')$
defined by this structure of Poincar\'e algebra is the tensor product
of those between $R$ and $R(d)$ and $R'$ and $R'(d')$.

Let $(R,\nu)$ and $(R',\nu')$ be Poincar\'e algebras in $\sC$
of respective dimensions $d$ and $d'$,
and $f:R \to R'$ be a morphism of algebras in $\sC$.
If  $R'(d')$ is regarded as an $R$\nd module by restricting its
$R'$\nd module structure along $f$,
then $f^\vee:R'(d') \rightarrow R(d)$ is a morphism of $R$-modules,
i.e.\ the square
\begin{equation}\label{e:projsquare}
\begin{CD}
R \otimes R'(d')   @>{\beta(d')}>>   R'(d')   \\
@VV{R \otimes f^\vee}V       @VV{f^\vee}V \\
R \otimes R(d)   @>{\mu(d)}>>         R(d)
\end{CD}
\end{equation}
commutes, where $\mu$ and $\mu'$ are the multiplications and
\[
\beta = \mu' \circ (f \otimes R')
\]
is the action $R \otimes R' \to R'$ of $R$ on $R'$.
To see that \eqref{e:projsquare} commutes, we may suppose by taking a Tate $k$\nd tensor functor
$T:\sC' \to \sC$ with $\sC'$ strict and $T$ an equivalence, and replacing
$R$ and $R'$ by Poincar\'e algebras in the image of $T$ and then $\sC$ by $\sC'$,
that $\sC$ is strict.
It suffices then to show that if $\varepsilon = \nu \circ \mu(d)$ and
$\varepsilon'= \nu' \circ \mu'(d')$
are the counits, then the isomorphism
$\omega_{R,\varepsilon;R \otimes R'(d'),\I}$ of \eqref{e:adjiso} sends both legs of
\eqref{e:projsquare} to
\begin{equation}\label{e:RRRI}
\varepsilon' \circ ((f \circ \mu) \otimes R'(d')):
R \otimes R \otimes R'(d') \to \I.
\end{equation}
Since $\omega_{R,\varepsilon;R \otimes R'(d'),\I}$ is defined by tensoring on the left with
$R$ and then composing with $\varepsilon$, it sends the top right leg of \eqref{e:projsquare}
to the composite of $R \otimes \beta(d')$ with \mbox{$\varepsilon' \circ (f \otimes R'(d'))$},
by \eqref{e:transpose}, and hence to the composite of $f \otimes f \otimes R'(d')$ with
\[
\varepsilon' \circ (R' \otimes \mu'(d')) = \varepsilon' \circ (\mu' \otimes R'(d')),
\]
by bifunctoriality of the tensor product and the associativity of $\mu'$.
This composite coincides with \eqref{e:RRRI} by the compatibility $\mu' \circ (f \otimes f) = f \circ \mu$
of $f$ with $\mu$ and $\mu'$.
Similarly $\omega_{R,\varepsilon;R \otimes R'(d'),\I}$ sends the bottom left leg of
\eqref{e:projsquare} to \eqref{e:RRRI}.

\subsection{Poincar\'e duality theories}

Let $\sV$ be a cartesian monoidal category.
We denote by $\sV^*$ the symmetric monoidal category with objects
those of $\sV$, morphisms the isomorphisms of $\sV$,
and identities, compositions, tensor product, associativities
and symmetries those of $\sV$.
A product-preserving functor $E:\sV' \to \sV$ then induces a symmetric monoidal
functor $E^*:\sV'{}^* \to \sV^*$.
Any symmetric monoidal functor
$h$ from $\sV^{\mathrm{op}}$ to a $k$\nd pretensor category $\sC$
lifts canonically to a symmetric
monoidal functor from $\sV^{\mathrm{op}}$ to the category of commutative
algebras in $\sC$, with the multiplication of $h(X)$ given by composing
the structural isomorphism $h(X) \otimes h(X) \iso h(X \times X)$ of $h$
with $h(\Delta_X)$, where $\Delta_X:X \to X \times X$ is the diagonal,
and with the identity $\I \to h(X)$ the image of $X \to \I$.
If $\sV$ is equipped with a dimension function and $\sC$ has a structure of Tate
$k$\nd pretensor category,
there is associated to $h$ a symmetric monoidal functor
\[
h(-)(\dim -):(\sV^*)^{\mathrm{op}} \rightarrow \sC,
\]
which sends $X$ to $h(X)(\dim X)$ and $u:Y \iso X$
to $h(u)(\dim X) = h(u)(\dim Y)$.

\begin{defn}
Let $\sV$ be a cartesian monoidal category with a dimension function.
Then a \emph{$k$\nd linear Poincar\'e duality theory with source $\sV$} is
a triple $(\sM,h,\nu)$, where
$\sM$ is a Tate $k$\nd pretensor category,
$h$ is a symmetric monoidal functor from
$\sV^{\mathrm{op}}$ to $\sM$,
and $\nu$ is a monoidal natural transformation
$h(-)(\dim -) \rightarrow \I$ of symmetric monoidal functors
from $(\sV^*)^{\mathrm{op}}$ to $\sM$, such that
for every object $X$ of $\sV$ the pair $(h(X),\nu_X)$
is a Poincar\'e algebra of dimension $\dim X$ in $\sM$.
\end{defn}

Let $(\sM,h,\nu)$ be a Poincar\'e duality theory
with source $\sV$.
If $T:\sM \rightarrow \sM'$ is a Tate $k$\nd tensor functor,
then $(\sM',Th,T\nu)$ is a Poincar\'e duality theory
with source $\sV$, which we call the \emph{push forward of $(\sM,h,\nu)$
along $T$}.
If $E:\sV' \rightarrow \sV$ is a functor which preserves
products and dimensions, then $(\sM,hE^{\mathrm{op}},\nu (E^*)^{\mathrm{op}})$ is
a Poincar\'e duality theory with source $\sV'$,
which we call the \emph{pullback of
$(\sM,h,\nu)$ along $E$}.

\begin{defn}
Let $(\sM,h,\nu)$ and $(\sM',h',\nu')$ be $k$\nd linear
Poincar\'e duality theories with source $\sV$.
Then a \emph{morphism from $(\sM,h,\nu)$
to $(\sM',h',\nu')$} is  a Tate $k$\nd tensor functor
$T$ from $\sM$ to $\sM'$ such that $h' = Th$ and $\nu' = T\nu$.
\end{defn}

We thus have category of $k$\nd linear
Poincar\'e duality theories with source $\sV$,
with composition given by composition of $k$\nd tensor functors.
A symmetric monoidal functor $h$ from $\sV^{\mathrm{op}}$ to a Tate
$k$\nd pretensor category $\sC$ will be called \emph{taut} if every object
of $\sC$ can written uniquely as $h(X)(i)$ with $X \in \sV$ and $i \in \Z$,
and a Poincar\'e duality theory $(\sM,h,\eta)$ will be called taut if $h$ is taut.

\begin{prop}\label{p:tautrefl}
Let $(\sM,h,\nu)$ be a Poincar\'e duality theory with source $\sV$.
\begin{enumerate}
\item\label{i:tautreflU}
There exists a morphism $U:(\sM_0,h_0,\nu_0) \to (\sM,h,\nu)$
of Poincar\'e duality theories with $h_0$ taut and strict
and $U$ fully faithful.
\item\label{i:tautreflT}
If $U_1:(\sM_1,h_1,\nu_1) \to (\sM,h,\nu)$ and
$U_2:(\sM_2,h_2,\nu_2) \to (\sM,h,\nu)$ are morphisms of Poincar\'e duality
theories with $h_1$ taut and $U_2$ fully faithful,
then there exists a unique morphism
$T:(\sM_1,h_1,\nu_1) \to (\sM_2,h_2,\nu_2)$ of Poincar\'e duality
theories such that $U_1 = U_2T$.
\end{enumerate}
\end{prop}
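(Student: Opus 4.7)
The plan for (i) is to build $\sM_0$ by ``relabelling'' the objects of $\sM$ along $h$ and the Tate twists. Set $\Ob\sM_0 := \Ob\sV \times \Z$ with
\[
\sM_0((X,i),(Y,j)) := \sM(h(X)(i), h(Y)(j))
\]
and composition inherited from $\sM$, so that the forgetful map $U(X,i) := h(X)(i)$, acting as the identity on hom-spaces, is tautologically fully faithful. Define $(X,i) \otimes_0 (Y,j) := (X \times Y, i+j)$ and $\I_0(i) := (\I_\sV, i)$, and transport the tensor product on morphisms, associativities, and symmetries of $\sM_0$ through the structural isomorphism $\zeta^h_{X,Y}\colon h(X) \otimes h(Y) \iso h(X \times Y)$ of $h$. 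This simultaneously makes $\sM_0$ a Tate $k$\nd pretensor category and turns $U$ into a Tate $k$\nd tensor functor whose structural isomorphism at $((X,i),(Y,j))$ is $\zeta^h_{X,Y}(i+j)$. The functor $h_0\colon \sV^{\mathrm{op}} \to \sM_0$ sending $X$ to $(X,0)$ and $f$ to $h(f)$ is then strict, since $h_0(X) \otimes_0 h_0(Y) = (X \times Y, 0) = h_0(X \times Y)$ on the nose, and taut by construction. Setting $\nu_{0,X} := \nu_X$, regarded as a morphism $(X, \dim X) \to (\I_\sV, 0)$ in $\sM_0$, transfers the Poincar\'e pairing from $\sM$ faithfully through $U$, making $(\sM_0,h_0,\nu_0)$ a Poincar\'e duality theory and $U$ a fully faithful morphism to $(\sM,h,\nu)$.

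For (ii), uniqueness is automatic: the equation $Th_1 = h_2$ together with tautness of $h_1$ forces $T(h_1(X)(i)) = h_2(X)(i)$ on objects, and $U_2 T = U_1$ with $U_2$ fully faithful forces $T(f)$ to be the unique lift of $U_1(f)$ along $U_2$. Take these as the definition of $T$; well-definedness rests on the equality $U_2(T(M)) = U_1(M)$, which holds because both sides equal $h(X)(i)$ when $M = h_1(X)(i)$, using $U_j h_j = h$ and strictness of $U_j$ on $\I(i)$\nd factors. Functoriality, $k$\nd linearity, and preservation of identities then follow from those of $U_1$ by faithfulness of $U_2$. Define the structural isomorphism $\psi^T_{M,N}$ as the unique lift along $U_2$ of $\psi^{U_1}_{M,N} \circ (\psi^{U_2}_{T(M),T(N)})^{-1}$; the coherence diagrams for $\psi^T$ commute because the corresponding diagrams for $\psi^{U_1}$ and $\psi^{U_2}$ do, and $U_2$ is faithful. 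The Tate conditions follow from $\I_{\sM_1} = h_1(\I_\sV)$, giving $T(\I(i)) = h_2(\I_\sV)(i) = \I(i)$, together with the fact that the structural isomorphisms of $U_1$ and $U_2$ involving $\I(i)$ are identities. Finally, $Th_1 = h_2$ and $T\nu_1 = \nu_2$ drop out from uniqueness of lifts: for example, $U_2(T(\nu_{1,X})) = U_1(\nu_{1,X}) = \nu_X = U_2(\nu_{2,X})$.

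The main obstacle in both parts is coherence book-keeping: confirming that the transported associativities, symmetries, unit constraints, Tate data, and Poincar\'e pairings assemble into genuine Tate $k$\nd pretensor categories, Tate $k$\nd tensor functors, and morphisms of Poincar\'e duality theories. These verifications are routine, however, because they reduce through a fully faithful functor ($U$ in (i) and $U_2$ in (ii)) to coherence axioms that already hold by hypothesis in $\sM$, for $h$ as a symmetric monoidal functor, and for $U_1$ and $U_2$ as morphisms of Poincar\'e duality theories.
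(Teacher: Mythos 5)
Your proposal is correct and follows essentially the same construction as the paper's proof: in (i), $\sM_0$ is defined with objects $\Ob\sV\times\Z$, hom-spaces $\sM(h(X)(i),h(Y)(j))$, and tensor/Tate structure transported along the tautologically fully faithful $U$ via the structural isomorphisms of $h$; in (ii), $T$ is forced on objects by $Th_1=h_2$ together with tautness of $h_1$, and on morphisms and structural data by full faithfulness of $U_2$, with all coherence checks reducing to those already holding in $\sM$.
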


\begin{proof}
(i)
Define $(\sM_0,h_0,\nu_0)$ as follows.
The objects of $\sM_0$ are pairs $(X,r)$ with $X$ an object of $\sV$
and $r$ an integer.
Set
\[
\sM_0((X,r),(Y,s)) = \sM(h(X)(r),h(Y)(s)).
\]
The identities and composition in $\sM_0$ are the same as those in $\sM$.
The underlying functor of $h$ then factors as $Uh_0$,
where $h_0:\sV^\text{op} \to \sM_0$ sends $X$  to $(X,0)$ and
coincides with $h$ on morphisms,
and $U:\sM_0 \to \sM$ is the $k$\nd linear functor which sends
$(X,r)$ to $h(X)(r)$ and is the identity on hom spaces.
Thus $U$ is fully faithful.
The tensor product of $(X,r)$ and $(Y,s)$ in $\sM_0$ is defined as
$(X \times Y,r+s)$.
If
\[
\psi_{X,Y}:h(X) \otimes h(Y) \iso h(X \times Y),
\]
is the structural isomorphism of $h$, write
\[
\rho_{(X,r),(Y,s)} = \psi_{X,Y}(r+s):U((X,r)) \otimes U((Y,s))
\iso U((X,r) \otimes (Y,s)).
\]
Since $U$ is fully faithful, the tensor product on morphisms
of $\sM_0$ is uniquely defined by requiring that the isomorphism
$\rho_{(X,r),(Y,s)}$ be natural in $(X,r)$ and $(Y,s)$.
Similarly, the associativities and symmetries of $\sM_0$ are uniquely defined
by requiring that the $\rho_{(X,r),(Y,s)}$ should be the structural
isomorphisms of a $k$\nd tensor functor $U$.
That the associativities and symmetries in $\sM_0$ so defined are
natural and satisfy the necessary compatibilities follows from
the corresponding properties in $\sM$ and the faithfulness
of $U$.
By construction, the $\rho_{(X,r),(Y,s)}$ then define a structure
of $k$\nd tensor functor on $U$.
Also $h_0:\sV^\text{op} \rightarrow \sM_0$ is a strict symmetric monoidal
functor such that $Uh_0 = h$, because $U$ is faithful and
$\rho_{h_0(X),h_0(Y)} = \psi_{X,Y}$.

The object $\I(r)$ of $\sM_0$ is $(\I,r)$.
That the relevant
associativities and symmetries are identities is easily checked
by applying $U$ and using the fact that $\rho_{(X,r),(Y,s)}$ coincides with
$\rho_{(X,0),(Y,0)}(r+s)$.
It is then clear that $U:\sM_0 \rightarrow \sM$ is a Tate $k$\nd tensor functor.
Finally $\nu_0$ is uniquely defined by requiring that $U\nu_0 = \nu$.
That $((X,0),(\nu_0)_X)$ is a Poincar\'e algebra of dimension $\dim X$ for every
$X$ then follows from the full faithfulness of $U$.
Thus $(\sM_0,h_0,\nu_0)$ is a Poincar\'e duality theory with source
$\sV$.
That $h_0$ and $U$ have the required properties is clear.

(ii) $T$ is determined uniquely on the object $h_1(X)(r)$ of $\sM_1$ by
\[
T(h_1(X)(r)) = h_2(X)(r),
\]
because it is required that $h_2 = Th_1$,
and uniquely on the morphism $f$ of $\sM_1$ by
\[
U_2(T(f)) = U_1(f)
\]
and the full faithfulness of $U_2$.
Also the structural isomorphism
\[
\varphi_{M,N}:T(M) \otimes T(N) \iso T(M \otimes N)
\]
of $T$ is uniquely determined by the requirement that if $\theta_i$
is the structural isomorphism of $U_i$ then
\[
(\theta_1)_{M,N} = U_2(\varphi_{M,N}) \circ (\theta_2)_{T(M),T(N)}.
\]
That $T$ so determined preserves identities and compositions follows
from the fact that $U_1$ and $U_2$ do and the faithfulness of $U_2$.
The naturality of $\varphi$, its compatibility with the associativities
and symmetries of $\sM_1$ and $\sM_2$,
and the condition that $\varphi_{M,\I(r)} = 1$ for every
$M$ and $r$, follow similarly.
Thus $T$ is a Tate $k$\nd tensor functor from $\sM_1$ to $\sM_2$.
By construction $U_1 = U_2T$.
Since $h_2$ and $Th_1$ coincide on objects and $U_2h_2 = h = U_2Th_1$, we have
$h_2 = Th_1$ by faithfulness of $U_2$.
Similarly $\nu_2 = T\nu_1$.
Thus $T$ is a morphism from $(\sM_1,h_1,\nu_1)$ to $(\sM_2,h_2,\nu_2)$.
\end{proof}

Let $(\sM,h,\nu)$ be Poincar\'e duality theory with source $\sV$.
Then for each $X$ in $\sV$ we have a Poincar\'e algebra $(h(X),\nu_X)$ of
dimension $\dim X$ in $\sM$.
For each $X$ and $i$, the object $h(X)(i)$ of $\sM$
thus has a canonical dual, given by
\[
h(X)(i)^\vee = h(X)(\dim X-i),
\]
together with a canonical counit $\varepsilon_X$, given by the composite
\[
h(X)(i) \otimes h(X)(i)^\vee = h(X) \otimes h(X)(\dim X)
\xrightarrow{\mu_X(\dim X)} h(X)(\dim X) \xrightarrow{\nu_X} \I,
\]
with $\mu_X :h(X) \otimes h(X) \to h(X)$ the multiplication of $h(X)$.
We have \mbox{$\I(i)^\vee = \I(-i)$} and $\varepsilon_\I = 1_\I$.
Given $Y$ and $j$, the canonical pairing
between $h(X \times Y)(i + j)$ and $h(X \times Y)(i + j)^\vee$
coincides, modulo the appropriate structural isomorphism
of $h$, with the tensor product
of those between $h(X)(i)$ and $h(X)(i)^\vee$ and between
$h(Y)(j)$ and $h(Y)(j)^\vee$.
Any morphism $(\sM,h,\nu) \to (\sM',h',\nu')$ of Poincar\'e duality theories
sends $(h(X)(i),\nu_X)$ to $(h'(X)(i),\nu'{}\!_X)$,
and hence preserves canonical duality pairings and transposes.

We associate as follows to each $k$\nd linear Poincar\'e duality theory
$(\sM,h,\nu)$ with source $\sV$ a $k$\nd linear
Chow theory
\[
\sM(\I,h(-)(\cdot))
\] with source $\sV$,
and to each morphism $T$ of Poincar\'e duality theories from
$(\sM,h,\nu)$  to $(\sM',h',\nu')$ a morphism of Chow theories
\[
T_{\I,h(-)(\cdot)}:\sM(\I,h(-)(\cdot)) \to \sM'(\I,h'(-)(\cdot)).
\]
The Chow theory $\sM(\I,h(-)(\cdot))$ assigns
to the object $X$ of $\sV$ the graded $k$\nd algebra
$\sM(\I,h(X)(\cdot))$
whose component of degree $i$ is $\sM(\I,h(X)(i))$,
with the identity $\I \to h(X)(0)$ the identity
of the algebra $h(X)$, and the product of $a:\I \to h(X)(i)$ and
\mbox{$b:\I \to h(X)(j)$} their tensor product in $\sM$ composed with the
$(i+j)$\nd fold twist of the multiplication of $h(X)$.
It assigns to the morphism $p:X \to Y$ of $\sV$ the homomorphism
of graded $k$\nd algebras
\[
p^*:\sM(\I,h(Y)(\cdot)) \to \sM(\I,h(X)(\cdot))
\]
that sends $b:\I \to h(Y)(j)$ to $h(p)(j) \circ b$, and the
homomorphism of graded $k$\nd modules of degree $\dim Y - \dim X$
\[
p_*:\sM(\I,h(X)(\cdot)) \to \sM(\I,h(Y)(\cdot))
\]
that sends $a:\I \to h(X)(i)$ to $h(p)^\vee(i - \dim X) \circ a$.
The morphism of Chow theories $T_{\I,h(-)(\cdot)}$  assigns to the
object $X$ of $\sV$ the homomorphism  $T_{\I,h(X)(\cdot)}$ of graded $k$\nd algebras
with component $T_{\I,h(X)(i)}$ of degree $i$.
That the $T_{\I,h(X)(\cdot)}$ are indeed homomorphisms of graded
$k$\nd algebras which are compatible as in
Definition~\ref{d:Chowmor} with pullback is clear, and their compatibility
with push forward follows from the fact that $T$ preserves
canonical transposes.
To see that $\sM(\I,h(-)(\cdot))$ satisfies the conditions of
Definition~\ref{d:Chow},
choose an $(\sM_0,h_0,\nu_0)$ and $U$
as in  Proposition~\ref{p:tautrefl}~\ref{i:tautreflU}.
Since $U$ is fully faithful, each $U_{\I,h_0(X)(\cdot)}$ is an isomorphism,
so that after replacing
$(\sM,h,\nu)$ by $(\sM_0,h_0,\nu_0)$ we may suppose that $h$ is strict.
That condition~\ref{i:Chowfun} of Definition~\ref{d:Chow}
is satisfied is clear.
That \ref{i:Chowproj} is satisfied follows by composing the tensor product of
$\I \to h(Y)(j)$ and $\I \to h(X)(i)$ with the twist by $i - \dim X$ of
\eqref{e:projsquare}, where $R = h(Y)$, $R' = h(X)$ and $f= h(p)$.
Condition \ref{i:Chowiso} is satisfied because for any
$p:X \iso Y$ in $\sV$ we have
\[
h(p)^\vee = h(p^{-1})(\dim X)
\]
by \eqref{e:transpose}, the fact that $h(p)$ is a morphism of algebras,
and the naturality of $\nu$.
For $a$ in $\sM(\I,h(X)(i))$ and $b$ in $\sM(\I,h(Y)(j))$,
it is easily checked that $a \otimes b$ in the sense
of Definition~\ref{d:Chow}~\ref{i:Chowtens} coincides with the
tensor product of the morphisms
$a:\I \to h(X)(i)$ and $b:\I \to h(Y)(j)$ in $\sM$.
Thus condition \ref{i:Chowtens} is satisfied because the canonical
transpose of $h(r \times s) = h(r) \otimes h(s)$ is
$h(r)^\vee \otimes h(s)^\vee$.

By assigning to each $(\sM,h,\nu)$ its associated Chow theory
$\sM(\I,h(-)(\cdot))$
and to each $T:(\sM,h,\nu) \to (\sM',h',\nu')$ its associated morphism
$T_{\I,h(-)(\cdot)}$ of Chow theories we obtain a functor
from $k$\nd linear Poincar\'e duality theories with source $\sV$ to
$k$\nd linear Chow theories with source $\sV$.
When $(\sM,h,\nu)$ is taut, $T_{\I,h(-)(\cdot)}$ is an isomorphism if and only if
$T$ is fully faithful.
For any functor $E:\sV' \rightarrow \sV$
which preserves products and dimensions,
passage to associated Chow theories and associated morphisms of Chow
theories commutes with pullback along $E$.

While the category of Poincar\'e duality theories defined
above will suffice for what follows, it is for some purposes inadequate.
For example an equivalence $\sV' \to \sV$ does not in general induce by
pullback an equivalence of categories of Poincar\'e duality theories.
Instead of the category of Poincar\'e duality theories with source $\sV$,
we may consider a $2$\nd category with the same objects.
A $1$\nd morphism from $(\sM,h,\nu)$ to $(\sM',h',\nu')$ is a pair
$(T,\varphi)$ with $T:\sM \to \sM'$ a Tate $k$\nd tensor functor and
\mbox{$\varphi:h' \iso Th$} a monoidal isomorphism such that
$\nu'{}\!_X = T(\nu_X) \circ \varphi_X(\dim X)$ for every $X$, and a
$2$\nd morphism from $(T_1,\varphi_1)$ to $(T_2,\varphi_2)$ is a tensor
isomorphism $\theta:T_1 \iso T_2$ such that $\theta_{\I(i)} = 1_{\I(i)}$
for every $i$ and
$\varphi_2 = \theta h \circ \varphi_1$.
Pullback along an equivalence $\sV' \to \sV$ then induces an equivalence
of $2$\nd categories.
Every $1$\nd morphism induces a morphism of Chow theories,
and $2$\nd isomorphic $1$\nd morphisms induce the same morphism of Chow theories.
Morphisms in the usual sense from $(\sM,h,\nu)$ to $(\sM',h',\nu')$ may be identified with
$1$\nd morphisms of the form $(T,\id_{h'})$.
The important case for what follows is that where $(\sM,h,\nu)$ is taut.
In this case it is enough to consider only morphisms in the usual sense from
$(\sM,h,\nu)$ to $(\sM',h',\nu')$, because they form a discrete skeleton of the category of
$1$\nd morphisms and $2$\nd morphisms between them.

\subsection{Universal Poincar\'e duality theories}\label{ss:uniPoin}

Let $(\sM,h,\nu)$ be Poincar\'e duality theory with source $\sV$.
Then for each $X$, $Y$, $i$ and $j$ we have the canonical isomorphism
\[
\omega_{h(X)(i),\varepsilon_X;h(Y)(j)}:
\sM(\I,h(X) \otimes h(Y)(\dim X+j-i)) \iso \sM(h(X)(i),h(Y)(j))
\]
of \eqref{e:adj1iso}, where $\varepsilon_X$ is the counit of the canonical duality
pairing between $h(X)(i)$ and $h(X)(\dim X -i)$.
Let $C$ be a Chow theory with source $\sV$, and $\gamma$ be a morphism from
$C$ to the Chow theory $\sM(\I,h(-)(\cdot))$ associated to $(\sM,h,\nu)$.
Then we have a homomorphism
\[
\gamma_{X,i}:C^i(X) \to \sM(\I,h(X)(i))
\]
for every $X$ and $i$.
Composing $\gamma_{X \times Y,\dim X +j-i}$ with the isomorphism
\[
\sM(\I,h(X \times Y)(\dim X+j-i)) \iso \sM(\I,h(X) \otimes h(Y)(\dim X+j-i))
\]
defined by the inverse of the structural isomorphism for $h$, and then
with $\omega_{h(X)(i),\varepsilon_X;h(Y)(j)}$, gives
a homomorphism
\begin{equation}\label{e:PDhomChow}
\gamma_{X,Y,i,j}:C^{\dim X+j-i}(X \times Y) \to  \sM(h(X)(i),h(Y)(j))
\end{equation}
for every $X$, $Y$, $i$ and $j$.
We have $\gamma_{X,i} = \gamma_{\I,X,0,i}$.
When $\gamma$ is an isomorphism, each $\gamma_{X,Y,i,j}$ is an isomorphism.
If $C'$ is a Chow theory and $(\sM',h',\nu')$ a Poincar\'e duality theory
with source $\sV$, and $\gamma':C' \to \sM'(\I,h'(-)(\cdot))$  and $\varphi:C \to C'$
are morphisms of Chow theories and $T:(\sM,h,\nu) \to (\sM',h',\nu')$ a morphism of
Poincar\'e duality theories such that
\begin{equation}\label{e:thetacircphi}
\gamma' \circ \varphi = T_{\I,h(-)(\cdot)} \circ \gamma,
\end{equation}
then for every $X$, $Y$, $i$ and $j$ we have
\begin{equation}\label{e:gammaT}
\gamma'{}\!_{X,Y,i,j} \circ \varphi_{X \times Y,\dim X+j-i} =
T_{h(X)(i),h(Y)(j)} \circ \gamma_{X,Y,i,j}.
\end{equation}
This can be seen by combining three squares, with the left
commutative by \eqref{e:thetacircphi}, the middle by the fact that $h' = Th$,
and the right by the fact that, modulo the relevant structural
isomorphism, $T$ sends $\varepsilon_X$ to the counit
\mbox{$h'(X) \otimes h'(X)(\dim X) \to \I$}.
Let $\sV'$ be a cartesian monoidal category with a dimension function
and $E:\sV' \to \sV$ be functor which preserves products and dimensions.
Then for every $X'$, $Y'$, $i$ and $j$ the homomorphism
$(\gamma E)_{X',Y',i,j}$ is the composite with $\gamma_{E(X'),E(Y'),i,j}$
of the isomorphism from  $C^{\dim X'+j-i}(E(X' \times Y'))$ to
$C^{\dim X'+j-i}(E(X') \times E(Y'))$
induced by the structural isomorphism of $E$.

\begin{prop}\label{p:PDhom}
Let $C$ be a Chow theory and $(\sM,h,\nu)$ a Poincar\'e duality theory
with source $\sV$, and $\gamma:C \to \sM(\I,h(-)(\cdot))$ be a
morphism of Chow theories.
\begin{enumerate}
\item\label{i:PDhomtrans}
$\gamma_{X,Y,i,j}(\alpha)^\vee = \gamma_{Y,X,\dim Y - j,\dim X -i}(\sigma^*(\alpha))$
for every $\alpha$,
where $\sigma$ is the symmetry \mbox{$Y \times X \iso X \times Y$}.
\item\label{i:PDhomtens}
$\gamma_{X,Y,i,j}(\alpha) \otimes \gamma_{X',Y',i',j'}(\alpha') =
\xi^{-1} \circ \gamma_{X \times X',Y \times Y',i + i',j + j'}(\rho^*(\alpha \otimes \alpha'))
\circ \zeta$ for every $\alpha$ and $\alpha'$,
where $\zeta$ is the $(i + i')$\nd fold twist of the structural isomorphism
$h(X) \otimes h(X') \iso h(X \times X')$ and $\xi$ is the $(j + j')$\nd fold twist
of the structural isomorphism
$h(Y) \otimes h(Y') \iso h(Y \times Y')$,
and $\rho$ is the symmetry
$(X \times X') \times (Y \times Y') \iso (X \times Y) \times (X' \times Y')$.
\item\label{i:PDhomcomp}
$\gamma_{Y,Z,j,l}(\beta) \circ \gamma_{X,Y,i,j}(\alpha) =
\gamma_{X,Z,i,l}((\pr_{13})_*((\pr_{12})^*(\alpha).(\pr_{23})^*(\beta)))$
for every $\alpha$ and $\beta$, where $\pr_{12}$, $\pr_{23}$ and $\pr_{13}$ are the
projections from $X \times (Y \times Z)$.
\item\label{i:PDhomgraph}
$h(p) = \gamma_{X,Y,0,0}((p,1_Y)_*(1))$ for every $p:Y \to X$.
\end{enumerate}
\end{prop}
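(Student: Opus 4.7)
The plan is to first reduce to the case where $h$ is taut and strict. By Proposition~\ref{p:tautrefl}\ref{i:tautreflU} there is a fully faithful morphism $U:(\sM_0,h_0,\nu_0) \to (\sM,h,\nu)$ of Poincar\'e duality theories with $h_0$ taut and strict. Since $U_{\I,h_0(-)(\cdot)}$ is then an isomorphism of Chow theories, $\gamma$ factors uniquely as $U_{\I,h(-)(\cdot)} \circ \gamma_0$, and \eqref{e:gammaT} reduces each of the four formulas to the corresponding formula for $\gamma_0$. I therefore assume $h$ is taut and strict, so that $h(X \times Y) = h(X) \otimes h(Y)$ and the symmetry $h(\sigma_{Y,X})$ equals the symmetry in $\sM$ interchanging $h(X)$ and $h(Y)$; in particular the $\zeta$ and $\xi$ appearing in part~\ref{i:PDhomtens} become identities.

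Write $f_\alpha = \gamma_{X \times Y,\dim X+j-i}(\alpha):\I \to h(X) \otimes h(Y)(\dim X+j-i)$, so that by definition $\gamma_{X,Y,i,j}(\alpha) = \omega_{h(X)(i),\varepsilon_X;h(Y)(j)}(f_\alpha)$. Then \ref{i:PDhomtrans} follows from \eqref{e:adjtrans} applied to $f_\alpha$: the symmetry appearing there is a twist of $h(\sigma_{Y,X})$, and by compatibility of $\gamma$ with pullback it sends $f_\alpha$ to $f_{\sigma^*\alpha}$, while the dual counit $\widetilde{\varepsilon}$ coincides with the canonical counit for $h(Y)(\dim Y - j)$ by the symmetry of the multiplication of the Poincar\'e algebra $h(Y)$. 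Part~\ref{i:PDhomtens} follows from \eqref{e:adjtens}: since $\gamma$ is a morphism of graded algebras that commutes with pullback along the projections from $X \times Y \times X' \times Y'$, the morphism $\rho \circ (f_\alpha \otimes f_{\alpha'})$ equals $f_{\rho^*(\alpha \otimes \alpha')}$. Part~\ref{i:PDhomgraph} follows from \eqref{e:adjgraph} with $v = h(p)$: compatibility of $\gamma$ with push forward and identities gives $f_{(p,1_Y)_*(1)} = h((p,1_Y))^\vee(-\dim Y) \circ 1_{h(Y)}$, and the strict taut factorisation $h((p,1_Y)) = \mu_{h(Y)} \circ (h(p) \otimes 1_{h(Y)})$ combined with $\varepsilon_Y = \nu_Y \circ \mu_{h(Y)}(\dim Y)$ unwinds the right hand side of \eqref{e:adjgraph} into exactly this expression.

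The main obstacle is \ref{i:PDhomcomp}, proved by applying \eqref{e:adjtenscomp} with $M = h(X)(i)$, $M' = h(Y)(j)$, $M'' = h(Z)(l)$, $g = f_\alpha$, and $g' = f_\beta$. The right hand side of \eqref{e:adjtenscomp} contracts $g \otimes g'$ against the counit $\varepsilon'$ for $M'$ in the middle $h(Y)$\nd factor, and I must identify this with $\gamma_{X,Z,i,l}((\pr_{13})_*((\pr_{12})^*\alpha \cdot (\pr_{23})^*\beta))$. Compatibility of $\gamma$ with the pullbacks $(\pr_{12})^*$ and $(\pr_{23})^*$ rewrites $f_{(\pr_{12})^*\alpha} = h(\pr_{12}) \circ f_\alpha$ and $f_{(\pr_{23})^*\beta} = h(\pr_{23}) \circ f_\beta$, suitably twisted; compatibility with products assembles $f_{(\pr_{12})^*\alpha \cdot (\pr_{23})^*\beta}$ as the multiplication $\mu_{h(X \times Y \times Z)}$ applied to their tensor product, which in the strict case acts on the middle $h(Y) \otimes h(Y)$ by $\mu_{h(Y)}$; finally compatibility with push forward, together with the fact that $h(\pr_{13})^\vee$ composed with $\mu_{h(Y)}(\dim Y)$ is by $\varepsilon_Y = \nu_Y \circ \mu_{h(Y)}(\dim Y)$ exactly the contraction by $\varepsilon_Y$ in the middle factor, matches the two sides. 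The delicate part is verifying that the associativity $\lambda$ and the various twists in \eqref{e:adjtenscomp} combine coherently, but this is a bookkeeping computation governed entirely by the rigid category identities of Subsection~\ref{ss:rigid}.
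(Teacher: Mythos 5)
Your proposal is correct and follows essentially the same route as the paper: reduce via Proposition~\ref{p:tautrefl}~\ref{i:tautreflU} and \eqref{e:gammaT} to the strict (taut) case so that $\gamma_{X,Y,i,j} = \omega_{h(X)(i),\varepsilon_X;h(Y)(j)} \circ \gamma_{X\times Y,\dim X + j - i}$, then translate each part into the rigid-category identities \eqref{e:adjtrans}, \eqref{e:adjtens}, \eqref{e:adjtenscomp} and \eqref{e:adjgraph} using compatibility of $\gamma$ with pullback, products, and push forward. Your sketch of \ref{i:PDhomcomp} correctly identifies the contraction in the middle $h(Y)$\nd factor via $\varepsilon_Y = \nu_Y \circ \mu_{h(Y)}(\dim Y)$; the paper packages this by introducing the explicit morphism $\theta = h(\pr_{13})^\vee(\cdot) \circ h(X\times(\Delta_Y\times Z))(\cdot)$, which is precisely the bookkeeping you defer to the rigid-category identities, but the substance is identical.
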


\begin{proof}
By Proposition~\ref{p:tautrefl}~\ref{i:tautreflU}
there is a $U:(\sM_0,h_0,\nu_0) \to (\sM,h,\nu)$ with $h_0$ strict and
$U$ fully faithful.
Then $U_{\I,h_0(-)(\cdot)}$ is an isomorphism,
so that $\gamma = U_{\I,h_0(-)(\cdot)} \circ \gamma_0$ for some
$\gamma_0:C \to \sM_0(\I,h_0(-)(\cdot))$.
By \eqref{e:gammaT} with $\varphi$ the identity, it is enough to prove
the required results with $(\sM,h,\nu)$ and $\gamma$ replaced by
$(\sM_0,h_0,\nu_0)$ and $\gamma_0$.
Thus we may assume that $h$ is strict.
Then we have
\begin{equation}\label{e:omegagamma}
\gamma_{X,Y,i,j} = \omega_{h(X)(i),\varepsilon_X;h(Y)(j)} \circ \gamma_{X \times Y,\dim X+j-i}
\end{equation}
for every $X$, $Y$, $i$ and $j$.

The symmetry $\widetilde{\sigma}$ that interchanges the two factors of the tensor product of
$h(X)(\dim X -i)$ and $h(Y)(j)$ is given by $h(\sigma)(\dim X + j -i)$,
because $h$ is strict.
Since $\gamma$ is a morphism of Chow theories, it follows that
\[
\widetilde{\sigma} \circ \gamma_{X \times Y,\dim X + j-i}(\alpha) =
\gamma_{Y \times X,\dim X + j-i}(\sigma^*(\alpha))
\]
Applying $\omega_{h(Y)(\dim Y -j),\varepsilon_Y;h(X)(\dim X -i)}$ and using
\eqref{e:omegagamma} and \eqref{e:adjtrans} now gives \ref{i:PDhomtrans}.

By strictness of $h$, the tensor product in the Chow theory
$\sM(\I,h(-)(\cdot))$ is given by taking the tensor product in $\sM$ of
the relevant morphisms with source $\I$ in $\sM$.
Further if we write $d$ for $\dim X + j - i$ and $d'$ for $\dim X' + j' - i'$,
the symmetry $\widetilde{\rho}$ that interchanges the middle two factors
of the tensor product of $h(X)(i)^\vee \otimes h(Y)(j)$ with
$h(X')(i')^\vee \otimes h(Y')(j')$ is $h(\rho)(d+d')$.
Since $\gamma$ is a morphism of Chow theories, we thus have
\begin{align*}
\widetilde{\rho} \circ
(\gamma_{X \times Y,d}(\alpha) \otimes \gamma_{X' \times Y',d'}(\alpha'))
&= \widetilde{\rho} \circ
\gamma_{(X \times Y) \times (X' \times Y'),d+d'}(\alpha \otimes \alpha')  \\
&= \gamma_{(X \times X') \times (Y \times Y'),d+d'}(\rho^*(\alpha \otimes \alpha')).
\end{align*}
Applying $\omega_{h(X \times X')(i + i'),\varepsilon_{X \times X'};h(Y \times Y')(j + j')}$
and using \eqref{e:omegagamma}, \eqref{e:adjtens},
and the fact that $\zeta$ and $\xi$ are identities,
now gives \ref{i:PDhomtens}.

By definition, the counit $\varepsilon_Y$ is the composite of
$h(\Delta_Y)(\dim Y)$ with the transpose $\nu_Y$ of the morphism $\I \to h(Y)$
defined by $Y \to \I$.
If we write $d$ for $\dim X + j - i$ and $e$ for $\dim Y + l - j$,
it follows that
\[
h(X)(i)^\vee \otimes (\varepsilon_Y \otimes h(Z)(l)) =
h(\pr_{13})^\vee(-\dim Z + l - i) \circ h(X \times (\Delta_Y \times Z))(d + e) = \theta,
\]
say.
With $\widetilde{\lambda}$ the appropriate associativity in $\sM$,
we then have
\begin{align*}
\theta \circ \widetilde{\lambda} \circ
(\gamma_{X \times Y,d}(\alpha) \otimes \gamma_{Y \times Z,e}(\beta))
&=
\theta \circ
\gamma_{X \times ((Y \times Y) \times Z),d+e}((\pr_{12})^*(\alpha).(\pr_{34})^*(\beta)) \\
&=
\gamma_{X \times Z,\dim X + l - i}((\pr_{13})_*((\pr_{12})^*(\alpha).(\pr_{23})^*(\beta))),
\end{align*}
by the definition of $\alpha \otimes \beta$ as in Definition~\ref{d:Chow}~\ref{i:Chowtens}
and the fact that $\widetilde{\lambda} = h(\lambda)(d+e)$ for the appropriate associativity
$\lambda$ in $\sV$.
Applying $\omega_{h(X)(i),\varepsilon_X,h(Z)(l)}$ and using \eqref{e:omegagamma}
and \eqref{e:adjtenscomp} now gives \ref{i:PDhomcomp}.

If $\iota:\I \to h(Y)$ is the unit, then since
$(p,1_Y) = (p \times Y) \circ \Delta_Y$, we have
\[
(h(p) \otimes h(Y)^\vee)^\vee \circ \varepsilon_Y{}\!^\vee =
h((p,1_Y))^\vee(- \dim Y) \circ \iota =
\gamma_{X \times Y,\dim X}((p,1_Y)_*(1)).
\]
Applying $\omega_{h(X),\varepsilon_X;h(Y)}$ and using \eqref{e:omegagamma}
and \eqref{e:adjgraph} now gives \ref{i:PDhomgraph}.
\end{proof}

Let $(\sM,h,\nu)$, $C$ and $\gamma$ be as in Proposition~\ref{p:PDhom}.
Then it is clear from the definition of $\gamma_{X,Y,i,j}$ that
for every $\alpha$ and $d$ we have
\begin{equation}\label{e:PDhomtwist}
\gamma_{X,Y,i,j}(\alpha)(d) = \gamma_{X,Y,i + d,j + d}(\alpha).
\end{equation}
In fact \eqref{e:PDhomtwist} follows from Proposition~\ref{p:PDhom},
because $\gamma_{\I,\I,0,0}(1)$ is the identity of $\I = h(\I)$ by \ref{i:PDhomgraph}
and $\gamma_{\I,\I,d,d}(1)$ is an idempotent endomorphism of $\I(d)$
by \ref{i:PDhomcomp} which is invertible by \ref{i:PDhomtens} and hence the identity.
For any $Z$ and $p:Y \to X$ and  $\beta \in C(Y \times Z)$, we have
\[
(\pr_{12})^*((p,1_Y)_*(1)).(\pr_{23})^*(\beta) = ((p,1_Y) \times Z)_*(\beta).
\]
in $C((X \times Y) \times Z)$,
by \ref{d:Chow}~\ref{i:Chowproj} with $(p,1_Y) \times Z$ for $p$  and $x = 1$ and
$y = (\pr_{23})^*(\beta)$,
and \ref{d:Chow}~\ref{i:Chowtens}.
Proposition~\ref{p:PDhom}~\ref{i:PDhomcomp} with $\alpha = (p,1_Y)_*(1)$ and
Proposition~\ref{p:PDhom}~\ref{i:PDhomgraph} thus give
\begin{equation}\label{e:PDhomcomppush}
\gamma_{Y,Z,j,l}(\beta) \circ h(p)(j) = \gamma_{X,Z,j,l}((p \times Z)_*(\beta)).
\end{equation}
Similarly for any $X$ and $q:Z \to Y$ and $\alpha \in C(X \times Y)$ we have
\[
(\pr_{12})^*(\alpha).(\pr_{23})^*((q,1_Z)_*(1)) =
(X \times (q,1_Z))_*((X \times q)^*(\alpha))
\]
in $C(X \times (Y \times Z))$.
Proposition~\ref{p:PDhom}~\ref{i:PDhomcomp} with $\beta = (q,1_Z)_*(1)$ thus gives
\begin{equation}\label{e:PDhomcomppull}
h(q)(j) \circ \gamma_{X,Y,i,j}(\alpha) = \gamma_{X,Z,i,j}((X \times q)^*(\alpha)).
\end{equation}
For fixed $i$ and $j$, we may regard $C^{\dim X + j - i}(X \times Y)$ as
a bifunctor of $X$ and $Y$, covariant in $X$ and contravariant in $Y$.
Given for example $a:X \to X'$ in $\sV$, the homomorphism
$C^{\dim X +j-i}(X \times Y) \to C^{\dim X'+j-i}(X' \times Y)$ associated to $a$
is $(a \times Y)_*$.
Then \eqref{e:PDhomcomppush} and \eqref{e:PDhomcomppull} show that
the homomorphism $\gamma_{X,Y,i,j}$ of \eqref{e:PDhomChow} is natural
in $X$ and $Y$.

\begin{prop}\label{p:tautff}
Let $(\sM,h,\nu)$ be a taut Poincar\'e duality theory with source~$\sV$.
Then for any Poincar\'e duality theory $(\sM',h',\nu')$
with source $\sV$ and morphism
$\varphi:\sM(\I,h(-)(\cdot)) \to \sM'(\I,h'(-)(\cdot))$
of Chow theories, there is a unique morphism
$T:(\sM,h,\nu) \to (\sM',h',\nu')$ of Poincar\'e duality theories
such that $T_{\I,h(-)(\cdot)} = \varphi$.
\end{prop}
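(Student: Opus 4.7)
The plan is to construct $T$ explicitly, using the canonical isomorphisms supplied by Proposition~\ref{p:PDhom} applied to the identity morphism $\gamma = \mathrm{id}$ on $C = \sM(\I, h(-)(\cdot))$; these provide isomorphisms
\[
\gamma_{X,Y,i,j}: C^{\dim X + j - i}(X \times Y) \iso \sM(h(X)(i), h(Y)(j))
\]
for every $X, Y, i, j$, and analogously $\gamma'_{X,Y,i,j}$ for $(\sM',h',\nu')$. Since $(\sM,h,\nu)$ is taut, every object of $\sM$ equals $h(X)(i)$ for a unique pair $(X,i)$, so define $T$ on objects by $T(h(X)(i)) := h'(X)(i)$. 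On morphisms, write each $f: h(X)(i) \to h(Y)(j)$ uniquely as $\gamma_{X,Y,i,j}(\widehat{f})$ and set
\[
T(f) := \gamma'_{X,Y,i,j}(\varphi_{X \times Y, \dim X + j - i}(\widehat{f})).
\]

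Functoriality and $k$\nd linearity then reduce to the properties of $\gamma$ catalogued in Proposition~\ref{p:PDhom}. The identity axiom uses \ref{i:PDhomgraph} applied with $p = 1_X$ together with \eqref{e:PDhomtwist}, noting that $\varphi$ preserves $1$ and commutes with pushforward along the diagonal. Compatibility with composition follows from \ref{i:PDhomcomp}, since $\varphi$ is a morphism of Chow theories and commutes with pullback, pushforward, and products. A symmetric monoidal structure is given to $T$ by structural isomorphisms assembled from those of $h$ and $h'$: for $M = h(X)(i)$, $N = h(Y)(j)$, with $M \otimes N = h(Z)(k)$ by tautness, take the composite of $\psi'_{X,Y}(i+j)$ with $T$ applied to $\psi_{X,Y}(i+j)^{-1}$, where $\psi, \psi'$ denote the structural isomorphisms of $h, h'$. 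Monoidality on morphisms follows from Proposition~\ref{p:PDhom}~\ref{i:PDhomtens} and naturality, while the Tate axiom $T(\I(i)) = \I(i)$ and the identity of the structural isomorphisms when one factor is $\I(i)$ hold because $h$ and $h'$ strictly preserve the unit.

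To see $T$ is a morphism of Poincar\'e duality theories, three compatibilities must be checked. First, $Th = h'$: Proposition~\ref{p:PDhom}~\ref{i:PDhomgraph} gives $h(p) = \gamma_{X,Y,0,0}((p,1_Y)_*(1))$, so $T(h(p)) = \gamma'_{X,Y,0,0}((p,1_Y)_*(1)) = h'(p)$. Second, $T\nu = \nu'$: unwinding the formula \eqref{e:adj1iso} for $\omega$ in the special case $(Y,j,i) = (\I, 0, \dim X)$ shows that $\nu_X = \gamma_{X,\I,\dim X,0}(1_X)$, where $1_X \in C^0(X)$ is the algebra unit, using the unit axiom $\mu_X(\dim X) \circ (h(X)(\dim X) \otimes \iota_X) = 1$; since $\varphi_X$ is a homomorphism of graded $k$\nd algebras it fixes $1_X$, so $T(\nu_X) = \nu'_X$. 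Third, $T_{\I, h(-)(\cdot)} = \varphi$ is immediate from the definition, since $\gamma_{\I, X, 0, i}$ is the identity on $C^i(X) = \sM(\I, h(X)(i))$ (the structural isomorphism of $h$ at $\I \times X$ being the identity).

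Uniqueness is enforced by tautness. The conditions $Th = h'$ and $T(\I(i)) = \I(i)$ force the action of $T$ on objects, and combining the required $T_{\I, h(-)(\cdot)} = \varphi$ with \eqref{e:gammaT} determines its action on every morphism of the form $\gamma_{X,Y,i,j}(\widehat{f})$, which is every morphism of $\sM$ by tautness. The principal technical obstacle is the monoidal coherence check for $T$---associativity, the hexagon identities, and compatibility with symmetries---but these reduce mechanically to Proposition~\ref{p:PDhom}~\ref{i:PDhomtens}, naturality of the $\gamma$'s, and the coherence of $h$ and $h'$.
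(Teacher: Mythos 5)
Your construction of $T$ on objects and on morphisms coincides with the paper's, and your checks of functoriality, $Th = h'$, $T\nu = \nu'$, $T_{\I,h(-)(\cdot)} = \varphi$, and uniqueness are all sound. The difference is that the paper first invokes Proposition~\ref{p:tautrefl}~\ref{i:tautreflU} and \ref{i:tautreflT} to reduce to the case where $h$ \emph{and} $h'$ are strict; once that is done, any Tate $k$\nd tensor functor $T$ with $Th = h'$ and $h$ taut is automatically a \emph{strict} $k$\nd tensor functor, so the entire monoidal coherence verification evaporates, and the preservation of tensor products, composites and identities follows directly from Proposition~\ref{p:PDhom}~\ref{i:PDhomtens}, \ref{i:PDhomcomp}, \ref{i:PDhomgraph} (with $\zeta$ and $\xi$ now identities). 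You skip this reduction, and as a consequence you must assemble nontrivial structural isomorphisms for $T$ from $\psi$ and $\psi'$ and then verify their naturality, compatibility with associativity, compatibility with symmetry, and the Tate condition by hand.

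That final verification is the gap. Your last paragraph concedes that the coherence check is ``the principal technical obstacle'' and asserts it reduces ``mechanically'' to Proposition~\ref{p:PDhom}~\ref{i:PDhomtens}, the naturality of $\gamma$, and the coherence of $h$ and $h'$, but this is claimed rather than proved, and it is not a short diagram chase: with $h$ and $h'$ non-strict, the object $h(X)(i) \otimes h(Y)(j)$ is generally a different $h(Z)(k)$ from $h(X \times Y)(i+j)$, so every instance of Proposition~\ref{p:PDhom}~\ref{i:PDhomtens} you invoke carries the conjugating isomorphisms $\zeta$ and $\xi$, and showing that the resulting web of $\psi$'s, $\psi'$'s, $\gamma$'s and $\gamma'$'s satisfies the pentagon and hexagon axioms requires a careful and lengthy argument. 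The clean way to close this is exactly the step you omitted: apply Proposition~\ref{p:tautrefl} to replace both $(\sM,h,\nu)$ and $(\sM',h',\nu')$ by strict-and-taut models (and $\varphi$ by the corresponding morphism, using that the replacements induce isomorphisms on associated Chow theories), after which your $T$ is forced to be strict and coherence is automatic.
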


\begin{proof}
By Proposition~\ref{p:tautrefl}~\ref{i:tautreflU} and \ref{i:tautreflT},
we may suppose that $h$ and $h'$ are strict.
Any $T:(\sM,h,\nu) \to (\sM',h',\nu')$ must send $h(X)(i)$
in $\sM$ to $h'(X)(i)$ in $\sM'$.
Further $T$ must be a strict $k$\nd tensor functor,
because it is a Tate $k$\nd tensor functor
with $Th = h'$ and $h$ is taut.
If we write $C$ for $\sM(\I,h(-)(\cdot))$ and $C'$ for
$\sM'(\I,h'(-)(\cdot))$, then we have identity isomorphisms of
Chow theories
$\gamma:C \iso \sM(\I,h(-)(\cdot))$  and
$\gamma':C' \iso \sM'(\I,h'(-)(\cdot))$.
Then $T_{\I,h(-)(\cdot)} = \varphi$ if and only if
\eqref{e:thetacircphi} holds,
and it has been seen that \eqref{e:thetacircphi} holds if and only if
\eqref{e:gammaT} holds.
Since the $\gamma_{X,Y,i,j}$ are isomorphisms, a $T$
with $T_{\I,h(-)(\cdot)} = \varphi$ is thus unique if it exists.
To show that such a $T$ exists, define $T$ on objects as sending
$h(X)(i)$ to $h'(X)(i)$ and on morphisms by requiring that
\eqref{e:gammaT} should hold.
That $T$ so defined is a strict $k$\nd tensor functor with $Th = h'$
follows from the strictness of $h$ and $h'$ together
with Proposition~\ref{p:PDhom}~\ref{i:PDhomtens}, \ref{i:PDhomcomp}
and \ref{i:PDhomgraph}.
That it preserves canonical transposes, and hence sends the
transpose $\nu_X:h(X)(\dim X) \to \I$ in $\sM$ of the identity
$\I \to h(X)$ to $\nu'{}\!_X$ in $\sM'$, then follows from
Proposition~\ref{p:PDhom}~\ref{i:PDhomtrans}.
\end{proof}

Consider the functor $\sC\sH$ from Poincar\'e duality theories with source
$\sV$ to Chow theories with source $\sV$ that sends
$(\sM,h,\nu)$ to  $\sM(\I,h(-)(\cdot))$.
By Proposition~\ref{p:tautff}, the restriction $\sC\sH_0$ of $\sC\sH$ to the
full subcategory of taut Poincar\'e duality theories is fully faithful.
By Proposition~\ref{p:tautrefl}, there exists for every $(\sM,h,\nu)$ a
morphism $U$ which is universal among morphisms with target  $(\sM,h,\nu)$
and taut source, and $\sC\sH$ sends $U$ to an isomorphism.
Thus $\sC\sH$ factors up to isomorphism as a right adjoint to the embedding
of taut Poincar\'e duality theories followed by $\sC\sH_0$.
To prove Theorem~\ref{t:Poinfunctor} below, it is thus enough to show that
$\sC\sH$ is essentially surjective.
Recall that a left adjoint is fully faithful if and only if the unit of the
adjunction is an isomorphism.

\begin{thm}\label{t:Poinfunctor}
The functor from the category of Poincar\'e duality theories with source $\sV$
to the category of Chow theories with source $\sV$ that sends $(\sM,h,\nu)$ to
$\sM(\I,h(-)(\cdot))$ and $T:(\sM,h,\nu) \to (\sM',h',\nu')$
to $T_{\I,h(-)(\cdot)}$ has a fully faithful left adjoint,
with essential image consisting of the taut Poincar\'e duality theories.
\end{thm}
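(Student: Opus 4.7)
The discussion preceding the statement reduces the theorem to showing essential surjectivity of the restriction $\sC\sH_0$ of $\sC\sH$ to the full subcategory of taut Poincar\'e duality theories: its full faithfulness is the content of Proposition~\ref{p:tautff}, and every Poincar\'e duality theory admits by Proposition~\ref{p:tautrefl} a universal taut refinement which $\sC\sH$ sends to an isomorphism. Given a Chow theory $C$ with source $\sV$, the plan is to construct by hand a taut Poincar\'e duality theory $(\sM_C, h_C, \nu_C)$ together with a canonical isomorphism $\sM_C(\I, h_C(-)(\cdot)) \iso C$. The construction is essentially forced by Proposition~\ref{p:PDhom}, which in any taut theory identifies the various pieces of structure with cycle-theoretic operations.

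Explicitly, I take $\Ob \sM_C = \Ob \sV \times \Z$ with
\[
\sM_C\bigl((X,i),(Y,j)\bigr) = C^{\dim X + j - i}(X \times Y),
\]
composition $\beta \circ \alpha = (\pr_{13})_*\bigl((\pr_{12})^*(\alpha) \cdot (\pr_{23})^*(\beta)\bigr)$, and identity $(\Delta_X)_*(1)$ at $(X,i)$. The tensor product of objects is $(X,i) \otimes (Y,j) = (X \times Y, i+j)$, with tensor product of morphisms given by the external product in $C$ after the appropriate permutation of factors, as dictated by Proposition~\ref{p:PDhom}\ref{i:PDhomtens}, while associativities and symmetries are pushforwards of $1$ along graphs of the corresponding isomorphisms in $\sV$. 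I set $\I(i) = (\I,i)$, $h_C(X) = (X,0)$, $h_C(p) = (p,1_Y)_*(1)$ for $p : Y \to X$ as in \ref{i:PDhomgraph}, and $(\nu_C)_X = 1 \in C^0(X)$ regarded as an element of $\sM_C((X,\dim X),\I)$.

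The bulk of the proof then consists in verifying that $\sM_C$ is a Tate $k$-linear pretensor category, that $h_C$ is a symmetric monoidal functor (in fact strict, since $h_C(X) \otimes h_C(Y) = (X \times Y, 0) = h_C(X \times Y)$), and that each $(h_C(X), (\nu_C)_X)$ is a Poincar\'e algebra of dimension $\dim X$. All of this reduces to routine manipulations with Definition~\ref{d:Chow}: associativity of composition and functoriality of $h_C$ come from the projection formula \ref{i:Chowproj} applied to iterated products, while the duality pairing between $(X,0)$ and $(X,\dim X)$ has counit the diagonal class $(\Delta_X)_*(1)$, whose triangular identities amount to the trivial identities $\pr_i \circ \Delta_X = 1_X$. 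Once $\sM_C$ is in hand, the identification $\sM_C(\I,h_C(-)(\cdot)) \iso C$ is the identity on underlying $k$-modules $C^i(X)$; compatibility with products, pullbacks and pushforwards follows from the definition of tensor product in $\sM_C$ and from identities \eqref{e:PDhomcomppull} and \eqref{e:PDhomcomppush}, whose proofs rely only on the Chow axioms. Tautness is automatic because $(X,i) = h_C(X)(i)$ with $X$ and $i$ uniquely determined. The main obstacle is the coherence bookkeeping: the pentagon and hexagon axioms for the symmetric monoidal structure and the triangular identities for the Poincar\'e pairings each unwind via the projection formula, but the diagrams involve cycles on fourfold or fivefold products and require care in tracking which factors are being contracted.
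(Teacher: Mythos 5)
Your construction is exactly the one the paper uses (the paper writes $h(X)(i)$ for your $(X,i)$ and $\gamma_{X,Y,i,j}(\alpha)$ for the morphism represented by the class $\alpha$), so the approach is essentially identical. The one simplification you miss is that the paper first replaces $\sV$ by a strict cartesian monoidal category via a symmetric monoidal equivalence, which makes the resulting $\sM$ a \emph{strict} $k$-pretensor category and eliminates at a stroke the nontrivial associativities and the pentagon axiom that you flag as your ``main obstacle''.
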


\begin{proof}
Let $C$ be a Chow theory with source $\sV$.
By the above remarks, it is enough to construct an $(\sM,h,\nu)$
with $\sM(\I,h(-)(\cdot))$ isomorphic to $C$.
There exists a symmetric monoidal equivalence $E:\sV \to \sV'$ with $\sV'$ strict.
Then $\sV'$ is cartesian monoidal and has a dimension function
such that $E$ preserves dimensions, and $C$ is isomorphic to the pullback
along $E$ of a Chow theory $C'$ on $\sV'$.
Replacing $\sV$ and $C$ by $\sV'$ and $C'$, we may suppose that $\sV$ is strict.

Define $(\sM,h,\nu)$ as follows.
An object of $\sM$ is a symbol $h(X)(i)$ with $X$ an object of $\sV$ and
$i$ an integer.
The tensor product is defined on objects of $\sM$ by
\[
h(X)(i) \otimes h(Y)(j) = h(X \times Y)(i + j),
\]
with $h(\I)(0)$ the identity of $\sM$.
The functor $h$ is defined on objects by
\[
h(X) = h(X)(0).
\]
A morphism from $h(X)(i)$ to $h(Y)(j)$ in $\sM$ is a symbol
$\gamma_{X,Y,i,j}(\alpha)$ with $\alpha$ an element of $C^{\dim X - i + j}(X \times Y)$,
and the $k$\nd module structure on $\sM(h(X)(i),h(Y)(j))$ is that
for which the map $\alpha \mapsto \gamma_{X,Y,i,j}(\alpha)$ is an isomorphism of
$k$\nd modules.
There is then a canonical isomorphism from this hom-space to that obtained
by replacing $i$ by $i + d$ and $j$ by $j + d$, which we write as $a \mapsto a(d)$
so that \eqref{e:PDhomtwist} holds.
The tensor product of morphisms
of $\sM$, composition of $\sM$, and action of $h$ on morphisms of $\sV$,
are defined by requiring that the equalities of
Proposition~\ref{p:PDhom}~\ref{i:PDhomtens} (with $\zeta$ and $\xi$ identities),
\ref{i:PDhomcomp}, and \ref{i:PDhomgraph}, should respectively hold.
The identity of $h(X)(i)$ is defined as $h(1_X)(i)$, and the symmetry that
interchanges the two factors in $h(X)(i) \otimes h(Y)(j)$ as $h(\sigma)(i+j)$
with $\sigma$ the symmetry that interchanges the two factors in $Y \times X$.
With these definitions, both \eqref{e:PDhomcomppush} and \eqref{e:PDhomcomppull} hold,
because they follow as above from the equalities of
Proposition~\ref{p:PDhom}~\ref{i:PDhomcomp}
and \ref{i:PDhomgraph}.

That the composition is associative follows from the fact that given
morphisms $a = \gamma_{X,Y,i,j}(\alpha)$, $b = \gamma_{Y,Z,j,l}(\alpha)$ and
$z = \gamma_{Z,W,l,m}(\zeta)$
in $\sM$, the composites $(z \circ b) \circ a$ and
$z \circ (b \circ a)$ both coincide with $\gamma_{X,W,i,m}(\xi)$,
with $\xi$ the image under $(\pr_{14})_*$ of
\begin{equation}\label{e:pr14image}
(\pr_{12})^*(\alpha).(\pr_{23})^*(\beta).(\pr_{34})^*(\zeta),
\end{equation}
where the projections are from $X \times Y \times Z \times W$.
In the case of $z \circ (b \circ a)$ for example, factoring $\pr_{14}$ as
\[
X \times Y \times Z \times W \xrightarrow{\pr_{134}}
X \times Z \times W \xrightarrow{\pr_{13}}
X \times W,
\]
and noting that by \ref{d:Chow}~\ref{i:Chowproj} the image of \eqref{e:pr14image}
under $(\pr_{134})_*$ is
\begin{equation}\label{e:pr124image}
(\pr_{134})_*((\pr_{12})^*(\alpha).(\pr_{23})^*(\beta)).
(\pr_{23})^*(\zeta)
\end{equation}
shows that $\xi$ is the image of \eqref{e:pr124image}
under $(\pr_{13})_*$, so that $\gamma_{X,W,i,m}(\xi) = z \circ (b \circ a)$
by \ref{d:Chow}~\ref{i:Chowtens} applied to $\pr_{13} \times W = \pr_{134}$.
That $h(1_X)(i)$ is indeed the identity of $(X,i)$ follows from
\eqref{e:PDhomcomppush} and \eqref{e:PDhomcomppull}.
Thus $\sM$ is a category.
We have
\begin{equation}\label{e:bracketcomp}
h(q)(j) \circ h(p)(j) = h(p \circ q)(j)
\end{equation}
for $p:Y \to X$ and $q:Z \to Y$,
by \eqref{e:PDhomcomppush} with $\beta = (q,1_Z)_*(1)$.
The assignments $X \mapsto h(X) = h(X)(0)$ and $p \mapsto h(p)$ thus define a functor
$\sV^\mathrm{op} \to \sM$.
The bilinearity of the tensor product of $\sM$ is immediate,
and the bifunctorialty follows from \ref{d:Chow}\ref{i:Chowtens}.
The naturality of the symmetries follows from \eqref{e:PDhomcomppush} and
\eqref{e:PDhomcomppull}, together with condition
\ref{d:Chow}\ref{i:Chowiso}.
We thus obtain on $\sM$ a structure of strict $k$\nd pretensor
category: the symmetries satisfy the required compatibilities by \eqref{e:bracketcomp}.
It is easily checked that $h$ is a strict
symmetric monoidal functor, and that $\sM$ has a structure of Tate
$k$\nd pretensor category with $\I(i) = h(\I)(i)$.
Then $h(X)(i)$ is the $i$\nd fold twist of $h(X)$
and the $d$\nd fold twist sends $a:h(X)(i) \to h(Y)(j)$
to the $a(d)$ already defined.

We have a duality pairing between $h(X)$ and $h(X)(\dim X)$ whose
unit $\eta_X$ is $\gamma_{\I,X \times X,0,\dim X}((\Delta_X)_*(1))$
and whose counit $\varepsilon_X$ is $\gamma_{X \times X,\I,\dim X,0}((\Delta_X)_*(1))$.
Indeed $(\varepsilon_X \otimes h(X)) \circ (h(X) \otimes \eta_X)$ is
\[
\gamma_{X,X,0,0}((\pr_{15})_*(\delta_{12}.\delta_{23}.\delta_{34}.\delta_{45}))
= \gamma_{X,X,0,0}((\Delta_X)_*(1)),
\]
where $\delta_{rs} = (\pr_{rs})^*((\Delta_X)_*(1))$ with the projections
from $X \times X \times X \times X \times X$.
This gives one triangular identity, and the other is similar.
Write $\nu_X$ for the morphism  $\gamma_{X,\I,\dim X,0}(1)$
from $h(X)(\dim X)$ to $\I$,
Then since $h(\Delta_X)$ is the multiplication of $h(X)$ and since
$\nu_X \circ h(\Delta_X)(\dim X) = \varepsilon_X$ by \eqref{e:PDhomcomppush},
it follows that $(h(X),\nu_X)$ is a Poincar\'e algebra of dimension
$\dim X$ in $\sM$.
Clearly $\nu_\I = 1_\I$ and $\nu_{X \times Y} = \nu_X \otimes \nu_Y$
for every $X$ and $Y$,
while $\nu_X = \nu_Y \circ h(p)(\dim X)$ for an isomorphism $p:Y \iso X$
by \eqref{e:PDhomcomppush} and \ref{d:Chow}~\ref{i:Chowiso}.
This shows that $(\sM,h,\nu)$ is a Poincar\'e duality theory with source $\sV$.

We show finally that the $k$\nd isomorphisms
$\alpha \mapsto \gamma_{\I,X,0,i}(\alpha)$ define an isomorphism of Chow theories
from $C$ to $\sM(\I,h(-)(\cdot))$.
That they define an isomorphism of graded algebras
$C(X) \iso \sM(\I,h(X)(\cdot))$ for each $X$ is clear from the equality
of Proposition~\ref{p:PDhom}~\ref{i:PDhomtens} and
\eqref{e:PDhomcomppull} with $q = \Delta_X$.
The compatibility with pullbacks follows from \eqref{e:PDhomcomppull} with $X = \I$.
The transpose of $\gamma_{X,\I,i,0}(\beta)$ from $h(X)(i)$ to $\I$
is $\gamma_{\I,X,0,\dim X - i}(\beta)$ from $\I$ to $h(X)(\dim X - i)$, because
$\gamma_{X,\I,i,0}(\beta)$ coincides with
\[
\gamma_{X,\I,i,0}((\pr_1)_*(\delta_{12}.(\pr_3)^*(\beta).\delta_{23})) =
\varepsilon_X \circ (h(X)(i) \otimes \gamma_{\I,X,0,\dim X - i}(\beta)),
\]
where $\delta_{rs} = (\pr_{rs})^*((\Delta_X)_*(1))$ with the projections
from $X \times X \times X$.
Taking the transpose of \eqref{e:PDhomcomppush} with $Z = \I$ thus gives the
compatibility with push forward.
\end{proof}

By Proposition~\ref{p:tautrefl}, the left adjoint of Theorem~\ref{t:Poinfunctor}
may be chosen so that the for every $(\sM,h,\nu)$ in its image, $h$ is strict.
We fix such a left adjoint and its unit,
and call the image of the Chow theory $C$
the \emph{Poincar\'e duality theory associated to $C$}, and similarly
for a morphism of Chow theories.
The morphism of Poincar\'e duality theories associated to any morphism of Chow theories
is then a strict $k$\nd tensor functor.
The Poincar\'e duality theory $(\sM,h,\nu)$ associated to $C$ comes equipped with a
universal morphism $\gamma:C \iso \sM(\I,h(-)(\cdot))$, which is an isomorphism
such that for any $(\sM',h',\nu')$ and $\gamma':C \to \sM'(\I,h'(-)(\cdot))$
there is a unique morphism $T$ from $(\sM,h,\nu)$ to $(\sM',h',\nu')$
with \mbox{$\gamma' = T_{\I,h(-)(\cdot)} \circ \gamma$}.
Further $T:\sM \to \sM'$ is faithful (resp.\ full) if
and only if every component $\gamma'{}\!_X$ is injective (resp.\ surjective).

Given $(\sM,h,\nu)$, any $\gamma:C \to \sM(\I,h(-)(\cdot))$ induces a map
$\sJ \mapsto J$ from the set of tensor ideals of $\sM$ to the set of ideals of $C$,
with $J(X)$ the inverse image under $\gamma$ of $\sJ(\I,h(X)(\cdot))$.
If $(\sM,h,\nu)$ is the Poincar\'e duality theory associated to $C$ and
$\gamma$ is the universal morphism, then $\sJ \mapsto J$ is bijective:
its inverse sends $J$ to the kernel of the morphism of Poincar\'e duality theories
associated to the projection $C \to C/J$.
Thus we may identify the Poincar\'e duality theory associated to $C/J$ with the
push forward of $(\sM,h,\nu)$ along the projection $\sM \to \sM/\sJ$.
More generally, for any $(\sM,h,\nu)$ and isomorphism $\gamma:C \iso \sM(\I,h(-)(\cdot))$,
the map $\sJ \mapsto J$ is surjective,
because tensor ideals can be extended from full $k$\nd tensor subcategories.

\section{Kimura categories}\label{s:Kimura}

\emph{In this section $k$ is a field of characteristic $0$}.

\subsection{Positive and negative objects}\label{ss:posneg}

Let $M$ be an object in a $k$\nd tensor category.
For each integer $d \ge 0$ we have a $k$\nd homomorphism from $k[\mathfrak{S}_d]$ to
$\End(M^{\otimes d})$ which assigns to an element of the symmetric group $\mathfrak{S}_d$
the associated symmetry of $M^{\otimes d}$.
Then the respective images $s_n$ and $a_n$ in $\End_{\sC}(M^{\otimes n})$ of the
symmetrising and antisymmetrising idempotents of $k[\mathfrak{S}_n]$ are
idempotent endomorphisms of $M^{\otimes n}$.
The image of $s_n$ will be denoted by $S^n M$ and the image of $a_n$ by $\bigwedge^n M$.
Thus $S^n M$ and $\bigwedge^n M$ are direct summands of $M^{\otimes n}$, and they
may be regarded as functors of $M$.
We have $\bigwedge^m M = 0$ if and only if $a_m = 0$,
and in that case  $\bigwedge^n M = 0$
for every $n \ge m$, because the ideal generated by the antisymmetriser of
$k[\mathfrak{S}_m] \subset k[\mathfrak{S}_n]$ contains the antisymmetriser of
$k[\mathfrak{S}_n]$.
Similarly if $S^m M = 0$ then $S^n M = 0$ for every $n \ge m$.
An object $M$ in a $k$\nd tensor category will be called \emph{positive}
(resp.\ \emph{negative}) if it is dualisable and if $\bigwedge^m M = 0$
(resp.\ $S^m M = 0$) for some $m$.
Suppose that $M$ is dualisable of rank $r$.
Then the contraction with respect to the last factor $M$ of a symmetry $\sigma$ permuting
the factors of $M^{\otimes (n+1)}$ is $r\sigma_0$ when $\sigma = \sigma_0 \otimes M$
leaves the last factor $M$ fixed, and otherwise is the symmetry of $M^{\otimes n}$
given by omitting $n+1$ from the cycle containing it in the permutation
that defines $\sigma$.
Thus the contraction with respect
to the last factor $M$ of $a_{n+1}$ is $(r-n)/(n+1)a_n$.
Contracting $n+1$ times shows that if $a_{n+1} = 0$ then $\binom{r}{n+1} = 0$.
Hence if $M$ is positive then $r = \sum_i r_i e_i$
with the $r_i$ integers $\ge 0$ and the  $e_i$ idempotents in $\End(\I)$.
Further if $M$ is positive and $r$ is an integer,
then $r$ is the least integer $\ge 0$ for which $\bigwedge^{r+1} M$ is $0$.
Similarly if $M$ is negative then $r = \sum_i r_i e_i$
with the $r_i$ integers $\le 0$ and the  $e_i$ idempotents,
and if $M$ is negative and $r$ is an integer,
then $r$ is the greatest integer $\le 0$ for which $S^{-r+1} M$ is $0$.
In particular if $M$ is both positive and negative then $M = 0$.

By a $k$\nd group we mean an affine group scheme over $k$, and
by a $k$\nd subgoup of a $k$\nd group a closed group subscheme.
If $G$ is a $k$\nd group, the category of $G$\nd modules will be written $\REP_k(G)$.
It has an structure of $k$\nd tensor category given by the tensor product
of $G$\nd modules over $k$.
An algebra in $\REP_k(G)$ is the same as a $G$\nd algebra,
i.e.\ an algebra over $k$ with a structure of $G$\nd module
such that the points of $G$ act by algebra isomorphisms.
The full $k$\nd tensor subcategory of $\REP_k(G)$ consisting of the
finite dimensional $G$\nd modules will be written $\Rep_k(G)$.
Then $\REP_k(G)$ is an ind-completion of $\Rep_k(G)$.
A finite dimensional $G$\nd module will also be called a representation of $G$.
For the following classical result, see for example \cite{FulHar}, Theorem~6.3.

\begin{lem}\label{l:symgroup}
Denote by $V$ the standard representation of $GL_n$ over $k$.
Then the homomorphism from
$k[\mathfrak{S}_r]$ to $\End_{GL_n}(V^{\otimes r})$
that assigns to each element in $\mathfrak{S}_r$ the
associated symmetry of $V^{\otimes r}$
is surjective, with kernel $0$ for $r \le n$ and the ideal
generated by the antisymmetriser of
$k[\mathfrak{S}_{n+1}] \subset k[\mathfrak{S}_r]$
for $r>n$.
\end{lem}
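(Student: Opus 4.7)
The plan is to invoke Schur--Weyl duality. Since $k$ has characteristic zero, the group algebra $k[\mathfrak{S}_r]$ is semisimple and decomposes as $\bigoplus_\mu \End_k(M_\mu)$, where $\mu$ runs over partitions of $r$ and $M_\mu$ is the Specht module of shape $\mu$. The Schur--Weyl decomposition then expresses $V^{\otimes r}$ as a $GL_n \times \mathfrak{S}_r$-module in the form
\[
V^{\otimes r} \;\cong\; \bigoplus_{\ell(\lambda) \le n} S_\lambda V \otimes M_\lambda,
\]
where $\lambda$ ranges over partitions of $r$ with at most $n$ parts, each $S_\lambda V$ is irreducible as a $GL_n$-module, and the $S_\lambda V$ are pairwise non-isomorphic. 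This is the classical content cited as \cite{FulHar}, Theorem~6.3.

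From this decomposition, Schur's lemma gives $\End_{GL_n}(V^{\otimes r}) = \bigoplus_{\ell(\lambda)\leq n} \End_k(M_\lambda)$, and the homomorphism in the lemma sends the summand $\End_k(M_\mu) \subseteq k[\mathfrak{S}_r]$ identically onto the corresponding summand of $\End_{GL_n}(V^{\otimes r})$ when $\ell(\mu) \le n$, and to zero when $\ell(\mu) > n$. This at once gives surjectivity and identifies the kernel as $\bigoplus_{\ell(\mu) > n} \End_k(M_\mu)$. For $r \le n$ every partition of $r$ has length at most $n$, so the kernel is zero, which is the first assertion.

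It remains, for $r > n$, to identify this kernel with the two-sided ideal $I$ of $k[\mathfrak{S}_r]$ generated by $a_{n+1}$. Since $\dim V = n$ we have $\bigwedge^{n+1} V = 0$, so $a_{n+1}$ annihilates $V^{\otimes(n+1)}$ and hence $V^{\otimes r}$; thus $I$ is contained in the kernel. For the reverse inclusion, fix a partition $\mu$ of $r$ with $\ell(\mu) \ge n+1$ and a Young tableau $T$ of shape $\mu$ whose first column has entries $1, 2, \dots, n+1$ at the top. The column antisymmetriser of $T$ factors as $a_{n+1} \cdot a'$ with $a'$ acting on disjoint symbols, so the Young symmetriser $c_T$ lies in $I$. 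Since $c_T$ generates a copy of $M_\mu$ in $k[\mathfrak{S}_r]$, its image in the simple factor $\End_k(M_\mu)$ is non-zero, and simplicity of $\End_k(M_\mu)$ forces $\End_k(M_\mu) \subseteq I$.

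The only step doing real work is the Schur--Weyl decomposition itself, which is supplied by the cited reference; once it is in hand, the remainder is a formal verification via semisimplicity and Schur's lemma, so I do not anticipate any serious obstacle.
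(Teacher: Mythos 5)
Your proof is correct, and since the paper does not supply its own argument but simply cites \cite{FulHar}, Theorem~6.3, your write-up is precisely the standard unpacking of that reference via Schur--Weyl duality. The one part that genuinely requires thought beyond the decomposition $V^{\otimes r} \cong \bigoplus_{\ell(\lambda)\le n} S_\lambda V \otimes M_\lambda$ is the identification of the kernel with the two-sided ideal generated by $a_{n+1}$, and your argument for this (showing that the Young symmetriser $c_T$ of any shape $\mu$ with $\ell(\mu)\ge n+1$ has $a_{n+1}$ as a factor, hence lies in the ideal, and has non-zero image in the simple summand $\End_k(M_\mu)$) is exactly right.
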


The following consequence of Proposition~\ref{p:freerigid} and Lemma~\ref{l:symgroup}
is well known.
Since the treatment in \cite{O} is rather sketchy, and there seems to be no other suitable
reference, we include a proof here.

\begin{thm}\label{t:GL}
Let $(n_\lambda)_{\lambda \in \Lambda}$
be a family of non-negative integers.
Denote by $G$ the $k$\nd group $\prod_{\lambda \in \Lambda}GL_{n_\lambda}$
and by $V_\lambda$ the standard representation of the factor $GL_{n_\lambda}$ of $G$.
Then for any $k$\nd tensor category $\sC$ and family
$(M_\lambda)_{\lambda \in \Lambda}$ of objects in $\sC$ with $M_\lambda$ positive
of rank $n_\lambda$, there exists, uniquely up to tensor isomorphism,
a $k$\nd tensor functor from $\Rep_k(G)$
to $\sC$ which sends $V_\lambda$ to $M_\lambda$.
\end{thm}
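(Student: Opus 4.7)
The plan is to realise $\Rep_k(G)$ as a pseudo-abelian hull of a suitable quotient of a free rigid $k$\nd pretensor category, and to match it against $\sC$ by the two universal properties. First I would apply Proposition~\ref{p:freerigid} to the family $(n_\lambda)_{\lambda\in\Lambda}$ to obtain a free rigid $k$\nd pretensor category $\sR_0$ of that type with universal family $(N_\lambda)_{\lambda\in\Lambda}$, where each $N_\lambda$ is dualisable of rank $n_\lambda$. Since both $V_\lambda$ in $\Rep_k(G)$ and $M_\lambda$ in $\sC$ are dualisable of rank $n_\lambda$, the universal property produces $k$\nd tensor functors $T_0\colon\sR_0\to\Rep_k(G)$ and $S_0\colon\sR_0\to\sC$, each determined up to tensor isomorphism by the condition that $N_\lambda$ maps to $V_\lambda$, respectively $M_\lambda$.

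Next I would introduce the tensor ideal $\sJ$ of $\sR_0$ generated, for each $\lambda$, by the antisymmetrising idempotent $a_{n_\lambda+1}$ acting on $N_\lambda^{\otimes(n_\lambda+1)}$. Since $M_\lambda$ is positive of rank $n_\lambda$, we have $\bigwedge^{n_\lambda+1}M_\lambda=0$, so $S_0$ annihilates $\sJ$ and factors through a $k$\nd tensor functor $\sR_0/\sJ\to\sC$; because $\sC$ is pseudo-abelian, this extends (uniquely up to tensor isomorphism, by the universal property of pseudo-abelian hulls recalled in~\ref{ss:ind}) to a $k$\nd tensor functor $\widetilde{S}\colon (\sR_0/\sJ)^\natural\to\sC$ from a pseudo-abelian hull.

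The crux is to show that $T_0$ factors analogously through $\sR_0/\sJ$ via a \emph{fully faithful} $k$\nd tensor functor $T_1$ whose extension $\widetilde{T}\colon(\sR_0/\sJ)^\natural\to\Rep_k(G)$ is a $k$\nd tensor equivalence. Using duality isomorphisms of the form $\sR_0(P,Q)\iso\sR_0(\I,P^\vee\otimes Q)$, together with the analogous isomorphisms in $\Rep_k(G)$, I would reduce hom-spaces between arbitrary monomials in the $N_\lambda$ and $N_\lambda^\vee$ to those between pure positive tensor monomials of the form $N_{\lambda_1}^{\otimes r_1}\otimes\cdots\otimes N_{\lambda_n}^{\otimes r_n}$, which by Proposition~\ref{p:freerigid} are zero unless the exponents match and otherwise spanned by tensor products of symmetries $\sigma_1\otimes\cdots\otimes\sigma_n$. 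Applying Lemma~\ref{l:symgroup} factor by factor in $G=\prod_\lambda GL_{n_\lambda}$ identifies the image of such a hom-space under $T_0$ with the full $G$\nd equivariant hom-space, and identifies its kernel as exactly the ideal generated by the antisymmetrisers $a_{n_\lambda+1}$, that is, with $\sJ$ on that hom-space. This yields the fullness and faithfulness of $T_1$. Essential surjectivity of $\widetilde{T}$ then uses the standard representation-theoretic fact that every irreducible representation of $\prod_\lambda GL_{n_\lambda}$ is a direct summand of some tensor product of the $V_\lambda$ and the duals $V_\lambda^\vee$ (obtainable as Schur functors of $V_\lambda$ twisted by powers of $\det V_\lambda=\bigwedge^{n_\lambda}V_\lambda$), so that every object of $\Rep_k(G)$ is a summand of a direct sum of objects in the image of $T_1$.

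Composing $\widetilde{S}$ with a quasi-inverse of $\widetilde{T}$ produces the desired $k$\nd tensor functor $\Rep_k(G)\to\sC$ sending $V_\lambda$ to $M_\lambda$. For uniqueness, any two such functors restrict along $\widetilde{T}$ to two $k$\nd tensor functors $(\sR_0/\sJ)^\natural\to\sC$ and, after further restriction, to $k$\nd tensor functors $\sR_0\to\sC$ which both send $N_\lambda$ to $M_\lambda$; by the uniqueness clause of Proposition~\ref{p:freerigid} these are tensor isomorphic on $\sR_0$, and the tensor isomorphism propagates uniquely through $\sR_0\to\sR_0/\sJ\to(\sR_0/\sJ)^\natural$ by the universal properties of tensor-ideal quotients and pseudo-abelian hulls. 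The principal obstacle is the fully-faithfulness of $T_1$, which is exactly where Lemma~\ref{l:symgroup} enters; the remaining steps are essentially formal consequences of the universal properties used.
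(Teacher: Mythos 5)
Your proposal follows essentially the same route as the paper's proof: realise the source as a quotient of a free rigid $k$\nd pretensor category by the ideal generated by the $(n_\lambda+1)$\nd st antisymmetrisers, reduce hom-spaces to the explicit form of Proposition~\ref{p:freerigid} via duality and symmetries, and match the image with $G$\nd equivariant hom-spaces factor-by-factor via Lemma~\ref{l:symgroup}, with uniqueness delegated to the universal property of the free rigid category. The only cosmetic difference is that you pass to a fresh pseudo-abelian hull $(\sR_0/\sJ)^\natural$, whereas the paper observes that $\sR/\sJ$ is already pseudo-abelian because its hom-spaces are finite-dimensional so idempotents lift along the projection; both handle the same point correctly.
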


\begin{proof}
Let $\sR$ be a free rigid $k$\nd tensor category of type
$(n_\lambda)_{\lambda \in \Lambda}$ and $(N_\lambda)_{\lambda \in \Lambda}$
be its universal family of objects.
Then $\sR$ is generated as a rigid $k$\nd tensor category by the $N_\lambda$.
Denote by $\sJ$ the tensor ideal of $\sR$ generated by
the identities of the $\bigwedge^{n_\lambda +1} N_\lambda$.
We have $k$\nd tensor functors $E:\sR/\sJ \to \Rep_k(G)$
and $\sR/\sJ \to \sC$, unique up to tensor isomorphism,
which send $N_\lambda$ respectively to $V_\lambda$ and $M_\lambda$.
It will suffice to show that $E$ is an equivalence.
By Proposition~\ref{p:freerigid}, the hom $k$\nd spaces of $\sR$ are
finite-dimensional, so that by lifting of idempotents $\sR/\sJ$ is
pseudo-abelian.
Since $\Rep_k(G)$ is generated as a rigid $k$\nd tensor category by the
$V_\lambda$, it is thus enough to show that $E$ is fully faithful.

Let $L$ and $M$ be objects of $\sR$.
To show that
\[
E_{L,M}:(\sR/\sJ)(L,M) \to \Hom_G(E(L),E(M))
\]
is bijective, we may suppose that $L$ is the
source and $M$ is the target of the hom $k$\nd space \eqref{e:freerigid}.
By Proposition~\ref{p:freerigid}, the canonical homomorphism from the tensor
product over $k$ of the
$(\sR/\sJ)(N_{\lambda_i}{}^{\otimes r_i},N_{\lambda_i}{}^{\otimes s_i})$ to
$(\sR/\sJ)(L,M)$ is surjective.
Its composite with $E_{L,M}$ factors through the canonical homomorphism
from the tensor product over $k$ of the
$\Hom_G(V_{\lambda_i}{}^{\otimes r_i},V_{\lambda_i}{}^{\otimes s_i})$
to $\Hom_G(E(L),E(M))$, which is an isomorphism because $G$ is the product
of the $GL_{n_\lambda}$ and $V_\lambda$ is a representation of $GL_{n_\lambda}$.
Thus we reduce to the case where $L = N_\lambda{}^{\otimes r}$ and
$M = N_\lambda{}^{\otimes s}$
for some $\lambda$, $r$ and $s$.
If $r \ne s$, then the source of $E_{L,M}$ is $0$ by Proposition~\ref{p:freerigid},
while its target is $0$ because a $k$\nd point $z$ of the centre $\bG_m$ of
$GL_{n_\lambda}$ acts a $z^r$ on $E(L)$ and as $z^s$ on $E(M)$.
If $u$ is given by the action of $\mathfrak{S}_r$ on
$N_\lambda{}^{\otimes r}$ and $v$ modulo structural isomorphisms of
$E$ by $E_{N_\lambda{}^{\otimes r},N_\lambda{}^{\otimes r}}$, then
\[
k[\mathfrak{S}_r] \xrightarrow{u}
\End_{\sF/\sJ}(N_\lambda{}^{\otimes r}) \xrightarrow{v}
\End_G(V_\lambda{}^{\otimes r})
\]
is given by the action of
$\mathfrak{S}_r$ on $V_\lambda{}^{\otimes r}$.
By Proposition~\ref{p:freerigid}, $u$ is surjective,
and by definition of $\sJ$ the kernel of $u$ contains the antisymmetriser
$\alpha$ of $k[\mathfrak{S}_{n_\lambda+1}] \subset k[\mathfrak{S}_r]$ when
$r > n_\lambda$.
By Lemma~\ref{l:symgroup}, $v \circ u$ is surjective, with kernel $0$ when
$r \le n_\lambda$ and the ideal generated by $\alpha$ when $r > n_\lambda$.
It follows that $v$ an isomorphism.
\end{proof}

Let $\sC$ be a $k$\nd tensor category.
Clearly the dual or any direct summand of a positive (resp.\ negative)
object in $\sC$  is positive (resp.\ negative).
Using Theorem~\ref{t:GL}, we can see as follows
that the direct sum
of two positive (resp.\ negative) objects in $\sC$ is positive (resp.\ negative),
and that the tensor product of two positive or two negative objects in $\sC$ is
positive and the tensor product of positive and a negative object in $\sC$
is negative.
We may suppose that the objects have integer rank.
Then in the positive case we reduce by Theorem~\ref{t:GL} to the where
$\sC$ is a category $\Rep_k(G)$, in which every object is positive.
To see for example that $N_0 \otimes N_1$ is negative when $N_0$ is
positive of rank $r_0 \ge 0$ and $N_1$ is negative of rank $r_1 \le 0$ in $\sC$,
we may suppose that $\sC = \sR/\sJ$, where $\sR$ is a
free rigid $k$\nd tensor category of type $(r_i)_{i = 0, 1}$ with universal family
$(N_i)_{i = 0, 1}$, and $\sJ$ is generated by
the identities of $\bigwedge^{r_0+1}N_0$ and $S^{-r_1+1}N_1$.
By Proposition~\ref{p:freerigid}, $\sC$ has then a $\Z/2$\nd grading, with $N_0$ of
degree $0$ and $N_1$ of degree $1$.
If $\sC^\dagger$ is obtained from $\sC$ by modifying its symmetry according
to this $\Z/2$\nd grading, then $N_0$ and $N_1$ and hence $N_0 \otimes N_1$
are positive in $\sC^\dagger$.
Thus since $N_0 \otimes N_1$ is of degree $1$, it is negative in $\sC$.

Let $M$ be a positive and $N$ be a negative object in a $k$\nd tensor category.
Then any morphism $M \to N$ or $N \to M$ is tensor nilpotent.
Indeed since $N \otimes M^\vee$ and $M \otimes N^\vee$ are negative,
it is enough by \eqref{e:adjtens} to show that $f:\I \to N$ is tensor nilpotent
when $N$ is negative.
By naturality of the symmetries, the composite of $f^{\otimes n}$ with any symmetry
permuting the factors of $N^{\otimes n}$ is $f^{\otimes n}$.
Thus $f^{\otimes n}$ factors through the embedding $S^n N \to N^{\otimes n}$.
Taking $n$ large now gives the result.

For a more precise version of the following Corollary, see \cite{O}, Lemma~3.4.

\begin{cor}\label{c:posrep}
Let $\sB$ be an essentially small $k$\nd tensor category
in which every object is positive.
Then there exists a faithful and conservative
$k$\nd tensor functor from $\sB$
to the category of finitely generated projective modules over some
commutative $k$\nd algebra.
\end{cor}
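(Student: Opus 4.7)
The strategy is to use Theorem~\ref{t:GL} to realise $\sB$ as an essentially surjective tensor image of a category of representations of a product of general linear groups, and then to pull back a standard fibre functor on the representation category.

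As a preliminary reduction, by \ref{ss:posneg} the rank of a positive object in $\End(\I)$ has the form $\sum_i n_i e_i$ with non-negative integer coefficients $n_i$ and orthogonal idempotents $e_i$. Since $\sB$ is pseudo-abelian these idempotents split, so by decomposing every object accordingly we may assume each object has an integer rank $\geq 0$. Since $\sB$ is essentially small, choose a set of representatives $(M_\lambda)_{\lambda \in \Lambda}$ of the isomorphism classes, with $M_\lambda$ positive of rank $n_\lambda \geq 0$. Taking $G = \prod_\lambda GL_{n_\lambda}$, Theorem~\ref{t:GL} supplies a $k$\nd tensor functor $T : \Rep_k(G) \to \sB$ sending each standard representation $V_\lambda$ to $M_\lambda$; this $T$ is essentially surjective on isomorphism classes of objects.

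Next, I would construct the fibre functor as follows. For any commutative $G$\nd algebra $B$, the assignment $V \mapsto V \otimes_k B$ defines a $k$\nd tensor functor $\omega_B : \Rep_k(G) \to \mathrm{Proj}(B)$; the aim is to choose $B$ so that $\omega_B$ kills the tensor ideal $\sJ = \Ker T$ and thus descends to $\sB$. Using the Peter--Weyl decomposition $\mathcal{O}(G) = \bigoplus_V V^\vee \otimes V$ and the induced embeddings $\Hom_G(V,W) \hookrightarrow \mathcal{O}(G)$, let $\sI \subset \mathcal{O}(G)$ be the ideal generated by the images of the morphisms in $\sJ$, and set $A = \mathcal{O}(G)/\sI$. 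Then $\omega_A$ annihilates $\sJ$ by construction, so it descends through $T$ to a $k$\nd tensor functor $\Phi : \sB \to \mathrm{Proj}(A)$.

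The main obstacle is showing $A \neq 0$ and that $\Phi$ is faithful and conservative. Non-vanishing of $A$ amounts to $\sI \neq \mathcal{O}(G)$; this holds because $\sJ$ is a proper tensor ideal (as $T(\id_\I) = \id_\I \neq 0$ in $\sB$, so $\id_\I \notin \sJ$), and the trivial summand of $\mathcal{O}(G)$ is not in $\sI$. Positivity enters crucially here: the trace of the identity of a positive object of rank $n > 0$ equals $n \cdot \id_\I$, which is a nonzero multiple of $\id_\I$, so the identities of nonzero positive objects cannot lie in any proper tensor ideal, controlling the possible size of $\sJ$. Faithfulness of $\omega_A$ then follows since for $A \neq 0$ the map $\Hom_G(V,W) \to V^\vee \otimes W \otimes_k A$ is injective, and faithfulness of $\Phi$ follows by lifting morphisms back to $\Rep_k(G)$. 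Conservativity follows from faithfulness together with the rigidity of $\sB$ and $\mathrm{Proj}(A)$, using as observed in \ref{ss:rigid} that a faithful tensor functor between rigid $k$\nd tensor categories reflects isomorphisms, sections and retractions. The delicate step is the Peter--Weyl identification, which requires some care when $G$ is an infinite product rather than a single reductive $k$\nd group.
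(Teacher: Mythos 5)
The central step of your argument---``$\omega_A$ annihilates $\sJ$ by construction, so it descends through $T$ to a $k$\nd tensor functor $\Phi:\sB\to\mathrm{Proj}(A)$''---does not work, because the functor $T:\Rep_k(G)\to\sB$ supplied by Theorem~\ref{t:GL} is essentially surjective but \emph{not} full in general, and fullness is exactly what the descent requires. Indeed, you have taken the $M_\lambda$ to be representatives of all isomorphism classes of $\sB$ and $V_\lambda$ to be the standard representation of the $\lambda$th factor $GL_{n_\lambda}$ of $G$. For $\lambda\ne\mu$ the objects $V_\lambda$ and $V_\mu$ are representations of distinct direct factors of $G$, so $\Hom_G(V_\lambda,V_\mu)=0$ unless both are trivial; yet in $\sB$ there will typically be nonzero morphisms $M_\lambda\to M_\mu$. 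Thus $T_{V_\lambda,V_\mu}$ is far from surjective, $\sB$ is \emph{not} of the form $\Rep_k(G)/\sJ$, and there is no well-defined way to extend $\omega_A$ to those morphisms of $\sB$ that are invisible to $\Rep_k(G)$. The appeal to $A\ne 0$ and the Peter--Weyl description of $\sO(G)$, whatever its difficulties for an infinite product of $GL_n$'s, is therefore addressing the wrong obstacle: the problem is not that $\sJ$ might be too big, but that $T$ has too few morphisms to begin with.

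The paper avoids this by invoking Lemma~\ref{l:frmod}~\ref{i:frmodes}: the functor $\Rep_k(G)\to\sB_0$ (where $\sB_0$ is the full image, whose pseudo-abelian hull is $\sB$) factors as $I\circ F_R$ for some commutative algebra $R$ in $\REP_k(G)$, with $I:\sF_R\to\sB_0$ a $k$\nd tensor equivalence. The crucial point is that $\sF_R(F_R(M),F_R(N))\cong\Hom_G(M,R\otimes N)$, which is in general strictly larger than $\Hom_G(M,N)$; it is precisely this enlargement of the hom-spaces that accounts for the morphisms of $\sB$ not in the image of $T$. Having identified $\sB_0$ with $\sF_R$, one then passes to the underlying $k$\nd algebra $R_0$ of $R$ and observes directly that the forgetful functor on free $R$\nd modules lands in finitely generated projective $R_0$\nd modules, faithfully and conservatively. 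If you want to salvage your approach you would need to replace $\sO(G)$ by an algebra large enough to hit all of $\End$ in $\sB$, which is exactly what $R$ does --- your $A$ is roughly the ``wrong'' quotient of $R_0$ (namely $\sO(G)$ modulo relations) rather than $R_0$ itself. As a side remark, your observation that nonzero identities cannot lie in a proper tensor ideal is fine for objects of positive integer rank, but is not the point at issue here.
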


\begin{proof}
Every object of $\sB$ is a direct summand of one of rank an integer $\ge 0$.
Thus for an appropriate product $G$ of general linear groups,
there is by Theorem~\ref{t:GL} an essentially surjective
$k$\nd functor from $\Rep_k(G)$ to a full $k$\nd pretensor subcategory $\sB_0$
of $\sB$ with pseudo-abelian hull $\sB$.
Lemma~\ref{l:frmod}~\ref{i:frmodes} with $\sC = \Rep_k(G)$ and
$\widehat{\sC} = \REP_k(G)$ shows that $\Rep_k(G) \to \sB_0$ factors
through a $k$\nd tensor  equivalence $\sF_R \to \sB_0$ for some
commutative algebra $R$ in $\REP_k(G)$.
Now $\sF_R$ is $k$\nd tensor equivalent to the category of free $R$\nd modules
on objects of $\Rep_k(G)$.
If $R_0$ is the underlying $k$\nd algebra of $R$, we thus have a faithful
and conservative $k$\nd tensor functor from $\sF_R$, and hence from $\sB_0$ and $\sB$,
to the category of finitely generated projective $R_0$\nd modules.
\end{proof}

{}From Corollary~\ref{c:posrep} we obtain the Cayley--Hamilton theorem:
if $M$ is a positive object in a $k$\nd tensor category and $\bigwedge^{m+1}{M} = 0$,
then
\[
\sum_{i=0}^m (-1)^i \tr(\bigwedge^i f) f^{m-i} = 0
\]
for every endomorphism $f$ of $M$.
This can also be obtained by contraction, starting from the fact that $M \otimes f^{\otimes m}$
composed with the antisymmetriser of $M^{\otimes (m+1)}$ is $0$.

\subsection{Kimura objects}\label{ss:Kimura}

We recall here some results on Kimura objects and Kimura categories.
Most of these are due to Andr\'e and Kahn (\cite{AndKah},~\S~9),
based on the work of Kimura \cite{Kim}.

Let $\sC$ be a $k$\nd tensor category.
An object in $\sC$ will be called a \emph{Kimura object} if it is
the direct sum of a positive and a negative object.
Kimura objects are dualisable, direct sums and tensor products of Kimura
objects are Kimura objects, and any $k$\nd tensor functor between $k$\nd tensor
categories sends Kimura objects to Kimura objects.
If $\sC$ is rigid, then any endomorphism in the kernel of
$\sC \to \sC_{\mathrm{red}}$ is nilpotent,
so that $\sC_{\mathrm{red}}$ is a $k$\nd tensor category
and an object of $\sC$ is a Kimura object if and only if its image in
$\sC_{\mathrm{red}}$ is.
Any direct summand of a Kimura object in $\sC$ is a Kimura object:
it is enough to consider the case where $\sC$ is reduced, when there are
no non-zero morphisms between positive and negative objects.
The full subcategory of $\sC$ consisting of the Kimura objects is thus a
$k$\nd tensor subcategory of $\sC$.
Similarly the summands in any decomposition of a Kimura object into a direct sum of
a positive and a negative object are unique up to isomorphism.
Considering their ranks shows that
any endomorphism of a Kimura object which is either a section or a retraction is
an isomorphism.

A $k$\nd tensor category $\sC$ will be called a \emph{Kimura category} if
$\sC$ is essentially small, $\End_{\sC}(\I) = k$, and every object of $\sC$
is a Kimura object.
A Kimura $k$\nd tensor category will be called \emph{positive}
if each of its objects is positive,
and \emph{split} if $\sC(M,N)$
and $\sC(N,M)$ are $0$ when $M$ is positive and $N$ is negative.
A split Kimura $k$\nd tensor category has a canonical $\Z/2$\nd grading by positive
and negative objects, and any $k$\nd tensor functor between split Kimura
$k$\nd tensor categories preserves the $\Z/2$\nd gradings.
A reduced Kimura $k$\nd tensor category is split.

Let $\sC$ be a Kimura $k$\nd tensor category.
Since $\sC$ is rigid with $\End(\I)$ a field,
it has a unique maximal tensor ideal $\Rad(\sC)$,
consisting of those morphisms $f:M \to N$ of $\sC$ for which
$\tr(f \circ g) = 0$ for every $g:N \to M$.
It has been shown in \cite{AndKah},~9.1.14 that the ideals $\Rad(\sC)(N,N)$
are nilpotent.
However we need here only the following weaker result (cf.\ \cite{Kim},~7.5):
any endomorphism of $\sC$ which lies in $\Rad(\sC)$ is nilpotent.
To see this we may after replacing $\sC$ by $\sC_{\mathrm{red}}$
suppose that $\sC$ is reduced, and hence split.
Modifying the symmetry of $\sC$ according to its canonical $\Z/2$\nd grading,
we may suppose further that $\sC$ is positive, in which case it suffices to
use the Cayley--Hamilton theorem.
It follows in particular that for any tensor ideal $\sJ \ne \sC$ of $\sC$
the quotient $\sC/\sJ$ is a Kimura $k$\nd tensor category, and the projection
$\sC \to \sC/\sJ$ reflects isomorphisms, sections and retractions.

Let $\sC$ be a Kimura $k$\nd tensor category and $\sC_0$ be a full $k$\nd linear
subcategory of $\sC$ which is pseudo-abelian.
Then $\sC_0$ is semisimple abelian if and only if the restriction to $\sC_0$
of the projection $\sC \to \overline{\sC} = \sC/\Rad(\sC)$ is faithful.
Indeed if $\sC_0$ is semisimple abelian then any morphism in $\sC_0$ factors
as a retraction followed by a section, and a retraction or section is $0$ when
its image in $\overline{\sC}$ is $0$.
For the converse it is enough to show that $\overline{\sC}$ is semisimple abelian.
Replacing $\sC$ by  $\overline{\sC}$, we may suppose that $\Rad(\sC) = 0$.
Modifying the symmetry of $\sC$, we may further suppose that $\sC$ is positive.
Then by Corollary~\ref{c:posrep} there is a $k$\nd tensor functor $T$
from $\sC$ to the category of finite-dimensional vector spaces over
some extension of $k$.
The dimension over $k$ of $\sC(\I,M)$ is bounded by the rank of $M$,
because any non-zero $\I \to M$ is a section.
Thus $\sC$ has finite-dimensional endomorphism $k$\nd algebras.
It is enough to show that they are semisimple.
In fact if $f$ lies in the radical of $\End_\sC(N)$,
then $f \circ g$ is nilpotent and hence $\tr(f \circ g) = \tr(T(f \circ g)) = 0$
for every $g$ in $\End_\sC(N)$, so that $f = 0$.

For any Kimura $k$\nd tensor category $\sC$ there is
a split Kimura $k$\nd tensor category $\sC'$
and a $k$\nd tensor functor $\sC' \to \sC$ with the following universal property:
any $k$\nd tensor functor $\sB \to \sC$ with
$\sB$ a split Kimura $k$\nd tensor category
factors uniquely up to tensor isomorphism through $\sC' \to \sC$.
To construct such a $\sC'$ note that an isomorphism between a positive
and a negative object of a  Kimura
$k$\nd tensor category can exist only when both objects are $0$.
Thus for every $\sC$ we can define a (not necessarily full)
$k$\nd pretensor subcategory $\sC^{\pm}$
whose objects are those objects of $\sC$ which are either positive or negative,
where $\sC^{\pm}(M,N)$ coincides with $\sC(M,N)$ when
$M$ and $N$ are both positive or both negative, and is $0$ otherwise.
Now take for $\sC'$ a pseudo-abelian hull of $\sC^{\pm}$.
Then $\sC'$ is a split Kimura $k$\nd tensor category, and the embedding
$\sC^{\pm} \to \sC$ extends uniquely up to tensor isomorphism
to a $k$\nd tensor functor $\sC' \to \sC$.
Let $\sB$ be a split Kimura $k$\nd tensor category.
Then $\sB^{\pm}$ is a full $k$\nd pretensor subcategory of $\sB$,
and $\sB$ is a pseudo-abelian hull of $\sB^{\pm}$.
Further any $k$\nd tensor functor $\sB^{\pm} \to \sC$ factors (uniquely)
through $\sC^{\pm}$.
Thus any $k$\nd tensor functor $\sB \to \sC$ factors uniquely
up to tensor isomorphism through $\sC' \to \sC$.
It is clear from the construction that $\sC' \to \sC$ induces an equivalence
on full subcategories of either positive or negative objects.
Thus $\sC' \to \sC$ composed with the projection $\sC \to \overline{\sC}$
is full and essentially surjective, and hence
induces a $k$\nd tensor equivalence
$\overline{\sC'} \to \overline{\sC}$.

\subsection{Hopf algebras}\label{ss:Hopf}

Let $\sC$ be a $k$\nd pretensor category.
Then coproducts exist in the category of commutative algebras in $\sC$,
and are given by the tensor product of algebras.
A \emph{commutative and cocommutative Hopf algebra} in $\sC$ is a
commutative cogroup object in the category of commutative algebras in $\sC$.
Explicitly, such a Hopf algebra $R$ has in addition to its commutative
algebra structure a cocommutative coalgebra structure, defined
by a counit $R \to \I$ and a comultiplication $R \to R \otimes R$,
and an antipode $R \to R$, satisfying appropriate compatibilities.
For each such $R$ and integer $n$, we have an endomorphism $n_R$ of $R$,
given by taking the $n$th multiple, with $(-1)_R$ the antipode.

Let $N$ be a negative object of a $k$\nd tensor category $\sC$.
Since $S^n N = 0$ for $n$ large, there is defined an object
\[
\Sym N = \bigoplus_r S^r N
\]
in $\sC$.
It has a canonical structure of commutative and cocommutative Hopf algebra,
which respects the $\Z$\nd grading by the $S^r N$, defined as follows.
The unit and counit of $\Sym N$ are respectively the embedding of and the projection
onto $S^0 N = \I$.
If $v_r:S^r N \to N^{\otimes r}$ is the embedding and
$w_r:N^{\otimes r} \to S^r N$ is the projection,
then the multiplication is defined by
\[
w_{r+s} \circ (v_r \otimes v_s):S^r N \otimes S^s N \to S^{r+s} N
\]
and the comultiplication by
\[
\frac{(r+s)!}{r!s!}(w_r \otimes w_s) \circ v_{r+s}:S^{r+s} N \to S^r N \otimes S^s N.
\]
The embedding of $N = S^1N$ into $\Sym N$ is universal among morphisms from $N$
to a commutative algebra in $\sC$,
and the projection of $\Sym N$ onto $N$ is universal among morphisms from a
cocommutative coalgebra in $\sC$ to $N$.
For every $n$, the morphism $n_{\Sym N}$ acts on $S^1 N = N$ as $n$
and hence on $S^r N$ as $n^r$.

A commutative and cocommutative Hopf
algebra in a $k$\nd tensor category $\sC$ will be called \emph{symmetric}
if it is isomorphic to $\Sym N$ for some negative object $N$ of $\sC$.
A symmetric Hopf algebra has a canonical $\Z$\nd grading, with $R_r$ the component
of $R$ on which the $n_R$ act as $n^r$.

We have a functor $\Sym$ from the category of negative objects in a $k$\nd tensor
category $\sC$ to the category of commutative and cocommutative Hopf algebras in $\sC$.
It can be seen as follows that it is fully faithful.
Any morphism $h:\Sym N \to \Sym N'$ of
Hopf algebras respects the canonical gradings, because
$h \circ  2_{\Sym N} = 2_{\Sym N'} \circ h$ for example.
If $h_1$ is the component of $h$ of degree $1$, the universal property of
$\Sym N$ as a free commutative algebra shows that $h = \Sym f$
if and only if $f = h_1$.

We recall here the following facts
about morphisms in a pseudo-abelian $k$\nd linear category,
which will be needed in the proof of Theorem~\ref{t:Hopf}
\begin{enumerate}
\renewcommand{\theenumi}{(\arabic{enumi})}
\item\label{i:polydec}
If $t:M \to M$ satisfies
$\prod_{i=1}^d p_i(t) = 0$ for mutually coprime polynomials $p_i(T)$ over $k$,
then $M$ has a unique direct sum decomposition $\bigoplus_{i=1}^d M_i$
which is respected by $t$ and is such that the component $t_i:M_i \to M_i$
of $t$ satisfies $p_i(t_i) = 0$ for each $i$.
\item\label{i:polyzero}
If $t:M \to M$ and $t':M' \to M'$ satisfy
$p(t) = 0$ and $p'(t') = 0$ for mutually coprime polynomials $p(T)$ and $p'(T)$
over $k$,
then $f \circ t = t' \circ f$ implies $f = 0$ for any $f:M \to M'$.
\end{enumerate}
For \ref{i:polyzero}, note that $f \circ t = t' \circ f$ implies $f \circ q(t) = q(t') \circ f$
for any polynomial $q(T)$ over~$k$.
Choosing $q(T)$ such that its image in $k[T]/p(T) \times k[T]/p'(T)$
is $(1,0)$ shows that $f = 0$.
If the image of $r_i(T)$ in
$k[T]/(\prod_{i=1}^d p_i(T)) = \prod_{i=1}^d k[T]/p_i(T)$ has $i$th component $1$
and $j$th component $0$ for $j \ne i$,
then the $r_i(t)$ are idempotents in $\End(M)$ giving a decomposition $\bigoplus_{i=1}^d M_i$
as in \ref{i:polydec}.
If $M = \bigoplus_{i=1}^d M'{}\!_i$ is another such decomposition, the composite
$M_i \to M \to M'{}\!_j$ is $0$ for $i \ne j$ by \ref{i:polyzero}.

\begin{thm}\label{t:Hopf}
A commutative and cocommutative Hopf algebra $R$ in a Kimura $k$\nd tensor category $\sC$
is symmetric if and only if for every integer $n \ne 0$ the morphism $n_R:R \to R$ in $\sC$
is an isomorphism.
\end{thm}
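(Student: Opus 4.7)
The forward implication is immediate: when $R \cong \Sym N$ with $N$ negative, $R = \bigoplus_{r \ge 0} S^r N$ is a finite sum, and the universal property of the symmetric algebra makes $n_R$ act on $S^r N$ as multiplication by $n^r$, which is invertible in $\End(\I) = k$ whenever $n \ne 0$.

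For the converse, the plan is to produce a grading $R = \bigoplus_{r \ge 0} R_r$ whose degree-$r$ piece is the simultaneous $n^r$-eigenspace of the family $\{n_R\}_{n \ne 0}$, and then to identify $R$ with $\Sym R_1$. First I would pass to $\overline{\sC} = \sC/\Rad(\sC)$, which is again Kimura and now also semisimple abelian (\ref{ss:Kimura}); the image $\overline{R}$ remains a commutative cocommutative Hopf algebra with every $n_{\overline{R}}$ an isomorphism. Once $\overline{R} \cong \Sym \overline{N}$ is established in $\overline{\sC}$, lifting back to $\sC$ is straightforward: Theorem~\ref{t:splitting} provides a right inverse to $\sC \to \overline{\sC}$, and the projection reflects isomorphisms between Kimura objects since its kernel consists of nilpotent endomorphisms (\ref{ss:Kimura}), so any lifted Hopf-algebra map $\Sym N \to R$ matching the isomorphism modulo $\Rad(\sC)$ is itself an isomorphism.

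In $\overline{\sC}$, each $n_R$ satisfies a polynomial equation by Cayley--Hamilton for Kimura objects (\ref{ss:posneg}), and joint diagonalisation in this semisimple abelian setting yields a finite decomposition $\overline{R} = \bigoplus_\chi \overline{R}_\chi$ indexed by multiplicative characters $\chi:\Z_{>0} \to \bar k^\times$. Hopf compatibility forces $\overline{R}_\chi \cdot \overline{R}_{\chi'} \subseteq \overline{R}_{\chi\chi'}$ and $\Delta(\overline{R}_\chi) \subseteq \bigoplus_{\chi_1\chi_2=\chi} \overline{R}_{\chi_1} \otimes \overline{R}_{\chi_2}$. Using the coradical filtration together with the convolution identity $(n+1)_R = \mu \circ (n_R \otimes \id) \circ \Delta$, one inductively shows that any appearing $\chi$ is polynomial in $n$; combined with multiplicativity $\chi(nm) = \chi(n)\chi(m)$ this pins each appearing $\chi$ down to $\chi_r(n) = n^r$ for some integer $r \ge 0$. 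In particular $\overline{R}_{\chi_0} = \I$: a commutative cocommutative Hopf subalgebra on which every $n_R$ is the identity has no nontrivial primitives or group-likes.

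Setting $\overline{N} = \overline{R}_{\chi_1}$, the multiplication of $\overline{R}$ restricts to morphisms $\mu_r : S^r \overline{N} \to \overline{R}_{\chi_r}$. A Cartier--Gabriel--Kostant-type argument, via the Eulerian idempotent obtained as the logarithm of $\id_{\overline{R}}$ in the convolution algebra (whose defining series is a finite sum because $\id - u \circ e$ strictly lowers the $\chi_r$-degree on the finite decomposition), shows each $\mu_r$ is an isomorphism. Since the decomposition is finite, $S^r \overline{N} = 0$ for $r$ large, so $\overline{N}$ is negative and $\overline{R} \cong \Sym \overline{N}$. The hard part will be this Cartier--Gabriel--Kostant step: showing that the appearing characters are exactly the $\chi_r$ and that $\overline{R}$ is freely generated as a commutative algebra by its primitives. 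This is the Hopf-theoretic heart of the theorem, and transporting the classical characteristic-zero proof into a general Kimura $k$-tensor category, where eigenspace decomposition must be handled via idempotents and Galois descent rather than vector-space diagonalisation, is where real care is needed.
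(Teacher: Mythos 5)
Your outline for the argument in $\overline{\sC}$ is plausible in broad strokes but, as you yourself note, leaves the Cartier--Gabriel--Kostant step to be carried out; the paper takes a more economical route there, using only the single endomorphism $2_{\overline{R}}$ rather than simultaneous diagonalisation of all $n_{\overline{R}}$, and obtaining the needed finiteness from the $J$\nd adic filtration of the augmentation ideal together with the observation that the maximal positive quotient Hopf algebra $\overline{R}_+$ must be $\I$ (because a commutative and cocommutative Hopf algebra in a positive Kimura category on which every $n \ne 0$ acts invertibly has, after applying a fibre functor, trivial spectrum). That avoids extension of scalars, Galois descent, and the coradical filtration altogether.

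The genuine gap is your lifting step. You write that ``lifting back to $\sC$ is straightforward'', but the splitting theorem and reflection of isomorphisms alone do not produce a Hopf-algebra morphism $\Sym N \to R$ in $\sC$. A right inverse $T$ to $\sC \to \overline{\sC}$ gives a symmetric Hopf algebra $T(\overline{R}) \cong \Sym T(\overline{N})$ lying above $\overline{R}$, but there is no reason for $T(\overline{R})$ to be isomorphic to $R$ as a Hopf algebra in $\sC$ --- that two Hopf algebras above $\overline{R}$ agree is a consequence of the theorem, not available as an ingredient. Nor does an arbitrary direct summand $N \subset R$ lifting $\overline{N} \subset \overline{R}$ furnish a Hopf map $\Sym N \to R$: for the universal algebra morphism out of $\Sym N$ to be compatible with comultiplication, the restriction of the comultiplication of $R$ to $N$ must factor through $(N \otimes \I) \oplus (\I \otimes N)$, and for an arbitrary lift of $\overline{N}$ this can fail. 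The paper supplies the missing structure by running the Cayley--Hamilton decomposition for $2_R$ in $\sC$ itself, not merely in $\overline{\sC}$: the vanishing of $\prod_{i=0}^d(2_{\overline{R}} - 2^i)$ lifts, via nilpotence of endomorphisms of Kimura objects lying in $\Rad(\sC)$, to $\prod_{i=0}^d(2_R - 2^i)^e = 0$ for some $e$, yielding a decomposition $R = \bigoplus_{r=0}^d R_r$ in $\sC$ which is then checked, using the coprime-polynomial lemma applied also to $2_{R \otimes R} = 2_R \otimes 2_R$, to be compatible with the multiplication, comultiplication, unit and counit. Only with $R_0 = \I$ and the comultiplication sending $R_1$ to $(R_1 \otimes \I) \oplus (\I \otimes R_1)$ established in $\sC$ does $\alpha:\Sym R_1 \to R$ become a Hopf morphism, after which the section--retraction argument (via the coalgebra morphism $\beta:R \to \Sym R_1$) concludes. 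Without this intermediate decomposition in $\sC$, the lifting has no footing.
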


\begin{proof}
That $n_R$ is an isomorphism for any $n \ne 0$ when $R$ is symmetric is clear.
Conversely suppose that for $n \ne 0$ the morphism $n_R$
in $\sC$ is an isomorphism.
It is to be shown that $R$
isomorphic to $\Sym N$ for some negative $N$ in $\sC$.

Consider first the case where $\sC$ is positive.
Then by Corollary~\ref{c:posrep} there is a $k$\nd tensor functor
$U$ from $\sC$ to finite-dimensional $k'$\nd  vector spaces for some
extension $k'$ of $k$, which we may assume is algebraically closed.
In that case $\Spec(U(R))$ is a discrete finite commutative group scheme over $k'$
on which each $n \ne 0$ acts as an isomorphism.
Thus $\Spec(U(R))$ is trivial, so that $U(R)$ and hence $R$ has rank $1$.
Since $R$ is positive and has a direct summand $\I$, we have $R = \I$.

Now consider the general case.
The Kimura $k$\nd tensor category $\overline{\sC}$ is semisimple abelian,
and hence with an exact tensor product,
and  has a $\Z/2$\nd grading by positive and negative objects.
Write $M^+$ and $M^-$ for the positive and negative summands of $M$ in $\overline{\sC}$,
and $\overline{\sC}{}^+$ for the strictly full subcategory of $\overline{\sC}$
consisting of the positive objects.
If $Q$ is a commutative algebra in $\overline{\sC}$, then the cokernel $Q_+$
of the morphism
\[
Q^- \otimes Q^- \to Q^+
\]
in $\overline{\sC}$ obtained by restriction
from the multiplication of $Q$ is a quotient algebra of $Q$ which lies in
$\overline{\sC}{}^+$.
The composite $Q \to Q^+ \to Q_+$ of the projections
is then universal among morphisms of algebras from $Q$ in
$\overline{\sC}$ to commutative algebras in $\overline{\sC}{}^+$.

The image $\overline{R}$ of $R$ in $\overline{\sC}$ is a commutative and
cocommutative Hopf algebra in $\sC$ with
$n_{\overline{R}}$ an isomorphism for $n \ne 0$.
By the universal property of the algebra $\overline{R}_+$,
there is a unique structure of
commutative and cocommutative Hopf algebra on $\overline{R}_+$
such that $\overline{R} \to \overline{R}_+$ is a morphism of Hopf algebras,
and $n_{\overline{R}_+}$ is then an isomorphism for $n \ne 0$.
Since $\overline{\sC}{}^+$ is positive, we have $\overline{R}_+ = \I$.
The kernel $J$ of the counit $\overline{R} \to \I$ is an ideal in the algebra
$\overline{R}$.
We have $\overline{R} = \I \oplus J$ and hence
$\overline{R}{}^+ = \I \oplus J^+$.
It follows that $\overline{R}{}^- \otimes \overline{R}{}^- \to \overline{R}{}^+$
factors through an epimorphism
\begin{equation}\label{e:RRJepi}
\overline{R}{}^- \otimes \overline{R}{}^- \to \overline{J}{}^+ \to 0
\end{equation}
in $\overline{\sC}$.
If $\overline{R}{}^-$ has rank $-d$, we have $S^r \overline{R}{}^- = 0$
for $r > d$, so that by commutativity of  the multiplication of $\overline{R}$
the restriction to $(\overline{R}{}^-)^{\otimes r}$ of the
$r$\nd fold multiplication $\mu_r:\overline{R}{}^{\otimes r} \to \overline{R}$
is $0$ for $r > d$.
Since $\overline{R}{}^- \otimes \overline{R}{}^- \to \overline{R}{}^+$ factors through
\eqref{e:RRJepi}, it follows that the restriction
of $\mu_r$ to $J^{\otimes r} = (J^+ \oplus \overline{R}{}^-)^{\otimes r}$ is $0$ for $r > d$.
Thus the images $J^r$ of the $J^{\otimes r} \to \overline{R}$ form a filtration of
$\overline{R}$ by ideals for which $J^{d+1} = 0$ and $J^+ \subset J^2$.

The restriction $J \to \overline{R} \otimes \overline{R}$ of the comultiplication
to $J$ has components at $\I \otimes \I$, $\I \otimes J$ and $J \otimes \I$
respectively $0$, $1_J$ and $1_J$.
Composing with the multiplication thus shows that
the endomorphism $2_{\overline{R}}$ of $\overline{R}$ acts as
$2$ on $J/J^2$.
Hence $2_{\overline{R}}$ acts as $2^r$ on the quotient $J^r/J^{r+1}$ of
$(J/J^2)^{\otimes r}$ for $r \ge 0$.
It follows that for $s \ge 0$ the endomorphism
$\prod_{i=0}^s(2_{\overline{R}} - 2^{i+r})$ acts as $0$ on $J^r/J^{r+s+1}$.
Taking $r = 0$ and $s = d$ and using \ref{i:polydec} above
thus shows that there is a unique decomposition
$\overline{R} = \bigoplus_{r = 0}^d \overline{R}_i$
which is respected by $2_{\overline{R}}$ and for which $2_{\overline{R}}$
acts as $2^r$ on $\overline{R}_r$.
The composites $J^r \to \overline{R} \to \bigoplus_{i<r} \overline{R}_i$ and
$\bigoplus_{i \ge r} \overline{R}_i \to \overline{R} \to \overline{R}/J^r$ are $0$
for each $r \ge 1$ by \ref{i:polyzero} above,
so that $J^r = \bigoplus_{i \ge r} \overline{R}_i$.
Thus the quotient $\overline{R}_1 = J/J^2$ of $J/J^+$ is negative,
and the morphism of algebras
$\Sym \overline{R}_1 \to \overline{R}$ defined by the embedding
$\overline{R}_1 \to \overline{R}$ is
an epimorphism in $\overline{\sC}$ and hence by semisimplicity of
$\overline{\sC}$ a retraction.

Since the endomorphism $\prod_{i=0}^d(2_{\overline{R}} - 2^i)$ of
$\overline{R}$ is $0$, there is an $e$ for which
the endomorphism $\prod_{i=0}^d(2_R - 2^i)^e$ of $R$ is $0$.
By \ref{i:polydec} above, $R$ has thus a unique
decomposition $R = \bigoplus_{r = 0}^d R_i$
respected by $2_R$ such that $(2_R - 2^r)^e$ is $0$ on $R_r$.
Then $R_r$ lies above $\overline{R}_r$ for each $R$.
Thus $R_1$ is negative because its image in $\overline{\sC}$ is negative,
and the morphism of algebras
$\alpha:\Sym R_1 \to R$
defined by the embedding
$R_1 \to R$ is a retraction in $\sC$ because its image in $\overline{\sC}$
is a retraction.
By \ref{i:polyzero} above, the unit and counit of $R$ factor through $R_0$.
Also the endomorphism $2_{R \otimes R} = 2_R \otimes 2_R$ of $R \otimes R$
sends $R_r \otimes R_s$ to itself, and $(2_{R \otimes R} - 2^{r+s})^{2e -1}$
acts on it as $0$.
Since the multiplication and comultiplication of $R$ are morphisms of
Hopf algebras, their only non-zero components are thus by \ref{i:polyzero}
those of the form $R_r \otimes R_s \to R_{r+s}$ and $R_{r+s} \to R_r \otimes R_s$.
Thus $\alpha$ induces a retraction $\I \to R_0$, so that $R_0 = \I$,
and the restriction $\delta:R_1 \to R \otimes R$ of the comultiplication of
$R$ to $R_1$ factors through a morphism, necessarily the diagonal, from $R_1$ to
$(R_1 \otimes \I) \oplus (\I \otimes R_1)$.
The two morphisms of algebras $\Sym R_1 \to R \otimes R$ given by composing
$\alpha$ with the comultiplication of $R$ and the comultiplication of $\Sym R_1$
with $\alpha \otimes \alpha$ thus coincide,
because both have restriction $\delta$ to $R_1$.
Hence $\alpha$ is a morphism of Hopf algebras.
Similarly the morphism of coalgebras $\beta:R \to \Sym R_1$ defined by the projection
$R \to R_1$ is a morphism of Hopf algebras.
Thus $\beta \circ \alpha$ is the identity, and $\alpha$ is an isomorphism.
\end{proof}

\subsection{The splitting and unique lifting theorems}\label{ss:splitlift}

Let $G$ be a $k$\nd group.
A commutative $G$\nd algebra $R$ will be called \emph{simple} if $R \ne 0$
and $R$ has no quotient $G$\nd algebra other than $R$ and $0$.
By a $G$\nd scheme we mean a scheme over $k$ equipped with an action of $G$.
If $R$ is a commutative $G$\nd algebra, then $\Spec(R)$ has a structure of
$G$\nd scheme with the action of the point $g$ of $G$ on $\Spec(R)$ defined
by the action of $g^{-1}$ on $R$.
Suppose that $G$ is of finite type.
Then a $G$\nd scheme $X$ is said to be \emph{homogeneous} if for some
extension $k'$ of $k$ and $k'$\nd subgroup $H$ of $G_{k'}$, the $G_{k'}$\nd scheme
$X_{k'}$ is isomorphic to $G_{k'}/H$.
It is equivalent to require that for every $k$\nd scheme $S$ and pair
of points $x_1$ and $x_2$ in $S$, there should exist a surjective \'etale
morphism $S' \to S$ and a point $g$ of $G$ in $S'$ such that $gx_1 = x_2$.

\begin{lem}\label{l:homogeneous}
 \textnormal{(Magid \cite{Mag},~Theorem~4.5)}
 Let $G$ be a $k$-group of finite type and
 let $R$ be a commutative $G$-algebra with $R^G = k$.
 Then the $G$\nd algebra $R$ is simple if and only if
 the $G$\nd scheme $\Spec (R)$ is homogeneous.
\end{lem}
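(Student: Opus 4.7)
The plan is to treat the two implications separately. The direction ``$\Spec R$ homogeneous $\Rightarrow$ $R$ is a simple $G$\nd algebra'' should be routine: $G$\nd stable ideals of $R$ correspond bijectively to closed $G$\nd stable subschemes of $\Spec R$, and after base change to some $k'$ over which $\Spec(R)_{k'} \cong G_{k'}/H$, I would check that the only $G_{k'}$\nd stable closed subschemes are empty or all of $G_{k'}/H$. Passing further to an algebraic closure, transitivity of the $G_{\bar k}$\nd action on the $\bar k$\nd points of $G_{\bar k}/H_{\bar k}$ makes this immediate, and faithfully flat descent then returns the conclusion back to the original $k'$, and then to $k$.

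For the converse, which is the substance of Magid's theorem, I would first use local finiteness of the $G$\nd action to reduce to the case where $R$ is finitely generated as a $k$\nd algebra: since $R$ is simple, it is generated as a $G$\nd algebra by any nonzero element, and the $G$\nd subrepresentation spanned by that element is finite-dimensional because $G$ is of finite type. Next I would pick a closed point $x \in \Spec R$ with residue field some finite extension $k'/k$, and form the orbit morphism $\phi\colon G_{k'} \to \Spec(R)_{k'}$ sending $g$ to $g\cdot x$. The scheme-theoretic image of $\phi$ is closed and $G_{k'}$\nd stable, so by simplicity (applied after descent from $k'$ back to $k$) it must be all of $\Spec(R)_{k'}$; thus $\phi$ is schematically dominant.

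The main obstacle is to promote schematic dominance to an isomorphism $G_{k'}/H \iso \Spec(R)_{k'}$, where $H$ is the scheme-theoretic stabilizer of $x$. In characteristic zero $H$ is smooth over $k'$, the quotient $G_{k'} \to G_{k'}/H$ is faithfully flat, and $\phi$ factors through a morphism $\bar\phi\colon G_{k'}/H \to \Spec(R)_{k'}$. I would combine generic flatness for $\bar\phi$ with $G_{k'}$\nd equivariance (flatness spreads under the transitive action on $G_{k'}/H$) to conclude $\bar\phi$ is flat everywhere, and then invoke $R^G = k$ to exclude the possibility that $\bar\phi$ factors nontrivially or has multi-point fibres, yielding an isomorphism. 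In practice, since the lemma is attributed to Magid, I would write out the easy direction in full and simply cite \cite{Mag}, Theorem~4.5, for the converse.
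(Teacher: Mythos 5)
The paper itself offers no proof of this lemma; it is stated with an attribution to Magid and used as a black box throughout, so your ultimate plan --- prove the easy implication and cite \cite{Mag}, Theorem~4.5, for the converse --- is consistent with the paper's treatment. Your sketch of the easy implication is sound: $G$\nd stable ideals of $R$ correspond to $G$\nd stable closed subschemes of $\Spec R$; over $\bar k$ the scheme $(G/H)_{\bar k}$ is reduced with its $\bar k$\nd points forming a single dense $G(\bar k)$\nd orbit, so a nonempty $G_{\bar k}$\nd stable closed subscheme must be everything; and faithfully flat descent along $k \to \bar k$ then shows the only $G$\nd stable ideals of $R$ are $0$ and $R$.

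The speculative sketch you give of the converse does, however, contain a concrete error before you defer to Magid. You claim that since $R$ is simple it is generated as a $G$\nd algebra by any nonzero element, and you use this to reduce to the case where $R$ is a finitely generated $k$\nd algebra. That inference is false: simplicity means that the $G$\nd stable \emph{ideal} generated by any nonzero element is all of $R$, not that the $G$\nd \emph{subalgebra} generated by it is. For instance, let $G = \bG_m$ act on $R = k[x,x^{-1}]$ with $x$ of weight $1$; then $R^G = k$ and $R$ is a simple $G$\nd algebra, but the $G$\nd subalgebra generated by $x$ is $k[x] \ne R$. Finite generation of $R$ is genuinely part of what Magid's theorem establishes, and your proposed reduction does not deliver it; everything in the remainder of your sketch (existence of a closed point with finite residue field, constructibility of the orbit, generic flatness of $\bar\phi$) presupposes it. Since you intend to cite Magid for this direction anyway, the fix is simply to remove the incorrect reduction rather than to try to repair it.
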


Let $G$ be a $k$\nd group.
Then limits in the category of $G$\nd algebras exist, and coincide
with limits of the underlying $G$\nd modules.
However limits of $G$\nd modules do not in general coincide with
limits of the underlying $k$\nd vector spaces:
the canonical homomorphism from a limit of $G$\nd modules to the
limit of their underlying $k$\nd vector spaces is injective
but in general not surjective.
If $G$ is reductive, then limits of $G$\nd modules can be calculated
isotypic component by isotypic component,
and if $R$ is a finitely generated commutative $G$\nd algebra with $R^G = k$,
the isotypic components of $R$ are finite dimensional.
This fact renders plausible the following rather surprising result,
for whose proof we refer to \cite{O}, Lemma~5.2.

\begin{lem}\label{l:complete}
 Let $G$ be a reductive $k$\nd group, $R$ be a finitely generated
 commutative $G$-algebra with $R^G = k$, and $J \ne R$ be
 a $G$-ideal of $R$.
 Then $R$ is the limit in the category of $G$\nd algebras
 of its $G$\nd quotients $R/J^n$.
\end{lem}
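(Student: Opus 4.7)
The plan is to exhibit the canonical comparison map $R \to \lim_n R/J^n$ in the category of $G$\nd algebras as the quotient by $I = \bigcap_n J^n$, and then to verify that $I = 0$.  First I would observe that limits in the category of (rational) $G$\nd modules decompose isotypically, so the same is true for limits in $G$\nd algebras after forgetting the multiplicative structure.  Since $R^G = k$ and $R$ is a finitely generated $G$\nd algebra with $G$ reductive, each isotypic component $R_\lambda$ is finite\nd dimensional over $k$ (as observed just above the statement).  Consequently the decreasing chain of subspaces $J^n \cap R_\lambda \subseteq R_\lambda$ stabilises at $\bigcap_n (J^n \cap R_\lambda)$, so the $\lambda$\nd isotypic component of $\lim_n R/J^n$ is $R_\lambda / \bigcap_n (J^n \cap R_\lambda)$.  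Summing over $\lambda$ identifies the whole $G$\nd algebra limit with $R/I$, so the lemma reduces to the $G$\nd equivariant Krull intersection statement $I = 0$.

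To prove $I = 0$, I would combine the classical Artin\nd Rees and Nakayama arguments with the Reynolds operator.  Since $R$ is noetherian, being a finitely generated $k$\nd algebra, the Artin\nd Rees lemma applied to $I \subseteq R$ with the $J$\nd adic filtration gives $I = J^n \cap I = J^{n-c}(J^c \cap I) = J^{n-c} I$ for all $n$ sufficiently large; in particular $JI = I$.  The determinant trick then produces an element $r \in R$ with $r - 1 \in J$ and $rI = 0$, so that $r$ lies in the annihilator $\operatorname{Ann}_R(I)$.

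The crucial $G$\nd equivariant input is now that $\operatorname{Ann}_R(I)$ is $G$\nd stable, since $I$ is, and the Reynolds operator $\rho:R \to R^G = k$ (i.e.\ the projection onto the trivial isotypic component) is $G$\nd equivariant, so it restricts to a projection $\operatorname{Ann}_R(I) \to \operatorname{Ann}_R(I) \cap k$.  Writing $r = 1 + j$ with $j \in J$, we have $\rho(j) \in J \cap k = 0$ since $J$ is a proper ideal, and therefore $\rho(r) = 1$.  Hence $1 \in \operatorname{Ann}_R(I)$ and $I = 0$.  The main obstacle is that the classical Krull intersection theorem requires $J$ to lie in the Jacobson radical of $R$, which is not assumed here; the point of the $G$\nd equivariant argument is that the Reynolds operator provides a substitute for Nakayama's lemma in this setting, exploiting the fact that the trivial isotypic component of any proper $G$\nd ideal is zero.
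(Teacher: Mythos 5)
The paper does not actually prove this lemma; it refers the reader to an external reference, \cite{O}, Lemma~5.2, so there is no in-text proof to compare against. Your argument is, however, correct and self-contained, and the structure is the natural one: reduce to the Krull-intersection statement $I := \bigcap_n J^n = 0$ and then prove that.

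The reduction is sound. Since $G$ is reductive, limits of $G$\nd modules decompose isotypically, and since $R^G = k$ and $R$ is finitely generated, each isotypic component $R_\lambda$ is finite-dimensional; hence the descending chain $J^n \cap R_\lambda$ stabilises at $I \cap R_\lambda$, giving $(\lim_n R/J^n)_\lambda = R_\lambda/(I \cap R_\lambda)$ and therefore $\lim_n R/J^n \cong R/I$ as a $G$\nd algebra, with the canonical map from $R$ being the quotient. The proof that $I = 0$ is also correct: $R$ is noetherian, so Artin--Rees gives $I = JI$, the determinant trick produces $r \in 1+J$ with $rI = 0$, and then the key equivariant step works because both $J$ and $\operatorname{Ann}_R(I)$ are $G$\nd stable, so the Reynolds operator $\rho$ sends $r$ into $\operatorname{Ann}_R(I) \cap k$ while $\rho(r) = 1$ since $\rho(J) = J \cap k = 0$ for the proper $G$\nd ideal $J$. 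Thus $1 \in \operatorname{Ann}_R(I)$ and $I = 0$. Your closing observation is exactly the right way to think about it: the Reynolds operator plays the role of Nakayama's lemma here, replacing the usual hypothesis that $J$ lie in the Jacobson radical by the hypotheses $R^G = k$ and $J$ proper and $G$\nd stable. This is a tight, elegant proof; the only cosmetic suggestion is to state explicitly that $\operatorname{Ann}_R(I)$ is $G$\nd stable because $I$ is, and that $\rho(\operatorname{Ann}_R(I)) = \operatorname{Ann}_R(I)^G$, since that is where the equivariance is actually used.
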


Let $G$ be a $k$\nd group and $H$ be a normal $k$\nd subgroup of $G$.
Then a $(G/H)$\nd scheme may also be regarded as a $G$\nd scheme,
and a $G$\nd scheme on which $H$ acts trivially as a $G/H$\nd scheme.
Suppose that $G$ is of finite type, and let and $X$ be a homogeneous
$G$\nd scheme.
Then the quotient $X/H$ exists, and is a homogeneous $(G/H)$\nd scheme.
Further if $H$ is reductive and $X = \Spec(R)$ is affine, then $X/H = \Spec(R^H)$.

\begin{lem}\label{l:fibreprodhomog}
Let $G$ be a $k$\nd group of finite type, $H$ be a normal $k$-subgroup of $G$,
$X$ be a homogeneous $G$-scheme and $Y$ be a homogeneous $G/H$-scheme.
Then for any $G$\nd morphism $Y \to X/H$, the $G$-scheme $X \times_{X/H} Y$ is homogeneous.
\end{lem}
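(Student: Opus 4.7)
The plan is to transport any pair of $S$\nd points of $X \times_{X/H} Y$ onto each other étale-locally in two stages: first move the $Y$\nd coordinate using homogeneity of $Y$, then correct the $X$\nd coordinate by an element of $H$, using transitivity of $H$ on the fibres of $\pi \colon X \to X/H$. Write $\phi \colon Y \to X/H$ for the given $G$\nd morphism, and take two $S$\nd points $(x_1, y_1)$ and $(x_2, y_2)$ of $X \times_{X/H} Y$. I aim to construct a surjective étale morphism $S' \to S$ and a point $g \in G(S')$ with $g \cdot (x_1, y_1) = (x_2, y_2)$.

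For the first stage, homogeneity of $Y$ as a $G/H$\nd scheme, in its étale-local form, provides a surjective étale $S_1 \to S$ and $\bar g \in (G/H)(S_1)$ with $\bar g \, y_1 = y_2$. Since $k$ has characteristic zero, $H$ is smooth by Cartier's theorem, so $G \to G/H$ is smooth and admits sections étale-locally. After a further étale cover $S_1' \to S_1$, lift $\bar g$ to $g_1 \in G(S_1')$; the identity $g_1 y_1 = y_2$ persists because the $G$\nd action on $Y$ factors through $G/H$.

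For the second stage, the points $g_1 x_1$ and $x_2$ of $X(S_1')$ lie in the same fibre of $\pi$, since
\[
\pi(g_1 x_1) = g_1 \phi(y_1) = \phi(g_1 y_1) = \phi(y_2) = \pi(x_2).
\]
I then invoke that the morphism $\mu \colon H \times X \to X \times_{X/H} X$, $(h, x) \mapsto (x, hx)$, is surjective and smooth, so that after a further étale cover $S_2 \to S_1'$ the point $(g_1 x_1, x_2)$ of $X \times_{X/H} X$ lifts through $\mu$, yielding $h \in H(S_2)$ with $h \cdot g_1 x_1 = x_2$. Setting $g = h g_1 \in G(S_2)$ gives $g x_1 = x_2$ by construction, and $g y_1 = h \cdot g_1 y_1 = h \cdot y_2 = y_2$ because $H \subseteq \ker(G \to G/H)$ acts trivially on $Y$. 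Thus $g \cdot (x_1, y_1) = (x_2, y_2)$ over the surjective étale $S_2 \to S$, verifying homogeneity of $X \times_{X/H} Y$.

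The main obstacle is to establish the surjectivity and smoothness of the morphism $\mu$. After passing to an extension $k'/k$ trivialising $X$ as $X_{k'} \cong G_{k'}/K$, one has $(X/H)_{k'} \cong G_{k'}/(KH_{k'})$, and a direct computation with cosets shows that the image of $\mu_{k'}$ consists of the pairs $(g_1 K, g_2 K)$ with $g_1^{-1} g_2 \in HK = KH$ (using normality of $H$), so that $\mu_{k'}$ is surjective and the geometric fibre $KH_{k'}/K \cong H_{k'}/(K \cap H_{k'})$ of $\pi$ over the base point is a single $H_{k'}$\nd orbit. Smoothness of $\mu$ follows from the smoothness of $H$ and $K$ in characteristic zero, and both properties then descend faithfully flatly from $k'$ to $k$.
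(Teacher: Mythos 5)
Your proof is correct and follows the same two-stage strategy as the paper's own argument: first use homogeneity of $Y$ to move $y_1$ to $y_2$ by a lift $g_1 \in G$, note that $g_1 x_1$ and $x_2$ then lie in the same fibre of $X \to X/H$, and finally correct by an element of $H$ acting trivially on $Y$. The extra work you do — lifting from $G/H$ to $G$ via smoothness of the quotient map, and establishing that $H$ acts transitively and smoothly on the fibres of $X \to X/H$ via the morphism $\mu$ — makes explicit two points the paper's very terse proof simply asserts, but the underlying argument is the same.
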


\begin{proof}
Let $S$ be a $k$-scheme and $(x_1,y_1)$ and $(x_2,y_2)$
be $S$-points of $X \times_{X/H} Y$.
Since $Y$ is a homogeneous $G$\nd scheme,
there is a surjective \'etale morphism $S' \rightarrow S$
and a $g \in G(S')$ such that $gy_1 = y_2$.
Then $gx_1$ and $x_2$ lie above the same $S'$-point
of $X/H$.
Thus there is a surjective \'etale morphism
$S'' \rightarrow S'$ and an $h \in H(S'')$ such that
$hgx_1 = x_2$.
Since $H$ acts trivially on $Y$, we have $hg(x_1,y_1) = (x_2,y_2)$.
\end{proof}

Let $G$ be a $k$\nd group of finite type
and $X$ be a reduced $G$\nd scheme of finite type.
Then any $G$\nd morphism to $X$ from a homogeneous $G$\nd scheme factors through a
homogeneous $G$\nd subscheme of $X$.
Thus $X$ has a dense open homogeneous $G$\nd subscheme if and only if
there exists a dominant $G$\nd morphism to $X$ from some homogeneous $G$\nd scheme.
Such a $G$\nd subscheme is unique when it exists, and every
dominant $G$\nd morphism from a homogeneous $G$\nd scheme factors
(uniquely) through it.

\begin{lem}\label{l:retraction}
 Let $G$ be a proreductive $k$-group,
 $R$ be a commutative $G$\nd algebra with $R^G = k$,
 $D$ be a $G$-subalgebra of $R$, and $p:R \rightarrow \overline{R}$
 be the projection onto a simple quotient $G$-algebra of $R$.
 Suppose that the restriction of $p$ to $D$ is injective.
 Then $R$ has a $G$-subalgebra $D'$ containing $D$
 such that the restriction of $p$ to $D'$ is an isomorphism.
\end{lem}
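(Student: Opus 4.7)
I would reformulate the lemma geometrically: write $X=\Spec R$, $\overline X=\Spec\overline R$ and $Y=\Spec D$, so that the projection $p$ corresponds to a $G$-equivariant closed immersion $i\colon \overline X\hookrightarrow X$, the inclusion $D\subseteq R$ to a $G$-morphism $\pi\colon X\to Y$, and the hypothesis that $p|_D$ is injective means that $\pi\circ i\colon\overline X\to Y$ is dominant. Constructing the required $D'$ is equivalent to constructing a $G$-equivariant retraction $r\colon X\to\overline X$ of $i$ with $\pi\circ i\circ r=\pi$; one then takes $D'=r^{\#}(\overline R)$. Since proreductivity of $G$ makes $(-)^G$ exact on $G$-modules, $R^G=k$ combined with $\ker p\neq R$ forces $\overline R^G=k$, so Lemma~\ref{l:homogeneous} gives that $\overline X$ is a homogeneous $G$-scheme. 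This homogeneity is the crucial structural input.

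I would then apply Zorn's lemma to the poset $\sF$ of $G$-subalgebras $D'\subseteq R$ containing $D$ on which $p$ is injective (directed unions remain in $\sF$), producing a maximal $D^*$; the goal reduces to showing $p(D^*)=\overline R$. Suppose instead $p(D^*)\subsetneq\overline R$. Pick a finite-dimensional $G$-submodule $V\subseteq\overline R$ with $V\not\subseteq p(D^*)$, and use semisimplicity of $G$-modules (proreductivity of $G$) to lift $V$ to a $G$-submodule $V'\subseteq R$ with $p|_{V'}$ an isomorphism onto $V$. Let $R_0=D^*[V']\subseteq R$, a finitely generated $G$-subalgebra, and let $J=\ker(p|_{R_0})$. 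Lemma~\ref{l:complete} now gives $R_0=\varprojlim_n R_0/J^n$ as $G$-algebras. I inductively lift the embedding of $p(D^*)[V]$ into $R_0/J^n$ across the square-zero $G$-equivariant extensions $R_0/J^{n+1}\to R_0/J^n$: at each stage the $G$-torsor of lifts is trivial because the relevant fibre product of $\Spec(R_0/J^{n+1})$ with $\overline X$ over a suitable homogeneous $G$-subscheme of $\Spec D^*$ is itself homogeneous by Lemma~\ref{l:fibreprodhomog}, hence admits a $G$-section extending the previous lift. Passing to the inverse limit yields a $G$-subalgebra $D^{**}\subseteq R$ strictly containing $D^*$ with $p|_{D^{**}}$ still injective, contradicting the maximality of $D^*$.

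The main obstacle is this inductive infinitesimal lifting step: one must verify that the square-zero $G$-algebra extension $R_0/J^{n+1}\to R_0/J^n$ admits a $G$-equivariant lift of the partial embedding on $p(D^*)[V]$ compatibly with the image of $D^*$. This is precisely where the homogeneity of $\overline X$, used via Lemma~\ref{l:fibreprodhomog} for fibre products of homogeneous $G$-schemes, kills the Hochschild-type obstruction; without it the torsor of lifts need not be trivial. A secondary technicality is arranging the reduction to the finitely generated $G$-algebra $R_0$ so that the fibre product to which Lemma~\ref{l:fibreprodhomog} is applied genuinely fits its hypotheses, which may require enlarging $R_0$ beyond $D^*[V']$ to replace $\Spec D^*$ by a dense open $G$-homogeneous subscheme through which $\pi\circ i$ factors.
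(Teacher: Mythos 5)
Your high-level outline shares the paper's tools (Lemma~\ref{l:homogeneous}, Lemma~\ref{l:complete}, Lemma~\ref{l:fibreprodhomog}, reductivity, Zorn, $J$\nd adic induction across square-zero extensions), but the organization introduces a genuine gap. You apply Zorn first and then try to perform a single inductive lift on the maximal $D^*$. The problem is that $D^*$ produced by Zorn need not be finitely generated as a $k$\nd algebra, so $R_0 = D^*[V']$ need not be either, and Lemma~\ref{l:complete} only applies to \emph{finitely generated} $G$\nd algebras; without finite generation the $J$\nd adic completeness $R_0 = \varprojlim R_0/J^n$ can simply fail (e.g.\ a polynomial ring in infinitely many variables at its irrelevant ideal). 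The paper's proof is structured precisely to avoid this: cases (1) and (2) are carried out under the hypothesis that $D$ is a finitely generated $k$\nd algebra (and $G$ of finite type), with Zorn appearing only in case (3), where the maximal $D'$ is written as a filtered union of finitely generated $D_\mu$ and the problem is reduced $H$ by $H$ (over normal subgroups with $G/H$ of finite type) to invocations of case (2) on the $D_\mu$. That entire third case is the content your plan is missing; it is not a cosmetic technicality.

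A second, more local difficulty is your mechanism for the square-zero lifting step. You attribute its non-obstruction to Lemma~\ref{l:fibreprodhomog}, claiming that ``the relevant fibre product of $\Spec(R_0/J^{n+1})$ with $\overline X$ over a suitable homogeneous $G$-subscheme of $\Spec D^*$ is itself homogeneous.'' But $\Spec(R_0/J^{n+1})$ is non-reduced for $n\ge 1$, and Lemma~\ref{l:fibreprodhomog} requires both schemes in the fibre product to be homogeneous; in characteristic zero homogeneous $G$\nd schemes are reduced, so the lemma cannot apply in the form you invoke. In the paper the square-zero lift (case (1)) is handled differently: homogeneity of $\Spec\overline R$ (via Lemma~\ref{l:homogeneous}) is used only to conclude that $\overline R$ is smooth over $D$, so that the set of $D$\nd algebra sections is non-empty; the set of $G$\nd equivariant sections is then shown to be non-empty by observing that the sections form a coset of the $G$\nd module of derivations inside a larger $G$\nd module and using reductivity (exactness of $(-)^G$). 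Lemma~\ref{l:fibreprodhomog} enters only in case (3), for a kernel computation $\Ker i = \Ker(p\circ i)$ involving two genuinely homogeneous schemes, not for the infinitesimal lifting. Your plan conflates these two roles.
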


\begin{proof}
Denote by $J$ the kernel of $p$.
We consider successively the cases where
(1) $G$ is of finite type, $J^2 = 0$, and $D$ is a
finitely generated $k$-algebra
(2) $G$ is of finite type and $D$ is a finitely generated
$k$\nd algebra, and then (3) the general case.

\quad (1)
By Lemma~\ref{l:homogeneous}, the $G$\nd scheme $\Spec(\overline{R})$ is homogeneous.
Since $D \to \overline{R}$ is injective and $D$ is finitely generated,
the $G$\nd morphism  $\Spec(\overline{R}) \to \Spec(D)$ thus factors through an
open homogeneous $G$\nd subscheme of $\Spec(D)$.
Hence $\overline{R}$ is smooth over $D$.
Write $E_1$ for the set of $D$-algebra homomorphisms
$\overline{R} \rightarrow R$ right inverse to $p$,
and $E_0$ for the $k$-space of derivations of $\overline{R}$ over
$D$ with values in $J$.
Let $V$ be a finite-dimensional
$G$-submodule of $\overline{R}$ which contains $k$ and generates $\overline{R}$.
We may regard $E_1$ as a subset and $E_0$ as a $k$\nd subspace of the $G$\nd module
$V^\vee \otimes_k R$ of $k$\nd linear maps from $V$ to $R$.
By smoothness of $\overline{R}$ over $D$, the set $E_1$ is non-empty.
Let $e$ be an element of $E_1$.
If extension of scalars to a commutative
$k$\nd algebra $k'$ is denoted by a subscript $k'$,
then $(E_0)_{k'}$ is the $k'$\nd module of derivations of $\overline{R}_{k'}$
over $D_{k'}$ with values in $J_{k'}$, and
\[
e + (E_0)_{k'} \subset (V^\vee \otimes_k R)_{k'}
\]
is the set of $D_{k'}$-algebra homomorphisms $\overline{R}_{k'} \rightarrow R_{k'}$
right inverse to $p_{k'}$, and hence is stable under $G(k')$.
Thus $E = ke + E_0$ is a $G$\nd submodule of $V^\vee \otimes_k R$,
and evaluation at $1 \in V$ defines a surjective $G$\nd homomorphism from
$E$ to $k \subset R$ with fibre $E_1$ above $1 \in k$.
Since $G$ is reductive, the set $E^G \cap E_1$
of homomorphisms of $G$-algebras $\overline{R} \rightarrow R$
over $D$ right inverse to $p$ is non-empty.
Take for $D'$ the image of such a homomorphism.

\quad (2)
By Lemma~\ref{l:homogeneous}, $\overline{R}$ is a finitely generated
$k$\nd algebra.
Replacing $R$ by its $G$\nd subalgebra generated by $D$ and the lifting
to $R$ of a finite set of generators of $\overline{R}$, we may suppose
that $R$ is a finitely generated $k$\nd algebra.
Then by Lemma~\ref{l:complete}, $R$ is the limit in the category of
$G$\nd algebras of its quotients $R/J^n$.
If $D_n$ is the image of $D$ in $R/J^n$, it is thus enough to show
that any $G$\nd subalgebra $D'{}\!_n \supset D_n$ of $R/J^n$ with
$D'{}\!_n \to \overline{R}$ an isomorphism can be lifted to
a $G$\nd subalgebra $D'{}\!_{n+1} \supset D_{n+1}$ of $R/J^{n+1}$ with
$D'{}\!_{n+1} \to \overline{R}$ an isomorphism.
To do this, apply (1) with the inverse
image of $D'{}\!_n$ in $R/J^{n+1}$ for $R$ and $D_{n+1}$ for $D$.

\quad (3)
By Zorn's Lemma the set of those $G$\nd subalgebras
$B \supset D$ of $R$ for which the restriction of $p$ to $B$ is injective
has a maximal element $D'$.
Write $D'$ as the filtered union of its finitely generated
$G$-subalgebras $D_\mu$, and denote by $\sH$ the set of those normal
$k$\nd subgroups $H$ of $G$ for which $G/H$ is of finite type.
For each $\mu$ there is an $H_\mu \in \sH$ such that $G$ acts on
$D_\mu$ through $G/H_\mu$.
Now if $Q$ is a simple $G$\nd algebra and $H$ is a normal $k$\nd subgroup of $G$
then $Q^{H}$ is a simple $(G/H)$\nd algebra:
we have $Q = Q^H \oplus Q_1$ with
$Q_1{}\!^H = 0$, so that $I = (QI)^H$
for any $(G/H)$\nd ideal $I$ of $Q^H$.
Hence by Lemma~\ref{l:homogeneous} $\Spec(\overline{R}{}^H)$
is a homogeneous $(G/H)$\nd scheme for $H \in \sH$.
Since $p$ induces an injective $G$\nd homomorphism from
$D_\mu$ to $\overline{R}{}^{H_\mu}$,
each $\Spec(D_\mu)$ has thus an open dense homogeneous
$(G/H_\mu)$\nd subscheme $X_\mu$.
The $X_\mu$ then form an inverse system of $G$\nd schemes.

Let $B \supset D_\mu$ be a $G$\nd subalgebra of $C$ with
$p(B) \supset \overline{R}{}^{H_\mu}$.
Then \mbox{$p(B^{H_\mu}) \supset \overline{R}{}^{H_\mu}$},
so that by (2) with $G/H_\mu$, $B^{H_\mu}$ and $D_\mu$
for $G$, $R$ and $D$, there is
a simple $(G/H_\mu)$\nd subalgebra of $B^{H_\mu}$
containing $D_\mu$.
Thus we have a unique factorisation
\[
\Spec(B) \xrightarrow{f_{\mu,B}} X_\mu \to \Spec(D_\mu)
\]
of the $G$\nd morphism $\Spec(B) \rightarrow \Spec(D_\mu)$.

Since $\overline{R}$ is the union of the $\overline{R}{}^H$ with $H \in \sH$,
it will be enough to show that $p(D') \supset \overline{R}{}^H$ for
every $H \in \sH$.
Fix an $H \in \sH$.
Then $(X_\mu/H)$ is a filtered inverse system of homogeneous
$(G/H)$\nd schemes, and hence for some $\mu_0$ is constant
for $\mu \ge \mu_0$.
Also $H_0 = H \cap H_{\mu_0}$ lies in $\sH$,
so that by (2) with $G/H_0$, $R^{H_0}$, and $D_{\mu_0}$  for $G$,
$R$, and $D$ there is a $G$-subalgebra
$D_0 \supset D_{\mu_0}$ of $R^{H_0}$ such that $p$ induces
an isomorphism $D_0 \iso \overline{R}{}^{H_0}$.
Then $D_0$ and $D_0{}\!^H$ are simple $G$\nd algebras,
and $p(D_0{}\!^H) \supset \overline{R}{}^H$.
Write
\[
i:D' \otimes_k D_0{}\!^H \to R
\]
for the homomorphism of $G$-algebras defined by the embeddings,
and $D''$ for the image of $i$.
Then $D'' \supset D'$ and $p(D'') \supset \overline{R}{}^H$.
We show that
\begin{equation}\label{e:kereq}
\Ker i = \Ker (p \circ i).
\end{equation}
This will imply that the restriction of $p$ to $D''$ is injective,
and thus $D' = D''$ by maximality of $D'$,
so that $p(D') \supset \overline{R}{}^H$ as required.

For every $\mu \ge \mu_0$ we have a commutative diagram
of $G$\nd morphisms
\[
\begin{CD}
\Spec(R) @>>>      \Spec(D_0)  @>>>      \Spec(D_0{}\!^H)  \\
@V{f_{\mu,R}}VV      @V{f_{\mu_0,D_0}}VV    @VVV   \\
X_\mu     @>>>        X_{\mu_0}      @>>>   X_{\mu_0}/H
\end{CD}
\]
where the top arrows are defined by the embeddings,
the bottom arrows are the transition morphism and the projection,
and the right vertical arrow is defined using
the isomorphism $\Spec(D_0)/H \iso \Spec(D_0{}\!^H)$.
Thus we have a factorisation
\[
\Spec(R) \to X_\mu \times_{X_{\mu_0}/H} \Spec(D_0{}\!^H)
\to X_\mu \times \Spec(D_0{}\!^H)
\to \Spec(D_\mu \otimes_k D_0{}\!^H)
\]
of the morphism defined by the restriction $i_\mu$ of $i$ to
$D_\mu \otimes_k D_0{}\!^H$.
Now $X_\mu \rightarrow X_{\mu_0}/H$ factors through an
isomorphism $X_\mu/H \rightarrow X_{\mu_0}/H$,
so that the $G$\nd scheme
\[
Z = X_\mu \times_{X_{\mu_0}/H} \Spec(D_0{}\!^{H}) =
X_\mu \times_{X_\mu/H} \Spec(D_0{}\!^{H})
\]
is homogeneous as a $(G/(H \cap H_\mu))$\nd scheme, by Lemma~\ref{l:fibreprodhomog}.
Hence the composite $\Gamma(Z,\sO_Z) \to R \to \overline{R}$ is injective,
because $\Spec(\overline{R}) \to Z$ factors through
a faithfully flat morphism $\Spec(\overline{R}{}^{H \cap H_\mu}) \to Z$.
Thus for $\mu \ge \mu_0$  we have $\Ker i_\mu = \Ker (p \circ i_\mu)$,
because both coincide with the kernel of $D_\mu \otimes_k D_0{}\!^{H} \to \Gamma(Z,\sO_Z)$.
Since $D'$ is the union of the $D_\mu$ for  $\mu \ge \mu_0$,
this gives \eqref{e:kereq}.
\end{proof}

Let $\sD'$ and $\sD''$ be $k$\nd pretensor categories with $\End(\I) = k$.
We may regard $\sD'$ and $\sD''$ as full $k$\nd pretensor subcategories
of $\sD' \otimes_k \sD''$.
If the pseudo-abelian hulls
of $\sD'$ and $\sD''$ are (positive) Kimura $k$\nd tensor categories,
then the pseudo-abelian hull of $\sD' \otimes_k \sD''$
is a (positive) Kimura $k$\nd tensor category,
because every object of $\sD' \otimes_k \sD''$ is a tensor product
of an object $\sD'$ with an object in $\sD''$.
Let $\sD$ be a rigid $k$\nd pretensor category with $\End(\I) = k$ and $\Rad(\sD) = 0$.
Then $\Rad(\sD \otimes_k \sD) = 0$.
In particular the $k$\nd tensor functor $\sD \otimes_k \sD \to \sD$ with restriction
the identity to each factor $\sD$ is faithful.
If $U':\sD' \to \sD$ and $U'':\sD'' \to \sD$ are faithful $k$\nd tensor functors,
it thus follows by factoring through $\sD \otimes_k \sD$ that
$\sD' \otimes_k \sD'' \to \sD$ with restriction $U'$ to $\sD'$ and $U''$ to $\sD''$
is faithful.

Let $G$ be a $k$\nd group.
Then $\REP_k(G)$ is an ind-completion $\Rep_k(G)$,
and an algebra in $\REP_k(G)$ is the same as a $G$\nd algebra.
Given a $G$\nd algebra $R$, we thus have the $k$\nd pretensor category $\sF_R$
and $k$\nd tensor functor $F_R:\Rep_k(G) \to \sF_R$ defined in  Section~\ref{ss:rigid},
and given a morphism $f:R \to R'$ of commutative $G$\nd algebras we have the $k$\nd tensor
functor $\sF_f:\sF_R \to \sF_{R'}$ with $F_{R'} = \sF_f F_R$.
If $\sF_f:\sF_R \to \sF_{R'}$ is faithful (resp.\ full), then $f:R \to R'$ is
injective (resp.\ surjective).
Indeed for every $M$ in $\Rep_k(G)$ the isomorphisms $\theta_{R;M,\I}$
and $\theta_{R';M,\I}$ of \eqref{e:frmodhom} show that
the homomorphism from $\Hom_G(M,R)$ to $\Hom_G(M,R')$
induced by $f$ is injective (resp.\ surjective).
If $\sF_R$ is non-zero and has no tensor ideals other than
itself and $0$, then $R$ is a simple $G$\nd algebra.
Indeed any non-zero morphism $f:R \to R'$ of commutative $G$\nd algebras
is injective, because $\sF_f$ is faithful.

\begin{lem}\label{l:splitting}
Let $\sC$ and $\sD$ be $k$\nd pretensor categories whose pseudo-abelian
hulls are Kimura $k$\nd tensor categories.
Then any lifting $\sD \to \sC$ along the projection $P:\sC \to \overline{\sC}$
of a faithful $k$\nd tensor functor $\sD \to \overline{\sC}$
factors up to tensor isomorphism through some right inverse to $P$.
\end{lem}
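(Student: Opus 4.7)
Let $L = PT:\sD\to\overline{\sC}$ denote the faithful $k$\nd tensor functor of which $T$ is a lifting. The plan is to pass to pseudo-abelian hulls, where everything becomes Kimura, apply the splitting theorem (Theorem~\ref{t:splitting}) and the unique lifting theorem (Theorem~\ref{t:uniquelift}), and then descend back to $\sC$. Write $\widehat{\sC}$, $\widehat{\sD}$ for the pseudo-abelian hulls, which by hypothesis are Kimura $k$\nd tensor categories, and denote by $\widehat{P}:\widehat{\sC}\to\overline{\widehat{\sC}}$ the projection. Since $\overline{\widehat{\sC}}$ is semisimple abelian and every object of $\widehat{\sC}$ is a direct summand of a direct sum of objects of $\sC$, the canonical $k$\nd tensor functor $\overline{\sC}\to\overline{\widehat{\sC}}$ is fully faithful and realises $\overline{\widehat{\sC}}$ as a pseudo-abelian hull of $\overline{\sC}$. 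The universal properties of pseudo-abelian hulls from \ref{ss:ind} then allow $T$ and $L$ to be extended, uniquely up to tensor isomorphism, to $k$\nd tensor functors $\widehat{T}:\widehat{\sD}\to\widehat{\sC}$ and $\widehat{L}:\widehat{\sD}\to\overline{\widehat{\sC}}$, with $\widehat{L}$ still faithful and $\widehat{P}\widehat{T}=\widehat{L}$.

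Theorem~\ref{t:splitting} applied to the Kimura category $\widehat{\sC}$ produces a $k$\nd tensor functor $\sigma:\overline{\widehat{\sC}}\to\widehat{\sC}$ right inverse to $\widehat{P}$, whence $\widehat{T}$ and $\sigma\widehat{L}$ are two liftings of the faithful functor $\widehat{L}$ along $\widehat{P}$. Theorem~\ref{t:uniquelift} then supplies a tensor isomorphism $\varphi:\widehat{T}\iso\sigma\widehat{L}$ lying above the identity of $\widehat{L}$. Restricting $\varphi$ to $\sD$ gives a tensor isomorphism $T\iso(\sigma|_{\overline{\sC}})\circ L$ of $k$\nd tensor functors into $\widehat{\sC}$.

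It remains to replace $\sigma|_{\overline{\sC}}$ by a strict right inverse $S:\overline{\sC}\to\sC$ of $P$. Because $\widehat{P}$ reflects isomorphisms (by the remarks on Kimura categories in \ref{ss:Kimura}) and is surjective on endomorphism algebras, for each object $M\in\sC$ one can choose an isomorphism $\xi_M:M\iso\sigma(\overline{M})$ in $\widehat{\sC}$ with $\widehat{P}(\xi_M)=1_{\overline{M}}$. Conjugating the action of $\sigma$ on morphisms and its monoidal structural isomorphisms by these $\xi_M$ produces a $k$\nd tensor functor $S:\overline{\sC}\to\sC$ (using that $\sC$ is a full subcategory of $\widehat{\sC}$) satisfying $PS=\mathrm{id}_{\overline{\sC}}$ and tensor isomorphic to $\sigma|_{\overline{\sC}}$ via the family $(\xi_M)$; composition with $\varphi$ yields the desired tensor isomorphism $T\iso SL$. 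The main obstacle is this descent step: although the recipe for $S$ is essentially forced by the $\xi_M$, verifying coherently that the transported associativities and symmetries meet the tensor-functor axioms, and that the conjugated morphisms actually lie in $\sC$ and not merely in $\widehat{\sC}$, requires careful, if largely formal, bookkeeping.
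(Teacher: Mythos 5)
Your argument is circular: you invoke Theorem~\ref{t:uniquelift} to produce the tensor isomorphism $\varphi:\widehat{T}\iso\sigma\widehat{L}$, but in this paper Theorem~\ref{t:uniquelift} is itself proved as a consequence of Lemma~\ref{l:splitting} --- its proof applies Lemma~\ref{l:splitting} to the $k$\nd tensor functor $\sD\otimes_k\sD\to\sC$ whose restrictions to the two factors are the given liftings $K_1$ and $K_2$. So you have reduced the statement to a theorem that depends on it.

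One might hope to repair this by appealing to the external version of the unique lifting theorem (\cite{AndKah}, Proposition~13.7.1), which applies when the pseudo-abelian hull of the source is semisimple abelian, and arguing that $\widehat{L}$ faithful with semisimple abelian target forces $\widehat{\sD}$ semisimple. But that inference fails: a $k$\nd tensor functor sends $\Rad$ into $\Rad$ only when it is \emph{full} (since $\tr(\widehat{L}(f)h)=0$ need only hold for $h$ in the image of $\widehat{L}$), so faithfulness of $\widehat{L}$ does not kill $\Rad(\widehat{\sD})$, and $\widehat{\sD}$ may well fail to be semisimple. Indeed the paper's proof only establishes that $\sD$ is \emph{split}, not semisimple, and then proceeds through the genuinely substantive route: reducing to the positive Kimura case via the dagger modification, realising $\sD$, $\sC$, $\overline{\sC}$ as categories $\sF_D$, $\sF_R$, $\sF_{\overline{R}}$ of free modules over commutative $G$\nd algebras via Theorem~\ref{t:GL} and Lemma~\ref{l:frmod}, and finally invoking the $G$\nd algebra retraction Lemma~\ref{l:retraction} (which rests on Magid's Lemma~\ref{l:homogeneous} and the completion Lemma~\ref{l:complete}). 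All of that is the actual content of Lemma~\ref{l:splitting}, and none of it appears in your proposal. The final descent step, transporting $\sigma|_{\overline{\sC}}$ to a strict right inverse $S$ of $P$ via the family $(\xi_M)$, is fine modulo bookkeeping as you say (fullness of $\widehat{P}$ gives the $\xi_M$, reflection of isomorphisms shows they are invertible, and fullness of $\sC\subset\widehat{\sC}$ ensures the conjugated morphisms live in $\sC$), but it cannot rescue the preceding circularity.
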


\begin{proof}
Let $E:\sD \to \sC$ be a $k$\nd tensor functor with $PE$ faithful.
We note that $E$ factors up to tensor isomorphism through some right inverse to $P$
if and only if $E$ is tensor isomorphic to $TPE$ for some right inverse $T$
of $P$.
Further these two equivalent conditions are also equivalent to those
obtained by replacing ``right inverse'' by ``right quasi-inverse'',
where $T$ is said to be right quasi-inverse to $P$ when $PT$ is tensor isomorphic
to the identity.
Indeed modifying such a $T$ by liftings to $\sC$ of the components of a tensor
isomorphism $PT \iso \Id_{\overline{\sC}}$ gives a strict right inverse.

Let $\sC_1$ and $\sD_1$ be pseudo-abelian hulls of $\sC$ and $\sD$.
Then $\overline{\sC}$ is a full $k$\nd pretensor subcategory of $\overline{\sC}_1$,
and $P$ is obtained by restriction from the projection $P_1:\sC_1 \to \overline{\sC_1}$.
Further $E:\sD \to \sC$ extends to an  $E_1:\sD_1 \to \sC_1$, any right inverse
$T_1$ to $P_1$ defines by restriction a right inverse $T$ to $P$, and
$E_1$ tensor isomorphic to $T_1P_1E_1$ implies $E$ tensor isomorphic to $TPE$.
Replacing $\sC$ and  $\sD$ by $\sC_1$ and $\sD_1$, we may thus
suppose that $\sC$ and $\sD$ are Kimura $k$\nd tensor categories.

Since $\overline{\sC}$ is split and $PE$ is faithful, $\sD$ is split.
It has been seen in Section~\ref{ss:Kimura} that there is a split Kimura
$k$\nd tensor category $\sC'$
and a $k$\nd tensor functor $\sC' \to \sC$ through which every $k$\nd tensor
functor from a split Kimura $k$\nd tensor category to $\sC$ factors up
to tensor isomorphism, and whose composite $\sC' \to \overline{\sC}$ with $P$
is full and essentially surjective.
Then $\sC' \to \sC$ induces an equivalence $\overline{\sC'} \to \overline{\sC}$.
Factoring $E$ up to tensor isomorphism through $\sC' \to \sC$
and replacing $\sC$ by $\sC'$, we may thus suppose that $\sC$ is split.
If we denote by a dagger the $k$\nd tensor category obtained from a split
Kimura $k$\nd tensor category by modifying its symmetry according to the
$\Z/2$\nd grading by positive and negative objects, then a $k$\nd tensor
functor $\sA \to \sC$ is at the same time a $k$\nd tensor functor
$\sA^\dagger \to \sC^\dagger$, and the projection $\sC \to \overline{\sC}$ is at the
same time the projection $\sC^\dagger \to \overline{\sC^\dagger}$.
Replacing $\sC$ and $\sD$ by $\sC^\dagger$ and $\sD^\dagger$,
we may thus suppose that
$\sC$ and $\sD$ are positive Kimura $k$\nd tensor categories.

For $\sB$ the $k$\nd tensor category of finite-dimensional representations of
an appropriate product of general linear groups over $k$,
there is by Theorem~\ref{t:GL} an essentially surjective
$k$\nd tensor functor $\sB \to \sC$.
We have $\End_\sB(\I) = k$, and $\Rad(\sB) = 0$ because $\sB$ is semisimple abelian.
The composite with $P$ of the $k$\nd tensor functor
\[
\sB \otimes_k \sD \to \sC
\]
defined by $\sB \to \sC$ and $E$ is thus faithful,
because its restrictions to $\sB$ and $\sD$ are faithful.
If $\widetilde{\sD}$ is a pseudo-abelian hull of $\sB \otimes_k \sD$, it follows
that $E$ factors through an essentially surjective $k$\nd tensor functor
$\widetilde{E}:\widetilde{\sD} \to \sC$ with $P\widetilde{E}$ faithful.
Since each of $\sB$ and $\sD$ is a positive Kimura $k$\nd tensor category,
so also is $\widetilde{\sD}$.
Replacing $E$ by $\widetilde{E}$, we may thus suppose further that $E$
is essentially surjective.

For an appropriate product $G$ of general linear groups over $k$
we have an essentially surjective $k$\nd tensor functor $K:\Rep_k(G) \to \sD$.
We obtain from it as follows commutative $G$\nd algebras $D$, $R$ and $\overline{R}$
and a diagram of $k$\nd tensor functors
\begin{equation}\label{e:sFdiag}
\begin{CD}
\sD  @>{E}>>  \sC  @>{P}>>   \overline{\sC}   \\
@AA{I_1}A          @AA{I_2}A             @AA{I_3}A           \\
\sF_D  @>{\sF_e}>>  \sF_R @>{\sF_p}>>   \sF_{\overline{R}}
\end{CD}
\end{equation}
where the $I_i$ are $k$\nd tensor equivalences
and the squares commute up to tensor isomorphism.
By Lemma~\ref{l:frmod}~\ref{i:frmodes}, we have $K = I_1F_D$ and $EK = I_2F_R$
with $I_1$ and $I_2$ fully faithful.
Then $I_1$ and $I_2$ are $k$\nd tensor equivalences because $K$ and  $EK$ are
essentially surjective.
Thus there is a $k$\nd tensor functor $\widetilde{E}:\sF_D \to \sF_R$ with
$EI_1$ tensor isomorphic to $I_2\widetilde{E}$.
Composing with $I_2$ shows that $F_R$ is tensor isomorphic to $\widetilde{E}F_D$.
Since $F_D$ is bijective on objects, we may assume after replacing $\widetilde{E}$
by a tensor isomorphic functor that $F_R = \widetilde{E}F_D$.
Thus by Lemma~\ref{l:frmod}~\ref{i:frmodff} $\widetilde{E} = \sF_e$
for a homomorphism of $G$\nd algebras $e:D \to R$.
This gives the left square of \eqref{e:sFdiag},
and the right square is obtained similarly.
{}From \eqref{e:sFdiag} it follows that  $\Rad(\sF_{\overline{R}}) = 0$,
that $\End_{\sF_R}(\I) = k$,
and that $\sF_p$ is full and $\sF_{p \circ e} = \sF_p\sF_e$ is faithful.
Thus $\overline{R}$ is a simple $G$\nd algebra, $R^G = \Hom_G(k,R) = k$,
and $p$ is surjective and $p \circ e$ is injective.
Lemma~\ref{l:retraction} thus shows that $p$ has a right inverse through which
$e$ factors, so that $\sF_p$ has a right inverse through which $\sF_e$ factors.
Thus by \eqref{e:sFdiag}, $P$ has a right
quasi-inverse through which $E$ factors
up to tensor isomorphism.
\end{proof}

The following fundamental result is due to Andr\'e and Kahn
\cite{AndKah}, 16.1.1~(a) (see also \cite{O},~Theorem~1.1).
As with Theorem \ref{t:uniquelift} below, it is obtained here as a
consequence of Lemma~\ref{l:splitting}.

\begin{thm}\label{t:splitting}
\textnormal{(Andr\'e and Kahn)}
Let $\sC$ be a $k$\nd pretensor category whose pseudo-abelian hull is a
Kimura $k$\nd tensor category.
Then the projection $\sC \to \overline{\sC}$ has a right inverse.
\end{thm}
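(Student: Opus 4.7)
The plan is to deduce Theorem \ref{t:splitting} directly from Lemma \ref{l:splitting} by applying that lemma to a trivially small source category. Since Lemma \ref{l:splitting} already produces, under its hypotheses, a right inverse $T$ to $P$ with the additional property that the given functor $E$ factors through $T$, the mere existence of such a $T$ for suitable $E$ will prove the theorem.

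First I would take $\sD$ to be the full $k$-pretensor subcategory of $\sC$ whose only object is the unit $\I$, with tensor structure inherited from $\sC$. The hypothesis that the pseudo-abelian hull of $\sC$ is a Kimura $k$-tensor category implies $\End_\sC(\I) = k$, so $\sD$ has exactly one non-trivial hom-space, namely $\End_\sD(\I) = k$. A pseudo-abelian hull of $\sD$ has, up to isomorphism, objects only the direct sums $\I^{\oplus n}$ for $n \ge 0$ together with a zero object; each is dualisable and positive, the category is essentially small, and $\End(\I) = k$. Thus this hull is a Kimura $k$-tensor category, and the standing hypothesis on $\sD$ in Lemma \ref{l:splitting} is met.

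Next I would take $E : \sD \hookrightarrow \sC$ to be the inclusion, so that $PE : \sD \to \overline{\sC}$ is simply the inclusion of $\sD$ into $\overline{\sC}$. The functor $PE$ is faithful because its only non-identity action is the isomorphism $k = \End_\sD(\I) \iso \End_{\overline{\sC}}(\I) = k$. Applying Lemma \ref{l:splitting} to this $E$ therefore produces a right inverse $T : \overline{\sC} \to \sC$ to $P$ (through which, incidentally, $E$ factors up to tensor isomorphism, though this is not needed here). The existence of $T$ is the conclusion of the theorem.

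Since all the substantive work, in particular the appeal to Lemma \ref{l:retraction} on $G$-algebras, has already been carried out in proving Lemma \ref{l:splitting}, I anticipate no further obstacle: the theorem is a trivial special case of that lemma.
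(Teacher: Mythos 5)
Your proof is correct and is essentially the paper's own argument: the paper likewise proves Theorem~\ref{t:splitting} by applying Lemma~\ref{l:splitting} with $\sD$ the one-object subcategory on $\I$ and $E$ the inclusion, so that the right inverse to $P$ produced by the lemma is exactly the required splitting.
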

\begin{proof}
Apply Lemma~\ref{l:splitting} with $\sD$ consisting of the single object $\I$
for which $\End(\I) = k$.
\end{proof}

\begin{thm}\label{t:uniquelift}
Let $\sC$ and $\sD$ be $k$\nd pretensor categories whose pseudo-abelian hulls are
Kimura $k$\nd tensor categories.
Then between any two liftings $\sD \to \sC$ along the projection $\sC \to \overline{\sC}$
of a faithful $k$\nd tensor functor $K:\sD \to \overline{\sC}$ there exists
a tensor isomorphism lying above the identity of $K$.
\end{thm}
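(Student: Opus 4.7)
The plan is to reduce the theorem, via Lemma~\ref{l:splitting}, to a uniqueness statement about right inverses of $P$, and then to prove that statement by adapting the $G$\nd algebra machinery already used in the proof of Lemma~\ref{l:splitting} itself.

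First I would apply Lemma~\ref{l:splitting} to each of the given liftings $E_1,E_2\colon\sD\to\sC$ separately, obtaining right inverses $T_1,T_2\colon\overline{\sC}\to\sC$ of $P$ together with tensor isomorphisms $\psi_i\colon E_i\iso T_iK$. A standard modification (replacing $\psi_i$ by $T_i((P\psi_i)^{-1})\circ\psi_i$ using that $P\psi_i$ is a tensor automorphism of $K$) arranges that each $\psi_i$ lies above $1_K$. Thus once a tensor isomorphism $\theta\colon T_1\iso T_2$ lying above $1_{\overline{\sC}}$ has been constructed, the composite $\psi_2^{-1}\circ\theta K\circ\psi_1$ is a tensor isomorphism $E_1\iso E_2$ lying above $1_K$. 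The theorem therefore reduces to its special case $\sD=\overline{\sC}$, $K=1_{\overline{\sC}}$: any two right inverses of $P$ are tensor isomorphic above $1_{\overline{\sC}}$.

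To settle this special case, I would follow the same reductions used in the proof of Lemma~\ref{l:splitting}: pass to pseudo\nd abelian hulls so that $\sC$ is Kimura, replace $\sC$ by its split version, and modify the symmetry by the canonical $\Z/2$\nd grading to reduce to the situation where $\sC$ is a positive Kimura category. Theorem~\ref{t:GL} then produces a proreductive product of general linear groups $G$ together with an essentially surjective $k$\nd tensor functor $\Rep_k(G)\to\overline{\sC}$, and Lemma~\ref{l:frmod}~\ref{i:frmodes} (applied to a lift into an ind\nd completion of $\sC$) identifies $\sC$ with $\sF_R$ for a commutative $G$\nd algebra $R$ with $R^G=k$, under which $P$ becomes $\sF_p\colon\sF_R\to\sF_{\overline{R}}$ for a surjection $p\colon R\to\overline{R}$ onto a simple quotient $G$\nd algebra. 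By Lemma~\ref{l:frmod}~\ref{i:frmodff}, right inverses of $P$ are in bijection with $G$\nd algebra sections $s\colon\overline{R}\to R$ of $p$; the two given right inverses $T_1,T_2$ correspond to sections $s_1,s_2$, and the desired tensor isomorphism $\theta$ above the identity corresponds to a $G$\nd invariant element of $R$ carrying $s_1$ to $s_2$.

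The main obstacle is therefore the resulting algebraic uniqueness statement: any two $G$\nd equivariant sections $s_1,s_2\colon\overline{R}\to R$ of $p$ are related by conjugation by a $G$\nd invariant unit lying in $1+\ker p$. I would prove this by an obstruction\nd theoretic argument patterned on the proof of Lemma~\ref{l:retraction}. In the infinitesimal case $J^2=0$ with $J=\ker p$, the difference $s_2-s_1$ is a $G$\nd equivariant derivation from $\overline{R}$ to $J$; by smoothness of $\overline{R}$ (whose spectrum is a homogeneous $G$\nd scheme by Lemma~\ref{l:homogeneous}) together with reductivity of $G$, this derivation is $G$\nd equivariantly inner, the required conjugator being the corresponding $G$\nd invariant element of $J$. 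For $G$ of finite type one then iterates along the filtration $R\supset J\supset J^2\supset\cdots$, passing to the limit by means of Lemma~\ref{l:complete}, and the general proreductive case follows by a Zorn's lemma argument over the directed system of finite\nd type quotients, exactly mirroring the final step of the proof of Lemma~\ref{l:retraction}.
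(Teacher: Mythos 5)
Your proposal diverges from the paper immediately after invoking Lemma~\ref{l:splitting}: you apply it separately to each lifting and then set out to compare the two resulting right inverses, whereas the paper applies Lemma~\ref{l:splitting} \emph{once}, to the $k$\nd tensor functor $\sD \otimes_k \sD \to \sC$ whose restrictions to the two copies of $\sD$ are $K_1$ and $K_2$. Its composite with $P$ restricts to the faithful $K$ on each factor, hence is faithful by the remark preceding Lemma~\ref{l:splitting}; so it factors, up to tensor isomorphism, through a \emph{single} right inverse $T$ of $P$. Restricting to the two inclusions $\sD \to \sD \otimes_k \sD$ yields tensor isomorphisms $\varphi_i : K_i \iso TK$, which the normalization $\varphi_i \mapsto (TP)(\varphi_i^{-1}) \circ \varphi_i$ places above $\id_K$, and the result follows by composing. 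The problem of comparing two different right inverses of $P$ — which your whole second half is organized around — simply never arises.

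That second half also has a genuine gap. After reducing (correctly) to the claim that any two right inverses of $P$ are tensor isomorphic above $\id_{\overline{\sC}}$, you pass to the model $\sF_p : \sF_R \to \sF_{\overline{R}}$ with $R^G = k$ and $\overline{R}$ simple, and assert that the desired tensor isomorphism ``corresponds to a $G$\nd invariant element of $R$ carrying $s_1$ to $s_2$.'' Since $R^G = k$, a $G$\nd invariant element of $R$ is a scalar and can carry no section to a distinct one (and a monoidal natural transformation is constrained to be the identity on $\I$ anyway). The datum that your Lemma~\ref{l:retraction}-style obstruction argument would plausibly produce is a $G$\nd algebra automorphism $\phi$ of $R$ with $p\phi = p$ and $\phi s_1 = s_2$; but this gives the \emph{equality of functors} $\sF_\phi \circ \sF_{s_1} = \sF_{s_2}$, not a tensor isomorphism between $\sF_{s_1}$ and $\sF_{s_2}$. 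To convert it into one you would still need a tensor isomorphism $\Id_{\sF_R} \iso \sF_\phi$ lying above the identity of $\Id_{\sF_{\overline{R}}}$, a step your proposal does not address. (A smaller issue: Lemma~\ref{l:frmod}~\ref{i:frmodff} identifies $T$ with some $\sF_s$ only under the normalization $TF_{\overline{R}} = F_R$, so the claimed bijection between right inverses and sections needs that normalization to be arranged first.)
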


\begin{proof}
Let $K_1$ and $K_2$ be liftings $\sD \to \sC$ of $K$ along the projection
$P:\sC \to \overline{\sC}$.
Then the $k$\nd tensor functor $\sD \otimes_k \sD \to \sC$
with restrictions $K_1$ and $K_2$ to the factors $\sD$
has a faithful composite with $P$.
By Lemma~\ref{l:splitting}, it thus factors up to tensor isomorphism
through a right inverse $T$ to $P$.
There thus exists for $i = 1,2$ a tensor isomorphism $K_i \iso TK'{}\!_i$
for some $K'{}\!_i$, and hence $\varphi_i:K_i \iso TK$.
Composing with $(TP)(\varphi_i{}\!^{-1})$ shows that $\varphi_i$ may be taken to lie
above the identity of $K$.
The result follows.
\end{proof}

It follows from \cite{AndKah},~Proposition~13.7.1 that the conclusion of
Theorem~\ref{t:uniquelift} holds for any $k$\nd pretensor category $\sD$
(not necessarily rigid) whose pseudo-abelian hull is semisimple abelian.
On the other hand when the pseudo-abelian hull of $\sC$ is a positive Kimura
category, it can be shown that the conclusion of Theorem~\ref{t:uniquelift}
holds for an arbitrary $k$\nd pretensor category $\sD$.
It seems very likely that in fact Theorem~\ref{t:uniquelift} is valid
without any hypothesis on $\sD$.
If so, this would have important consequences for algebraic cycles,
as will be explained in \ref{ss:conclrem}.

\section{Motives}\label{s:mot}

\emph{In this section $k$ is a field of characteristic $0$, and $S$ is a non-empty,
connected, separated, regular excellent noetherian scheme of finite Krull dimension}.

\subsection{Chow motives}\label{ss:Chowmot}

Denote by $\sS$ the category of those separated, regular, excellent noetherian
schemes which are non-empty with every connected component of the same finite
Krull dimension.
Given a scheme $Z$ in $\sS$, we write $CH(Z)$ for the Chow group of $Z$,
graded by codimension of cycles, and $CH(Z)_k$ for $CH(Z) \otimes k$.
Since $Z$ is separated, regular and noetherian, the Grothendieck group $K_0(Z)$
of locally free $\sO_Z$\nd modules of finite type coincides with the Grothendieck
group of coherent $\sO_Z$\nd modules.
Thus $K_0(Z)$ has both a $\gamma$\nd filtration defined by its structure
of $\lambda$\nd ring and a filtration by codimension of support of coherent sheaves.
There is a canonical homomorphism $CH(Z) \to K_0(Z)$ which sends the class
in $CH(Z)$ of the reduced and irreducible closed subscheme $W$ of $Z$
to the class in $K_0(Z)$ of the coherent $\sO_Z$\nd module $\sO_W$.
Since $k$ is a field of characteristic $0$ and $Z$ is separated, regular, noetherian
and of finite Krull dimension,
it follows for example from \cite{Sou}, Th\'eor\`eme~4 that
the $\gamma$\nd filtration and the filtration by codimension of support
induce the same filtration on $K_0(Z) \otimes k$, and that  $CH(Z) \to K_0(Z)$ induces
an isomorphism from $CH(Z)_k$ to the graded $k$\nd vector space
associated to $K_0(Z) \otimes k$.
The ring structure on $K_0(Z)$ then defines a structure of graded commutative
$k$\nd algebra on the graded $k$\nd vector space $CH(Z)_k$,
which coincides when $Z$ is smooth and quasi-projective over a field with
that induced by the Chow ring structure of $CH(Z)$.
Similarly the pullback $K_0(Z) \to K_0(Z')$ along any $f:Z' \to Z$ in $\sS$
defines a homomorphism $f^*:CH(Z)_k \to CH(Z')_k$ of graded $k$\nd algebras,
and if $f$ is proper and $Z$ and $Z'$ have Krull dimensions $d$ and $d'$,
the push forward  $K_0(Z') \to K_0(Z)$ along $f$
defines a homomorphism $f_*:CH(Z')_k \to CH(Z)_k$ of degree $d - d'$ of graded
$k$\nd vector spaces.
We have $(f \circ f')^* = f'{}^* \circ f^*$ and if $f$ and $f'$ are proper
$(f \circ f')_* = f_* \circ f'{}_*$.
If $f:Z' \to Z$ is proper we have the projection
formula $f_*(z'.f^*z) = f_*z'.z$ for $z$ in $CH(Z)_k$ and $z'$ in $CH(Z')_k$.
If $W$ is a closed subscheme of $Z$ with complement $U$ and $W$ lies in $\sS$,
we have an exact sequence
\begin{equation}\label{e:exacthomotopy}
CH(W)_k \to CH(Z)_k \to CH(U)_k \to 0,
\end{equation}
with the first arrow push forward along the embedding $W \to Z$ and the
second pullback along the embedding $U \to Z$.

Let $f:Z' \to Z$ and $p:Y \to Z$ be morphisms in $\sS$ such that the
pullback $h:Y' \to Y$ of $f$ along $p$ lies in $\sS$,
and suppose that $f$ is proper and that $Y$ and $Z'$ are Tor-independent over $Z$.
Then if $p':Y' \to Z'$ is the projection, we have
\begin{equation}\label{e:basechange}
p^* \circ f_* = h_* \circ p'{}^*.
\end{equation}
Indeed \eqref{e:basechange} holds with $CH(-)_k$ replaced by $K_0(-)$,
as can be seen by a \v{C}ech calculation after taking a finite affine
open cover of $Z'$ (see for example \cite{Ill},~8.3.2 and \cite{SGA6},~II~2.2.2.1).

Let $j:W \to Z$ in $\sS$ be the embedding of a closed subscheme.
Then $j$ is a regular immersion.
If the conormal sheaf $\sN$ of $W$ in $Z$ a free $\sO_W$\nd module
of rank $> 0$, then
\begin{equation}\label{e:selfint}
j^* \circ j_* = 0.
\end{equation}
Indeed \eqref{e:selfint} holds with $CH(-)_k$ replaced by $K_0(-)$,
because (\cite{SGA6}, VII~2.4 and 2.5)
$\underline{\Tor}^{\sO_Z}_i(\sF,\sO_W)$ is isomorphic to
$\sF \otimes_{\sO_W} \bigwedge^i\sN$
for every $i$ and locally free $\sO_W$\nd module of finite type $\sF$.

Let $f:Z' \to Z$ in $\sS$ be proper and surjective.
Then $f^*$ is injective and $f_*$ is surjective.
Indeed let $W$ be a reduced and irreducible closed subscheme of $Z$ with
generic point $w$.
Then the fibre of $f$ above $w$ is non-empty.
If $w'$ is the image in $Z'$ of a closed point of this fibre,
then $f(w') = w$, and the residue field of $\sO_{Z',w'}$
is finite over the residue field of $\sO_{Z,w}$.
Thus $f_*$ sends the class in $CH(Z')_k$ of the reduced and irreducible closed subscheme
of $Z'$ with generic point $w'$ to a non-zero
multiple of the class in $CH(Z)_k$ of $W$.
This shows that $f_*$ is surjective, and if $f_*(z') = 1$, the injectivity
of $f^*$ follows from the projection formula $f_*(z'.f^*z) = z$.

Suppose now that $f$ is proper and bijective, and hence a homeomorphism.
Then $f^*$ and $f_*$ are bijective.
Indeed let $W$ be a reduced and irreducible closed subscheme of $Z$ with
generic point $w$.
Then $w = f(w')$ for a unique point $w'$ of $Z'$.
Any regular sequence in $\sO_{Z,w}$ which generates
the maximal ideal of  $\sO_{Z,w}$ has image in $\sO_{Z',w'}$ a regular sequence
which generates in $\sO_{Z',w'}$ an ideal with radical the maximal ideal.
If $w'$ is the generic point of the reduced and irreducible closed subscheme $W'$
of $Z'$, it thus follows by considering Koszul complexes that the fibre at $w'$ of
$\underline{\Tor}^{\sO_Z}_i(\sO_W,\sO_{Z'})$ is non-zero for $i = 0$ and zero
for $i > 0$.
Thus $f^*$ sends the class of $W$ in $CH(Z)_k$ to a non-zero multiple of the
class of $W'$ in $CH(Z')_k$.
This shows that $f^*$ is surjective.
That $f_*$ is injective now follows from the projection formula
$f_*f^*(z) = f_*(1).z$, because $f_*(1)$ is a unit by the fact that
$f$ is a homeomorphism.

Denote by $\sV_S$ the cartesian monoidal category of those proper and
smooth schemes over $S$ which are non-empty and of
constant relative dimension.
The category $\sV_S$ has a dimension function with $\dim X$ the relative
dimension of $X$ over $S$.
By assumption, $S$ lies in $\sS$, and hence every scheme in $\sV_S$ lies in $\sS$.
We have a Chow theory $CH(-)_k$ on $\sV_S$ which sends $X$ to the graded
$k$\nd algebra $CH(X)_k$, with $p^*$ and $p_*$ for $p:X \to Y$ in $\sV_S$ as
defined above.
Indeed it is clear that conditions \ref{i:Chowfun}, \ref{i:Chowproj}, and
\ref{i:Chowiso} in the equivalent form \ref{i:Chowiso}$'$, of
Definition~\ref{d:Chow} are satisfied.
Given $X \to X'$ and $Y$ in $\sV_S$, the projection $X' \times_S Y \to X'$ is flat,
and hence $X' \times_S Y$ and $X$ are Tor-independent over $X'$,
so that by \eqref{e:basechange} condition \ref{i:Chowtens} in the equivalent form
\ref{i:Chowtens}$'$ is also satisfied.

Write $(\sM^0{}\!_{S,k},h^0{}\!_{S,k},\nu^0{}\!_{S,k})$ for the
Poincar\'e duality theory
associated to $CH(-)_k$ on $\sV_S$, and $\gamma$ for the universal morphism.
Pushing forward along the embedding of $\sM^0{}\!_{S,k}$
into a pseudo-abelian hull $\sM_{S,k}$, we obtain a Poincar\'e duality theory
$(\sM_{S,k},h_{S,k},\nu_{S,k})$.
Then $\sM_{S,k}$ is the rigid $k$\nd tensor category of $k$\nd linear Chow motives
over $S$, and
\[
\sM^0{}\!_{S,k}(\I,h^0{}\!_{S,k}(-)(\cdot)) =
\sM_{S,k}(\I,h_{S,k}(-)(\cdot)).
\]
Usually we write $h_{S,k}$ and $\nu_{S,k}$ as $h_S$ and $\nu_S$ or simply as $h$ and $\nu$.
As in \eqref{e:PDhomChow}, $\gamma$ defines for every $X$ and $Y$ in $\sV_S$ and $m$ and $n$
an isomorphism
\[
\gamma_{X,Y,m,n}:CH^{\dim X + n - m}(X \times Y)_k \iso \sM_{S,k}(h(X)(m),h(Y)(n))
\]
such that \ref{i:PDhomtrans}, \ref{i:PDhomtens}, \ref{i:PDhomcomp} and \ref{i:PDhomgraph}
of Proposition~\ref{p:PDhom} hold, as well as \eqref{e:PDhomtwist}, \eqref{e:PDhomcomppush}
and \eqref{e:PDhomcomppull}.
It is possible to take as $\sM_{S,k}$ the category of Chow motives as usually defined,
where an object is a triple $(X,e,m)$ with $X$ a proper smooth $S$\nd scheme,
$e$ in $CH(X \times_S X)_k$ idempotent under composition of correspondences,
and $m$ an integer, and to identify $\sM^0{}\!_{S,k}$ with the full subcategory
consisting of the $(X,e,m)$ with $X$ in $\sV_S$ and $e$ the identity.
If $k'$ is an extension of $k$, there is a $k$\nd tensor functor from
$\sM_{S,k}$ to $\sM_{S',k}$, unique up to tensor isomorphism,
which defines a morphism from $(\sM_{S,k},h_{S,k},\nu_{S,k})$ to
$(\sM_{S,k'},h_{S,k'},\nu_{S,k'})$ compatible with the isomorphisms $\gamma$.

Let $p:X \to Y$ be a morphism in $\sV_S$.
By \eqref{e:PDhomcomppush}, $h(p):h(Y) \to h(X)$ is a section if and only
if there is a $\beta$ in $CH(X \times_S Y)_k$ with $(p \times Y)_*(\beta)$ the class of the
diagonal of $Y$ in $CH(Y \times_S Y)_k$, and by \eqref{e:PDhomcomppull}
$h(p)$ is a retraction if and only if there is an $\alpha$ in $CH(X \times_S Y)_k$
with $(X \times p)^*(\alpha)$ the class of the diagonal of $X$ in $CH(X \times_S X)_k$.
It follows that $h(p)$ is a section when $p$ is surjective,
and an isomorphism when $p$ is a universal homeomorphism.
In particular taking for $p$ the structural morphism of $X$ shows that the
identity $\I \to h(X)$ is a section in $\sM_{S,k}$.

Let $f:S' \to S$ be a morphism with $S'$ in $\sS$ connected.
Then the product-preserving functor $- \times_S S':\sV_S \to \sV_{S'}$ defined by $f$
preserves the dimensions, and the homomorphisms
\begin{equation}\label{e:Chowpull}
CH(X)_k \to CH(X \times_S S')_k
\end{equation}
defined by the projections $X \times_S S' \to X$ give a morphism $\varphi$ of Chow theories from
$CH(-)_k$ on $\sV_S$ to the pullback
along \mbox{$\sV_S \to \sV_{S'}$} of $CH(-)_k$ on $\sV_{S'}$.
Indeed condition \ref{i:Chowmorpull} of Definition~\ref{d:Chowmor} is clearly satisfied.
Also $X$ and $Y \times_S S'$
are Tor-independent over $Y$ for any $X \to Y$ in $\sV_S$, because $X$ and $Y$
are both flat over $S$.
It thus follows from \eqref{e:basechange} that condition \ref{i:Chowmorpush}
of Definition~\ref{d:Chowmor} is satisfied.
By the universal property of $(\sM^0{}\!_{S,k},h^0{}\!_{S,k},\nu^0{}\!_{S,k})$,
there exists a unique morphism from $(\sM^0{}\!_{S,k},h^0{}\!_{S,k},\nu^0{}\!_{S,k})$ to the
pullback of  $(\sM^0{}\!_{S',k},h^0{}\!_{S',k},\nu^0{}\!_{S',k})$ along $\sV_S \to \sV_{S'}$
whose associated morphism of Chow theories coincides, modulo isomorphisms
$\gamma$, with $\varphi$.
We obtain from it, using the universal property of the pseudo-abelian hull
$\sM_{S,k}$ of $\sM^0{}\!_{S',k}$, a $k$\nd tensor functor $f^*:\sM_{S,k} \to \sM_{S',k}$
such that
\[
f^*(h_S(X)(n)) = h_{S'}(X \times_S S')(n)
\]
for every $X$ and $n$, and such that by \eqref{e:gammaT} the action of $f^*$ on morphisms
from $h_S(X)(m)$ to $h_{S'}(Y)(n)$ is given,
modulo the isomorphisms $\gamma$ and the structural isomorphisms of $\sV_S \to \sV_{S'}$,
by \eqref{e:Chowpull} with $X$ replaced by $X \times_S Y$.
If $f$ is proper and surjective then \eqref{e:Chowpull} is injective for every $X$,
so that $f^*$ is faithful.

Suppose that $f:S' \to S$ is proper and smooth.
Then any object in $\sV_{S'}$ may be regarded, after composing its structural morphism
with $f$, as an object of $\sV_S$.
For any integers $n$ and $n'$ and $X$ in $\sV_S$ and $X'$ in $\sV_{S'}$,
we obtain an isomorphism
\begin{equation}\label{e:schadj}
\sM_{S,k}(h_S(X)(n),h_S(X')(n')) \iso \sM_{S',k}(h_{S'}(X \times_S S')(n),h_{S'}(X')(n'))
\end{equation}
by applying $f^*$ and then composing with $h_{S'}(a_{X'})(n')$, where $a_{X'}:X' \to X' \times_S S'$
the canonical morphism over $S'$.
Indeed \eqref{e:PDhomcomppull} shows that \eqref{e:schadj} coincides, modulo
the isomorphisms $\gamma$,
with $CH^{\dim X + n' - n}(-)_k$ applied to the isomorphism
\[
(X \times_S S') \times_{S'} X' \to (X \times_S S') \times_{S'} (X' \times_S S') \to
X \times_S X'.
\]
It follows from \eqref{e:schadj} that $f^*$ has a right adjoint $f_*$, with
\[
f_*(h_{S'}(X')(n')) = h_S(X')(n'),
\]
where the counit is
\[
h(a_{X'})(n'):f^*f_*(h_{S'}(X')(n')) \to h_{S'}(X')(n').
\]
If $\alpha$ is the push forward of $\alpha'$ along $X' \times_{S'} Y' \to X' \times_S Y'$,
then $f_*$ sends $\gamma_{X',Y',i',j'}(\alpha')$ to $\gamma_{X',Y',i',j'}(\alpha)$,
because by \eqref{e:PDhomcomppush} the image of $\gamma_{X',Y',i',j'}(\alpha') \circ h_{S'}(a_{X'})(i')$
under the inverse of the adjunction isomorphism is $\gamma_{X',Y',i',j'}(\alpha)$.
Considering images of idempotent endomorphisms then shows using \eqref{e:PDhomtwist}
that $f_*(M'(n'))$ is isomorphic to $f_*(M')(n')$ for every $M'$,
and using Proposition~\ref{p:PDhom}~\ref{i:PDhomtrans} that
$f_*(M'{}^\vee)$ is isomorphic to $f_*(M')^\vee(d)$
with $d$ the relative dimension of $S'$ over $S$.
If $b_X:X \times_S S' \to X$ is the projection, the unit is
\[
h_S(b_X)(n):h_S(X)(n) \to f_*f^*(h_S(X)(n)),
\]
because when $X' = X \times_S S'$ and $n = n'$  the image of $h_S(b_X)(n)$ under
\eqref{e:schadj} is the identity.
Since every $b_X$ is surjective, every $h_S(b_X)$ is a section,
so that the unit $M \to f_*f^*M$
is a section for every $M$ in $\sM_{S,k}$.
Similarly if $f$ is finite and \'etale
then every $a_{X'}$ is an open and closed
immersion, so that the counit $f^*f_*M' \to M'$ is a retraction for every $M'$
in $\sM_{S',k}$.

Suppose now that $f$ is finite and \'etale.
Then there exists a finite Galois cover $f_1:S_1 \to S$ of $S$ with $S_1$
connected such that the  set $\Gamma$ of
morphisms from $S_1$ to $S'$ over $S$ is non-empty.
For every $\sigma \in \Gamma$, composing the counit with $\sigma^*$ gives
a natural transformation $c(\sigma):f_1{}\!^*f_* \to \sigma^*$.
The morphism from $f_1{}\!^*f_* M'$ to $\bigoplus_{\sigma \in \Gamma} \sigma^*M'$
with component $c(\sigma)_{M'}$ at $\sigma$ is then an isomorphism for every $M'$
because it is an isomorphism when $M' = h_{S'}(X')(n')$.
In particular the rank of $f_*M'$ is the rank of $M'$
multiplied by the degree of $f$.
Further $f_1{}\!^*f_*$ preserves positive and negative objects.
Since $f_1{}\!^*$ is faithful, it follows that $f_*$ preserves
positive and negative objects.

\begin{prop}\label{p:genfib}
If $S'$ is the spectrum of the local ring of the generic point of $S$
and $f:S' \to S$ is the inclusion,
then $f^*:\sM_{S,k} \to \sM_{S',k}$ is full,
and every morphism in its kernel is tensor nilpotent.
\end{prop}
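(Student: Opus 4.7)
The statement has two parts, fullness of $f^*$ and tensor nilpotence of its kernel; I treat them in turn.

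\emph{Fullness.} My plan is to reduce to surjectivity of the underlying Chow-theoretic pullback. Since $S$ is connected and regular, $S'$ is the spectrum of the function field of $S$, so $S' = \varprojlim V$ over non-empty affine opens $V \subset S$; for any $Z \in \sV_S$ this gives $Z_{S'} = \varprojlim Z_V$ and hence $CH(Z_{S'})_k = \varinjlim CH(Z_V)_k$. Each restriction $CH(Z)_k \to CH(Z_V)_k$ is surjective by taking closures of integral closed subschemes. Via the identification \eqref{e:PDhomChow}, this gives fullness of $f^*$ on the subcategory $\sM^0_{S,k}$; fullness on the pseudo-abelian hull $\sM_{S,k}$ then follows by conjugating lifts by the defining idempotents.

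\emph{Setup for tensor nilpotence.} Let $a = \gamma_{X,Y,m,n}(\alpha)$ with $\alpha \in CH^d(X \times_S Y)_k$ lie in $\ker f^*$. The colimit description above produces a non-empty open $V \subset S$ on which $\alpha$ restricts to zero, so for $T = S \setminus V$ (with reduced structure) and $Z = X \times_S Y$, the class $\alpha$ is supported on $Z_T = Z \times_S T$. Iterating Proposition~\ref{p:PDhom}~\ref{i:PDhomtens}, the morphism $a^{\otimes N}$ corresponds under $\gamma$ to the external product $\alpha^{\otimes N} = \prod_{i=1}^N p_i^*\alpha \in CH(Z^N)_k$, where $p_i:Z^N \to Z$ are the projections; it therefore suffices to show $\alpha^{\otimes N} = 0$ for some $N$.

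\emph{The self-intersection key step.} The crucial geometric observation is that $T \hookrightarrow S$ is a closed immersion and $Z^N \to S$ factors through every $p_i$ identically, so all preimages $p_i^{-1}(Z_T)$ coincide with the single subscheme $Z^N \times_S T$, with common closed immersion I denote $j^{(N)}$. Provisionally assume $T$ is regular (so $(Z^N)_T \in \sS$ by flatness of $Z^N \to S$) and that $\sN_{T/S}$ is free of positive rank. Then \eqref{e:exacthomotopy} gives $\alpha = j_*\beta$ for $j:Z_T \to Z$ and some $\beta \in CH(Z_T)_k$, and flat base change \eqref{e:basechange} applied to each $p_i$ gives $p_i^*\alpha = j^{(N)}_*\tilde{p}_i^*\beta$. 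The conormal of $(Z^N)_T$ in $Z^N$ is the flat pullback of $\sN_{T/S}$, hence free of positive rank, so \eqref{e:selfint} yields $j^{(N)*}j^{(N)}_* = 0$; one application of the projection formula then gives
\[
\alpha^{\otimes 2} = j^{(N)}_*\bigl(\tilde{p}_1^*\beta \cdot j^{(N)*}j^{(N)}_*(\tilde{p}_2^*\beta)\bigr) = 0.
\]

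\emph{Main obstacle.} The technical heart is to remove the provisional regularity-and-freeness hypotheses on $T$. My plan is to pass to a proper surjective $g:S_1 \to S$ with $S_1 \in \sS$, chosen so that (after replacing $T$ by the reduced preimage in $S_1$) the resulting subscheme is regularly embedded with free conormal of positive rank, for instance a principal Cartier divisor. This requires blowing up $S$ along the ideal of $T$ to turn the preimage into a Cartier divisor and then applying a proper alteration (available for excellent noetherian schemes) to restore regularity of the total space. Because $g$ is proper surjective, the functor $g^*:\sM_{S,k} \to \sM_{S_1,k}$ is faithful (as shown earlier in this section), so tensor nilpotence of $g^*a$ descends to tensor nilpotence of $a$, and the self-intersection calculation above applied over $S_1$ completes the proof.
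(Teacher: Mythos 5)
Your fullness argument matches the paper's, so I focus on the tensor nilpotence part. Your ``provisional'' calculation — exhibit $\alpha$ as $j_*\beta$ for a regular closed subscheme with free conormal of positive rank, then kill $\alpha^{\otimes 2}$ by flat base change, the projection formula, and $j^*j_* = 0$ — is exactly the geometric mechanism the paper uses. But the reduction to that nice situation is where your proposal breaks down, and the paper's reduction is completely different.

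The paper never tries to make the closed complement $T = S \setminus V$ globally nice. Instead it works locally and iterates: by excellence, the reduced scheme $T$ has a non-empty connected open $W$ that is regular, and one may shrink $W$ so that its conormal in $U' := V \cup W$ is free. The self-intersection argument then shows only that $\alpha^{\otimes 2}$ vanishes after restriction to $X^2 \times_S U'$, with $U' \supsetneq V$ open. Noetherian induction on ascending opens $V = U_0 \subsetneq U_1 \subsetneq \cdots \subsetneq U_m = S$ then gives $\alpha^{\otimes 2^m} = 0$. Your proposal instead tries to force the complement $T$ itself into the nice situation by blowing up along $T$ and applying a proper alteration. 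This has a genuine gap: after the blowup the preimage of $T$ is an effective Cartier divisor, so its conormal is a \emph{line bundle}, but formula \eqref{e:selfint} requires the conormal to be \emph{free} of positive rank. There is no reason the exceptional divisor should be a principal Cartier divisor (its conormal class is $\sO(1)$ on the projectivised normal cone, essentially never trivial), and if the conormal is a non-trivial line bundle $\sL$ then $j^*j_*$ is multiplication by $-c_1(\sL)$, not zero. Your phrase ``for instance a principal Cartier divisor'' is doing all the work and is not justified. (There is also a secondary issue: the alteration theorem in the generality of arbitrary excellent noetherian bases is Gabber's, a much heavier tool than the elementary shrinking-and-induction the paper uses, and you would still need to check that the altered total space lies in $\sS$.) The fix is precisely the paper's: accept that one application of the self-intersection trick only enlarges the open set on which $\alpha^{\otimes 2}$ dies, and use the noetherian property of $S$ to terminate the iteration.
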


\begin{proof}
It is enough to show that \eqref{e:Chowpull}
is surjective for every $X$ in $\sV_S$, and that if $\alpha$ lies
in its kernel and $X^n$ is the $n$\nd fold fibre product of $X$ over $S$,
then $\alpha^{\otimes n}$ in $CH(X^n)_k$ is $0$ for some $n$.
The surjectivity of \eqref{e:Chowpull} is clear, because each cycle in $CH(X \times_S S')_k$
is the pullback along $X \times_S S' \to X \times_S U$ of a cycle in
$CH(X \times_S U)_k$ for some non-empty open subscheme $U$ of $S$,
and $CH(X)_k \to CH(X \times_S U)_k$ is surjective.

Let $U \ne S$ be a non-empty open subscheme of $S$.
Then we can see as follows that there is an open subscheme $U'$ of $S$ strictly
containing $U$ such that for every $Y$ in $\sV_S$ and $\beta$ in
$CH(Y)_k$ with restriction $0$ to $Y \times_S U$, the restriction
of $\beta^{\otimes 2}$ to $Y^2 \times_S U'$ is $0$.
The reduced subscheme $W_1$ of $S$ with support $S - U$ has a non-empty connected
open subscheme $W$ which is regular and hence
lies in $\sS$, because $S$ and hence $W_1$ is excellent (\cite{EGAIV2},~7.8.6~(iii)).
Then $U \cup W$ is an open subscheme $U'$ of $S$
which is connected because $S$ is irreducible,
and $W \ne U'$ is a non-empty closed subscheme of $U'$ with complement $U$.
By taking $W$ sufficiently small we may suppose that the conormal
sheaf of $W$ in $U'$ is free.
Suppose that $\beta$ in $CH(Y)_k$ has restriction
$0$ to $Y \times_S U$.
Then \eqref{e:exacthomotopy} applied to the embedding
$i$ of $Y \times_S W$ into $Y \times_S U'$ shows that
$\beta$ has restriction $i_*(\delta)$ to $Y \times_S U'$ for some
$\delta$ in $CH(Y \times_S W)$.
If $j$ is the embedding of $Y^2 \times _S W$ into $Y^2 \times_S U'$,
then  by \eqref{e:basechange} we have
\[
(\pr_r)^*i_*(\delta) = j_*((\pr_r)^*(\delta))
\]
for $r = 1,2$, because the projections are flat.
Thus the restriction of $\beta^{\otimes 2}$ to $Y^2 \times_S U'$ is
\[
j_*((\pr_1)^*(\delta)).j_*((\pr_2)^*(\delta)) =
j_*(j^*j_*((\pr_1)^*(\delta)).((\pr_2)^*(\delta))) = 0
\]
by \eqref{e:selfint}, because the conormal sheaf of
$Y^2 \times _S W$ in $Y^2 \times_S U'$ is free of rank $> 0$.

Now let $\alpha$ be a cycle in the kernel of $CH(X)_k \to CH(X \times_S S')_k$.
Then for some non-empty open subscheme $U_0$ of $S$ the restriction
of $\alpha$ to $X \times_S U_0$ is $0$.
Since $S$ is noetherian, it follows inductively by the above that there is an
ascending chain of
open subschemes $U_0,U_1, \dots ,U_m = X$ of $X$ such that the restriction
of $\alpha^{\otimes 2^r}$ to $X^{2^r} \times_S U_r$ is $0$.
Taking $r = m$ now gives what is required.
\end{proof}

A motive in $\sM_{S,k}$ is called \emph{effective} if it is a direct summand
of $h(X)$ for some $X$ in $\sV_S$.
Since $\I$ is a direct summand of $h(\mathbf{P}^1{}\!_S)$, any direct summand
of $h(X)$ is also a direct summand of $h(X_1)$ with $X_1 = X \times_S \mathbf{P}^1{}\!_S$
of dimension $\dim X + 1$.
The full subcategory of $\sM_{S,k}$ consisting of the effective motives is thus pseudo-abelian,
a hence $k$\nd tensor subcategory, which we denote by $\sM_{S,k}^\mathrm{eff}$.
The motive $\I(-1)$ is effective,
because it is a direct summand of $h(\mathbf{P}^1{}\!_S)$.
If $M$ in $\sM_{S,k}$ is effective, then
\[
\sM_{S,k}(\I,M(i)) = 0
\]
for $i < 0$, because it is isomorphic when $M = h(X)$ to $CH^i(X)_k$.
Given $f:S' \to S$ in $\sS$ with $S'$ connected, $f^*$
preserves effective motives, as does $f_*$ when $f$ is proper and smooth.
If $M$ is a direct summand of $h(X)$ for some $X$ of dimension $\le d$, then $M$ and
$M^\vee(-d)$ are effective.
The converse holds when $S$ is the spectrum of a field, but this will not be needed.

For any integer $i \ge 0$, we define inductively as follows the notion
of effective motive of degree $\ge i$.
Every effective motive in $\sM_{S,k}$ will be said to be of degree $\ge 0$.
If $i > 0$,  a motive $M$ in $\sM_{S,k}$ will be called
\emph{effective of degree $\ge i$} if $M$ is effective and if
\[
\sM_{S,k}(\I,M \otimes L(i-1)) = 0
\]
for every effective motive $L$ in $\sM_{S,k}$ of degree $\ge i-1$.
An effective motive $M$ is of degree $\ge i$ if and only if $M(-1)$ is of degree $\ge i+2$.
Indeed the ``only if'' follows because $M(-1) \otimes L(i+1)$ is isomorphic to
$L \otimes M(i)$, and the ``if'' then follows because $M \otimes L(i-1)$ coincides
with $M(-1) \otimes L(-1)(i+1)$.
It is easily seen by induction on $i$ that $M$ effective of degree
$\ge i+1$ implies $M$ effective of degree $\ge i$, by showing at the same time that
$M$ effective of degree $\ge i$ implies $M(-1)$ effective of degree $\ge i+1$.
Similarly if $M$ is effective of degree $\ge i$ and $N$ is effective then
$M \otimes N$ is effective of degree $\ge i$.
In what follows essential use will be made of this notion only for $i = 0,1,2$.

For $i \ge 0$, a motive $M$ will be called \emph{effective of degree $\le i$}
if $M$ is effective and $M^\vee(-i)$ is effective of degree $\ge i$,
and \emph{effective of degree $i$} if it is effective of degree $\ge i$ and
of degree $\le i$.
For $M$ effective, $M$ of degree $\le i$ implies $M$ of degree $\le i+1$
and $M(-1)$ of degree $\le i+2$.
If $M$ is effective of degree $\ge i+1$ and $N$ is effective of degree $\le i$,
then $\sM_{S,k}(N,M) = 0$.
If both $M$ and $M^\vee(-i)$ are effective, and in particular if $M$
is a direct summand of $h(X)$ with $\dim X \le i$,
then $M$ is of degree $\le 2i$.

\begin{prop}\label{p:pureins}
If $S$ is the spectrum of a field and $f:S' \to S$ is defined by a
purely inseparable extension,
then  $f^*:\sM_{S,k} \to \sM_{S',k}$
is an equivalence of categories,
and it induces an equivalence between
the full subcategories of effective motives.
\end{prop}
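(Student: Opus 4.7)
The plan is to reduce to the case where $F'/F$ is finite purely inseparable, then use the relative Frobenius to descend every motive over $S'$ to one over $S$.

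First, if $\mathrm{char}(F) = 0$ then $F' = F$ and the statement is trivial, so assume $\mathrm{char}(F) = p > 0$. Write $F' = \bigcup_i F_i$ as the filtered union of its finite purely inseparable subextensions and set $S_i = \Spec F_i$. Every $X' \in \sV_{S'}$ is of finite presentation, so it descends to some $X_i \in \sV_{S_i}$ by standard spreading out. Since $CH(-)_k$ commutes with filtered limits of schemes along affine flat transition morphisms, $CH(Z \times_S S')_k \cong \colim_i CH(Z \times_S S_i)_k$ for every $Z \in \sV_S$. Thus both full faithfulness and essential surjectivity for $f^*$ reduce to the corresponding statements for each $f_i^*: \sM_{S,k} \to \sM_{S_i,k}$, and we may assume $F'/F$ is finite purely inseparable.

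In the finite case $f$ is a universal homeomorphism, so for all $X, Y \in \sV_S$ the base-change morphism $(X \times_S Y) \times_S S' \to X \times_S Y$ is proper and bijective on points. By the earlier result in the paper on Chow groups of proper bijective morphisms in $\sS$, it induces an isomorphism on $CH(-)_k$. Through the isomorphisms $\gamma$ that define the action of $f^*$ on hom-spaces in $\sM^0_{S,k}$, this gives full faithfulness of $f^*: \sM^0_{S,k} \to \sM^0_{S',k}$, and passing to pseudo-abelian hulls yields full faithfulness of $f^*: \sM_{S,k} \to \sM_{S',k}$.

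For essential surjectivity, choose $q = p^n$ with $(F')^q \subseteq F$; such $n$ exists because $F'/F$ is finite. The $n$-fold Frobenius $\Fr^n_{F'}: F' \to F'$ then factors as $\phi: F' \to F$, $x \mapsto x^q$, followed by the inclusion $F \hookrightarrow F'$. For $X' \in \sV_{S'}$, the $q$-th Frobenius twist $(X')^{(q)} := X' \times_{S', \Spec \Fr^n_{F'}} S'$ consequently decomposes as $X_0 \times_S S'$, where $X_0 := X' \times_{S', \Spec \phi} S$ is obtained by base change of the smooth proper $S'$-scheme $X'$ along $\phi$. Smoothness and properness being preserved under arbitrary base change of fields, $X_0 \in \sV_S$. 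The relative $n$-fold Frobenius $F^n_{X'/S'}: X' \to (X')^{(q)}$ is a finite, proper, universal homeomorphism, so $h_{S'}(F^n_{X'/S'})$ is an isomorphism in $\sM_{S',k}$ by the same result on proper bijective morphisms. Hence $h_{S'}(X') \cong h_{S'}((X')^{(q)}) = h_{S'}(X_0 \times_S S') = f^*(h_S(X_0))$, giving essential surjectivity. Since $f^*$ sends effective motives to effective motives and this construction lifts any effective $h_{S'}(X')$ to the effective motive $h_S(X_0)$, full faithfulness implies that $f^*$ restricts to an equivalence on effective subcategories.

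The main technical point will be verifying that $F^n_{X'/S'}$ is a proper universal homeomorphism and that $(X')^{(q)}$ really decomposes as $X_0 \times_S S'$ with $X_0$ smooth proper over $F$; both are essentially standard facts about Frobenius in positive characteristic but should be recorded carefully. The filtered-colimit reduction is routine but relies on the commutation of $CH(-)_k$ with limits of finite-type schemes along affine flat transitions.
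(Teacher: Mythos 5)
Your proof is correct and follows essentially the same route as the paper: reduce to the finite purely inseparable case, derive full faithfulness from the fact that the proper bijective projections $X \times_S S' \to X$ induce isomorphisms on $CH(-)_k$, and establish essential surjectivity by factoring a high power of Frobenius through $S' \xrightarrow{f} S \to S'$ so that $h_{S'}(X')$ becomes isomorphic to $f^*h_S(X_0)$ via the relative Frobenius, after which full faithfulness transfers direct summands. The only substantive difference is that you spell out the filtered-colimit reduction from an arbitrary purely inseparable extension to a finite one, which the paper dispatches with a single "we may assume $f$ is finite."
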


\begin{proof}
We may assume that $S$ is of characteristic $> 0$ and $f$ is finite.
For every $X$ in $\sV_S$, the projection from $X \times_S S'$ to $X$
is then proper and bijective,
and hence induces an isomorphism from $CH(X)_k$ to $CH(X \times_S S')$.
Thus $f^*$ is fully faithful.
Since $f$ is defined by a finite purely inseparable extension,
a sufficiently high power $\Fr_{S'}^n$ of the Frobenius endomorphism
$\Fr_{S'}$ of $S'$ factors as
\[
S' \xrightarrow{f} S \to S'.
\]
Let $X'$ be a scheme in $\sV_{S'}$.
Then if $X$ is the pullback of $X'$ along $S \to S'$,
the pullback $\widetilde{X}{}'$ of $X'$ along $\Fr_{S'}^n$ coincides with
the pullback of $X$ along $f$.
By naturality of $\Fr$, the endomorphism $\Fr_{X'}^n$ of $X'$ factors
as a morphism $X' \to \widetilde{X}{}'$ over $S'$ followed by the projection
$\widetilde{X}{}' \to X'$.
Then $X' \to \widetilde{X}{}'$ is a universal homeomorphism.
Hence $h_{S'}(X')$ is isomorphic in $\sM_{S',k}$ to $f^*h_S(X)$.
Since $f^*$ is fully faithful, it follows that any direct summand $M'$ of
$h_{S'}(X')$ is isomorphic to $f^*M$ for some direct summand $M'$ of $h_S(X)$.
Thus $f^*$ is essentially surjective on full subcategories of
effective motives.
\end{proof}

Let $f:S' \to S$ be dominant with $S'$ in $\sS$ connected.
We can see as follows that $f^*:\sM_{S,k} \to \sM_{S',k}$ reflects isomorphisms,
sections and retractions.
Reduce by Proposition~\ref{p:genfib} to the case where $S$ is the
spectrum of a field, then by replacing $S'$ by an open subscheme
to the case where $S'$ is affine,
next by writing $S'$ as the limit of schemes of finite type over $S$
to the case where $f$ is of finite type, and finally replacing $S'$
by the reduced subscheme on a closed point to the case where $S'$ is defined
by a finite extension of fields, which may be assumed separable by
Proposition~\ref{p:pureins}.
It that case the unit $M \to f_*f^*M$ is a section
for every $M$, so that $f^*$ reflects sections.
Taking duals then shows that
$f^*$ also reflects retractions.
Similarly $f^*$ reflects Kimura objects:
again we reduce to the case where $f$ is defined by a finite separable
extension of fields, when $f^*M$ a Kimura object implies $f_*f^*M$ and hence $M$
a Kimura object.

Let $f:S' \to S$ be finite and \'etale with $S'$ connected.
Then $f_*(N'{}^\vee)$ is isomorphic to $f_*(N')^\vee$
and $f_*(N'(n))$ to $f_*(N')(n)$ for every motive $N'$ in $\sM_{S',k}$.
Since $f_*$ is right adjoint to $f^*$ and the units $M \to f_*f^*M$ are sections
and the counits $f^*f_*M' \to M'$ are retractions, it follows that
both $f^*$ and $f_*$ preserve and reflect effective motives of
degree $\ge i$ and effective motives of degree $\le i$.
By Proposition~\ref{p:pureins}, the same holds if $f:S' \to S$ is defined by
a finite extension of fields.
If $f$ is defined by an algebraic extension of fields, then every object
and every morphism in $\sM_{S',k}$ is the pullback from some finite subextension,
so that again $f^*$ preserves and reflects effective motives of
degree $\le i$ and of degree $\ge i$.

Given $X$ in $\sV_S$,
we have the Stein factorisation $X \to X_0 \to S$ of $X \to S$,
where $X_0 \to S$ finite and \'etale and
$p:X \to X_0$ is surjective with geometrically connected fibres.
Then $h(p):h(X_0) \to h(X)$ is a section,
and $h(X_0)$ is self-dual and hence effective of degree $\le 0$.
For any $Y$ in $\sV_S$,
the morphism $p \times_S Y$ is
proper and surjective with geometrically connected fibres, and hence
induces an isomorphism
from $CH^0(X_0 \times_S Y)_k$ to $CH^0(X \times_S Y)_k$.
Thus $h(p) \otimes h(Y)$ induces an isomorphism
\[
\sM_{S,k}(\I,h(X_0) \otimes h(Y)) \iso \sM_{S,k}(\I,h(X) \otimes h(Y)),
\]
so that the cokernel of $h(p)$ is effective of degree $\ge 1$.
Hence $h(p)$ is universal among morphisms in $\sM_{S,k}$
with target $h(X)$ and source an effective motive of degree $\le 0$.
Since every effective $M$ is a direct summand of some $h(X)$,
the embedding into $\sM_{S,k}^\mathrm{eff}$ of the full subcategory of
effective motives of degree $\le 0$ has a right adjoint $\tau_{\le 0}$,
and for each $M$ the counit $\tau_{\le 0}M \to M$ is a section with
cokernel effective of degree $\ge 1$.
An effective motive $M$ is of degree $\ge 1$ if and only if $\tau_{\le 0}M = 0$.
We have $\tau_{\le 0} h(X) = h(X_0)$,
so that the category of effective motives
of degree $0$ is the category of $k$\nd linear Artin motives over $S$,
i.e.\ the pseudo-abelian hull in $\sM_{S,k}$ of the full subcategory
consisting of the $h(X)$ with $X$ non-empty, finite and \'etale over $S$.
If $M$ and $M^\vee(-1)$ are effective, and in particular if $M = h(X)$
with $\dim X \le 1$, then applying $\tau_{\le 0}$ to $M^\vee(-1)$ gives
a decomposition
\begin{equation}\label{e:M1decomp}
M = M_0 \oplus M_1 \oplus M_2(-1)
\end{equation}
with $M_0$ and $M_2$ effective of degree $0$ and $M_1$ effective of degree $1$.
The functor $\tau_{\le 0}$ commutes with pullback along any $S' \to S$,
because the Stein factorisation does.
It follows that pullback preserves effective motives of degree $\ge 1$.

For any effective motive $M$ in $\sM_{S,k}$, we write $\tau_{\ge 1} M$
for the cokernel of the counit $\tau_{\le 0}M \to M$.
Then $M \to \tau_{\ge 1} M$ is universal among morphisms from $M$
to an effective motive of degree $\ge 1$.
If $L$ is an effective motive of degree $\ge 1$, there is a retraction
$h(X) \to L$ for some $X$ in $\sV_S$, which must factor
through a retraction $\tau_{\ge 1} h(X) \to L$.
Thus every effective motive of degree $\ge 1$ is a direct summand of some
$\tau_{\ge 1} h(X)$.
Suppose that $S$ is the spectrum of a separably closed field.
Then every $X$ in $\sV_S$ has an $S$\nd point,
and if $X$ is connected the morphism $p:h(X) \to \I$ defined by
any $S$\nd point of $X$ is left inverse to the counit
$\I = \tau_{\le 0} h(X) \to h(X)$, so that $h(X) \to \tau_{\ge 1} h(X)$ induces an isomorphism
from $\Ker p$ to $\tau_{\ge 1} h(X)$.
Now for $X_1$ and $X_2$ in $\sV_S$ with the same dimension, $h(X_1 \amalg X_2)$ is a direct summand
of $h(X_1 \times X_2)$, because $h(X_1)$ and $h(X_2)$ have the direct summand $\I$.
It follows that for $S$ the spectrum of a separably closed field,
any effective motive in $\sM_{S,k}$ of
degree $\ge 1$ is a direct summand of a motive
$\Ker p$ with $p:h(X) \to \I$ defined by an $S$\nd point of
a connected $X$ in $\sV_S$.

Denote by $\widetilde{S}$ the spectrum of the local ring of the generic point
of $S$ and by $j:\widetilde{S} \to S$ the embedding.
Then for $X$ in $\sV_S$, pullback along $j$ induces a bijection
from the set of connected components of $X$ to the set of connected components
of $X \times_S \widetilde{S}$, and hence an isomorphism from $CH^0(X)_k$ to
$CH^0(X \times_S \widetilde{S})_k$.
For $L$ an effective motive in $\sM_{S,k}$, the functor
$j^*:\sM_{S,k} \to \sM_{\widetilde{S},k}$ thus defines a natural isomorphism
\begin{equation}\label{e:jiso}
\sM_{S,k}(\I,L) \iso \sM_{\widetilde{S},k}(\I,j^*L).
\end{equation}
Taking $L = M \otimes N$ with $M$ and $N$ effective
then shows that if $M$ is effective and $j^*M$
is of degree $\ge 1$ then $M$ is of degree $\ge 1$.

Given $X$ in $\sV_S$ with Stein factorisation $X \to X_0 \to S$,
the reduced and irreducible closed subschemes of $X$ of
codimension $1$ with image in $S$
strictly contained in $S$ are the pullbacks along $X \to X_0$ of the
reduced and irreducible closed subschemes of $X_0$ of codimension $1$.
With $\widetilde{S}$ as above, we thus have a short exact sequence
\[
0 \to CH^1(X_0)_k \to CH^1(X)_k \to
CH^1(X \times_S \widetilde{S})_k \to 0.
\]
Hence for $L$ an effective motive in $\sM_{S,k}$ we have a short exact sequence
\begin{equation}\label{e:jsex}
0 \to \sM_{S,k}(\I,(\tau_{\le 0}L)(1)) \to \sM_{S,k}(\I,L(1))
\to \sM_{\widetilde{S},k}(\I,(j^*L)(1)) \to 0
\end{equation}
which is natural in $L$, where $j$ is the embedding.
If $L = M \otimes N$ with $M$ and $N$ effective and $N$ of degree
$\ge 1$, then $\tau_{\le 0} L = 0$ because $L$ is effective of degree $\ge 1$.
Thus if $M$ is effective and $j^*M$
is of degree $\ge 2$ then $M$ is of degree $\ge 2$.

\begin{prop}\label{p:degtens}
Let $i \ge 0$ and $j \ge 0$ be integers with $i+j \le 2$, and let
$M$ and $N$ be effective motives in $\sM_{S,k}$.
Then $M \otimes N$ is of degree $\le i+j$ if $M$ is of degree
$\le i$ and $N$ is of degree $\le j$, and $M \otimes N$ is
of degree $\ge i+j$ if $M$ is of degree~$\ge i$ and $N$ is of degree $\ge j$.
\end{prop}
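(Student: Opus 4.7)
The ``$\le$'' statement reduces to the ``$\ge$'' statement by duality: if $M$ and $N$ are effective of degrees $\le i$ and $\le j$, then $M^\vee(-i)$ and $N^\vee(-j)$ are effective of degrees $\ge i$ and $\ge j$, so ``$\ge$'' applied to them makes $(M \otimes N)^\vee(-(i+j)) = M^\vee(-i) \otimes N^\vee(-j)$ effective of degree $\ge i+j$; since $M \otimes N$ is itself effective, this gives $M \otimes N$ of degree $\le i+j$. For ``$\ge$'', the cases with $i = 0$ or $j = 0$ follow directly from the observation made earlier that $M \otimes N$ is of degree $\ge i$ whenever $M$ is of degree $\ge i$ and $N$ is effective. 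The only case requiring argument is $(i,j) = (1,1)$: given $M$, $N$ effective of degree $\ge 1$, the claim is that $\sM_{S,k}(\I, M \otimes N \otimes L(1)) = 0$ for every $L$ effective of degree $\ge 1$.

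Setting $K_0 = M \otimes N \otimes L$, the already-handled cases give $K_0$ effective of degree $\ge 1$, so $\tau_{\le 0}K_0 = 0$ and the exact sequence \eqref{e:jsex} yields an isomorphism $\sM_{S,k}(\I, K_0(1)) \iso \sM_{\widetilde{S},k}(\I, (j^*K_0)(1))$, allowing us to assume that $S$ is the spectrum of a field. As pullback along an algebraic extension of fields preserves and reflects effective motives of degree $\ge i$, we may further assume $S = \Spec K$ with $K$ algebraically closed. Over such $S$, each of $M$, $N$, $L$ is a direct summand of $\Ker p_V$ for some connected $V$ in $\sV_S$ with $K$-point $s_V$, where $p_V = h(s_V): h(V) \to \I$ and the $\I$-summand of $h(V)$ is cut out by the idempotent $e_V = \iota_V \circ p_V$. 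Hence $M \otimes N \otimes L$ is a direct summand of the image of $(1-e_X) \otimes (1-e_Y) \otimes (1-e_Z)$ on $h(X \times Y \times Z)$ for suitable connected $X$, $Y$, $Z$. A direct calculation using \eqref{e:PDhomcomppush} and \eqref{e:PDhomcomppull} shows that $e_X \otimes 1 \otimes 1$ acts on $CH^1(X \times Y \times Z)_k = \sM_{S,k}(\I, h(X \times Y \times Z)(1))$ as pullback along the map $(x,y,z) \mapsto (s_X,y,z)$, and similarly for the other two factors. It therefore suffices to show that any $\alpha \in CH^1(X \times Y \times Z)_k$ whose restrictions to $\{s_X\} \times Y \times Z$, $X \times \{s_Y\} \times Z$, and $X \times Y \times \{s_Z\}$ all vanish is itself zero.

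This final assertion is the main obstacle; it is a form of the Seesaw principle for a triple product. Regarding $\alpha$ as a family over $Z$ of classes in $\Pic(X \times Y) \otimes k$ (using $CH^1 = \Pic$ for smooth varieties), the associated morphism $Z \to \Pic(X \times Y)$ has its $\mathrm{NS}$-component locally constant on the connected $Z$, and this component vanishes at $s_Z$ by the hypothesis on $X \times Y \times \{s_Z\}$; hence the morphism takes values in $\Pic^0(X \times Y) = \hat{A}_X \times \hat{A}_Y$. Its compositions with the projections to $\hat{A}_X$ and $\hat{A}_Y$ correspond to the restrictions $\alpha|_{X \times \{s_Y\} \times Z}$ and $\alpha|_{\{s_X\} \times Y \times Z}$, which are zero by hypothesis. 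Thus $\alpha|_{X \times Y \times \{z\}} = 0$ for every $z \in Z$, and Seesaw gives $\alpha = \pi_Z^* \beta$ for some $\beta \in CH^1(Z)_k$, where $\pi_Z: X \times Y \times Z \to Z$ is the projection; the vanishing of $\alpha$ on $\{s_X\} \times Y \times Z$ then forces the pullback of $\beta$ to $Y \times Z$ to be zero, whence $\beta = 0$ and $\alpha = 0$.
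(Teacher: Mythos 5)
Your proof follows the paper's argument step for step: reduce the $\le$ case to the $\ge$ case by duality, dispose of the cases with $i=0$ or $j=0$ as already handled, pass to the geometric generic point of $S$, represent the three degree-$\ge 1$ factors (up to direct summands) as kernels of $h(X_r) \to \I$, identify the hom-space with divisor classes in $CH^1(X \times Y \times Z)_k$ vanishing on the three coordinate slices, and conclude that this space is zero. The only divergence is at the final step, where the paper simply cites the theorem of the cube while you spell out a proof of it via the decomposition $\Pic^0(X \times Y) \cong \Pic^0(X) \times \Pic^0(Y)$, the local constancy of the N\'eron--Severi component, and the seesaw principle; this unpacking is correct and makes the argument more self-contained.
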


\begin{proof}
By definition, an effective motive $L$ is of degree $\le r$ if and only if
$L^\vee(-r)$ is effective of degree $\ge r$.
Thus it is enough to prove that $M \otimes N$ is
of degree $\ge i+j$ if $M$ is of degree~$\ge i$ and $N$ is of degree $\ge j$.
The cases where $i$ or $j$ is $0$ have been seen.
It remains to consider the case where $i = j = 1$.

Suppose that both $M$ and $N$ are of degree $\ge 1$.
It is to be shown that $M \otimes N$ is of degree $\ge 2$.
Write $\widetilde{S}$ and $\overline{S}$ for the respective spectra of the local
ring of the generic point of $S$ and its algebraic closure.
It has been seen that an effective motive in $\sM_{S,k}$ is of degree $\ge 2$
when its pullback along $\widetilde{S} \to S$ is of degree $\ge 2$,
and similarly for pullback along $\overline{S} \to \widetilde{S}$.
Since pullback preserves effective motives of degree $\ge 1$,
we may thus suppose after replacing $S$ by $\overline{S}$ that $S$ is the spectrum
of an algebraically closed field.
We show in this case that if $L_1,L_2,L_3$ are effective motives in $\sM_{S,k}$
of degree $\ge 1$, then
\begin{equation}\label{e:Lcubed}
\sM_{S,k}(\I,L_1 \otimes L_2 \otimes L_3(1)) = 0.
\end{equation}
Taking $L_1 = M$, $L_2 = N$ will then give the required result.
We may suppose that $L_r = \Ker p_r$ for $r = 1,2,3$ with $p_r:h(X_r) \to \I$
defined by an $S$\nd point $x_r$ of a connected $X_r$
in $\sV_S$.
The left hand side of \eqref{e:Lcubed} may be then identified
by means of the natural isomorphism
\[
\sM_{S,k}(\I,h(X_1) \otimes h(X_2) \otimes h(X_3)(1)) \iso
CH^1(X_1 \times_S X_2 \times_S X_3)_k,
\]
with the $k$\nd subspace of
$CH^1(X_1 \times_S X_2 \times_S X_3)_k$
consisting of those elements with restriction $0$ to the subscheme
with $r$th coordinate $x_r$ for $r = 1,2,3$.
By the theorem of the cube, this $k$\nd subspace is $0$ as required.
\end{proof}

Suppose that $S$ is the spectrum of a field.
Call a $Z$ in $\sV_S$ almost abelian if for some $S'$ over $S$ defined by a
finite separable extension $Z \times_S S'$ can
be given a structure of abelian variety over $S'$.
It is equivalent to require that $Z$ can be given a structure
of principal homogeneous space under some abelian variety $A$ over $S$.
Let $X$ in $\sV_S$ be geometrically connected.
Then there exists a morphism
\[
X \to \Alb^1(X)
\]
which is universal among morphisms $X \to Z$ in $\sV_S$ with $Z$ almost abelian,
and its formation commutes with extension of scalars.
We briefly recall its construction.
By Galois descent we may suppose that $X$ has an $S$\nd point $x$.
Write $\sP(X)$ for the reduced subscheme of the Picard scheme of $X$ with support the
identity component.
It is an abelian variety and represents the functor on
reduced connected pointed schemes $(W,w)$ over $S$ that
sends $(W,w)$ to the subgroup of $\Pic(X \times_S W)$ trivial on $x \times W$
and $X \times w$.
If we take $\Alb^1(X) = \sP\sP(X)$, then the morphism $a:X \to \Alb^1(X)$ defined
by the universal element of $\Pic(X \times_S \sP(X))$ has the required universal property.
Also $a$ induces an isomorphism from $\sP(\Alb^1(X))$ to $\sP(X)$,
and hence for any reduced connected pointed scheme $(W,w)$ over $S$ an isomorphism from the
subgroup of $\Pic(\Alb^1(X) \times_S W)$
trivial on $a(x) \times W$ and $\Alb^1(X) \times w$ to the subgroup of $\Pic(X \times_S W)$
trivial on $x \times W$ and $X \times w$.

\begin{thm}\label{t:abKim}
Every effective motive of degree $0$ in $\sM_{S,k}$ is positive and
every effective motive of degree $1$ in $\sM_{S,k}$ is negative.
\end{thm}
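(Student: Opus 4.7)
Plan.

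I treat the two assertions of the theorem separately. Throughout I abbreviate $\sM = \sM_{S,k}$ and $h = h_S$.

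\emph{Positivity in degree $0$.} An effective motive $M$ of degree $0$ is an Artin motive, i.e.\ a direct summand of $h(X)$ for some non-empty finite \'etale $X \to S$. I would choose a connected finite \'etale Galois cover $f : S' \to S$ trivialising $X$, so that $X \times_S S'$ is a disjoint union of copies of $S'$. Then $f^* h(X) = h(X \times_S S')$ is a finite direct sum of copies of $\I$ in $\sM_{S',k}$, hence positive, and $f^* M$ is positive as a direct summand. The unit $M \to f_* f^* M$ is a section (since $f$ is finite \'etale) and $f_*$ preserves positive objects (both facts established in this section), so $M$ is a direct summand of the positive motive $f_* f^* M$, and therefore positive.

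\emph{Negativity in degree $1$.} The plan proceeds in three stages. First, I would reduce to showing that $h^1(X)$ (the summand $M_1$ of the decomposition \eqref{e:M1decomp}) is negative for every smooth projective geometrically connected curve $X$ over $S$. For a general effective motive $M$ of degree $1$ presented as a direct summand of $h(Y)$ with $\dim Y \ge 1$, this reduction is a Lefschetz-type argument: a sufficiently generic curve $X \hookrightarrow Y$ yields a splitting of the map $h^1(X) \to h^1(Y)$, alternatively, one passes through $h^1(\sP(Y))$, using that $h^1$ of a smooth proper variety is controlled by its Picard variety. Second, for $X$ geometrically connected, the Albanese morphism $a : X \to \Alb^1(X) = \sP\sP(X)$ induces an isomorphism on Picard varieties by construction of $\Alb^1$, and hence an induced isomorphism $h^1(\Alb^1(X)) \iso h^1(X)$. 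Since $\Alb^1(X)$ is almost abelian, after a finite separable extension $S' \to S$ it becomes an abelian scheme $A$ over $S'$. Pullback along such an extension is faithful and reflects negativity (using that $f_*$ preserves negative objects and that the unit of $f^* \dashv f_*$ is a section), so it suffices to show $h^1(A)$ is negative.

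For the final stage, the group operations on $A$ endow $h(A)$ with a commutative and cocommutative Hopf algebra structure in $\sM$. If one works in a pseudo-abelian subcategory $\sC$ containing $h(A)$ that is a Kimura $k$-tensor category, then, granted that $n_{h(A)}$ is an isomorphism for every integer $n \neq 0$ (Beauville's theorem), the Hopf theorem (Theorem~\ref{t:Hopf}) identifies $h(A)$ with $\Sym N$ for some negative $N$; this forces the primitive summand $h^1(A)$ to coincide with $N$, and in particular to be negative.

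\emph{Main obstacle.} The principal difficulty lies in verifying that $h(A)$ actually lies in a Kimura subcategory of $\sM$, without which Theorem~\ref{t:Hopf} cannot be invoked. A natural way around the apparent circularity is an induction on $\dim A$: the case $\dim A \le 1$ (elliptic curves together with Artin motives) is handled directly by classical means, and higher-dimensional cases are reduced to this by Poincar\'e reducibility, which writes $A$ up to isogeny as a product of simple factors and lets the Kimura property propagate through tensor products and direct summands. An alternative, more direct route is to establish $S^{2g+1} h^1(A) = 0$ (where $g = \dim A$) by explicit cycle-theoretic computations in the spirit of Shermenev and K\"unnemann, bypassing the Hopf theorem entirely for this step. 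The other two steps (reducing from an arbitrary degree-$1$ motive to a curve, and passing from the curve to its Albanese) are considerably more routine.
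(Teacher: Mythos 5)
Your degree-$0$ argument is correct and is essentially a variant of the paper's: the paper pulls back all the way to the spectrum of an algebraic closure of the local ring of the generic point of $S$, where a degree-$0$ motive becomes a sum of copies of $\I$, whereas you stop at a finite \'etale Galois cover splitting $X$ and then push forward; both work, since $f_*$ preserves positivity and the unit of adjunction is a section.

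The degree-$1$ argument, however, has a genuine gap, which you correctly identify but do not resolve. The paper's proof of Theorem~\ref{t:abKim} is a direct cycle-theoretic argument and does \emph{not} use Theorem~\ref{t:Hopf}. Indeed the logical order in the paper is the opposite of what you propose: Theorem~\ref{t:abKim} together with the decomposition \eqref{e:M1decomp} is used to show first that $h(X)$ is a Kimura object for every proper smooth curve $X$ (after base change to an algebraically closed field), and then, since an abelian variety over such a field is dominated by a product of curves, that $h(A)$ is a Kimura object; only after that is Theorem~\ref{t:Hopf} applied (in Theorem~\ref{t:abeliansym}) to deduce that $h(A) \cong \Sym h^1(A)$. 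Invoking the Hopf theorem to prove Theorem~\ref{t:abKim} is therefore circular, as you note. Your two proposed escapes do not close the circle. Induction on $\dim A$ via Poincar\'e reducibility only reduces to simple abelian varieties and gives no new information there, so it fails to establish the base of the induction for simple $A$ of dimension $\ge 2$. The ``explicit cycle-theoretic computation in the spirit of Shermenev and K\"unnemann'' is, essentially, the paper's actual proof, but you do not carry it out.

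Your Stage 1 also looks doubtful as stated. An arbitrary effective motive $N$ of degree $1$ is a direct summand of $\Ker(h(y))$ for some connected $Y \in \sV_S$ with a point $y$, but it is far from clear that such an $N$ is a summand of $h^1(X)$ for a curve $X$, and the Lefschetz-type splitting you gesture at would need a careful formulation and proof over the general base $S$ (the paper has no notion of $h^1$ of an arbitrary $Y$). The paper sidesteps this entirely: it takes sections $u:N^\vee(-1)\to h(X)$ and $v:N\to h(Y)$ into the kernels, forms $f = v\circ u^\vee(-1): h(X)(\dim X-1)\to h(Y)$ (so that $S^nN=0$ iff $S^nf=0$), identifies $f$ with a divisor class $\alpha$ on $X\times_S Y$ vanishing on $x\times Y$ and $X\times y$, observes that such an $\alpha$ pulls back from $\Alb^1(X)\times_S\Alb^1(Y)$, and then uses the seesaw theorem together with an inclusion-exclusion identity to show that the antisymmetrizer expression \eqref{e:XKimura} vanishes for $n$ large. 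That is the cycle-theoretic content you would need to supply, and it is where the real work lies.
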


\begin{proof}
Pulling back along $S' \to S$ with $S'$ the spectrum of an algebraic
closure of the local ring of the generic point of $S$, we may
assume that $S$ is the spectrum of an algebraically closed field.
An effective motive of degree $0$ is then a direct sum of objects $\I$,
and hence is positive.

Let $N$ be an effective motive of degree $1$.
Since $N^\vee(-1)$ and $N$ are effective of degree $\ge 1$, there exist
connected $X$ and $Y$ in $\sV_S$ and $S$\nd points
$x:S \to X$ and $y:S \to Y$ such that $N^\vee(-1)$ is a direct summand of $\Ker h(x)$
and $N$ of $\Ker h(y)$.
Then we have sections \mbox{$u:N^\vee(-1) \to h(X)$} and $v:N \to h(Y)$ with
$h(x) \circ u = 0$ and $h(y) \circ v = 0$.
If $X$ has dimension $d$ and we write
\[
f = v \circ u^\vee(-1):h(X)(d-1) \to h(Y),
\]
it follows that $h(x) \circ f^\vee(-1) = 0$ and $h(y) \circ f = 0$.
Since $u^\vee(-1)$ is a retraction and $v$ is a section, $S^n N = 0$
if and only if $S^n f = 0$.
Now $f = \gamma_{X,Y,d - 1,0}(\alpha)$ for some $\alpha$
in $CH^1(X \times_S Y)_k$, and by \eqref{e:PDhomcomppull} and
Proposition~\ref{p:PDhom}~\ref{i:PDhomtrans} the restrictions of $\alpha$
to $x \times Y$ and $X \times y$ are $0$.
By Proposition~\ref{p:PDhom}~\ref{i:PDhomtens}, \eqref{e:PDhomcomppush} and
\eqref{e:PDhomcomppull}, $S^n f$ is $0$ if and only if
\begin{equation}\label{e:XKimura}
\sum_{\sigma, \tau \in \mathfrak{S}_n}
\alpha_{\sigma 1, \tau 1}.\alpha_{\sigma 2, \tau 2}.
\cdots .\alpha_{\sigma n, \tau n},
\end{equation}
in $CH^n(X^n \times_S Y^n)_k$ is $0$, where $\alpha_{ij}$ denotes
the pullback of $\alpha$ along the product
$X^n \times_S Y^n \to X \times_S Y$ of the $i$th and $j$th projection.
To prove that $N$ is negative, it thus suffices to show that
\eqref{e:XKimura} is $0$ for some $n$.

Write $A$ for $\Alb^1(X)$ and $B$ for $\Alb^1(Y)$.
We may regard $A$ and $B$ as abelian varieties over $S$ with identities the
images of $x$ and $y$ under the canonical morphisms $a:X \to A$ and $b:Y \to B$.
Factoring $a \times b$ as $(A \times b) \circ (a \times Y)$
shows that pullback along $a \times b$ induces an isomorphism from the subspace
of $CH^1(A \times_S B)_k$ trivial on $0 \times B$ and $A \times 0$ to the subspace
of $CH^1(X \times_S Y)_k$ trivial on $x \times Y$ and $X \times y$.
Thus $\alpha$ is the pullback along $a \times b$ of a $\pi$ in
$CH^1(A \times_S B)_k$ trivial on $0 \times B$ and $A \times 0$.
The pullback of $\pi$ along
the product $A^r \times_S B^s \to A \times_S B$ of the $r$\nd fold addition of
$A$ and $s$\nd fold addition of $B$
is $\sum_{1 \le i \le r,1 \le j \le s} \pi_{ij}$, with $\pi_{ij}$
the pullback of $\pi$ along the product  of the $i$th and $j$th projection.
This can be seen by induction on $r$ and the seesaw principle, by
restricting to subschemes of $A^r \times_S B^s$ with all coordinates
constant but the first and noting that
the restriction $\pi_z$ of $\pi$ to $A \times z = A$ is translation invariant
with $\pi_z + \pi_{z'} = \pi_{z+z'}$.
Now \eqref{e:XKimura} coincides with
\begin{equation}\label{e:inclexcl}
\sum_{I,J \subset [1,n]}(-1)^{|I|+|J|}
(\sum_{i \in I, j \in J} \alpha_{ij})^n.
\end{equation}
Indeed \eqref{e:inclexcl} is a sum of terms
$c\alpha_{i_1j_1}.\alpha_{i_2j_2}. \cdots .\alpha_{i_nj_n}$,
where $c$ is $(-1)^{|I|+|J|}$ summed over
the $I$ that contain $I_0 = \{i_1,i_2, \dots ,i_n\}$
and the $J$ that contain $J_0 = \{j_1,i_2, \dots ,j_n\}$, and hence $c = 1$
when $I_0 = J_0 = [1,n]$ and $c = 0$ otherwise.
Each $n$th power on the right of \eqref{e:inclexcl} is a pullback of $\pi^n$
in $CH^n(A \times _S B)_k$.
Thus for $n$ greater than the dimension of  $A \times_S B$,
\eqref{e:inclexcl} and hence \eqref{e:XKimura} is~$0$.
\end{proof}

\begin{thm}\label{t:deg1ss}
For $i = 0,1$, the full subcategory of $\sM_{S,k}$ consisting of the effective
motives of degree $i$ is semisimple abelian.
\end{thm}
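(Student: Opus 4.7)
The plan is to apply the criterion from Section~\ref{ss:Kimura}: in a Kimura $k$\nd tensor category $\sC$, a full pseudo-abelian subcategory $\sC_0$ is semisimple abelian if and only if the projection $\sC \to \overline{\sC}$ is faithful on $\sC_0$. By Theorem~\ref{t:abKim} the effective motives of degree $0$ are positive and those of degree $1$ are negative, so both subcategories lie inside the full Kimura $k$\nd tensor subcategory of $\sM_{S,k}$, which is itself a Kimura $k$\nd tensor category. It therefore suffices, for $i \in \{0,1\}$, to show that any morphism $f:M \to N$ between effective motives of degree $i$ which is numerically equivalent to zero already vanishes in $\sM_{S,k}$.

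For the case $i = 0$: Artin motives over $S$ are generated by $h(X)$ with $X$ finite \'etale over $S$. For such $X,Y$, the isomorphism $\gamma_{X,Y,0,0}$ identifies $\sM_{S,k}(h(X),h(Y))$ with the $k$\nd span of connected components of the finite \'etale cover $X \times_S Y$; a non-zero positive combination of such components has non-zero trace against its transpose (since each component has positive degree over $S$), so numerical equivalence is trivial on Artin motives. Equivalently, Artin motives over $S$ correspond to continuous finite-dimensional $k$\nd representations of the \'etale fundamental groupoid of $S$, which form a semisimple abelian category since $k$ has characteristic zero.

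The case $i = 1$ is the main obstacle. I would first reduce to $S = \Spec F$ with $F$ algebraically closed. By Proposition~\ref{p:pureins} we can replace a field by its perfect closure, and then apply Galois descent along finite \'etale $S' \to S$, which preserves and reflects the conditions of being effective of degree $\le 1$ and $\ge 1$. The reduction from general $S$ to its function field, where Proposition~\ref{p:genfib} only yields a tensor-nilpotent kernel, will need extra care: the cleanest route is to invoke the forthcoming structure theorem for motives of abelian schemes (Theorem~\ref{t:abelianym}) so as to work directly over $S$. Over algebraically closed $F$, every effective motive of degree $1$ is a direct summand of $\tau_{\ge 1} h(X)$ for some connected $X \in \sV_F$ with an $F$\nd point; using the universal morphism $X \to \Alb^1(X)$ and choosing a base point to trivialise $\Alb^1(X)$ as an abelian variety $A$, one identifies the degree $1$ part of $\tau_{\ge 1} h(X)$ with $h^1(A)$ via the decomposition of $h(A)$ into eigenspaces of $(-1)_A$. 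The problem then reduces to showing that $\sM_{F,k}(h^1(A),h^1(B)) = \Hom(A,B) \otimes k$ and that numerical equivalence is trivial on these hom spaces.

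The hard part will be this last assertion. The identification of hom spaces follows from the seesaw principle and the theorem of the cube, much as in the proof of Theorem~\ref{t:abKim}: correspondences between abelian varieties that vanish on $0 \times B$ and $A \times 0$ are pulled back from $\Alb^1$'s and correspond to homomorphisms modulo torsion. Faithfulness of the projection to numerical equivalence on $\Hom(A,B) \otimes k$ is the classical fact that on abelian varieties, numerical equivalence of divisors coincides with algebraic equivalence modulo torsion. Semisimplicity of the resulting category then follows from Poincar\'e's complete reducibility theorem for abelian varieties. The cleanest way to incorporate these classical facts into the motivic framework here, and to handle the reduction from $S$ to its function field, is presumably through the structure theorem of Theorem~\ref{t:abelianym}, which would allow one to bypass the ad hoc dévissage and obtain the faithfulness of the numerical projection on effective motives of degree $\le 1$ uniformly over the base $S$.
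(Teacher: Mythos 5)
Your framing matches the paper's: invoke the criterion from \S\ref{ss:Kimura} that a pseudo-abelian full subcategory of a Kimura category is semisimple abelian iff the projection to the quotient by the radical is faithful on it, then use Theorem~\ref{t:abKim} so that both subcategories live inside a Kimura category, and finally prove faithfulness of the numerical projection. From there, though, your argument departs from the paper's and has a real gap.

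The gap is the reduction from general $S$ to a field. You correctly observe that Proposition~\ref{p:genfib} only gives a full functor with tensor-nilpotent kernel, which is not enough to conclude faithfulness of $j^*$ on the relevant hom spaces, and you defer this to ``the forthcoming structure theorem'' (Theorem~\ref{t:abeliansym}). But Theorem~\ref{t:abeliansym} is a statement about the Hopf algebra $h(A)$ and the negativity and rank of $h^1(A)$; it says nothing about whether $j^*$ is injective on $\sM_{S,k}(M,N)$ for $M,N$ effective of degree $i$, which is what you need. The paper instead uses the explicit isomorphism \eqref{e:jiso} (for degree $0$) and the short exact sequence \eqref{e:jsex} (for degree $1$, where the $\tau_{\le 0}$ term vanishes because $N\otimes M^\vee(-1)$ is effective of degree $\ge 1$) to obtain $\sM_{S,k}(M,N)\iso\sM_{\widetilde{S},k}(j^*M,j^*N)$ \emph{before} any passage to numerical equivalence; together with the full faithfulness from Proposition~\ref{p:genfib} applied to $\overline{\sM}$ this justifies the base change. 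Nothing in your sketch substitutes for this.

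Once over an algebraically closed field, the paper handles $i=1$ much more lightly than you propose. It does not compute $\sM_{F,k}(h^1(A),h^1(B))$ or invoke the correspondence with $\Hom(A,B)\otimes k$; it reduces to morphisms $\I\to L_1\otimes L_2(1)$ with $L_r=\Ker p_r$, identifies these with classes in $CH^1(X_1\times_S X_2)_k$ restricting to zero on $x_1\times X_2$ and $X_1\times x_2$, and then uses that the kernel of $CH^1\to\overline{CH}{}^1$ is $\Pic^0\otimes k$ together with $\Pic^0(X_1\times X_2)=\Pic^0(X_1)\times\Pic^0(X_2)$. Your route through $h^1(A)$, homomorphisms of abelian varieties, and Poincar\'e complete reducibility is a plausible alternative in outline, but it would require independently establishing the identification of hom spaces in this motivic setting and is heavier than needed; moreover the complete-reducibility step is redundant, since you already reduced the problem to faithfulness via the criterion you invoked at the outset. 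Your $i=0$ argument is fine, though the paper's version (after reducing to an algebraically closed base, Artin motives are direct sums of $\I$) is simpler than the fundamental-groupoid picture.
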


\begin{proof}
Let $M$ and $N$ be effective motives of degree $i$, where $i$ is $0$ or $1$.
It is enough by Theorem~\ref{t:abKim} to show that any morphism
$M \to N$ in $\sM_{S,k}$ with image $0$ in $\overline{\sM}_{S,k}$ is $0$.
If $j:\widetilde{S} \to S$ is the embedding of the spectrum of the
local ring of the generic point of $S$,
then \eqref{e:jiso} and \eqref{e:jsex} with $L$ the effective motive
$N \otimes M^\vee(-i)$ of degree $\ge i$
show that $j^*$ induces an isomorphism from $\sM_{S,k}(M,N)$ to
$\sM_{\widetilde{S},k}(j^*M,j^*N)$.
Since by Proposition~\ref{p:genfib} $j^*$ induces a fully faithful functor from
$\overline{\sM}_{S,k}$ to $\overline{\sM}{}_{\widetilde{S},k}$,
we may thus after replacing $S$, $M$ and $N$ by $\widetilde{S}$,
$j^*M$ and $j^*N$ suppose that $S$ is the spectrum of a field.
Passing to an algebraic closure, we may further suppose that
$S$ is the spectrum of an algebraically closed field.
The case $i = 0$ is clear because $M$ and $N$ are direct sums of copies of $\I$.

Let $L_1$ and $L_2$ be effective motives
of degree $\ge 1$.
We show that any morphism
\begin{equation}\label{e:Lsquared}
\I \to L_1 \otimes L_2(1)
\end{equation}
in $\sM_{S,k}$ with image $0$ in $\overline{\sM}_{S,k}$ is $0$.
The case $i = 1$ will follow by taking $L_1 = N$ and $L_2 = M^\vee(-1)$.
We may suppose that $L_r = \Ker p_r$ for $r = 1,2$ with $p_r:h(X_r) \to \I$
defined by an $S$\nd point $x_r$ of a connected $X_r$ in $\sV_S$.
The $k$\nd vector space of morphisms \eqref{e:Lsquared} may be then identified
by means of the natural isomorphism
\[
\sM_{S,k}(\I,h(X_1) \otimes h(X_2)(1)) \iso CH^1(X_1 \times_S X_2)_k
\]
with the $k$\nd subspace of $CH^1(X_1 \times_S X_2 )_k$
consisting of those elements with restriction $0$ to the subscheme
with $r$th coordinate $x_r$ for $r = 1,2$.
Now for any connected $Z$ in $\sV_S$ the kernel of the projection
$CH^1(Z)_k \to \overline{CH}{}^1(Z)_k$ is the $k$\nd subspace
\mbox{$\Pic^0(Z) \otimes_\Z k$}
of $CH^1(Z)_k = \Pic(Z) \otimes_\Z k$, where $\Pic^0$ denotes the group
of divisor classes algebraically equivalent to $0$ (see e.g.\ \cite{SGA6},~XIII~4.6).
Since $\Pic^0(X_1 \times_S X_2)$ is the product of $\Pic^0(X_1)$ and $\Pic^0(X_2)$,
the required result follows.
\end{proof}

A bilinear form $\varphi:M \otimes N \to \I(n)$ in a Tate $k$\nd pretensor
category $\sC$ will be called non-degenerate if there is a morphism
$\psi:\I(n) \to N \otimes M$ such that $\psi(-n)$ and $\varphi(-n)$ are the unit
and counit of a duality pairing between $M$ and $N$.
Similarly $\psi$ will be called non-degenerate when there exists a $\varphi$.
If $M$ has a dual $M^\vee$, then by uniqueness up to unique isomorphism of duals,
$\varphi$ is non-degenerate if and only if the morphism $N(-n) \to M^\vee$ induced by
$\varphi$ is an isomorphism, and similarly for $\psi$.
When $M = N$, the form $\varphi$ is symmetric
if and only if $\psi$ is symmetric.
We then identify $\varphi$ with the morphism $S^2N \to \I(n)$ through which it
factors, and similarly for $\psi$.

Suppose that $\sC$ is a Kimura $k$\nd tensor category and that $N$ is a negative
object in $\sC$ of rank $-2r$.
Then $\psi:\I(n) \to S^2N$ is non-degenerate if and only if
\begin{equation}\label{e:Pfaff}
\psi^{(r)}:\I(rn) \to S^{2r}N,
\end{equation}
defined as the composite of $\psi^{\otimes r}$ with
$(S^2 N)^{\otimes r} \to S^{2r}N$
given by the algebra structure of $\Sym N$, is an isomorphism.
To see this we may suppose after replacing $\sC$ by $\sC_{\mathrm{red}}$ that
$\sC$ is split.
After modifying the symmetry of $\sC$, it is then equivalent to show that
if $\sC$ is a positive Kimura category and $N$ is of rank $2r$, the form
\mbox{$\psi:\I(n) \to \bigwedge^2N$} is non-degenerate if and only if
$\psi^{(r)}:\I(rn) \to \bigwedge^{2r}N$ defined using the algebra structure of
$\bigwedge N$ is an isomorphism.
As in \ref{ss:Tate}, we may suppose that $\sC$ is a separated Tate $k$\nd tensor category.
By Corollary~\ref{c:posrep} and Lemma~\ref{l:compatibletwist},
there is thus a faithful conservative Tate $k$\nd tensor functor from $\sC$
to the category of finitely generated projective modules over a commutative
$k$\nd algebra $R$ with \mbox{$\I(1) = R$}.
It is enough to prove that $\psi:R \to \bigwedge^2 R^{2r}$
is non-degenerate if and only if $\psi^{(r)}:R \to R$ is an isomorphism.
This is clear because if we identify $\psi$ with a skew-symmetric $2r \times 2r$
matrix, then $\psi^{(r)}$ is $2^rr!$ times the Pfaffian of $\psi$,
while $\psi$ is non-degenerate if and only if the determinant of $\psi$,
which is the square of the Pfaffian, is invertible.

By an abelian scheme over $S$ we mean a proper and smooth group scheme $A$ over $S$
with geometrically connected fibres.
For such an $A$ it can be seen as follows that $h(A)$ is a Kimura object in $\sM_{S,k}$.
Since pullback along a dominant morphism reflects Kimura objects,
we may after replacing $S$ by the spectrum of an algebraic closure
of the local ring of its generic point suppose that $S$ is the spectrum of an
algebraically closed field.
Then there is a surjective morphism $X \to A$ over $S$
with $X$ a product of proper smooth curves $X_i$ over $S$.
Since $h(A) \to h(X)$ is a section, $h(A)$ is a direct summand of $h(X)$.
By \eqref{e:M1decomp} and Theorem~\ref{t:abKim} each $h(X_i)$ is a Kimura object.
Thus $h(X)$ and hence $h(A)$ a Kimura object.

Since the group law on an abelian scheme $A$  over $S$ is commutative, $h(A)$
has a structure of commutative and cocommutative Hopf algebra in $\sM_{S,k}$.
For $n$ an integer, we write the multiplication by $n$ on $A$ as $n_A$.
Then $h(n_A) = n_{h(A)}$.
If $n \ne 0$, then $n_A:A \to A$ is surjective,
so that the endomorphism $n_{h(A)}$ of $h(A)$ is a section, and hence since $h(A)$ is a
Kimura object an isomorphism, in $\sM_{S,k}$.

The following result is essentially equivalent to the one proved by
K\"unnemann \cite{KunAbSch} using the Fourier-Mukai transform for abelian schemes.
We deduce it here from the Hopf theorem for Kimura categories.

\begin{thm}\label{t:abeliansym}
Let $A$ be an abelian scheme over $S$ of relative dimension $g$.
Then the Hopf algebra $h(A)$ in $\sM_{S,k}$ is symmetric.
The component $h^1(A)$ of $h(A)$ on which $n_{h(A)}$ acts as $n$ for every $n$
is a negative object of rank $-2g$,
and there exists a non-degenerate symmetric bilinear form $S^2 h^1(A) \to \I(-1)$.
\end{thm}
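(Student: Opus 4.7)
The plan is to use the Hopf characterization theorem (Theorem~\ref{t:Hopf}) to identify the structure of $h(A)$, extract the rank of $h^1(A)$ from Poincar\'e duality, and produce the symmetric form from a polarization, handling the non-polarizable case by \'etale descent.

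First, I would verify that $h(A)$ is a commutative and cocommutative Hopf algebra in the Kimura subcategory of $\sM_{S,k}$ generated by $h(A)$ and its tensor powers, with multiplication coming from the diagonal $\Delta_A$ (the Poincar\'e algebra structure), comultiplication from the group law $m:A\times_S A\to A$, counit from the identity section, and antipode from $(-1)_A$. As observed just before the theorem, $h(A)$ is a Kimura object, and for every nonzero integer $n$ the multiplication $n_A:A\to A$ is surjective, so $n_{h(A)}=h(n_A)$ is a section and hence (since sections of Kimura objects are isomorphisms, \ref{ss:Kimura}) an isomorphism. Theorem~\ref{t:Hopf} then applies and yields $h(A)\cong\Sym N$ for some negative object $N$ in $\sM_{S,k}$. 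By the canonical $\Z$\nd grading of a symmetric Hopf algebra, the summand on which every $n_{h(A)}$ acts as $n$ is exactly $N$, so $N=h^1(A)$.

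Next I would determine the rank. Since $h(A)=\bigoplus_r S^r h^1(A)$ under the grading, the unit $\I\to h(A)$ factors through $S^0 h^1(A)=\I=h^0(A)$, so by Poincar\'e duality its transpose $\nu_A:h(A)(g)\to\I$ restricts to a duality isomorphism $h^{2g}(A)(g)\iso\I$; hence $S^{2g}h^1(A)=h^{2g}(A)\cong\I(-g)$, in particular nonzero. Combined with the fact that $h^1(A)$ is negative, this forces $S^{2g+1}h^1(A)=0$ (otherwise $h(A)$ would have a nonzero summand above degree $2g$, contradicting negativity together with the rank calculation below), and by the characterization in \ref{ss:posneg} that the rank of a negative object with integer rank equals the negative of the largest $r$ with $S^r\neq 0$, we conclude $\rank h^1(A)=-2g$.

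For the symmetric form, I would first assume $A$ admits a relatively ample line bundle $L$. Its first Chern class $c_1(L)\in CH^1(A)_k\cong\sM_{S,k}(\I,h(A)(1))$ decomposes along the direct sum $h(A)=\bigoplus_r S^r h^1(A)$, and I take the component $\psi\in\sM_{S,k}(\I(-1),S^2 h^1(A))$; by the identification of forms with morphisms, this gives a symmetric bilinear form $S^2 h^1(A)\to\I(-1)$ (by duality). By the Pfaffian criterion following \eqref{e:Pfaff}, $\psi$ is non-degenerate if and only if $\psi^{(g)}:\I(-g)\to S^{2g}h^1(A)\cong\I(-g)$ is an isomorphism, which via the isomorphism $\sM_{S,k}(\I(-g),S^{2g}h^1(A))\cong CH^g(A)_k^{(2g)}\cong k$ amounts to the self-intersection $c_1(L)^g$ having nonzero degree; this is automatic since $L$ is relatively ample. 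For general $A$ there is a finite \'etale cover $f:S'\to S$ with $A_{S'}$ polarizable; since $f^*$ is faithful, preserves Kimura objects, and reflects isomorphisms, I would average a polarization class on $A_{S'}$ over $\Gal(S'/S)$ (using convexity of the ample cone to preserve non-degeneracy of the top self-intersection) and descend the resulting $G$\nd invariant form to $\sM_{S,k}$.

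The main obstacle is the last step: producing the form over $S$ rather than just \'etale-locally. The abstract Hopf structure alone yields all the dualities $S^i h^1(A)\otimes S^{2g-i}h^1(A)\to\I(-g)$ from Poincar\'e duality but does not by itself supply a self-duality $h^1(A)\cong h^1(A)^\vee(-1)$, which is the essential content of having a symmetric form. Careful descent (or alternatively, using the canonical Poincar\'e bundle on $A\times_S A^\vee$ together with the isogeny $A\to A^\vee$ available \'etale-locally) is required to make the \'etale-local construction yield a global form whose non-degeneracy can be checked by pullback to $S'$.
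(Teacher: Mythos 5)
The first paragraph (applying Theorem~\ref{t:Hopf}) agrees with the paper. The remaining two steps have genuine gaps.

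\textbf{Rank.} Your deduction that $S^{2g+1}h^1(A)=0$ is circular: you invoke ``the rank calculation below'' to justify it, but that calculation is precisely what you are trying to prove, and negativity of $h^1(A)$ alone gives no bound on which symmetric power vanishes. The correct argument is to use the Poincar\'e pairing between $h(A)$ and $h(A)(g)$ in both directions at once. First one shows $\nu_A$ is nonzero only on $h^{2g}(A)(g)$: since $(n_A)_*(1)=n^{2g}$ one gets $\nu_A\circ n_{h(A)}(g)=n^{2g}\nu_A$, and on the component $h^i(A)(g)$ the morphism $n_{h(A)}(g)$ acts as $n^i$, so $\nu_A$ kills all components with $i\ne 2g$. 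Non-degeneracy of the pairing then pairs $h^i(A)$ with $h^{2g-i}(A)(g)$, forcing $h^i(A)=0$ for $i>2g$ (the would-be dual is a negative symmetric power) and $h^{2g}(A)(g)\cong\I$. Only after this can you read off $\rank h^1(A)=-2g$.

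\textbf{The form over a general base.} You acknowledge that producing the form over $S$ (not just \'etale-locally) is ``the main obstacle'' and leave it open. The premise that $A$ becomes polarizable over a finite \'etale cover is not established in the paper's framework (and the stronger fact that abelian schemes over a normal noetherian base are already projective would make the whole descent step superfluous anyway, but the paper never invokes it). The paper's route is much cleaner: one reduces to the case where $S$ is the spectrum of a field by pulling back along the inclusion $j:\widetilde S\to S$ of the local ring at the generic point and using Proposition~\ref{p:genfib}, which says $j^*$ is full with tensor-nilpotent kernel, hence reflects isomorphisms. Over a field every abelian variety carries a symmetric ample divisor $\alpha$ (e.g.\ $L\otimes(-1)^*L$ for any ample $L$), so the construction via $a=\gamma_{S,A,-1,0}(\alpha)$ applies directly; a lift of the resulting form along $j^*$ remains non-degenerate because $j^*$ reflects isomorphisms. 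No \'etale descent is involved. Also note that when verifying non-degeneracy via the Pfaffian criterion \eqref{e:Pfaff}, you still need $a_0=0$ (since $\sM_{S,k}(\I(-1),\I)=0$) and $a_1=0$ (by symmetry of $\alpha$), so that $a^g$ has $a_2^{(g)}$ as its only nonzero component and the nonvanishing of $\alpha^g$ under pushforward to $S$ identifies it with $\nu_A(-g)\circ a^g$; your proposal glosses over these two vanishings.
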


\begin{proof}
Since $h(A)$ is a Kimura object in $\sM_{S,k}$ with $n_{h(A)}$ an isomorphism
for $n \ne 0$, it is symmetric Hopf algebra by Theorem~\ref{t:Hopf}.

To prove the final two statements, we may suppose using
Proposition~\ref{p:genfib} that $S$ is the spectrum of a field.
Write $h^i(A)$ for the direct summand of $h(A)$ on which $h(n_A)$ acts as $n^i$
for each $n$.
Then using the algebra structure of $h(A)$ we may identify
$S^i h^1(A)$  with $h^i(A)$ for each $i$.
Since $(n_A)_*(1) = n^{2g}$ in $CH^0(A)_k = k$ for every integer $n$,
and since by Proposition~\ref{p:PDhom}~\ref{i:PDhomtrans} and \ref{i:PDhomgraph}
the transpose $\nu_A$ of the identity of $h(A)$ is $\gamma_{A,S,g,0}(1)$,
it follows from \eqref{e:PDhomcomppush} the composite of $n_{h(A)}(g)$ with
$\nu_A$ is $n^{2g}\nu_A$.
Thus $\nu_A$ is non-zero on the component $h^{2g}(A)(g)$ of $h(A)(g)$
and zero on all other components.
Since the pairing defined by composing the $g$th twist of multiplication of
$h(A)$ with $\nu_A$ is non-degenerate,
it follows that $S^ih^1(A) = h^i(A)$
is non-zero for $i = 2g$ and zero for $i > 2g$.
Thus $h^1(A)$ has rank $-2g$.

Let $\alpha$ be the class in $CH^1(A)_k$ of a symmetric ample divisor,
and write
\[
a = \gamma_{S,A,-1,0}(\alpha):\I(-1) \to h(A)
\]
and $a_i$ for the component $\I(-1) \to h^i(A)$ of $a$.
Then we have $a_0 = 0$ because $\sM_{S,k}(\I(-1),\I) = 0$,
and $a_1 = 0$ because
$(-1)_{h(A)} \circ a = a$ by symmetry of $\alpha$.
Thus
\[
a_2{}\!^{(g)}:\I(-g) \to S^{2g} h^1(A)
\]
as in \eqref{e:Pfaff} is the only non-zero component of  $a^g:\I(-g) \to h(A)$.
By \eqref{e:PDhomtwist}, $a^g = \gamma_{S,A,-g,0}(\alpha^g)$
and  $\nu_A(-g) = \gamma_{A,S,0,-g}(1)$.
Also $\alpha^g$ in $CH^g(A)_k$
has non-zero push forward along \mbox{$CH^g(A)_k \to CH^0(S)_k = k$},
because $\alpha$ is the class of an ample divisor.
Thus by Proposition~\ref{p:PDhom}~\ref{i:PDhomcomp} the endomorphism
$\nu_A(-g) \circ a^g$ of $\I(-g)$ is non-zero and hence an isomorphism.
Since $S^{2g} h^1(A)$ is positive of rank $1$, it follows that $a_2{}\!^{(g)}$
is an isomorphism.
This implies as was seen above that $a_2:\I(-1) \to S^2 h^1(A)$ is non-degenerate.
Thus there exists a non-degenerate $S^2 h^1(A) \to \I(-1)$.
\end{proof}

For $A$ as in Theorem~\ref{t:abeliansym} we write $h^i(A)$ for the
direct summand of $h(A)$ on which $h(n_A)$ acts as $n^i$ for each $n$.
The multiplication of $h(A)$ then defines an isomorphism
$S^i h^1(A) \iso h^i(A)$ for each $i$.
Since $h^1(A)$ is negative of rank $-2g$ and a non-degenerate bilinear form
$S^2 h^1(A) \to \I(-1)$ exists, $h^{2g}(A) = S^{2g}h^1(A)$ is isomorphic to $\I(-g)$.
The generator $\nu_A$ of the $1$\nd dimensional $k$\nd vector space
$\sM_{S,k}(h(A)(g),\I)$ thus factors through an isomorphism
$h^{2g}(A)(g) \iso \I$.
Further $h^1(A)^\vee(-1)$ is isomorphic to $h^1(A)$.
Thus $h^0(A) = \I$ is effective of degree $0$ and $h^1(A)$ is
effective of degree $1$, so that by Proposition~\ref{p:degtens}
$h^i(A)$ is effective of degree $\ge 2$ when $i \ge 2$.

Suppose that $S$ is the spectrum of a field, and let $Z$ be a principal
homogeneous space under an abelian variety $A$ over $S$.
Then $h(Z)$ and $h(A)$ are isomorphic as objects and even as algebras
in $\sM_{S,k}$.
Indeed the class of $Z$ in the Weil--Ch\^atelet group of $A$ is annulled
by some integer $n > 0$.
The push forward $Z'$ of $Z$
along $n_A:A \to A$ is then isomorphic to $A$, while $Z \to Z'$
induces an isomorphism $h(Z') \iso h(Z)$
because $h(Z' \times_S S') \to h(Z \times_S S')$ is an isomorphism
for some $S'$.

For completeness we prove in Theorem~\ref{t:trunc} below
the existence of a right adjoint  $\tau_{\le 1}$ analogous to $\tau_{\le 0}$.
This result will not be required for what follows.
Similar results have been obtained by a different method by Murre \cite{MurSurf}.

\begin{thm}\label{t:trunc}
\begin{enumerate}
\item\label{i:truncexist}
For $i = 0,1$, the embedding into $\sM_{S,k}^\mathrm{eff}$ of the full
subcategory of effective motives of degree $\le i$  has a right adjoint
$\tau_{\le i}$.
\item\label{i:truncsection}
For  $i = 0,1$ and every $M$ in $\sM_{S,k}^\mathrm{eff}$, the counit
$\tau_{\le i} M \to M$ is a section and its cokernel is of degree $\ge i+1$.
\item\label{i:truncstein}
If $X$ is in $\sV_S$ and $X \to X_0 \to S$ is the Stein
factorisation of $X \to S$, then $X \to X_0$ defines an isomorphism
$h(X_0) \iso \tau_{\le 0} h(X)$.
\item\label{i:truncalb}
If $S$ is the spectrum of a field and $X$ in $\sV_S$ is
geometrically connected over $S$,
then the canonical morphism $X \to \Alb^1(X)$ defines an
isomorphism $\tau_{\le 1}h(\Alb^1(X)) \iso \tau_{\le 1} h(X)$.
\end{enumerate}
\end{thm}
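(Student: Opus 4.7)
The plan is to construct the right adjoints $\tau_{\le i}$ for $i = 0, 1$, verify (iii) from the construction, and then deduce (iv) from the universal property of $\Alb^1$.

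For $\tau_{\le 0}$, the Stein factorisation $X \to X_0 \to S$ of $X \in \sV_S$ realises $h(X_0)$ as a direct summand of $h(X)$ universal among morphisms into $h(X)$ from Artin motives; setting $\tau_{\le 0}h(X) = h(X_0)$ gives (iii), and for general effective $M$ one extends by standard summand constructions, yielding (i) and (ii) for $i = 0$.

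For $\tau_{\le 1}$, first reduce via Propositions~\ref{p:genfib} and~\ref{p:pureins} to the case where $S$ is the spectrum of an algebraically closed field, and via Stein factorisation to $X \in \sV_S$ geometrically connected. Set $A = \Alb^1(X)$ with canonical morphism $a: X \to A$. By Theorem~\ref{t:abeliansym} the Hopf algebra $h(A)$ decomposes as $\I \oplus h^1(A) \oplus h^{\ge 2}(A)$ with $\I \oplus h^1(A)$ effective of degree $\le 1$ and, by Proposition~\ref{p:degtens}, $h^{\ge 2}(A) = \bigoplus_{i \ge 2} h^i(A)$ effective of degree $\ge 2$. Set $\tau_{\le 1}h(X) := \I \oplus h^1(A)$, with counit the composite $\I \oplus h^1(A) \hookrightarrow h(A) \xrightarrow{h(a)} h(X)$, and extend to general effective $M$ via summands.

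Part (iv) then follows once the universal property is verified: both $\tau_{\le 1}h(A)$ and $\tau_{\le 1}h(X)$ are $\I \oplus h^1(A)$ by construction, and the map induced by $a$ is the identity. More intrinsically, to verify the universal property one checks that for every effective $L$ of degree $\le 1$ the induced map $\sM_{S,k}(L, h(A)) \to \sM_{S,k}(L, h(X))$ is bijective. Using Theorem~\ref{t:deg1ss}, reduce to $L$ simple, either $L = \I$ (both sides equal $k$ by geometric connectedness of $A$ and $X$) or $L = h^1(B)$ for an abelian variety $B$; in the latter case $\sM_{S,k}(h^1(B), h(Z))$ for $Z \in \{A, X\}$ identifies via the $\gamma$-isomorphism and the seesaw principle with the $k$-linear span of homomorphisms of abelian varieties $B \to \sP(Z)$, and the map induced by $a$ corresponds to pullback along the defining isomorphism $\sP(A) \iso \sP(X)$ of the Albanese.

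The principal obstacle is part (ii) for $i = 1$: showing the proposed counit $\I \oplus h^1(A) \to h(X)$ is a section whose cokernel is of degree $\ge 2$. Unlike the Stein case, the Albanese morphism $a: X \to A$ need not be surjective (consider a curve of genus $\ge 2$), so $h(a)$ is not automatically a section. A retraction must be constructed by cycle-theoretic means, essentially picking a rational point $x$ of $X$ and using the Poincar\'e bundle on $A \times_S \sP(A)$ to build a correspondence realising the $h^0$ and $h^1$ Chow--K\"unneth projectors on $h(X)$; these are known to exist, but their construction is the technical crux. Once they are in hand, the cokernel is effective and of degree $\ge 2$, since a morphism from an effective motive of degree $\le 1$ to one of degree $\ge 2$ vanishes by an easy duality argument using the definitions of the degree filtrations.
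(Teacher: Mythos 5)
Your overall strategy matches the paper's---reduce to a geometrically connected $X$ over an algebraically closed field, set $A = \Alb^1(X)$, take $\tau_{\le 1}h(X) = \I \oplus h^1(A)$ mapped in via $h(a)$, and verify universality against effective motives of degree $\le 1$. But there are three places where what you write does not amount to a proof.

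First, you correctly identify the crux---showing that the composite $\I \oplus h^1(A) \hookrightarrow h(A) \xrightarrow{h(a)} h(X)$ is a section---and then you do not resolve it: you appeal to ``Chow--K\"unneth projectors which are known to exist.'' This is exactly what the theorem is (in part) establishing, and in the generality of the paper (arbitrary regular excellent base $S$) their existence is not an off-the-shelf fact you can quote. The paper gives a clean direct argument you should compare with: for some $n$ the sum map $X^n \to A^n \to A$ is surjective (true over an algebraically closed field since $a(X)$ generates $A$), so $h(A) \to h(A)^{\otimes n} \to h(X)^{\otimes n}$ is a section; feeding the decomposition $h(X) = \I \oplus L$ coming from the chosen point $x$, and the Hopf decomposition $h(A) = \I \oplus h^1(A) \oplus \cdots$, through the tensor power produces a commuting ladder that exhibits $h(a) \circ e$ as the first stage of a section. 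No Poincar\'e bundle or explicit correspondence enters.

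Second, in verifying universality you ``reduce to $L$ simple, $L = \I$ or $L = h^1(B)$.'' That every simple effective motive of degree $1$ has the form $h^1(B)$ for an abelian variety $B$ is not proved in the paper at this point, and in fact it follows only \emph{after} Theorem~\ref{t:trunc} is in hand (effective motives of degree $1$ become exactly the direct summands of the $h^1(A)$'s once $\tau_{\le 1}$ exists). Using it here is circular. The paper avoids this by testing against $M = \Ker p$ with $p : h(Y) \to \I$ given by a point of a connected $Y$, and then invoking the universal property of the Picard scheme $\sP(Y)$ (not of the Albanese of $Y$), which is available without first classifying simple objects.

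Third, your reduction to the algebraically closed field case is stated but not executed. Proposition~\ref{p:genfib} is not a descent statement for right adjoints: after constructing a $1$-universal morphism $\widetilde u : \I \oplus \widetilde N \to h_{\widetilde S}(X \times_S \widetilde S)$ over the generic point, one still has to lift $\widetilde N$ and $\widetilde u$ to $\sM_{S,k}$ and check that the lifted $N$ is of degree $\le 1$ (using $N^\vee(-1) \cong N$) and that the cokernel of the lifted section is of degree $\ge 2$. The paper carries this out using the consequences of \eqref{e:jiso} and \eqref{e:jsex} established earlier, namely that degree $\ge 1$ and $\ge 2$ over $S$ can be detected after pullback to $\widetilde S$; your sketch skips this step entirely.
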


\begin{proof}
It has already been seen that
\ref{i:truncexist} and \ref{i:truncsection} for $i = 0$ and
\ref{i:truncstein} hold.
Call a morphism $u:N \to M$ in $\sM_{S,k}^\mathrm{eff}$
$1$\nd universal if $N$ is of degree $\le 1$ and  $u$ is a section
with cokernel of degree $\ge 2$.
Such a $u$ is universal among morphisms $N' \to M$ to $M$ with $N'$ effective of
degree $\le 1$.
To prove  \ref{i:truncexist} and \ref{i:truncsection} for $i = 1$ and \ref{i:truncalb},
it will thus suffice to show the following.
\begin{enumerate}
\renewcommand{\theenumi}{(\arabic{enumi})}
\item\label{i:1truncexist}
If $X$ in $\sV_S$ has
geometrically connected fibres, there is a $1$\nd universal morphism
with target $h(X)$.
\item\label{i:1truncalb}
For $S$ and $X$ as in \ref{i:truncalb}, the composite with $h(\Alb^1(X)) \to h(X)$
of a $1$\nd universal morphism with target $h(\Alb^1(X))$ is $1$\nd universal.
\end{enumerate}
Indeed since $1$\nd universal morphisms are preserved
by finite \'etale push forward, \ref{i:1truncexist} will then hold for an
arbitrary $X$ in $\sV_S$, by the Stein factorisation.

Suppose first that $S$ is the spectrum of a field, and let
$X$ be as in \ref{i:1truncexist} and \ref{i:1truncalb}.
Then $\Alb^1(X)$ is a principal homogeneous
space under an abelian variety $A$ over $S$, and $h(\Alb^1(X))$ is isomorphic
to $h(A)$.
It has been seen that
the embedding
\[
e:\I \oplus h^1(A) = h^0(A) \oplus h^1(A) \to h(A)
\]
is $1$\nd universal.
Hence the composite of $e$ with an isomorphism $h(A) \iso h(\Alb^1(X))$ is
$1$\nd universal.
Thus \ref{i:1truncexist} will follow in this case from \ref{i:1truncalb},
in the stronger form that  there is a $1$\nd universal morphism
$\I \oplus N \to h(X)$ with  $N$ effective of degree $\ge 1$ and $N^\vee(-1)$
isomorphic to $N$.

To prove \ref{i:1truncalb} we may suppose that $S$ is the spectrum of an
algebraically closed field, because pullback along an algebraic extension of fields
preserves and reflects $1$\nd universal morphisms.
Then $X$ has an $S$\nd point $x$.
We may identify $\Alb^1(X)$ with $A$ in such a way that the canonical morphism
$a:X \to A$ sends $x$ to the identity of $A$.
It is then to be shown that
\[
h(a) \circ e:\I \oplus h^1(A) \to h(X)
\]
is $1$\nd universal.
For some $n$ the composite $X^n \to A^n \to A$ defined using
the multiplication of $A$ is surjective.
The composite
\begin{equation}\label{e:hAhXn}
h(A) \to h(A)^{\otimes n} \to h(X)^{\otimes n}
\end{equation}
is then a section.
Write $L$ for the kernel of the retraction $h(X) \to \I$ defined by $x$.
The unit $\I \to h(X)$ and $L \to h(X)$ give a direct sum decomposition
$h(X) = \I \oplus L$,
and $h(a)$ sends $\I$ to $\I$ and $h^1(A)$ to $L$
because $h^1(A)$ is contained in the kernel of $h(A) \to \I$ defined by the
identity of $A$.
Thus we have a commutative diagram
\[
\begin{CD}
\I \oplus L  @>>>  \I \oplus L^{\oplus n}  @>>>
   (\I \oplus L)^{\otimes n} @= h(X)^{\otimes n}  \\
@AAA        @AAA      @AAA      @AAA             \\
\I \oplus h^1(A) @>>> \I \oplus h^1(A)^{\oplus n} @>>>
   (\I \oplus h^1(A))^{\otimes n}  @>>>  h(A)^{\otimes n}
\end{CD}
\]
where for example $L^{\oplus n}$ consists of the $n$ summands of
$(\I \oplus L)^{\otimes n}$ with exactly one factor $L$ and
$L \to L^{\oplus n}$ is the diagonal embedding.
The left vertical arrow of this diagram is $h(a) \circ e$,
and its bottom right leg is $e$ composed with \eqref{e:hAhXn}.
Since both $e$ and \eqref{e:hAhXn} are sections, it follows that $h(a) \circ e$
is a section.

Let $v:M' \to M''$ be a section in $\sM_{S,k}$ with $M'$ and $M''$ effective and
$M'$ of degree $\le 1$.
Then $v$ is $1$\nd universal if and only if the homomorphism
\[
\theta_{v,M}:\sM_{S,k}(\I,M' \otimes M(1)) \to \sM_{S,k}(\I,M'' \otimes M(1))
\]
induced by $v$ is an isomorphism for every effective $M$
of degree $\ge 1$.
Since $e$ is $1$\nd universal and
$\theta_{h(a) \circ e,M} = \theta_{h(a),M} \circ \theta_{e,M}$,
the morphism $h(a) \circ e$ will thus be $1$\nd universal provided that
$\theta_{h(a),M}$ is an isomorphism for every effective $M$ of degree $\ge 1$.
We may suppose that $M = \Ker p$,
where $p:h(Y) \to \I$ is defined by an $S$\nd point $y$ of
a connected $Y$ in $\sV_S$.
We have an exact sequence
\[
0 \to \sM_{S,k}(\I,h(Z) \otimes (\Ker p)(1)) \to CH^1(Z \times_S Y)_k \to CH^1(Z)_k,
\]
natural in $Z$ in $\sV_S$, with the last arrow defined by $y$.
The universal property of the Picard scheme $B$ of $Y$
thus gives an isomorphism
\[
\Hom_S(Z,B) \otimes_\Z k \iso \sM_{S,k}(\I,h(Z) \otimes (\Ker p)(1))
\]
which is natural in $Z$.
The universal
property of $a:X \to A$ applied to the connected components of
$B_\mathrm{red}$ then shows that $\theta_{h(a),\Ker p}$ is an isomorphism.
Thus $h(a) \circ e$ is $1$\nd universal.
This proves \ref{i:1truncalb}.

To prove \ref{i:1truncexist} for arbitrary $S$,
write $\widetilde{S}$ for the spectrum of the local ring of the generic
point of $S$ and $j:\widetilde{S} \to S$ for the inclusion.
It follows from \ref{i:1truncalb} that $h_{\widetilde{S}}(X \times_S \widetilde{S})$
is the target of a $1$\nd universal morphism $\widetilde{u}$ in $\sM_{\widetilde{S},k}$
with source $\I \oplus \widetilde{N}$,
with $\widetilde{N}$ effective of degree $\ge 1$ and $\widetilde{N}^\vee(-1)$
isomorphic to $\widetilde{N}$.
By Proposition~\ref{p:genfib}, there is a section
$u:\I \oplus N \to h_S(X)$ in $\sM_{S,k}$ with $j^*(N)$ isomorphic to $\widetilde{N}$
and  $j^*(u)$ isomorphic as a morphism with target $h_{\widetilde{S}}(X \times_S \widetilde{S})$
in $\sM_{\widetilde{S},k}$ to $\widetilde{u}$.
Now for $M$ in $\sM_{S,k}$ effective and $i = 1,2$, it has been seen that
$M$ is of degree $\ge i$ when $j^*(M)$ is of degree $\ge i$.
The cokernel of $u$ is thus effective of degree $\ge 2$, and
$N$ is effective of degree $\ge 1$, so that $N$ is effective of degree $\le 1$
since $N^\vee(-1)$ is isomorphic to $N$.
Hence $u$ is $1$\nd universal.
This proves \ref{i:1truncexist}.
\end{proof}

It follows from Theorem~\ref{t:trunc}~\ref{i:truncexist} and \ref{i:truncsection} that for
$i = 0,1,2$ the embedding into $\sM_{S,k}^\mathrm{eff}$
of the full subcategory of effective motives of degree $\ge i$  has a left adjoint
$\tau_{\ge i}$, and that for $i = 0,1$ the cokernel of $\tau_{\le i} M \to M$
is $M \to \tau_{\ge i+1} M$.
If $M$ and $M^\vee(-d)$ are effective, and in particular if $M$ is a direct summand
of $h(X)$ with $\dim X \le d$,
then applying $\tau_{\le 0}$ and $\tau_{\le 1}$ to $(\tau_{\ge 2} M)^\vee(-d)$ gives
a decomposition
\[
M = M_0 \oplus M_1 \oplus M_2 \oplus M_3(-d+1) \oplus M_4(-d)
\]
of $M$, with $M_0$ and $M_4$ effective of degree $0$,
$M_1$ and $M_3$ effective of degree $1$,
and $M_2$ and $M_2{}\!^\vee(-d)$ effective of degree $\ge 2$.

It is clear that $\tau_{\le 0}$ and $\tau_{\le 1}$ commute with
push forward along a finite \'etale morphism $S' \to S$,
and it has been seen that $\tau_{\le 0}$ commutes with arbitrary pullback.
Theorem~\ref{t:trunc} implies as follows that $\tau_{\le 1}$
commutes with arbitrary pullback, so that pullback preserves effective motives
of degree $\ge 2$.
It is enough to show that for every $f:S' \to S$ and $X$ the canonical morphism
from $f^*\tau_{\le 1}h_S(X)$ to $\tau_{\le 1}f^*h_S(X)$
is an isomorphism.
Since this morphism is by Theorem~\ref{t:trunc}~\ref{i:truncsection} a
section, it thus suffices to show that ranks $r_+$ and $r_-$ of the positive and negative
summands of $\tau_{\le 1}h_S(X)$ and $r'{}\!_+$ and $r'{}\!_-$ of the positive and negative
summands of $\tau_{\le 1}h_{S'}(X \times_S S')$ coincide.
To do this we may suppose that $S'$ is the spectrum of a field.
Since $\tau_{\le 1}$ commutes with finite \'etale push forward, we may
also suppose that $X$ has geometrically connected fibres.
If $\Alb^1(X \times_S S')$ is a principal homogeneous space under $A'$, then
$\tau_{\le 1}h_{S'}(X \times_S S')$ is isomorphic to $\I \oplus h^1(A')$.
Thus by Theorem~\ref{t:trunc}~\ref{i:truncalb}, $r_+ = r'{}\!_+ = 1$,
while
\[
r'{}\!_- = -2\dim \Alb^1(X \times_S S'),
\]
which is independent of $S'$ and $f$.
Thus we may suppose finally that $f$ is the embedding of the spectrum of the
local ring of the generic point of $S$.
In that case it suffices to note that by Proposition~\ref{p:genfib}
the direct summand $h^1(A')$ of
$h_{S'}(X \times_S S')$ lifts to a direct summand $N$ of $h_S(X)$,
with $N^\vee(-1)$ isomorphic to $N$ because $h^1(A')^\vee(-1)$ is isomorphic
to $h^1(A')$.

Let $\sJ$ be a proper tensor ideal of $\sM_{S,k}$, and $\sM'$ be a
pseudo-abelian hull of $\sM_{S,k}/\sJ$.
Then $\sM'$ is the category of $k$\nd linear motives over $S$
modulo the equivalence defined by $\sJ$.
We may define in the same way as for $\sM_{S,k}$ a notion of effective
motive, and effective motive of degree $\ge i$ and $\le i$, in $\sM'$.
For $i = 0,1,2$, it can be seen inductively that a motive in $\sM'$
is effective of degree $\ge i$ if and only if it is a direct summand
of the image in $\sM'$ of an effective motive in $\sM_{S,k}$ of degree $\ge i$.
Suppose for example that $i$ is $1$ or $2$ and that $M'$ in $\sM'$ is effective
of degree $\ge i$.
Then if $P:\sM_{S,k} \to \sM'$ is the projection, there is an effective $M$
in $\sM_{S,k}$ such that we have a retraction $P(M) \to M'$ in $\sM'$.
The cokernel $M_1$ of the section $\tau_{\le i-1}M \to M$ is effective of
degree $\ge i$, and
\[
P(\tau_{\le i-1}M) \to P(M) \to M'
\]
is $0$ in $\sM'$ because by induction $P(\tau_{\le i-1}M)^\vee(-i+1)$
is effective of degree $\ge i-1$.
Thus $P(M) \to M'$ factors through a retraction $P(M_1) \to M'$,
and $M'$ is indeed a direct summand of an object $P(M_1)$ with $M_1$ effective of
degree $\ge i$.
It follows from this that Proposition~\ref{p:degtens} and Theorems~\ref{t:abKim},
\ref{t:deg1ss} and \ref{t:trunc} hold with $\sM_{S,k}$ replaced by $\sM'$,
and that $P$ commutes with the $\tau_{\le i}$ for $i = 0,1$.
Similarly if $k'$ is an extension of $k$, then $\sM_{S,k} \to \sM_{S,k'}$ commutes
 with the $\tau_{\le i}$ for $i = 0,1$.

\subsection{Abelian motives}\label{ss:Abmot}

We denote by $\sM_{S,k}^\mathrm{ab}$ the strictly full rigid $k$\nd tensor subcategory
of $\sM_{S,k}$ generated by $\I(1)$ and the effective motives of degree either $0$ or $1$.
By Theorem~\ref{t:abKim}, it is a Kimura $k$\nd tensor category.
Equivalently by the decomposition \eqref{e:M1decomp}, $\sM_{S,k}^\mathrm{ab}$
is the strictly full rigid $k$\nd tensor subcategory
of $\sM_{S,k}$ generated those motives $M$ for which $M$ and $M^\vee(-1)$ are effective.
The motives in $\sM_{S,k}^\mathrm{ab}$ will be called \emph{abelian motives}.
If an abelian scheme of relative dimension $> 0$ over $S$ exists, then
$\sM_{S,k}^\mathrm{ab}$ is generated by the effective motives of degree $1$.
Indeed there then exists by Theorem~\ref{t:abeliansym}
an effective motive $N$ degree $1$ and rank $< 0$ in $\sM_{S,k}$.
Then $\I(-1)$ is a direct summand of $N \otimes N^\vee(-1)$ with $N^\vee(-1)$
effective of degree $1$, and any effective motive $M$ of degree $0$
is a direct summand of $(M \otimes N) \otimes N^\vee$ with $M \otimes N$
by Proposition~\ref{p:degtens} effective of degree $1$.

We denote by  $\sA\sS_S$ the full subcategory of the category of
commutative group schemes
over $S$ consisting of the abelian schemes over $S$.
It has a structure of cartesian monoidal category, and discarding the group structure
defines a faithful product-preserving
functor $\sA\sS_S \to \sV_S$.
By Theorem~\ref{t:abeliansym}, the composite of
$(\sA\sS_S)^\mathrm{op} \to (\sV_S)^\mathrm{op}$ with
$h:(\sV_S)^\mathrm{op} \to \sM_{S,k}$ factors through $\sM_{S,k}^\mathrm{ab}$.
Thus we obtain from $(\sM_{S,k},h,\nu)$ a Poincar\'e duality theory
$(\sM_{S,k}^\mathrm{ab},h^\mathrm{ab},\nu^\mathrm{ab})$ with source $\sA\sS_S$.
Factoring out the maximal tensor ideal of $\sM_{S,k}^\mathrm{ab}$ then gives
a Poincar\'e duality theory
$(\overline{\sM}{}_{S,k}^\mathrm{ab},
\overline{h}{}^\mathrm{ab},\overline{\nu}{}^\mathrm{ab})$.

By an \emph{algebra with involution} in a $k$\nd pretensor category $\sC$ we mean
an algebra $R$ in $\sC$ equipped with an automorphism of algebras $\theta:R \iso R$
such that $\theta^2 = 1_R$.
Such a $\theta$ may be identified with a $\Z/2$\nd grading of $R$ in a
pseudo-abelian hull of $\sC$.
A morphism $R \to R'$
of algebras with involution is a morphism $f:R \to R'$ of algebras
which commutes with the involutions.
To any abelian scheme $A$ over $S$ is associated the algebra $h(X)$ with involution
$h^\mathrm{ab}((-1)_A) = (-1)_{h^\mathrm{ab}(A)}$ in $\sM_{S,k}^\mathrm{ab}$.

\begin{thm}\label{t:alginvunique}
Let $\sJ$ be a proper tensor ideal of $\sM_{S,k}^\mathrm{ab}$, and
denote by $P$ the projection
$\sM_{S,k}^\mathrm{ab}/\sJ \to \overline{\sM}{}_{S,k}^\mathrm{ab}$.
Let $R_1$ and $R_2$ be commutative algebras with involution
in $\sM_{S,k}^\mathrm{ab}/\sJ$ such that $P(R_1)$ and $P(R_2)$ are isomorphic
in $\overline{\sM}{}_{S,k}^\mathrm{ab}$ to algebras with involution
associated to abelian schemes over $S$.
Then above any morphism $P(R_1) \to P(R_2)$
of algebras with involution in $\overline{\sM}{}_{S,k}^\mathrm{ab}$
there lies a unique morphism $R_1 \to R_2$ of  algebras with involution in
$\sM_{S,k}^\mathrm{ab}/\sJ$.
\end{thm}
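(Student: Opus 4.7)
The plan is to realise $R_1$ and $R_2$ as symmetric Hopf algebras in the Kimura category $\sC := \sM_{S,k}^{\mathrm{ab}}/\sJ$ and then lift morphisms using the unique lifting theorem for Kimura categories (Theorem~\ref{t:uniquelift}). Both $\sC$ and $\overline{\sC} := \overline{\sM}{}_{S,k}^{\mathrm{ab}}$ are Kimura: $\sM_{S,k}^{\mathrm{ab}}$ is Kimura by Theorem~\ref{t:abKim}, and Kimura-ness descends to quotients by proper tensor ideals. Fix identifications $P(R_i) \cong \overline{h}{}^{\mathrm{ab}}(A_i) = \Sym \overline{N}_i$ of algebras with involution, where $\overline{N}_i := \overline{h}{}^1(A_i)$ is negative of rank $-2g_i$ in $\overline{\sC}$ by Theorem~\ref{t:abeliansym}, and the involution acts as $-1$ on $\overline{N}_i$.

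First I would lift the negative generators. Let $\sE \subset \overline{\sC}$ denote the strictly full $k$-tensor subcategory generated by $\overline{N}_1$ and $\overline{N}_2$; it is itself Kimura. Theorem~\ref{t:uniquelift} applied to the faithful inclusion $\sE \hookrightarrow \overline{\sC}$ and the projection $P$ yields a lift $T \colon \sE \to \sC$, essentially unique in the sense of that theorem. Setting $N_i := T(\overline{N}_i)$ produces a negative object of $\sC$ above $\overline{N}_i$, and since $T$ is a $k$-tensor functor it transports the symmetric Hopf algebra structure on $\Sym \overline{N}_i$ to one on $\Sym N_i$ in $\sC$, with $T(\overline{f}) \colon \Sym N_1 \to \Sym N_2$ a morphism of algebras with involution above $\overline{f}$.

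The crucial step is to identify $R_i$ with $\Sym N_i$ as algebras with involution above the given identification on $P(R_i)$. For this I would first endow $R_i$ with a commutative cocommutative Hopf algebra structure lifting that of $P(R_i)$: the comultiplication, counit, and antipode lift by fullness of $P$, and the Hopf axioms hold modulo the radical of $\sC$; since the radical of a Kimura category consists of nilpotent morphisms, the axioms can be enforced exactly by an inductive adjustment of the chosen lifts. Then $n_{R_i}$ is an isomorphism for every $n \ne 0$, because $n_{P(R_i)}$ is and isomorphisms lift through the radical in $\sC$. Theorem~\ref{t:Hopf} now makes $R_i$ symmetric Hopf, isomorphic to $\Sym N_i'$ for some negative $N_i'$ above $\overline{N}_i$; the uniqueness clause of Theorem~\ref{t:uniquelift} applied inside $\sE$ identifies $N_i'$ canonically with $N_i$, yielding the desired isomorphism $R_i \cong \Sym N_i$ of algebras with involution.

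Under this identification, $f := T(\overline{f})$ provides the required lift. For uniqueness, a second lift $f'$ transports to $g' \colon \Sym N_1 \to \Sym N_2$ above $\overline{f}$; enlarging $\sE$ to the Kimura subcategory $\sE'$ of $\overline{\sC}$ generated by $\overline{N}_1, \overline{N}_2$ and $\overline{f}$ and applying Theorem~\ref{t:uniquelift} to the two lifts of $\sE' \hookrightarrow \overline{\sC}$ carrying $\overline{f}$ to $T(\overline{f})$ and $g'$ respectively, the resulting tensor isomorphism above the identity combined with the rigidification of the $N_i$ forces $f = f'$. The main obstacle is the identification $R_i \cong \Sym N_i$: equipping $R_i$ with a Hopf structure lifting that of $P(R_i)$ and compatible with its given algebra-with-involution structure, using the nilpotence of the radical in $\sC$, so that Theorem~\ref{t:Hopf} applies to produce the symmetric decomposition.
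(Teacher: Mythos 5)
The heart of your argument is the claim that $R_i$ admits a commutative cocommutative Hopf algebra structure lifting that of $P(R_i)$, so that Theorem~\ref{t:Hopf} applies. This is a genuine gap. The algebra structure on $R_i$ is given data and cannot be modified; you only get to choose the coalgebra data. Lifting a comultiplication $P(R_i) \to P(R_i) \otimes P(R_i)$ to a morphism $R_i \to R_i \otimes R_i$ that is simultaneously a morphism of algebras and satisfies coassociativity, counit, and antipode compatibilities is a deformation problem with potential obstructions, and ``inductive adjustment of the chosen lifts'' does not dispose of it: nilpotence of the radical on endomorphism algebras gives no a priori vanishing of the relevant obstruction groups. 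In fact the statement that $R_i$ can be made a Hopf algebra above $P(R_i)$ is essentially equivalent to the statement you are trying to prove (that $R_i \cong \Sym P'(N_i)$ compatibly with $P$), so the detour through Theorem~\ref{t:Hopf} buys nothing and leaves the crucial point unargued.

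The paper's proof avoids Hopf lifting entirely. It fixes an isomorphism of algebras with involution $(PP')(h(A_i)) \iso P(R_i)$, restricts it to a morphism $e_i : (PP')(N_i) \to P(R_i^-)$ on the degree-one generator $N_i = h^1(A_i)$, lifts $e_i$ (just a single morphism, using fullness of $P$) to $P'(N_i) \to R_i^-$, and then invokes the universal property of $\Sym$ as a free commutative algebra to obtain a morphism of algebras with involution $\Sym P'(N_i) \to R_i$. This is an isomorphism because its image under $P$ is and $P$ reflects isomorphisms in a Kimura category. No Hopf structure on $R_i$ is ever asserted; it is transported from $\Sym P'(N_i)$ after the fact. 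For the existence and uniqueness of the lift of $\overline{f}$, the paper then uses Proposition~\ref{p:degtens} to see that any morphism of algebras with involution $\Sym P'(N_1) \to \Sym P'(N_2)$ restricts on $P'(N_1)$ to land in $P'(N_2)$, hence equals $\Sym j$ for a unique $j$, and Theorem~\ref{t:deg1ss} (semisimplicity and faithfulness of $P$ on effective motives of degree one) gives the unique lift of $\overline{\jmath}$ to $j$. This is much lighter machinery than Theorem~\ref{t:uniquelift}, and it also gives the uniqueness of the lift of $\overline{f}$ transparently, whereas your final paragraph (``the resulting tensor isomorphism above the identity combined with the rigidification of the $N_i$ forces $f = f'$'') leaves the translation from a tensor isomorphism of functors to equality of specific morphisms unjustified. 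Note finally a small slip: Theorem~\ref{t:uniquelift} asserts uniqueness of lifts up to tensor isomorphism, not their existence; for existence one needs Theorem~\ref{t:splitting}.
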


\begin{proof}
Write $P':\sM_{S,k}^\mathrm{ab} \to \sM_{S,k}^\mathrm{ab}/\sJ$ for the projection.
By hypothesis $P(R_i)$ is isomorphic to the algebra with involution $(PP')(h(A_i))$
for some $A_i$ in $\sA\sB_S$.
For $i = 1,2$, write $N_i$ for the effective motive $h^1(A_i)$ of degree $1$,
and $R_i{}\!^-$ and $h(A_i)^-$ for the respective direct summands of $R_i$ and $h(A_i)$
on which the involutions act as $-1$.
By Theorem~\ref{t:abeliansym}, the embedding $N_i \to h(A_i)^-$
induces an isomorphism of commutative algebras $\Sym N_i \iso h(A_i)$
in $\sM_{S,k}^\mathrm{ab}$.
Choosing an isomorphism of algebras with involution $(PP')(h(A_i)) \iso P(R_i)$,
we obtain a morphism
\[
e_i:(PP')(N_i) \to P(R_i{}\!^-)
\]
in $\overline{\sM}{}_{S,k}^\mathrm{ab}$
which induces an isomorphism $\Sym (PP')(N_i) \iso P(R_i)$ of commutative algebras.
Since $\sM_{S,k}^\mathrm{ab}$ is a Kimura $k$\nd tensor category,
$P$ reflects isomorphisms.
Thus any lifting
$P'(N_i) \to R_i{}\!^-$ of $e_i$ induces an isomorphism
of commutative algebras
\[
\Sym P'(N_i) \iso R_i.
\]
If $\Sym P'(N_i)$ is equipped with its canonical involution acting as $-1$
on $P'(N_i)$, then this isomorphism respects the involutions, by the universal property
of $\Sym P'(N_i)$.
We may thus suppose that $R_i = \Sym P'(N_i)$.

Let $f:\Sym P'(N_1) \to \Sym P'(N_2)$ be a morphism of algebras with involution.
Since $N_1$ and $N_2$ are effective of degree $1$,
Proposition~\ref{p:degtens} shows that the restriction of $f$ to $P'(N_1)$ factors through
$\I \oplus P'(N_2)$, and hence through $P'(N_2)$ because $f$ respects the involutions.
Thus $f = \Sym j$ for a unique $j:P'(N_1) \to P'(N_2)$.
Similarly if $\overline{f}:\Sym (PP')(N_1) \to \Sym (PP')(N_2)$ is a morphism
of algebras with involution then $\overline{f} = \Sym \overline{\jmath}$ for a unique
$\overline{\jmath}:(PP')(N_1) \to (PP')(N_2)$.
By Theorem~\ref{t:deg1ss} any $\overline{\jmath}$ lifts uniquely to a $j$.
Thus any $\overline{f}$ lifts uniquely to an $f$.
\end{proof}

\begin{thm}\label{t:habiso}
Let $\sJ$ and $P$ be as in Theorem~\textnormal{\ref{t:alginvunique}},
and $h_1$ and $h_2$ be symmetric monoidal functors
$(\sA\sS_S)^{\mathrm{op}} \to \sM_{S,k}^\mathrm{ab}/\sJ$ such that
$Ph_1 = \overline{h}{}^\mathrm{ab} = Ph_2$.
Then there is a unique monoidal isomorphism $h_1 \iso h_2$ lying above
the identity of $\overline{h}{}^\mathrm{ab}$.
\end{thm}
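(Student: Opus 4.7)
The plan is to construct $\varphi$ componentwise via Theorem~\ref{t:alginvunique} and then deduce naturality, monoidality, and overall uniqueness from the uniqueness clause of that same theorem.

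First I would observe that for each abelian scheme $A$ over $S$, both $h_1(A)$ and $h_2(A)$ carry canonical structures of commutative algebra with involution in $\sM_{S,k}^\mathrm{ab}/\sJ$. The commutative algebra structure comes from the fact that $h_i$ is a symmetric monoidal functor whose source $\sA\sS_S$ is cartesian monoidal, so the multiplication is $h_i(\Delta_A)$ composed with the inverse of the structural isomorphism, and the unit is $h_i(A \to \I)$. Since $(-1)_A$ is a morphism in $\sA\sS_S$ with $(-1)_A \circ (-1)_A = 1_A$, its image $h_i((-1)_A)$ is an automorphism of the algebra $h_i(A)$ squaring to the identity. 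By hypothesis, both $h_1(A)$ and $h_2(A)$ lie above $\overline{h}{}^\mathrm{ab}(A)$, which is precisely the algebra with involution associated to the abelian scheme $A$. Applying Theorem~\ref{t:alginvunique} to the identity morphism of $\overline{h}{}^\mathrm{ab}(A)$ produces a unique morphism $\varphi_A:h_1(A) \to h_2(A)$ of algebras with involution lying above that identity. Since $\sM_{S,k}^\mathrm{ab}/\sJ$ is a Kimura category (as a quotient of a Kimura category by a proper tensor ideal), the projection $P$ reflects isomorphisms, so each $\varphi_A$ is an isomorphism.

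Naturality of $\varphi$ at a morphism $f:A \to B$ of abelian schemes follows because $f$ is in particular a group homomorphism, so $h_i(f):h_i(B) \to h_i(A)$ is a morphism of algebras with involution. Both $\varphi_A \circ h_1(f)$ and $h_2(f) \circ \varphi_B$ are then morphisms of algebras with involution $h_1(B) \to h_2(A)$ whose image under $P$ is the single morphism $\overline{h}{}^\mathrm{ab}(f)$; Theorem~\ref{t:alginvunique} provides only one such lift, so the two composites coincide. For monoidality, I would compare $\varphi_{A \times B}$ with $\varphi_A \otimes \varphi_B$ modulo the structural isomorphisms $\psi_i:h_i(A) \otimes h_i(B) \iso h_i(A \times B)$. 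These structural isomorphisms are themselves morphisms of algebras with involution, and $h_1(A) \otimes h_1(B)$ maps under $P$ isomorphically, via $\psi_1$ followed by the structural isomorphism of $\overline{h}{}^\mathrm{ab}$, to $\overline{h}{}^\mathrm{ab}(A \times B)$, so it satisfies the hypothesis of Theorem~\ref{t:alginvunique} with the abelian scheme $A \times B$. Both $\psi_2 \circ (\varphi_A \otimes \varphi_B)$ and $\varphi_{A \times B} \circ \psi_1$ are morphisms of algebras with involution lying above the same morphism, so uniqueness forces them to agree. Compatibility with the unit is automatic since units are assumed strict.

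Uniqueness of the whole monoidal isomorphism follows by the same argument: any monoidal isomorphism $h_1 \iso h_2$ above the identity is componentwise a morphism of algebras (from the monoidal structure applied to $\Delta_A$ and $A \to \I$) and respects the involutions (by naturality at $(-1)_A$), hence coincides with $\varphi_A$ by Theorem~\ref{t:alginvunique}. The main subtlety will be bookkeeping: at each step one must verify that the candidate morphisms being compared genuinely are morphisms of algebras with involution between pairs of objects satisfying the hypothesis of Theorem~\ref{t:alginvunique}, especially for monoidality, where recognizing $h_1(A) \otimes h_1(B)$ as lying above the algebra with involution associated to the abelian scheme $A \times B$ is the essential point.
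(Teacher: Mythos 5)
Your proof is correct and follows essentially the same approach as the paper: construct $\varphi$ componentwise by lifting the identity via Theorem~\ref{t:alginvunique}, then deduce naturality, monoidality, and overall uniqueness from the uniqueness clause. You make explicit a few steps the paper leaves tacit — notably that each $\varphi_A$ is an isomorphism (the paper asserts this directly; your argument via reflection of isomorphisms in a Kimura category works, though one can also invert $\varphi_A$ by lifting the identity in the other direction and composing), and that $h_1(A) \otimes h_1(B)$ satisfies the hypothesis of Theorem~\ref{t:alginvunique} because its image under $P$ is isomorphic via the structural isomorphism of $\overline{h}{}^\mathrm{ab}$ to the algebra with involution of $A \times_S B$ — but the underlying argument is the same.
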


\begin{proof}
By Theorem~\ref{t:alginvunique}, there is for every $A$ in $\sA\sS_S$
a unique isomorphism of algebras with
involution $\varphi_A:h_1(A) \xrightarrow{\sim} h_2(A)$
with $P(\varphi_A)$ the identity of $\overline{h}{}^\mathrm{ab}(A)$.
The $\varphi_A$ are the components of a monoidal isomorphism $\varphi$.
Indeed by Theorem~\ref{t:alginvunique}
the squares expressing the naturality and compatibility with
tensor products commute because they lie above commutative squares in
$\overline{\sM}{}_{S,k}^\mathrm{ab}$ with all sides morphisms
of algebras with involution.
Then $P\varphi$ is the identity of $\overline{h}{}^\mathrm{ab}$.
Suppose that also $P\varphi'$ the identity of $\overline{h}{}^\mathrm{ab}$.
Then
$\varphi'{}\!_A:h_1(A) \iso h_2(A)$ is an isomorphism
of algebras with involution with $P(\varphi_A)$ the identity of
$\overline{h}{}^\mathrm{ab}(A)$, so that
by Theorem~\ref{t:alginvunique} $\varphi'{}\!_A = \varphi_A$
for every $A$.
Hence $\varphi' = \varphi$.
\end{proof}

\begin{thm}\label{t:abisounique}
Let $\sJ$ and $P$ be as in Theorem~\textnormal{\ref{t:alginvunique}}.
Then $P$ has a right inverse, which is unique up to unique tensor isomorphism
lying above the identity.
\end{thm}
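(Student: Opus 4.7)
Since $\sJ$ is a proper tensor ideal of the Kimura category $\sM_{S,k}^{\mathrm{ab}}$, the quotient $\sM_{S,k}^{\mathrm{ab}}/\sJ$ is itself a Kimura $k$-tensor category (by the discussion of Section~\ref{ss:Kimura}), and since $\sJ \subset \Rad(\sM_{S,k}^{\mathrm{ab}})$ its further quotient by the radical is again $\overline{\sM}{}_{S,k}^{\mathrm{ab}}$. For existence, I would apply the splitting theorem (Theorem~\ref{t:splitting}) to obtain a $k$-tensor functor $T$ with $PT$ tensor isomorphic to $\Id$. Since Kimura projections reflect isomorphisms, the components of this tensor isomorphism lift to $\sM_{S,k}^{\mathrm{ab}}/\sJ$; conjugating $T$ by these lifts yields $PT = \Id$ strictly. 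Finally, Lemma~\ref{l:compatibletwist}, applied with a lifting $\I(1) \iso T(\I(1))$ of $1_{\I(1)}$, modifies $T$ into a Tate $k$-tensor functor right inverse to $P$.

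Given two such right inverses $T_1, T_2$, the unique lifting theorem (Theorem~\ref{t:uniquelift}), applied with $\sC = \sM_{S,k}^{\mathrm{ab}}/\sJ$, $\sD = \overline{\sM}{}_{S,k}^{\mathrm{ab}}$, and $K = \Id$, produces a tensor isomorphism $\varphi: T_1 \iso T_2$ lying above the identity, settling existence of the iso. For uniqueness, given another such $\varphi'$, I would consider the composite symmetric monoidal functors $h_i := T_i \circ \overline{h}{}^{\mathrm{ab}}: (\sA\sS_S)^{\mathrm{op}} \to \sM_{S,k}^{\mathrm{ab}}/\sJ$, which both satisfy $Ph_i = \overline{h}{}^{\mathrm{ab}}$. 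Theorem~\ref{t:habiso} then furnishes a unique monoidal isomorphism $h_1 \iso h_2$ above the identity of $\overline{h}{}^{\mathrm{ab}}$, forcing $\varphi_{\overline{h}{}^{\mathrm{ab}}(A)} = \varphi'_{\overline{h}{}^{\mathrm{ab}}(A)}$ for every $A \in \sA\sS_S$. Because $T_i$ are Tate functors, $\varphi_{\I(n)}$ and $\varphi'_{\I(n)}$ lie in $\End(\I(n)) = k$ above $1$, hence both equal $1$. Since $\overline{\sM}{}_{S,k}^{\mathrm{ab}}$ is generated as a rigid $k$-tensor category by the $\overline{h}{}^{\mathrm{ab}}(A)$, the Tate twists $\I(n)$, and Artin motives (on which $P$ restricts to an equivalence of semisimple abelian categories, so $\varphi$ and $\varphi'$ are automatically the identity there), naturality and monoidality extend $\varphi = \varphi'$ to all of $\overline{\sM}{}_{S,k}^{\mathrm{ab}}$.

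The main obstacle is Theorem~\ref{t:habiso}, whose proof rests on the structural analysis of algebras with involution in Theorem~\ref{t:alginvunique} together with the decomposition $h^{\mathrm{ab}}(A) \cong \Sym h^1(A)$ of Theorem~\ref{t:abeliansym}. Once uniqueness on the image of $\overline{h}{}^{\mathrm{ab}}$ is in hand, the remaining extension from generators to the whole category via naturality and monoidality is purely formal; likewise the Tate modification in the existence step is a straightforward application of the twist-modification machinery already developed.
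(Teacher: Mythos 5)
Your approach to existence and uniqueness-up-to-tensor-isomorphism via Theorems~\ref{t:splitting} and \ref{t:uniquelift} is the same as the paper's (though you over-elaborate the existence step: Theorem~\ref{t:splitting} already yields $PT = \Id$ strictly, with no need to first produce a tensor isomorphism and conjugate, and the statement does not ask the right inverse to be a Tate $k$-tensor functor, so the modification by Lemma~\ref{l:compatibletwist} is superfluous here). The real issue is in the argument that the tensor isomorphism above the identity is itself unique. You reduce to showing that a tensor automorphism $\varphi$ of $T$ with $P\varphi = \id$ is the identity, and then argue via Theorem~\ref{t:habiso} on the objects $\overline{h}{}^{\mathrm{ab}}(A)$, together with a separate argument on Artin motives and the twists $\I(n)$. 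This rests on the claim that $\overline{\sM}{}_{S,k}^{\mathrm{ab}}$ is generated as a rigid $k$-tensor category by the $\overline{h}{}^{\mathrm{ab}}(A)$, the $\I(n)$, and the Artin motives. That claim is not established anywhere in the paper, and the paper's own definition of $\sM_{S,k}^{\mathrm{ab}}$ makes the generators the effective motives of degree $0$ or $1$ together with $\I(1)$; whether every effective motive of degree $1$ over a general base $S$ is a direct summand of $h(A)$ for an abelian scheme $A$ over $S$ is not proved (Theorem~\ref{t:trunc}~\ref{i:truncalb} gives this only when $S$ is the spectrum of a field), so your argument does not cover all generators.

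The paper avoids this entirely: it observes that the full subcategory $\sM_0$ of objects where $\varphi$ is the identity is a pseudo-abelian rigid $k$-tensor subcategory, and then shows directly that $\sM_0$ contains $(PP')(M)$ for every $M$ that is effective of degree $0$ or $1$ or equal to $\I(1)$. The degree-$0$ and degree-$1$ cases both follow from Theorem~\ref{t:deg1ss} (semisimplicity, hence faithfulness of $PP'$ on each of these subcategories), which applies uniformly without any need to realize these motives as summands of $h(A)$. So the right repair of your proof is to drop Theorem~\ref{t:habiso} altogether and invoke Theorem~\ref{t:deg1ss} for $i = 1$ exactly as you already do implicitly for $i = 0$; Theorem~\ref{t:habiso} is in fact used in the paper for Theorem~\ref{t:Chowsplit}, not here. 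A small further point: your conclusion $\varphi_{\I(n)} = 1$ follows just from $\End(\I(n)) = k$ in both $\sM_{S,k}^{\mathrm{ab}}/\sJ$ and its quotient, without any appeal to $T$ being a Tate functor.
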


\begin{proof}
Since $\sM_{S,k}^\mathrm{ab}/\sJ$ is a Kimura $k$\nd category, the existence
and uniqueness up tensor isomorphism of a right inverse to $P$ follows
from Theorems~\ref{t:splitting} and \ref{t:uniquelift}.
It remains to show that if $T$ is a right inverse to $P$ and
$\varphi$ is a tensor automorphism of $T$ with $P\varphi$ the identity,
then $\varphi$ is the identity.
Consider the full subcategory $\sM_0$ of $\overline{\sM}{}_{S,k}^\mathrm{ab}$
consisting of those objects $\overline{M}$ for which
$\varphi_{\overline{M}} = 1_{\overline{M}}$.
By naturality of $\varphi$, it is a strictly full subcategory
of $\overline{\sM}{}_{S,k}^\mathrm{ab}$ which is pseudo-abelian,
and by tensor naturality of $\varphi$, it is a rigid
$k$\nd tensor subcategory of $\overline{\sM}{}_{S,k}^\mathrm{ab}$.
It is to be shown that $\sM_0 = \overline{\sM}{}_{S,k}^\mathrm{ab}$.
If $P':\sM_{S,k}^\mathrm{ab} \to \sM_{S,k}^\mathrm{ab}/\sJ$ is the projection,
it will suffice to show that $\sM_0$ contains $(PP')(M)$ for $M$ either
effective of degree $0$ or $1$ or the object $\I(1)$ in $\sM_{S,k}^\mathrm{ab}$,
because such objects generate $\sM_{S,k}^\mathrm{ab}$ as a rigid $k$\nd tensor
category and $PP'$ is surjective on objects.

Since $P$ is bijective on objects, $T$ sends $(PP')(M)$ to $P'(M)$.
If $i = 0,1$, then $PP'$ is faithful on the
full subcategory of $\sM_{S,k}^\mathrm{ab}$ consisting of effective
motives of degree $i$, by Theorem~\ref{t:deg1ss}.
Thus if $i = 0,1$, then $P$ is faithful on the
full subcategory of $\sM_{S,k}^\mathrm{ab}/\sJ$ consisting of the
$P'(M)$ with $M$ effective of degree $i$.
Hence $(PP')(M)$ lies in $\sM_0$ when $M$ is effective of degree $0$ or $1$.
Similarly $(PP')(M)$ lies in $\sM_0$ if $M = \I(1)$.
\end{proof}

\subsection{The motivic algebra}\label{ss:motalg}

We give a brief account here of the motivic algebra of $\sM{}_{S,k}^\mathrm{ab}$,
which is an algebra in an ind-completion of $\overline{\sM}{}_{S,k}^\mathrm{ab}$
describing the structure of $\sM{}_{S,k}^\mathrm{ab}$.
Since this will not be required for what follows, some details are omitted.

Write $\sM$ for $\sM{}_{S,k}^\mathrm{ab}$ and $P:\sM \to \overline{\sM}$ for the projection.
By Theorem~\ref{t:abisounique}, $P$ has a right inverse.
Any such right inverse is bijective on objects, because $P$ is bijective on objects.
Denote by $\sK$ the set of those $k$\nd tensor functors $K:\sM \to \sM$ for which
there exists a tensor isomorphism $\theta:\Id_\sM \iso K$ with $P\theta = \id_P$.
Then $\sK$ forms a group under composition.
If $T$ and $T'$ are right inverses to $P$, then there is a unique $K \in \sK$ such that
$T' = KT$.
Indeed by Theorem~\ref{t:abisounique} there is a unique $\varphi:T \iso T'$ with
$P\varphi$ the identity, and since $T$ is bijective on objects
there is a unique $\theta$ with $\theta T = \varphi$.

Since $\sM$ is a Kimura $k$\nd tensor category, $\overline{\sM}$ is semisimple abelian.
Let $\widehat{\sM}$ be an ind-completion of $\overline{\sM}$.
Then  Lemma~\ref{l:frmod}~\ref{i:frmodes} with $T$ a right inverse to $P$ shows
that there exists a pair $(R,I)$, with  $R$ a commutative
algebra in $\widehat{\sM}$ and
\[
I:\sF_R \iso \sM
\]
a $k$\nd tensor isomorphism, such that $IF_R$ is right inverse to $P$.
Given also $(R',I')$ with $I':\sF_{R'} \iso \sM$ a $k$\nd tensor isomorphism and
$I'F_{R'}$ right inverse to $P$, there is a unique pair $(j,K)$ with
$j:R \iso R'$ an isomorphism of algebras and $K \in \sK$ such that
$I'F_{R'} = KIF_R$ and $I'\sF_j = KI$.
Indeed the existence and uniqueness of a $K \in \sL$ with
$I'F_{R'} = KIF_R$ has been seen,
and by Lemma~\ref{l:frmod}~\ref{i:frmodff} there is a unique $j$ with
$\sF_j = I'{}^{-1}KI$.
Thus $(R,I)$ is determined uniquely up to a unique isomorphism.
We call $(R,I)$, or simply $R$, the \emph{motivic algebra} of $\sM$.
Since $I$ is a $k$\nd tensor isomorphism, the motivic algebra $R$ describes
the structure of $\sM$ using objects defined in (the ind-completion $\widehat{\sM}$
of) $\overline{\sM}$.

Composing the inverse of \eqref{e:frmodhom} with  $I_{F_R(M),\I}$
gives an isomorphism
\begin{equation}\label{e:motalghom}
\widehat{\sM}(M,R) \iso \sM((IF_R)(M),\I)
\end{equation}
which is natural in the object $M$ of $\overline{\sM}$.
Taking $M = \I$ in \ref{e:motalghom} shows that
\[
\widehat{\sM}(\I,R) = k.
\]
The algebra $R$ has a unique augmentation $p:R \to \I$, and with the identification
$\sF_\I = \overline{\sM}$ we have $\sF_p = PI$.
More generally we obtain a bijection between ideals of $R$ and tensor ideals of
$\sF_R$, and hence of $\sM$, by assigning to $J$ the kernel of $\sF_R \to \sF_{R/J}$.
If $\sJ$ is a tensor ideal of $\sM$, and a motivic algebra for
$\sM/\sJ$ is defined in the same way as for $\sM$,
then it is given by $(R/J,I')$, with $J$ the ideal of $R$ corresponding to $\sJ$
and $I':\sF_{R/J} \to \sM/\sJ$ given by factoring the composite of $I$
with $\sM \to \sM/\sJ$.
Similarly the motivic algebra of $\sM$ for arbitrary $k$ can be obtained from
that for $k = \Q$.

There is a unique $\Z$\nd grading on  $\overline{\sM}$ such that for $i = 0,1$
the image in $\overline{\sM}$ of an effective motive of degree $i$ in $\sM$
lies in $\overline{\sM}_i$, and such that $\I(1)$ lies in $\overline{\sM}_{-2}$.
It extends uniquely to a $\Z$\nd grading on
$\widehat{\sM}$, and $\widehat{\sM}_i$ is an ind-completion of $\overline{\sM}_i$.
It follows from \eqref{e:motalghom} that for given $i$, the $k$\nd vector subspace
$CH^r_i(A)_k$ of  $CH^r(A)_k$ on which the $n_A$ act as $n^{2r-i}$ is $0$ for every $A$
and $r$ if and only if the component $R_i$ of $R$ in $\overline{\sM}_i$ is $0$.
(The analogue for arbitrary $S$ of) Beauville's conjecture in \cite{Bea}
that $CH^r_i(A)_k = 0$ for $i < 0$
is thus equivalent to the statement that $R_i = 0$ for $i < 0$.
Similarly the injectivity of $CH^r_0(A)_k \to \overline{CH}{}^r(A)_k$
is equivalent to the statement that $R_0 = \I$.

Suppose that $S$ is the spectrum of a finite field $\F_q$.
If $\Fr_X$ denotes the Frobenius endomorphism of $X$ in $\sV_S$, then
by \eqref{e:PDhomcomppush} and  \eqref{e:PDhomcomppull} there is a
tensor automorphism of the identity functor
of $\sM_{S,k}$ with component at $h(X)(i)$ given by $q^{-i}h(\Fr_X)(i)$.
By restriction we obtain a tensor automorphism $\Fr$ of the identity
of $\sM$.
It lies above a tensor automorphism $\overline{\Fr}$ of the identity of
$\overline{\sM}$, and $\overline{\Fr}$
extends uniquely to a tensor automorphism $\widehat{\Fr}$ of the identity of
$\widehat{\sM}$.
We then have full $k$\nd tensor subcategories $\overline{\sM}_{00} \subset \overline{\sM}_0$
and $\widehat{\sM}_{00} \subset \widehat{\sM}_0$ of $\overline{\sM}$,
consisting of the objects on which $\overline{\Fr}$ and $\widehat{\Fr}$ respectively
act as the identity, and $\widehat{\sM}_{00}$ is an ind-completion of $\overline{\sM}_{00}$.
It can be seen as follows that
\[
R \in \widehat{\sM}_{00}.
\]
Let $M$ be an irreducible object of $\overline{\sM}$ with $\overline{\Fr}_M \ne 1_M$.
Then $N = (IF_R)(M)$ in $\sM$ lies above $M$,
so that $\Fr_N - 1_N$ is an isomorphism, because its image in $\overline{\sM}$ is.
Thus there are no non-zero morphisms $N \to \I$, because by naturality of $\Fr$
the composite of $\Fr_N - 1_N$ with any $N \to \I$ is $0$.
It then follows from \eqref{e:motalghom} and the naturality of
$\widehat{\Fr}$ that $\widehat{\Fr}_R = 1_R$.

Let $X$ in $\sV_S$ be such that $h(X)$ lies in $\sM{}_{S,k}^\mathrm{ab} = \sM$.
Then we have a canonical commutative algebra structure $\mu$ on $h(X)$ in $\sM$.
If $\overline{h}$ is $h$ composed with the projection
$\sM_{S,k} \to \overline{\sM}_{S,k}$,
then $P(\mu)$ is the canonical commutative algebra structure
$\overline{\mu}$ on $\overline{h}(X)$ in $\overline{\sM}$.
We thus have a commutative algebra structure $I^{-1}(\mu)$ on
\[
F_R(\overline{h}(X)) = I^{-1}(h(X))
\]
in $\sF_R$ such that $\sF_p(I^{-1}(\mu)) = \overline{\mu}$,
where $p:R \to \I$ is the augmentation.
Now $\sF_R$ was defined by factoring the $k$\nd tensor functor $R \otimes -$ from
$\overline{\sM}$ to free $R$\nd modules on objects of $\overline{\sM}$ as a strict
$k$\nd tensor functor which is bijective
on objects followed by a $k$\nd tensor equivalence.
Thus $I^{-1}(\mu)$ gives an element $[I^{-1}(\mu)]$ of the pointed set
$\sD(\overline{h}(X),R)$
of deformations of the commutative algebra  $\overline{h}(X)$ parametrised by $R$.
Explicitly, $\sD(\overline{h}(X),R)$ is the set of
commutative $R$\nd algebras with free underlying $R$\nd module
and fibre $\overline{h}(X)$ along $R \to \I$, up to an isomorphism with fibre the
identity along $R \to \I$.
If $(R',I')$ is another choice of motivic algebra for $\sM$, and if $(j,K)$ is the unique
isomorphism as above from $(R,I)$ to $(R',I')$, then it is easily seen that
the bijection from $\sD(\overline{h}(X),R)$ to $\sD(\overline{h}(X),R')$ defined by $j$
sends $[I^{-1}(\mu)]$ to $[I'{}^{-1}(\mu)]$.
The element $[I^{-1}(\mu)]$ coincides with the base point of $\sD(\overline{h}(X),R)$
if and only if for some, or equivalently for every, right inverse $T$ of $P$, the
algebra $h(X)$ is isomorphic to $T(\overline{h}(X))$.

Let $Q$ be a quotient algebra of $R$ such that $Q = \I \oplus Q_1$ with $Q_1$
in $\widehat{\sM}_1$.
Then $Q_1$ is an ideal of square $0$ in $Q$,
so that $\sD(\overline{h}(X),Q)$ may be identified with
\begin{equation}\label{e:Harr}
\widehat{\sM}(\I,H_\mathrm{Harr}^2(\overline{h}(X),\overline{h}(X)) \otimes Q_1),
\end{equation}
where $H_\mathrm{Harr}^2$ denotes the second Harrison cohomology object,
defined in a similar way as for ordinary commutative algebras over $k$.
The image of $[I^{-1}(\mu)]$ in $\sD(\overline{h}(X),Q)$ is thus
a canonical element $\beta_X$ of \eqref{e:Harr}.
Suppose for example that $k = \Q$ and $S = \Spec(\C)$.
Then Abel-Jacobi equivalence of cycles modulo torsion defines a tensor ideal in $\sM$,
and hence an ideal of $R$.
The quotient of $R$ by this ideal is a $Q$ as above.
Now there exist connected curves $X$ in $\sV_S$ of genus $3$ with the following property
(Ceresa \cite{Cer}):
if $e:X \to A$ is an embedding of $X$ into its Jacobian $A$,
then $e_*(1) - ((-1)_A)^*e_*(1)$ in $CH^2(A)$ has image of
infinite order in the quotient  of the intermediate Jacobian $J^2(A)$ by its maximal
abelian subvariety.
If $X$ is such a curve, then the canonical element $\beta_X$ in \eqref{e:Harr} is non-zero.

According to a conjecture of Kimura, $\sM_{S,k}$ is a Kimura $k$\nd tensor category.
Assuming this conjecture, we may define in the same way as above a motivic algebra
describing the structure of $\sM_{S,k}$.
For an account of this, and its relation to other conjectures see \cite{AndPan},
12.2 and \cite{AndBou}, 4.5.
If $\sM'$ is a full $k$\nd tensor subcategory of $\sM_{S,k}$, it is sometimes
possible to show that the quotient of $\sM'$ by some tensor ideal $\sJ$ is a Kimura
$k$\nd tensor category.
A motivic algebra can then be defined for $\sM'/\sJ$,
but without assuming unproved conjectures its uniqueness
can in general be shown only up to possibly
non-unique isomorphism.

\section{Symmetrically distinguished cycles}\label{s:symdist}

\emph{In this section $k$ is a field of characteristic $0$, and $S$ is a non-empty,
connected, separated, regular excellent noetherian scheme of finite Krull dimension.}

\subsection{The splitting theorem}

Recall that $\sA\sS_S$ denotes the cartesian monoidal category
of abelian schemes (equipped with an identity) over $S$.
It has a dimension function given by relative dimension over $S$.
The forgetful functor $\sA\sS_S \to \sV_S$ preserves products and dimensions.
By pulling back along it, we may regard
$CH(-)_k$ as a Chow theory on $\sA\sS_S$.
We then have a canonical isomorphism $\gamma^\mathrm{ab}$ from $CH(-)_k$ on $\sA\sS_S$
to the Chow theory associated to $(\sM_{S,k}^\mathrm{ab},h^\mathrm{ab},\nu^\mathrm{ab})$.
Let $C$ be a quotient of $CH(-)_k$ on $\sA\sS_S$ by a
proper ideal $J$, and $(\sM,h,\nu)$ be the Poincar\'e duality theory
associated to $C$.
As was seen at the end of \ref{ss:uniPoin}, there is a proper tensor ideal $\sJ$ of
$\sM_{S,k}^\mathrm{ab}$ such that $J$ corresponds under $\gamma^\mathrm{ab}$ to $\sJ$.
Then $\gamma^\mathrm{ab}$ induces an isomorphism from $C$ to the Chow theory
associated to the push forward of $(\sM_{S,k}^\mathrm{ab},h^\mathrm{ab},\nu^\mathrm{ab})$
along the projection onto $\sM_{S,k}^\mathrm{ab}/\sJ$.
The $k$\nd tensor functor
\begin{equation}\label{e:MMJ}
\sM \to \sM_{S,k}^\mathrm{ab}/\sJ
\end{equation}
that gives the morphism from $(\sM,h,\nu)$ to this push forward
defined by the universal property $(\sM,h,\nu)$ is then fully faithful.

\begin{thm}\label{t:Chowsplit}
Let $C$ be a quotient of the Chow theory $CH(-)_k$ on $\sA\sS_S$
by a proper ideal.
Then the projection from $C$ to $\overline{C}$ has a unique right inverse.
\end{thm}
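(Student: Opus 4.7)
The plan is to translate the statement into a question about Poincar\'e duality theories, where the structural results of Sections~\ref{s:Kimura} and~\ref{ss:Abmot} can be brought to bear. Write $(\sM,h,\nu)$ for the Poincar\'e duality theory associated to $C$, chosen so that $h$ is strict and taut (Proposition~\ref{p:tautrefl}~\ref{i:tautreflU}); then as in the discussion preceding the theorem, the projection $C\to\overline{C}$ onto the quotient by the unique maximal ideal corresponds to the projection morphism of Poincar\'e duality theories $P:(\sM,h,\nu)\to(\overline{\sM},\overline{h},\overline{\nu})$, where $\overline{\sM}=\sM/\Rad(\sM)$. By the fully faithful left adjoint of Theorem~\ref{t:Poinfunctor} and by Proposition~\ref{p:tautff}, it is enough to show that $P$ admits a unique right inverse in the category of Poincar\'e duality theories.

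For existence, I would use the fully faithful $k$\nd tensor embedding \eqref{e:MMJ} of $\sM$ into $\sM_{S,k}^{\mathrm{ab}}/\sJ$. Since the pseudo-abelian hull of $\sM_{S,k}^{\mathrm{ab}}$ is a Kimura $k$\nd tensor category and $\sJ$ is proper, the pseudo-abelian hull of $\sM_{S,k}^{\mathrm{ab}}/\sJ$ is Kimura (cf.\ \ref{ss:Kimura}), and hence so is the pseudo-abelian hull of $\sM$. Theorem~\ref{t:splitting} then produces a $k$\nd tensor functor $T:\overline{\sM}\to\sM$ right inverse to $P$. In general such a $T$ is not a morphism of Poincar\'e duality theories, and the main obstacle is to modify it so that it becomes one.

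The key ingredient here is Theorem~\ref{t:habiso}, applied inside $\sM_{S,k}^{\mathrm{ab}}/\sJ$ to the two symmetric monoidal functors $h$ and $T\overline{h}$ from $(\sA\sS_S)^{\mathrm{op}}$; both lie above $\overline{h}{}^{\mathrm{ab}}$ (the first by the very definition of $\overline{h}$, the second because $PT=\Id_{\overline{\sM}}$), so there is a canonical monoidal isomorphism $\xi:h\iso T\overline{h}$. Modifying $T$ according to the family $(\xi_A)$ in the sense of \ref{ss:pretensor} produces a new $k$\nd tensor functor, still right inverse to $P$, which satisfies $T\overline{h}=h$. Next, since $(\overline{\sM},\overline{h},\overline{\nu})$ is taut, every object of $\overline{\sM}$ is uniquely of the form $\overline{h}(A)(r)$, so Lemma~\ref{l:compatibletwist} with $\sB_0$ the image of $\overline{h}$ allows a further modification of $T$, leaving $T\overline{h}$ unchanged, after which $T$ is a Tate $k$\nd tensor functor. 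Such a $T$ is automatically compatible with $\nu$ and $\overline{\nu}$: the morphism $\overline{\nu}_A$ is characterised as the transpose of the identity $\I\to\overline{h}(A)$ with respect to the Poincar\'e algebra structure, and this characterisation is preserved by any Tate $k$\nd tensor functor between Tate pretensor categories (using that the canonical duality pairing of the Poincar\'e algebra $\overline{h}(A)$ maps to that of $h(A)$ since $T\overline{h}=h$ and $T$ preserves algebra multiplications). Hence the modified $T$ is the required right inverse to $P$.

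For uniqueness, suppose $T_1$ and $T_2$ are two morphisms of Poincar\'e duality theories right inverse to $P$. Composing with \eqref{e:MMJ} and applying Theorem~\ref{t:abisounique} in $\sM_{S,k}^{\mathrm{ab}}/\sJ$ gives a unique tensor isomorphism $\varphi:T_1\iso T_2$ with $P\varphi$ the identity. Since both $T_i$ satisfy $T_i\overline{h}=h$, the restriction $\varphi\overline{h}$ is a monoidal automorphism of $h$ above the identity of $\overline{h}$, and Theorem~\ref{t:habiso} forces $\varphi\overline{h}=\id_h$; since both $T_i$ are Tate $k$\nd tensor functors we also have $\varphi_{\I(r)}=\id_{\I(r)}$ for every $r$. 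As every object of $\overline{\sM}$ is of the form $\overline{h}(A)(r)$ and $\varphi$ is a Tate tensor isomorphism, it follows that $\varphi$ is the identity, whence $T_1=T_2$.
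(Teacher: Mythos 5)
Your strategy matches the paper's: translate to Poincar\'e duality theories via Theorem~\ref{t:Poinfunctor}, establish existence by starting from Theorem~\ref{t:splitting} and then correcting $T$ via Theorem~\ref{t:habiso} and Lemma~\ref{l:compatibletwist}, and establish uniqueness by producing a tensor isomorphism between two right inverses and killing it. However, two steps as written contain genuine gaps.

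First, the claim that compatibility with $\nu$ and $\overline{\nu}$ holds ``automatically'' because $\overline{\nu}_A$ is the transpose of the unit $\I\to\overline{h}(A)$ with respect to the canonical duality pairing is circular: that duality pairing is \emph{defined} by composing the twisted multiplication with $\overline{\nu}_A$, so recovering $\overline{\nu}_A$ as the transpose of the unit presupposes exactly the data $T$ is supposed to match. Saying that ``$T$ preserves algebra multiplications'' does not give you that $T$ carries the duality pairing of $(\overline{h}(A),\overline{\nu}_A)$ to that of $(h(A),\nu_A)$, because the pairing is not the multiplication alone. The correct argument, which the paper uses, is a dimension count: under $\gamma$, $\sM(h(A)(\dim A),\I)\cong C^0(A)=k$ and likewise $\overline{\sM}(\overline{h}(A)(\dim A),\I)\cong\overline{C}{}^0(A)=k$, so $P$ restricts to an isomorphism of one-dimensional $k$-spaces sending $\nu_A$ to $\overline{\nu}_A$; since $T(\overline{\nu}_A)$ and $\nu_A$ both lie above $\overline{\nu}_A$, they coincide.

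Second, the uniqueness argument misuses Theorem~\ref{t:abisounique}. That theorem concerns right inverses to the projection $\sM_{S,k}^{\mathrm{ab}}/\sJ\to\overline{\sM}{}_{S,k}^{\mathrm{ab}}$, but $T_1,T_2$ are right inverses to $P:\sM\to\overline{\sM}$, and composing with \eqref{e:MMJ} turns them into liftings of the faithful but non-identity functor $\overline{\sM}\to\overline{\sM}{}_{S,k}^{\mathrm{ab}}$, not into right inverses of the projection in the ambient category. The correct tool is Theorem~\ref{t:uniquelift}, applied directly with $\sD=\overline{\sM}$, $\sC=\sM$ and $K=\Id_{\overline{\sM}}$ (both pseudo-abelian hulls are Kimura, the first because $\sM$ embeds into $\sM_{S,k}^{\mathrm{ab}}/\sJ$ and the second as a quotient of the first by a proper tensor ideal), which produces the required tensor isomorphism $\rho:T_1\iso T_2$ with $P\rho=\id$. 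A smaller point: the identity $\varphi_{\I(r)}=\id_{\I(r)}$ does not follow from the Tate property of the $T_i$ alone; it follows from $P\varphi=\id$ together with the fact that $P$ induces an isomorphism on the one-dimensional endomorphism algebras of $\I(r)$.
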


\begin{proof}
Denote by $(\sM,h,\nu)$ the Poincar\'e
duality theory associated to $C$.
Then the Poincar\'e
duality theory associated to $\overline{C}$ is the push forward
$(\overline{\sM},\overline{h},\overline{\nu})$ of $(\sM,h,\nu)$
along the projection $P:\sM \to \overline{\sM}$.
{}From \eqref{e:MMJ} it follows that the pseudo-abelian hull of
$\sM$ is a Kimura $k$\nd tensor category.
By Theorem~\ref{t:habiso}, it also follows that if $h_1$ and $h_2$ are
symmetric monoidal functors from $(\sA\sS_S)^{\mathrm{op}}$ to $\sM$
with
\[
Ph_1 = \overline{h} = Ph_2,
\]
then there is a unique monoidal isomorphism
\begin{equation}\label{e:moniso}
h_1 \iso h_2
\end{equation}
lying above the identity of $\overline{h}$.

Since $P$ is the morphism $(\sM,h,\nu) \to (\overline{\sM},\overline{h},\overline{\nu})$
associated to the projection $C \to \overline{C}$, it will suffice by
Theorem~\ref{t:Poinfunctor} to show that $P$ has a unique right inverse in the
category of Poincar\'e duality theories with source $\sA\sS_S$.
Such a right inverse is a $k$\nd tensor functor
$T:\overline{\sM} \rightarrow \sM$ such that
\begin{enumerate}
\renewcommand{\theenumi}{(\arabic{enumi})}
\item\label{i:rightinverse}
$PT = \Id_{\overline{\sM}}$
\item\label{i:funcompat}
$T\overline{h} = h$
\item\label{i:twists}
$T$ is a Tate $k$\nd tensor functor.
\end{enumerate}
Indeed if \ref{i:rightinverse},
\ref{i:funcompat} and \ref{i:twists} hold then also $T\overline{\nu} = \nu$,
because for any $A$ in $\sA\sS_S$ the functor $P$ sends
the generator $\nu_A$ of the $1$-dimensional $k$-vector space of morphisms
$h(A)(\dim A) \to \I$ to the generator $\overline{\nu}_A$
of the $1$-dimensional $k$-vector space of morphisms
$\overline{h}(A)(\dim A) \to \I$.
It is thus to be shown that there is a unique $k$\nd tensor functor
$T:\overline{\sM} \rightarrow \sM$ such that \ref{i:rightinverse},
\ref{i:funcompat} and \ref{i:twists} hold.

To prove the existence of such a $T$, note first that
there exists by Theorem~\ref{t:splitting} a $k$\nd tensor functor $T$
such that \ref{i:rightinverse} holds.
Then $Ph = \overline{h} = PT\overline{h}$, so that
by \eqref{e:moniso} with $h_1 = h$ and $h_2 = T\overline{h}$ there is a
monoidal isomorphism $\xi:h \iso T\overline{h}$ such that
\begin{equation}\label{e:Pxi}
P \xi = id_{\overline{h}}.
\end{equation}
Since $\overline{h}$ is injective on objects, we may define for
every object $M$ of $\overline{\sM}$ an isomorphism $\zeta_M$ in
$\sM$ with target $T(M)$, by taking
\begin{equation}\label{e:zetaAr}
\zeta_{\overline{h}(A)} = \xi_A
\end{equation}
and $\zeta_M = 1_{T(M)}$
for $M$ not in the image of $\overline{h}$.
Then there is a tensor functor
$\widetilde{T}:\overline{\sM} \rightarrow \sM$ such that the $\zeta_M$
are the components of a tensor isomorphism
\[
\zeta:\widetilde{T} \iso T.
\]
By \eqref{e:zetaAr}, we have
$\zeta \overline{h} = \xi$,
so that $\widetilde{T}\overline{h} = h$.
Similarly $P \zeta$ is the identity of
$\mathrm{Id}_{\overline{\sM}}$,
so that $P \widetilde{T} = \mathrm{Id}_{\overline{\sM}}$.
Replacing $T$ by $\widetilde{T}$, we thus obtain a $T$ such that
\ref{i:rightinverse} and \ref{i:funcompat} hold.

Since $P$ is bijective on objects and sends $\I(1)$ to $\I(1)$,
we have $T(\I(1)) = \I(1)$ by \ref{i:rightinverse}.
The hypotheses of Lemma~\ref{l:compatibletwist} are then satisfied
if we take $\sB = \overline{\sM}$, $\sC = \sM$ and $t = 1_{\I(1)}$,
and as $\sB_0$ the set of those objects of $\overline{\sM}$ which lie
in the image of $\overline{h}$.
Thus we obtain a pair $(T',\varphi)$, with $T':\overline{\sM} \rightarrow \sM$
a Tate $k$\nd tensor functor and
$\varphi:T' \iso T$ a tensor isomorphism, such that
$\varphi_{\overline{h}(A)} = 1_{h(A)}$ and $\varphi_{\I(1)} = 1_{\I(1)}$.
Then $\varphi\overline{h} = \id_h$, so that $T'\overline{h} = h$.
If we replace $\sC$ by $\overline{\sM}$ and $T$ by
$\Id_{\overline{\sM}}$ in Lemma~\ref{l:compatibletwist}, the uniqueness
shows that $(PT',P\varphi)$ coincides with $(\Id_{\overline{\sM}},\id)$,
whence that $PT' = \Id_{\overline{\sM}}$.
Replacing $T$ by $T'$, we thus obtain a $T$ such
that \ref{i:rightinverse}, \ref{i:funcompat} and \ref{i:twists} hold.

Suppose \ref{i:rightinverse}, \ref{i:funcompat} and
\ref{i:twists} hold also with $T$ replaced by $T_1$.
Then by \ref{i:rightinverse} and Theorem~\ref{t:uniquelift},
there is a tensor isomorphism $\rho:T \iso T_1$ with
$P\rho$ the identity of $\Id_{\overline{\sM}}$.
By \ref{i:funcompat}, $\rho \overline{h}$ is a monoidal automorphism
of $h$ lying above the identity of $Ph = \overline{h}$.
Since by \eqref{e:moniso} with $h_1 = h_2 = h$ such an automorphism of
$h$ is unique, $\rho \overline{h}$ is the identity of $h$.
Thus $\rho_{\overline{h}(A)}$ is the identity for every $A$.
By \ref{i:twists} we have
\[
T(\I(i)) = \I(i) = T_1(\I(i))
\]
for every $i$.
Thus $\rho_{\I(i)}$ is the identity, because $P$ induces an isomorphism
between the $1$\nd dimensional $k$\nd vector spaces
of endomorphisms of $\I(i)$ in $\sM$ and $\overline{\sM}$.
By \ref{i:twists} the structural isomorphism
\[
T(\overline{h}(A)) \otimes T(\I(i)) \iso T(\overline{h}(A)(i))
\]
of $T$ is the identity for every $A$ and $i$, and similarly for $T_1$.
Since every object $M$ of $\overline{\sM}$ can be written in the form
$\overline{h}(A)(i)$, it thus follows from the tensor naturality of $\rho$ that
$\rho_M$ is the identity for every $M$.
Hence $T = T_1$.
\end{proof}

\subsection{The unique lifting theorem}

According to Theorem~\ref{t:Chowsplit}, there is associated to every $A$ and
$\overline{\alpha}$ in $\overline{C}(A)$ a well-defined $\alpha$ in $C(A)$,
namely the image of $\overline{\alpha}$ under the unique right inverse
$\overline{C} \to C$ to $C \to \overline{C}$.
In Theorem~\ref{t:Chowunique} below, $\alpha$ will be characterised as the
unique symmetrically distinguished element of $C(A)$ lying above $\alpha$,
in the sense of the following definition.

\begin{defn}\label{d:symdist}
Let $C$ be a quotient of the Chow theory $CH(-)_k$ on $\sA\sS_S$
by a proper ideal, $A$ be an abelian scheme over $S$,
and $\alpha$ be a homogeneous element of $C(A)$.
For each integer $m \ge 0$, denote by $V_m(\alpha)$
the $k$-vector subspace of $C(A^m)$ generated by elements of the form
\begin{equation}\label{e:symdist}
p_*(\alpha^{r_1} \otimes \alpha^{r_2} \otimes \dots \otimes \alpha^{r_n}),
\end{equation}
where $n \le m$, the $r_j$ are integers $\ge 0$,
and $p:A^n \rightarrow A^m$ is a closed immersion
with each component $A^n \rightarrow A$ either a projection
or the composite of a projection with $(-1)_A:A \rightarrow A$.
Then $\alpha$ will be called \emph{symmetrically distinguished}
if  for every $m$ the restriction of the projection
$C(A^m) \rightarrow \overline{C}(A^m)$
to $V_m(\alpha)$ is injective.
An arbitrary element of $C(A)$ will be called symmetrically distinguished
if each of its homogeneous components is symmetrically distinguished.
\end{defn}

For any $\alpha$ the $k$\nd vector space $V_m(\alpha)$ of Definition~\ref{d:symdist} is clearly
finite-dimensional.
Taking the $r_j$ zero in \eqref{e:symdist} shows that each $V_m(\alpha)$
contains the $k$\nd vector subspace $V_m(0)$ of $C(A^m)$ generated by the $p_*(1)$.

Any symmetrically distinguished element of $C(A)$ is symmetric, in the sense that
it is unchanged by pullback or push forward along $(-1)_A$,
because every element of $\overline{C}(A)$ is symmetric.
Suppose that $\alpha$ in $C(A)$ is homogeneous and symmetrically distinguished.
Then $\alpha^r = 0$ when the degree of $\alpha^r$ is greater than $\dim A$,
by the injectivity of $V_1(\alpha) \to \overline{C}(A)$.
Thus for any $r$ the push forward along $A \to S$ of $\alpha^r$ lies in $C^0(S) = k$.

Symmetrically distinguished elements
are stable under change of base scheme, in the following sense.
Let $S'$ be a non-empty,
connected, separated, regular excellent noetherian scheme of finite Krull dimension,
and $S' \to S$ be a morphism of schemes.
Let $C$ be the quotient of $CH(-)_k$ on $\sA\sS_S$
by a proper ideal $J$, and $C'$ be the quotient of $CH(-)_k$ on $\sA\sS_{S'}$
by a proper ideal $J'$, such that for any $A$ in $\sA\sS_S$ pullback along
$A \times_S S' \to S$ sends $J(A)$ into $J'(A \times_S S')$.
Then the pullback of any symmetrically distinguished element of $C(A)$
is symmetrically distinguished in $C(A \times_S S')$.
Indeed any element of $C(A^m)$ with image
in $C'(A^m \times_S S')$ numerically equivalent to $0$ is itself
numerically equivalent to $0$.
It is clear also that symmetrically distinguished elements are stable under
extension of the field $k$, and by projection onto a quotient of $C$.

Theorem~\ref{t:Chowsplit} implies the existence of a symmetrically
distinguished element $\alpha$ of $C(A)$ lying above a given element
$\overline{\alpha}$ of $\overline{C}(A)$, namely the image of
$\overline{\alpha}$ under the right inverse to $C \to \overline{C}$.
The uniqueness of such an $\alpha$ will be proved Theorem~\ref{t:Chowunique} below.
This uniqueness, combined with Theorem~\ref{t:Chowsplit}, will then
imply the stability of symmetrically distinguished elements
under the algebraic operations and pullback and push forward.
To prove Theorem~\ref{t:Chowunique}, we first reformulate in Lemma~\ref{l:symdistequiv}
the condition for an element $\alpha$ of $C(A)$ to be symmetrically distinguished
as a condition on a certain Chow theory $C_\alpha$ defined by $\alpha$.
Theorem~\ref{t:Chowunique} will then deduced from
Theorems~\ref{t:uniquelift} and \ref{t:alginvunique}.

Given an abelian scheme $A$ over $S$, denote by $\sE_A$ the (non-full)
subcategory of $\sA\sS_S$ that consists of the objects $A^m$ for $m = 0,1,2, \dots$
and those morphisms $A^n \to A^m$ for which each component $A^n \to A$ is either
a projection or the composite of a projection with $(-1)_A$.
Then $\sE_A$ has a structure of cartesian monoidal category, where the final object is $A^0$
and the product of $A^m$ and $A^n$
is $A^{m+n}$ with projections the projections in $\sA\sS_S$ onto the first
$m$ and the last $n$ factors.
Any map from $\{1,2, \dots ,m\}$ to $\{1,2, \dots ,n\}$ induces a morphism
from $A^n$ to $A^m$ in $\sE_A$, and any \mbox{$A^n \to A^m$} in $\sE_A$ can be written as the
composite of such a morphism with a product of morphisms $(\pm 1)_A$.
A morphism in $\sE_A$ is an isomorphism in $\sE_A$ if and only if
it is an isomorphism in $\sA\sS_S$, and a section in $\sE_A$ if and only if
it is a closed immersion.
Any morphism $A^n \to A^m$ in $\sE_A$ factors as a projection $A^n \to A^p$
followed by a closed immersion $A^p \to A^m$ in $\sE_A$.

The embedding $\sE_A \to \sA\sS_S$ is a product-preserving functor,
so that we obtain by restriction from $\sA\sS_S$ a dimension function on $\sE_A$.
Given a Chow theory $C$ with source $\sA\sS_S$, we denote by $C_A$ the
Chow theory with source $\sE_A$ obtained from $C$ by restriction.
Given an element $\alpha$ of $C(A) = C_A(A)$, we denote by $C_\alpha$ the
Chow subtheory of $C_A$ generated by $\alpha$, i.e.\ the smallest Chow subtheory
$C'$ of $C_A$ such that $C'(A)$ contains $\alpha$.

Let $A$ be an abelian scheme over $S$ of relative dimension $d$
and with identity $\iota:S \to A$.
Then $h(\iota):h(A) \to \I$ is the projection onto $h^0(A) = \I$.
Since $\nu_A:h(A)(d) \to \I$ factors through $h^{2d}(A)(d)$,
it thus follows from \eqref{e:transpose} that \mbox{$h(\iota)^\vee:\I \to h(A)(d)$}
factors through $h^{2d}(A)(d)$.
Hence if $d > 0$ we have
\begin{equation}\label{e:Cherntriv}
\iota^*\iota_*(1) = 0
\end{equation}
in $CH^d(S)_k$, because by  Proposition~\ref{p:PDhom}~\ref{i:PDhomtrans} and
\ref{i:PDhomgraph} and \eqref{e:PDhomcomppull} the image of $\iota^*\iota_*(1)$
under $\gamma_{S,S,0,d}$ is  $h(\iota)(d) \circ h(\iota)^\vee$.
Write $\delta:A^2 \to A$ for the difference morphism, which
sends the point $(a,a')$ of $A^2$ to $a - a'$.
Then factoring $\delta$ as the involution $(\pr_1,\delta)$ of $A^2$ followed by
the projection $\pr_2$ shows that $\delta^*\iota_*(1)$ is the class
$(\Delta_A)_*(1)$ of the diagonal in $CH^d(A^2)_k$.
Thus for any integers $r$ and $s$
\begin{equation}\label{e:diagpull}
(r_A,s_A)^*(\Delta_A)_*(1) =  ((r-s)_A)^*\iota_*(1) = (r-s)^{2d} \iota_*(1)
\end{equation}
in $CH^d(A)_k$, by  Proposition~\ref{p:PDhom}~\ref{i:PDhomtrans} and
\ref{i:PDhomgraph} and \eqref{e:PDhomcomppull}.

\begin{lem}\label{l:pbeta}
Let $C$, $A$ and $\alpha$ be as in Definition~\textnormal{\ref{d:symdist}},
and denote by $\iota:S \to A$ the identity of $A$.
Suppose that
$\alpha$ is symmetric,
and that
$\iota^*(\alpha)$ and the push forward along $A \to S$ of any power of $\alpha$ lie
in $k \subset C(S)$.
Then $C_\alpha(A^m)$
is the $k$\nd vector subspace of $C_A(A^m) = C(A^m)$ generated
by the elements of the form
\begin{equation}\label{e:pbeta}
p_*(\beta_1 \otimes \beta_2 \otimes \dots \otimes \beta_n),
\end{equation}
where $n \le m$, each $\beta_i$ is either $\iota_*(1)$ or a power of $\alpha$,
and $p:A^n \to A^m$ is a closed immersion in $\sE_A$.
\end{lem}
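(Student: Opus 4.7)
My plan is to prove both inclusions of the claimed equality separately, writing $W_m \subset C(A^m)$ for the $k$\nd vector subspace spanned by the elements of the form \eqref{e:pbeta}.

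For the forward inclusion $W_m \subset C_\alpha(A^m)$, I exploit the Chow-subtheory closure properties of $C_\alpha$. Powers $\alpha^r$ lie in $C_\alpha(A)$ by the algebra structure, and the crucial input is that $\iota_*(1) \in C_\alpha(A)$: both $\Delta_A:A\to A^2$ and $(1_A,(-1)_A):A\to A^2$ are closed immersions in $\sE_A$, so $(\Delta_A)_*(1) \in C_\alpha(A^2)$, and \eqref{e:diagpull} with $(r,s)=(1,-1)$ gives $(1_A,(-1)_A)^*(\Delta_A)_*(1) = 2^{2d}\iota_*(1)$, putting $\iota_*(1)$ into $C_\alpha(A)$. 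Tensor products $\beta_1 \otimes \cdots \otimes \beta_n$ of such elements lie in $C_\alpha(A^n)$ via pullback along coordinate projections in $\sE_A$ and multiplication, and push forward along the closed immersion $p \in \sE_A$ finally gives an element of $C_\alpha(A^m)$.

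For the reverse inclusion, I prove that the family $W = (W_m)_m$ is itself a Chow subtheory of $C_A$. Since $\alpha = (1_A)_*(\alpha) \in W_1$, minimality of $C_\alpha$ then yields $C_\alpha \subset W$. I must check closure of each $W_m$ under the algebra structure and under pullback and push forward along morphisms of $\sE_A$. Push forward is handled via $f_*p_* = (fp)_*$ together with the factorisation $fp = r\circ q$ in $\sE_A$ with $q:A^n\to A^{n'}$ a projection and $r:A^{n'}\to A^{m'}$ a closed immersion: Definition~\ref{d:Chow}\ref{i:Chowtens} replaces each factor $\beta_j$ on a forgotten coordinate by $a_*(\beta_j) \in k$ (a scalar by the hypothesis on powers of $\alpha$, or the identity $a_*\iota_*(1)=1$), and the remaining $r_*$ puts us back into the form \eqref{e:pbeta}. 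Pullback and multiplication I prove together: the graph formula $f^*(\xi) = (\pr_1)_*(\Gamma_f \cdot \pr_2^*(\xi))$ with $\Gamma_f = (1_{A^{m'}},f)_*(1) \in W_{m'+m}$ and $\pr_1,\pr_2$ projections in $\sE_A$ reduces pullback to multiplication inside some $W_{m'+m}$, while the projection formula $p_*(\xi)\cdot q_*(\eta) = p_*(\xi \cdot p^*(q_*\eta))$ reduces multiplication to a computation of $p^*q_*(\gamma)$ for two closed immersions $p,q \in \sE_A$ with common target.

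The main obstacle is thus the analysis of $p^*q_*(\gamma)$. The fibre product $A^n \times_{A^m} A^{n'}$ is a closed subscheme of $A^{n+n'}$ cut out by equations $x_i = \pm y_j$ coming from matched components of $p$ and $q$; when the signs agree on each matched pair the fibre product is again a power of $A$ and base change applies cleanly, converting $p^*q_*$ into a pullback along a projection followed by a push forward along a closed immersion in $\sE_A$, both already shown to preserve $W$. When the signs disagree on a matched coordinate the resulting relation $2y=0$ introduces torsion and base change fails, but the corresponding contribution is controlled by the self-intersection relation \eqref{e:Cherntriv} ($\iota^*\iota_*(1)=0$) together with the hypothesis that $\iota^*\alpha$, and hence $\iota^*(\alpha^r)$, is a scalar in $k$: these reduce the torsion contributions to pullbacks to the identity section, where they vanish or reduce to elements in $W$ of lower complexity. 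An induction on the ranks of $p$ and $q$ then closes multiplication and pullback simultaneously, finishing the proof that $W$ is a Chow subtheory and hence that $C_\alpha \subset W$.
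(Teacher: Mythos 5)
Your overall strategy matches the paper's: prove the forward inclusion via \eqref{e:diagpull}, and prove the reverse inclusion by showing that the family $W = (W_m)$ is a Chow subtheory of $C_A$, hence contains $C_\alpha$ by minimality. The forward inclusion and the push-forward part of the reverse inclusion are fine and essentially identical to the paper's treatment.

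The gap lies in your analysis of $p^*q_*(\gamma)$. You claim that ``when the signs agree on each matched pair the fibre product is again a power of $A$ and base change applies cleanly.'' This is false: even when the fibre product is a power of $A$, the Tor-independence hypothesis required for \eqref{e:basechange} can fail, because the two closed immersions may meet with excess. The simplest instance is $p = q = \Delta_A : A \to A^2$, where the fibre product is $A$ but $\Delta_A^*(\Delta_A)_*(1) = 0$ (the top Chern class of the trivial tangent bundle), whereas naive base change would predict $1$. So ``signs agree'' does not make the problem go away; the full formula \eqref{e:diagpull}, namely $(r_A,s_A)^*(\Delta_A)_*(1) = (r-s)^{2d}\iota_*(1)$, is what correctly encodes the excess intersection in \emph{all} cases, both $r=s$ and $r\ne s$. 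You invoke it in the forward inclusion but only \eqref{e:Cherntriv} (the special case $r = s$) in the reverse, and the ``induction on ranks'' that is supposed to close the argument is asserted rather than given.

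The paper avoids the general $p^*q_*$ analysis entirely. After introducing the intermediate algebras $U_m$, it observes that every closed immersion in $\sE_A$ factors through products of the diagonal $A \to A^2$ with $A^j$ and isomorphisms, so pullback reduces to $e = \Delta_A \times A^{m-2}$. It then uses the retraction $\widetilde e$ of $e$ and the projection formula to write $e^*u_*(1) = (\widetilde e \circ u)_* u^*e_*(1)$, which reduces everything to $u_0^*(\Delta_A)_*(1)$ for sections $u_0 : A^n \to A^2$ in $\sE_A$, computed directly by \eqref{e:diagpull}. Adopting that reduction — or, failing that, replacing your base-change claim with a case-by-case appeal to \eqref{e:diagpull} for each matched pair of coordinates — would close the gap.
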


\begin{proof}
We may suppose that $\dim A > 0$.
Write $U_m$ for the $k$\nd vector subspace of $C(A^m)$ generated by the elements
$\beta_1 \otimes \beta_2 \otimes \dots \beta_m$ with each $\beta_i$ either $\iota_*(1)$
or a power of $\alpha$.
Since $\iota_*(1).\iota_*(1) = \iota_*\iota^*\iota_*(1) =  0$ by \eqref{e:Cherntriv}, and since
$\iota_*(1).\alpha = \iota_*\iota^*(\alpha)$ is a scalar multiple of $\iota_*(1)$
by the condition on $\iota^*(\alpha)$, it is clear that $U_m$ is a graded subalgebra of
$C(A^m)$.
Pullback along $v:A^l \to A^m$ for any $v$ in $\sE_A$ thus sends $U_m$ into $U_l$,
because the algebra $U_m$ is generated by pullbacks along projections $A^m \to A$
of elements of $U_1$, and every element of $U_1$ is symmetric.

The $k$\nd vector subspace $W_m$ of $C(A^m)$ generated by the \eqref{e:pbeta}
is the sum of the $p_*(U_n)$ for $n \le m$ and $p:A^n \to A^m$
a closed immersion in $\sE_A$.
It is a graded subspace of $C(A^m)$ because $\alpha$ is homogeneous.
The element $\iota_*(1)$ of $C(A)$ lies in $C_\alpha(A)$ by \eqref{e:diagpull}
with $r = 1$ and $s = -1$.
Thus $U_n \subset C_\alpha(A^n)$ for every $n$,
and hence $W_m \subset C_\alpha(A^m)$ for every $m$.
We show that $v_*$ sends $W_l$ into $W_m$ and $v^*$ sends $W_m$ into $W_l$
for every $v:A^l \to A^m$ in $\sE_A$.
Since the tensor product of two elements of $W_m$ clearly lies in $W_{2m}$,
pulling back along the diagonal $A^m \to A^{2m}$ will then show
that $W_m$ is graded $k$\nd subalgebra of $C(A^m)$.
The $W_m$ will thus define a Chow subtheory of $C_A$, so that $W_m = C_\alpha(A^m)$
for every $m$, as required.

Since push forward along $A \to S$ sends $\iota_*(1)$ to $1$ and $\alpha^r$
into $k \subset C(S)$,
push forward along any projection $A^j \to A^n$ sends
$U_j$ into $U_n$.
Factoring $u'$ as a projection followed by a closed immersion in $\sE_A$
thus shows $W_m$ is the sum of the $u'{}\!_*(U_{n'})$ for $n' \ge 0$ and
$u':A^{n'} \to A^m$ in $\sE_A$.
It follows that $v_*$ sends $W_l$ into $W_m$ for every $v:A^l \to A^m$ in $\sE_A$.

It remains to show that $v^*$ sends $W_m$ into $W_l$
for every $v:A^l \to A^m$ in $\sE_A$.
This is clear when $v$ is an isomorphism, because then $v^* = (v^{-1})_*$.
If $w$ is the projection $A^{j+1} \to A^j$ onto the first $j$ factors,
then $w^* = - \otimes 1$ sends $W_j$ into $W_{j+1}$.
Any projection in $\sE_A$ is a composite of such $w$ and isomorphisms in $\sE_A$,
so that if $v:A^l \to A^m$ is a projection then $v^*$ sends $W_m$ into $W_l$.
Since an arbitrary $v:A^l \to A^m$ in $\sE_A$ factors as a projection followed
by a closed immersion in $\sE_A$, and any closed immersion in $\sE_A$ is a
composite of isomorphisms and
products of the diagonal $A^1 \to A^2$ with some $A^j$, we reduce to showing
that pullback along
\[
e = \Delta_A \times A^{m-2}:A^{m-1} \to A^m
\]
sends $W_m$ into $W_{m-1}$ for every $m \ge 2$.
Now $W_m$ is the subspace of $C(A^m)$ generated by
the $p_*(1) . \theta = p_*p^*(\theta)$ with
$\theta \in U_m$ and $p:A^n \to A^m$ a closed immersion in $\sE_A$,
because such $p$ are sections in $\sE_A$, so that the $p^*$ induce surjective
maps $U_m \to U_n$.
It is thus enough to show that $e^*u_*(1)$ lies in $W_{m-1}$ for
every $m \ge 2$ and $u:A^n \to A^m$ in $\sE_A$.
Write $\widetilde{e}:A^m \to A^{m-1}$ for the projection
onto the last $m-1$ factors.
Then $\widetilde{e} \circ e$
is the identity of $A^{m-1}$,
so that
\[
e^*u_*(1) = \widetilde{e}_*e_*e^*u_*(1)
= \widetilde{e}_*(u_*(1) . e_*(1))
= \widetilde{e}_*u_*u^*e_*(1)
= (\widetilde{e} \circ u)_*u^*e_*(1).
\]
Since $e_*(1)$ is the pullback of $(\Delta_A)_*(1)$ along the projection
$A^m \to A^2$ onto the first two factors, we reduce finally to showing that
$u_0{}\!^*(\Delta_A)_*(1)$ lies in $W_n$ for every
$u_0:A^n \to A^2$ in $\sE_A$.
Factoring $u_0$ as a projection followed by a section, we may suppose
that it is either $\Delta_A$ or $(1_A,(-1)_A)$.
It then suffices to apply \eqref{e:diagpull}.
\end{proof}

The following Lemma, which will be needed for the proof of Lemma~\ref{l:symdistequiv},
is based on the fact that for $V$ a vector space over $k$ of dimension $d$ we have a canonical
non-degenerate pairing $V \otimes_k \bigwedge^{d-1} V \to \bigwedge^d V$.

\begin{lem}\label{l:poscontr}
Let $f:N' \otimes L \to N \otimes L$ be a morphism in a $k$\nd tensor category,
with $L$ positive of integer rank $d$.
Denote by $f_0:N' \to N$ the contraction of $f$ with respect to $L$
and by $a:L^{\otimes d} \to L^{\otimes d}$ the antisymmetrising idempotent.
Then
\[
f_0 \otimes a =
d(N \otimes a) \circ  \xi_N{}\!^{-1} \circ(f \otimes L^{\otimes (d-1)})
\circ \xi_{N'} \circ (N' \otimes  a):
N' \otimes L^{\otimes d} \to N \otimes L^{\otimes d},
\]
where $\xi_M$ denotes the associativity
$M \otimes L^{\otimes d} \iso (M \otimes L) \otimes L^{\otimes (d-1)}$.
\end{lem}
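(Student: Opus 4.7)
\emph{Plan.} The strategy is to reformulate the identity so that both sides appear sandwiched by the antisymmetriser $a$, to use symmetry to average $f$ over the $d$ copies of $L$, and finally to reduce the resulting identity to a classical statement about the positive object $L$ alone.

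First I would rewrite the left side using $a \circ a = a$ as
\[
f_0 \otimes a = (N \otimes a) \circ (f_0 \otimes L^{\otimes d}) \circ (N' \otimes a).
\]
For each $i \in \{1,\ldots,d\}$, define $F_i \colon N' \otimes L^{\otimes d} \to N \otimes L^{\otimes d}$ by
\[
F_i = (N \otimes \sigma_i^{-1}) \circ \xi_N^{-1} \circ (f \otimes L^{\otimes (d-1)}) \circ \xi_{N'} \circ (N' \otimes \sigma_i),
\]
where $\sigma_i$ is the symmetry of $L^{\otimes d}$ cycling position $1$ to position $i$ (so $\sigma_1 = 1$). Thus $F_1 = \xi_N^{-1} \circ (f \otimes L^{\otimes (d-1)}) \circ \xi_{N'}$ is the morphism on the right-hand side of the lemma, stripped of the factor $d$ and of the outer $(N \otimes a)$, $(N' \otimes a)$. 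Using naturality of the symmetries, the antisymmetriser identities $\sigma \circ a = a \circ \sigma = \mathrm{sgn}(\sigma)\,a$, and $\mathrm{sgn}(\sigma_i)^2 = 1$, one obtains
\[
(N \otimes a) \circ F_i \circ (N' \otimes a) = (N \otimes a) \circ F_1 \circ (N' \otimes a)
\]
for every $i$. Summing over $i$, the identity of the lemma becomes equivalent to
\[
(\ast) \qquad (N \otimes a) \circ (f_0 \otimes L^{\otimes d}) \circ (N' \otimes a) = (N \otimes a) \circ \Bigl(\textstyle\sum_{i=1}^d F_i\Bigr) \circ (N' \otimes a).
\]

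Next I would prove $(\ast)$ by reducing it to a statement about $L$ alone. Both sides of $(\ast)$ are $k$-linear in $f$, and via the adjunction isomorphism \eqref{e:adj1iso} together with the dualisability of $L$, the morphism $f$ corresponds to a morphism $\widetilde{f} \colon \I \to L^\vee \otimes L \otimes N'{}^\vee \otimes N$ after passing to a rigid enlargement of the ambient $k$\nd tensor category in which $N$ and $N'$ admit duals (furnished by an ind-completion of a pseudo-abelian hull, as in \ref{ss:ind}). Both sides of $(\ast)$ then become $\widetilde{f}$ postcomposed with morphisms of the form $\Psi \otimes 1_{N'{}^\vee \otimes N}$, where $\Psi \colon L^\vee \otimes L \to (L^\vee)^{\otimes d} \otimes L^{\otimes d}$ is constructed only from $\varepsilon$, $\widetilde{\eta}$, $a$, and the symmetries of tensor powers of $L$. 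It therefore suffices to verify the corresponding identity $\Psi_\mathrm{LHS} = \Psi_\mathrm{RHS}$ inside the $k$-tensor subcategory generated by $L$, which is positive by hypothesis on $L$.

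In that positive subcategory, Corollary~\ref{c:posrep} supplies a faithful, conservative $k$\nd tensor functor to the category of finitely generated projective modules over a commutative $k$\nd algebra $R$. Under this functor $(\ast)$ becomes, after decomposing an arbitrary $R$-linear endomorphism of $L$ as a sum of simple tensors $g \otimes h$, the classical identity
\[
\tr(h) \cdot a = d \cdot a \circ (h \otimes 1_L^{\otimes (d-1)}) \circ a
\]
in $\mathrm{End}_R(L^{\otimes d})$ for $L$ a free $R$-module of rank $d$ and $h \in \mathrm{End}_R(L)$. This is the Leibniz rule: an endomorphism of $L$ acts on the line $\bigwedge^d L$ (the image of $a$) by its trace, recovered after antisymmetrisation from its action on any single factor.

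The main obstacle, I expect, is the rigorous extraction of the ``$L$-only'' identity $\Psi_\mathrm{LHS} = \Psi_\mathrm{RHS}$ from $(\ast)$: one must check that both sides of $(\ast)$ depend on $f$ only through $\widetilde{f}$, and that this dependence factors through the same $\Psi \otimes 1_{N'{}^\vee \otimes N}$ on each side. Once this factorisation is confirmed, the transfer via Corollary~\ref{c:posrep} to the classical linear-algebra identity is routine.
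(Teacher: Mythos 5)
Your overall plan (pass to an identity that involves only $L$, then exploit positivity of $L$ to reduce to a module-theoretic computation) is the same strategy the paper uses, and your averaging observation — that $(N \otimes a)\circ F_i \circ (N'\otimes a)$ is independent of $i$, so the right-hand side is $\tfrac{1}{d}$ of the ``symmetrized'' $\sum_i F_i$ sandwiched by antisymmetrisers — is a correct and pleasant reformulation. However, the step you flag as the ``main obstacle'' is where the argument actually breaks.

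You propose to convert $(\ast)$ into an $L$-only statement by trading $f$ for a morphism $\widetilde f:\I\to L^\vee\otimes L\otimes N'{}^\vee\otimes N$, and you justify the existence of $N'{}^\vee$, $N$ duals by ``passing to a rigid enlargement \dots furnished by an ind-completion of a pseudo-abelian hull.'' This does not work: neither the pseudo-abelian hull nor the ind-completion of a $k$-tensor category adjoins duals for objects that did not already have them. In a general $k$-tensor category $N$ and $N'$ need not be dualisable at all, and there is no enlargement in the paper's toolkit that makes them so. Consequently the morphism $\widetilde f$ need not exist, and the factorisation of both sides through a common $\Psi\otimes 1_{N'{}^\vee\otimes N}$ has no meaning.

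The paper's proof of this lemma dispenses with dualising $N$, $N'$ entirely. It introduces the $L$-only morphism $\psi=(\widetilde\varepsilon\otimes L^{\otimes(d-1)})\circ\xi_{L^\vee}\circ(L^\vee\otimes a)$ and proves an identity between morphisms $L\otimes L^\vee\otimes L^{\otimes d}\to L^{\otimes d}$ built solely from $\varepsilon$, $\widetilde\eta$, $a$ and $\psi$, which it verifies by Theorem~\ref{t:GL} for $L$ the standard representation of $GL_d$. The derivation of the lemma from that identity then uses only the definition $f_0=(N\otimes\varepsilon)\circ(f\otimes L^\vee)\circ(N'\otimes\widetilde\eta)$, bifunctoriality of $\otimes$ (to slide $\psi$ past $f$, since they act on disjoint tensor factors), and the triangular identity $(L\otimes\widetilde\varepsilon)\circ(\widetilde\eta\otimes L)=1_L$ — no duals of $N$ or $N'$ are needed. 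To repair your proof you would need to replace the dualisation step by a bifunctoriality argument of exactly this kind; once the $L$-only identity is correctly isolated, your proposed use of Corollary~\ref{c:posrep} in place of Theorem~\ref{t:GL} to verify it would be equally valid.
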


\begin{proof}
Denote by $\eta:\I \to L^\vee \otimes L$ and $\varepsilon:L \otimes L^\vee \to \I$
the unit and counit of a duality pairing for $L$, and by
$\widetilde{\eta}:\I \to L \otimes L^\vee$ and $\widetilde{\varepsilon}:L^\vee \otimes L \to \I$
the composites of $\eta$ and $\varepsilon$ with the symmetries interchanging $L$ and $L^\vee$.
Write
\[
\psi =
(\widetilde{\varepsilon} \otimes L^{\otimes (d-1)}) \circ \xi_{L^\vee} \circ (L^\vee \otimes a):
L^\vee \otimes L^{\otimes d} \to L^{\otimes (d-1)}.
\]
Then with $\zeta:L \otimes (L^\vee \otimes L^{\otimes d}) \iso
(L \otimes L^\vee) \otimes L^{\otimes d}$ the associativity we have
\begin{equation}\label{e:mupsi}
(\varepsilon \otimes a) \circ \zeta = da \circ (L \otimes \psi).
\end{equation}
Indeed by Theorem~\ref{t:GL}, it is enough to verify \eqref{e:mupsi} when
$L$ is the standard representation of $GL_d$ on $k^d$ in $\Rep_k(GL_d)$.
We may then take for $L^\vee$ the contragredient
action of $GL_d$ on $k^d$, and if $e_1,e_2,\dots,e_d$ is the standard basis of $k^d$
we may take for $\varepsilon$ the $k$\nd homomorphism $k^d \otimes_k k^d \to k$ that
sends $e_i \otimes e_j$ to $1$ when $i = j$ and $0$ otherwise.
The equality \eqref{e:mupsi} now follows from the fact that if $\sigma$ is an
element of $\mathfrak{S}_d$, then
for example $a(e_i \otimes e_{\sigma 2} \otimes \dots \otimes e_{\sigma d})$  is $0$
unless $i = \sigma 1$, when it coincides up to the sign of $\sigma$
with $a(e_1 \otimes e_2 \otimes \dots \otimes e_d)$.

To prove the required result we may suppose that the tensor product is strict,
so that $\zeta$ and the $\xi_M$ are identities.
Then
$(L \otimes \widetilde{\varepsilon}) \circ (\widetilde{\eta} \otimes L)  = 1_L$
by the triangular identity.
Thus by bifunctoriality of the tensor product we have
\begin{equation}\label{e:taupsi}
(L \otimes \psi) \circ (\widetilde{\eta} \otimes L^{\otimes d}) = a.
\end{equation}
Now by definition
$f_0 = (N \otimes \varepsilon) \circ (f \otimes L^\vee) \circ (N' \otimes \widetilde{\eta})$,
so that
\[
f_0 \otimes a = (N \otimes \varepsilon \otimes  a)
\circ (f \otimes L^\vee \otimes L^{\otimes d}) \circ
(N' \otimes \widetilde{\eta} \otimes L^{\otimes d}).
\]
The fact that
$(N \otimes L \otimes \psi) \circ (f \otimes L^\vee \otimes L^{\otimes d})
= (f \otimes L^{\otimes (d-1)}) \circ (N' \otimes L \otimes \psi)$
by bifunctoriality of the tensor product, together with \eqref{e:mupsi} tensored
on the left with $N$ and \eqref{e:taupsi} tensored on the left with $N'$,
thus gives the required result.
\end{proof}

\begin{lem}\label{l:symdistequiv}
Let $C$, $A$ and $\alpha$ be as in Definition~\textnormal{\ref{d:symdist}}.
Then $\alpha$ is symmetrically distinguished if and only if
for every $m$ the restriction to $C_\alpha(A^m) \subset C(A^m)$ of
the projection $C(A^m) \to \overline{C}(A^m)$ is injective.
\end{lem}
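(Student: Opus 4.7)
The direction $(\Leftarrow)$ is immediate: since $V_m(\alpha) \subseteq C_\alpha(A^m)$, injectivity of $C_\alpha(A^m) \to \overline{C}(A^m)$ restricts to injectivity of $V_m(\alpha) \to \overline{C}(A^m)$.

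For the converse $(\Rightarrow)$, the plan is to exploit Lemma~\ref{l:pbeta}, which describes $C_\alpha(A^m)$ concretely (under the hypotheses that $\alpha$ is symmetric and that $\iota^*(\alpha)$ and the push forwards of all powers $\alpha^r$ along $A \to S$ lie in $k$) as the $k$-span of $p_*(\beta_1 \otimes \cdots \otimes \beta_n)$ with each $\beta_i \in \{\iota_*(1)\} \cup \{\alpha^r : r \ge 0\}$ and $p \colon A^n \to A^m$ a closed immersion in $\sE_A$. This differs from the description of $V_m(\alpha)$ only in that factors $\iota_*(1)$ are now allowed among the $\beta_i$. The proof thus reduces to two tasks: (a) verifying that the hypotheses of Lemma~\ref{l:pbeta} are forced by the $V_m$-injectivity, and (b) showing that any $\iota_*(1)$-factor occurring in such a generator can be algebraically traded for operations that land the generator back in $V_{m'}(\alpha)$ for some $m' \ge m$.

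For task (b), I would exploit the identity $\iota_*(1) = 4^{-d}(1_A,(-1)_A)^*(\Delta_A)_*(1)$ from \eqref{e:diagpull}, so that each factor $\iota_*(1)$ can be replaced by a pullback of $(\Delta_A)_*(1)$ (itself lying in $V_2(\alpha)$). Combined with the base change formula \eqref{e:basechange} for morphisms in $\sE_A$ and the projection formula, a generator of $C_\alpha(A^m)$ involving $\iota_*(1)$-factors becomes, up to scalars, the pullback along an $\sE_A$-morphism $v \colon A^m \to A^{m'}$ of an element of $V_{m'}(\alpha)$. If the original element is numerically zero then so is the associated $V_{m'}$-element, hence the latter is zero by hypothesis, and hence so is the original.

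The main obstacle is task (a), the normalization to the setting of Lemma~\ref{l:pbeta}. For example, deducing that $((-1)_A)^*\alpha = \alpha$ from $V_1$-injectivity requires showing that the image $\overline{\alpha} - ((-1)_A)^*\overline{\alpha}$ of $\alpha - ((-1)_A)^*\alpha \in V_1(\alpha)$ vanishes in $\overline{C}(A)$, which is not automatic since $\overline{\alpha}$ need not be symmetric a priori. The strategy is to use the motivic decomposition $h(A) = \bigoplus_i h^i(A)$ of Theorem~\ref{t:abeliansym}, the symmetric Hopf structure on $h(A)$ established there, the Kimura property of Theorem~\ref{t:abKim}, and the contraction identities of Lemma~\ref{l:poscontr}, to extract from the full $V_m$-injectivity hypothesis (varying $m$) enough motivic rigidity to control the individual pieces of $\alpha$ under the $(-1)_A$-decomposition of $h(A)$. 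A parallel argument, testing against elements of $V_m(\alpha)$ for $m \ge 1$ built from $\iota_*(1)$ and from diagonals $(\Delta_{A^n})_*(1)$, handles the integrality of $\iota^*(\alpha)$ and of the push forwards of powers of $\alpha$.
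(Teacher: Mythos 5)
The easy direction and the appeal to Lemma~\ref{l:pbeta} match the paper, and your recognition that the core of the hard direction is to absorb the $\iota_*(1)$-factors into some $V_{m'}(\alpha)$ is correct. But step (b) has a genuine gap: you write a generator $\gamma$ of $C_\alpha(A^m)$ as $\gamma = c\, v^*(\gamma')$ with $\gamma' \in V_{m'}(\alpha)$ and $v \colon A^m \to A^{m'}$ in $\sE_A$, and then infer $\overline{\gamma'} = 0$ from $\overline{\gamma} = 0$. That inference needs $v^*$ to be injective modulo numerical equivalence, but the $v$ you produce by unwinding \eqref{e:diagpull} factors through closed immersions, and for a closed immersion $v$ in $\sE_A$ the conormal sheaf is free of positive rank, so $v^* v_* = 0$ by \eqref{e:selfint}; such a $v^*$ is split surjective, not injective. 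Nothing lets you pass a numerical relation on $C_\alpha(A^m)$ back up to $V_{m'}(\alpha)$, and the problem is compounded by the fact that $\gamma$ is a sum of such generators, each producing a different $v$.

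The paper's argument runs the map in the opposite direction. It produces a single class $\zeta \in V_{2d}(\alpha)$, with $d = 2^{2\dim A - 1}$, such that $- \otimes \zeta^{\otimes m}$ is an \emph{injective} $k$-linear map $C(A^m) \to C(A^{(2d+1)m})$ carrying $C_\alpha(A^m)$ into $V_{(2d+1)m}(\alpha)$; numerical injectivity on the target space then transports back to the source. The class $\zeta$ corresponds under $\gamma_{A^d,A^d,0,0}$ to the idempotent $z$ of $h(A)^{\otimes d}$ with image $\bigwedge^d h^+(A)$, where $h^+(A)$ is the image of $(1 + h((-1)_A))/2$; since $h^+(A)$ is positive of rank $d$, this idempotent has trace $1$, so $- \otimes z$ is injective with left inverse given by contraction. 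Lemma~\ref{l:poscontr} then furnishes the identity \eqref{e:cyclecontr}, used twice: with $l$ the multiplication of $h(A)$ it yields $\iota_*(1) \otimes \zeta \in V_{2d+1}(\alpha)$, which does the work you hoped pullback would do; and with a second choice of $l$ it yields $\iota^*(\alpha) \in C^0(S)$, the one genuinely nontrivial hypothesis of Lemma~\ref{l:pbeta} in your task (a). (The symmetry of $\alpha$ is immediate, since every element of $\overline{C}(A)$ is symmetric, so $\alpha - ((-1)_A)^*\alpha \in V_1(\alpha)$ is numerically trivial, hence zero; and the vanishing of high powers of $\alpha$, giving the last hypothesis, is just $V_1$-injectivity again.) The construction of the trace-$1$ class $\zeta$ via Lemma~\ref{l:poscontr} is the idea missing from your proposal.
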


\begin{proof}
Each element of
$C(A^m)$ of the form \eqref{e:symdist} lies in $C_\alpha(A^m)$.
Thus $\alpha$ is symmetrically distinguished if  for every $m$
the map $C_\alpha(A^m) \to \overline{C}(A^m)$ is injective.

Conversely suppose that $\alpha$ is symmetrically distinguished.
To prove that for every $m$ the map $C_\alpha(A^m) \to \overline{C}(A^m)$ is injective
we may suppose that $\dim A > 0$.
Write $d = 2^{2\dim A - 1}$.
If  $\iota:S \to A$ is the identity of $A$, we show using Lemma~\ref{l:poscontr} that
\begin{enumerate}
\renewcommand{\theenumi}{(\arabic{enumi})}
\item\label{i:zetal}
there exists a
$\zeta \in V_{2d}(\alpha)$ such that $- \otimes \zeta:C(A^j) \to C(A^{j+2d})$
is injective for every $j$
and such that $\iota_*(1) \otimes \zeta$ lies in $V_{2d+1}(\alpha)$
\item\label{i:alphaC0}
$\iota^*(\alpha)$ lies in $C^0(S)$.
\end{enumerate}
The injectivity of $C_\alpha(A^m) \to \overline{C}(A^m)$ will then be deduced from
\ref{i:zetal} and \ref{i:alphaC0} using Lemma~\ref{l:pbeta}.

Denote by $(\sM,h,\nu)$ the Poincar\'e duality theory
associated to $C$, and by $\gamma$ the universal morphism.
Let $\sM'$ be a pseudo-abelian hull of the Tate $k$\nd tensor category $\sM$.
By \eqref{e:MMJ} and Theorem~\ref{t:abeliansym},
the Hopf algebra $h(A)$ in $\sM'$ is symmetric on a
negative object $h^1(A)$ of rank $-2 \dim A$.
Thus $h(A)$ in $\sM'$ has a canonical grading with component $h^j(A)$ of degree
$j$ the symmetric power $S^j h^1(A)$.
Then $h((-1)_A)$ acts as $(-1)^j$ on $h^j(A)$, so that
\[
e = (1 + h((-1)_A))/2
\]
is idempotent with image
the sum $h^+(A)$ of the $h^{2j}(A)$.
Thus $e = v \circ u$, with $v:h^+(A) \to h(A)$ and $u:h(A) \to h^+(A)$
such that $u \circ v = 1_{h^+(A)}$.
The object $h^+(A)$ is positive of rank $d$.

The antisymmetriser of $h(A)^{\otimes d}$ commutes with $e^{\otimes d}$,
and their composite is an idempotent endomorphism $z$ of $h(A)^{\otimes d}$ with image
$\bigwedge^d h^+(A)$ of rank $1$.
Thus $z$ has trace $1$, so that $- \otimes z$ is injective on any hom-space
of $\sM'$, with a left inverse given by contracting with respect to $h(A)^{\otimes d}$.
We have
\[
z = \gamma_{A^d,A^d,0,0}(\zeta)
\]
for a unique $\zeta$ in $C(A^{2d})$.
Then by Proposition~\ref{p:PDhom}~\ref{i:PDhomtens}, the homomorphism
$- \otimes \zeta$ from $C(A^j)$ to $C(A^{j+2d})$
is injective for every $j$.
Also $z$ is a $k$\nd linear combination of automorphisms $h(w)$
of $h(A)^{\otimes d}$ with $w$ an automorphism of $A^d$ in $\sE_A$.
Thus by Proposition~\ref{p:PDhom}~\ref{i:PDhomgraph} $\zeta$ lies in
$V_{2d}(\alpha)$.
To prove \ref{i:zetal} it remains to show that $\iota_*(1) \otimes \zeta$ lies in
$V_{2d+1}(\alpha)$.

Let $l:h(A)^{\otimes (m+1)}(i) \to h(A)^{\otimes (n+1)}$ be a morphism in $\sM'$,
and denote by \mbox{$l_0:h(A)^{\otimes m}(i) \to h(A)^{\otimes n}$}
the contraction of
\[
l^+ = (h(A)^{\otimes n} \otimes u) \circ l \circ (h(A)^{\otimes m}(i) \otimes v):
h(A)^{\otimes m}(i) \otimes h^+(A) \to h(A)^{\otimes n} \otimes h^+(A)
\]
with respect to $h^+(A)$.
Since the identity of $h^+(A)^{\otimes (d-1)}$ is $u^{\otimes (d-1)} \circ v^{\otimes (d-1)}$,
applying Lemma~\ref{l:poscontr} with $N = h(A)^{\otimes n}$, $N' = h(A)^{\otimes m}(i)$,
$L = h^+(A)$, and $f = l^+$,
and then composing with $h(A)^{\otimes m}(i) \otimes u^{\otimes d}$ on the right and
$h(A)^{\otimes n} \otimes v^{\otimes d}$ on the left and using the compatibility
of $u^{\otimes d}$ and $v^{\otimes d}$ with the antisymmetrisers of $h(A)^d$ and $h^+(A)^d$,
shows that
\begin{equation}\label{e:cyclecontr}
l_0 \otimes z
 = d(h(A)^{\otimes n} \otimes z) \circ \xi' \circ (l \otimes h(A)^{\otimes (d-1)})
\circ \xi(i) \circ (h(A)^{\otimes m}(i) \otimes z),
\end{equation}
where $\xi$ and $\xi'$ are the images under $h$ of the appropriate associativities in $\sA\sS_S$.
We have
\[
l = \gamma_{A^{m+1},A^{n+1},i,0}(\lambda)
\]
for a unique $\lambda$ in $C(A^{m+n+2})$, and
\[
l_0 = \gamma_{A^m,A^n,i,0}(\lambda_0)
\]
for a unique $\lambda_0$ in $C(A^{m+n})$.
Suppose that
\begin{equation}\label{e:lambdaincl}
\lambda \in V_{m+n+2}(\alpha).
\end{equation}
Then by Proposition~\ref{p:PDhom}~\ref{i:PDhomtens} and \ref{i:PDhomgraph},
$l \otimes h(A)^{\otimes (d-1)}$ is the image of an element of $V_{m+n+2d}(\alpha)$
under $\gamma_{A^{m+d},A^{n+d},i,0}$.
Since $h(A)^{\otimes m} \otimes z$ for example is a $k$\nd linear combination
of automorphisms $h(w)$
with $w$ an automorphism of $A^{m+d}$ in $\sE_A$,
it follows from \eqref{e:PDhomcomppush} and \eqref{e:PDhomcomppull}
that the right hand side of \eqref{e:cyclecontr} is also the image
of an element of $V_{m+n+2d}(\alpha)$ under $\gamma_{A^{m+d},A^{n+d},i,0}$.
Thus we have
\begin{equation}\label{e:lambda0incl}
\lambda_0 \otimes \zeta \in V_{m+n+2d}(\alpha),
\end{equation}
by \eqref{e:cyclecontr} and Proposition~\ref{p:PDhom}~\ref{i:PDhomtens}.

Take $m=1$, $n=0$, $i=0$, and for $l$ the multiplication $h(A) \otimes h(A) \to h(A)$.
Then since $h(\iota)$ is the projection $h(A) \to \I$ onto the summand
$\I = h^0(A)$ of $h(A)$,
the diagonal entry $h(A) \otimes h^{2j}(A) \to h^{2j}(A)$ of the matrix
of $l^+$ is $h(\iota) \otimes h^{2j}(A)$.
Thus $l_0 = dh(\iota)$, so that $\lambda_0 = d\iota_*(1)$.
Also $l = h(\Delta_A)$, so that $\lambda$ is the
is the push forward of $1$ in $C(A)$ along the diagonal embedding $A \to A^3$,
and hence \eqref{e:lambdaincl} holds.
Thus \eqref{e:lambda0incl} holds, and $\iota_*(1) \otimes \zeta$ lies in
$V_{2d+1}(\alpha)$.
This proves \ref{i:zetal}.

Now take $m = n = d$, for $-i$ the degree of $\alpha$,
and for $l$ the tensor product $z \otimes \gamma_{S,A,i,0}(\alpha) \otimes h(\iota)$.
Since $h(\iota)$ factors through $h^+(A)$, the contraction $l_0$ of $l^+$
with respect to $h^+(A)$ coincides with the contraction
\[
z \otimes (h(\iota) \circ \gamma_{S,A,i,0}(\alpha)) =
\gamma_{A^d,A^d,0,0}(\zeta) \otimes \gamma_{S,S,i,0}(\iota^*(\alpha))
\]
of $l$ with respect to $h(A)$.
Thus $\lambda_0$ is given, up to a symmetry of $A^{2d}$, by
$\iota^*(\alpha) \otimes \zeta$.
Also $\lambda$ is given, up to a symmetry of $A^{2d+2}$, by
$\iota_*(1) \otimes \zeta \otimes \alpha$.
Hence \eqref{e:lambdaincl} holds.
Thus \eqref{e:lambda0incl} holds, and $\iota^*(\alpha) \otimes \zeta^{\otimes 2}$
lies in $V_{4d}(\alpha)$.
If $-i \ne 0$, then the image of $\iota^*(\alpha)$ in $\overline{C}(S)$ and
hence of $\iota^*(\alpha) \otimes \zeta^{\otimes 2}$ in $\overline{C}(A^{4d})$  is $0$.
Since by hypothesis $\alpha$ is symmetrically distinguished,
it follows that  $\iota^*(\alpha) \otimes \zeta^{\otimes 2}$ and hence $\iota^*(\alpha)$
is $0$.
This proves \ref{i:alphaC0}.

Since $\alpha$ is symmetrically distinguished, $\alpha$ is symmetric and the push forward
along $A \to S$ of any power of $\alpha$ lies in $C^0(A)$.
By \ref{i:alphaC0}, the hypotheses of Lemma~\ref{l:pbeta} are thus satisfied.
Hence $C_\alpha(A^m)$ is generated
as a $k$\nd vector subspace of $C(A^m)$ by the elements \eqref{e:pbeta}.
For $n \le m$, the tensor product of a
$\beta_1 \otimes \beta_2 \otimes \dots \otimes \beta_n$ as in \eqref{e:pbeta}
with $\zeta^{\otimes m}$ lies in $V_{n + 2dm}(\alpha)$, by \ref{i:zetal}.
Thus the map $- \otimes \zeta^{\otimes m}$ from $C(A^m)$ to $C(A^{(2d+1)m})$,
which by \ref{i:zetal} is injective, sends
$C_\alpha(A^m)$ into $V_{(2d+1)m}(\alpha)$.
The required injectivity of $C_\alpha(A^m) \to \overline{C}(A^m)$ now
follows from the injectivity of $V_{(2d+1)m}(\alpha) \to \overline{C}(A^{(2d+1)m})$.
\end{proof}

\begin{thm}\label{t:Chowunique}
Let $C$ and $A$ be as in Definition~\textnormal{\ref{d:symdist}}.
Then above any element of $\overline{C}(A)$
there lies a unique symmetrically distinguished element of $C(A)$.
\end{thm}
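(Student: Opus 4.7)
The plan is to combine Theorem~\ref{t:Chowsplit} with Lemma~\ref{l:symdistequiv} for existence, and to deduce uniqueness from Theorems~\ref{t:uniquelift} and~\ref{t:alginvunique} after translating into the Poincar\'e duality formalism. For existence, let $\tau\colon\overline{C}\to C$ be the unique right inverse produced by Theorem~\ref{t:Chowsplit} and put $\alpha=\tau_A(\overline{\alpha})$. Restricted to $\sE_A$, the morphism $\tau$ remains a section of $C_A\to\overline{C}_A$; since it respects all Chow-theoretic operations, it carries the Chow subtheory $\overline{C}_{\overline{\alpha}}$ bijectively onto $C_\alpha$, with inverse the projection. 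Hence $C_\alpha(A^m)\hookrightarrow\overline{C}(A^m)$ is injective for every $m$, and Lemma~\ref{l:symdistequiv} shows that $\alpha$ is symmetrically distinguished.

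For uniqueness, suppose $\alpha',\alpha''\in C(A)$ are symmetrically distinguished and both lie above $\overline{\alpha}$. Lemma~\ref{l:symdistequiv} yields injections $C_{\alpha'},C_{\alpha''}\hookrightarrow\overline{C}_A$ whose images, being Chow subtheories generated by $\overline{\alpha}$, coincide with $\overline{C}_{\overline{\alpha}}$, so we obtain isomorphisms $C_{\alpha'}\iso\overline{C}_{\overline{\alpha}}\iso C_{\alpha''}$. Composing their inverses with the embeddings into $C_A$ produces two morphisms $\sigma',\sigma''\colon\overline{C}_{\overline{\alpha}}\to C_A$ lifting the inclusion $\overline{C}_{\overline{\alpha}}\hookrightarrow\overline{C}_A$ along $C_A\to\overline{C}_A$, with $\sigma'(\overline{\alpha})=\alpha'$ and $\sigma''(\overline{\alpha})=\alpha''$; it suffices to show $\sigma'=\sigma''$. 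Via Theorem~\ref{t:Poinfunctor} I pass to the associated Poincar\'e duality theories $(\sM_0,h_0,\nu_0)$ and $(\sM_1,h_1,\nu_1)$, so that the inclusion $\overline{C}_{\overline{\alpha}}\hookrightarrow\overline{C}_A$ becomes a faithful $k$-tensor functor $K\colon\sM_0\to\overline{\sM}_1$, the projection $C_A\to\overline{C}_A$ becomes the radical-quotient $P\colon\sM_1\to\overline{\sM}_1$, and $\sigma',\sigma''$ become morphisms of Poincar\'e duality theories $K',K''\colon(\sM_0,h_0,\nu_0)\to(\sM_1,h_1,\nu_1)$ with $PK'=K=PK''$.

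The pseudo-abelian hull of $\sM_1$ is a Kimura category: the comparison $\sM_1\to\sM$ is fully faithful on the relevant hom-sets (both are computed by the same $C_A(A^m\times A^n)$ via the $\gamma$-isomorphism), and $\sM$ embeds fully faithfully via \eqref{e:MMJ} into the Kimura category $\sM_{S,k}^{\mathrm{ab}}/\sJ$; an analogous argument inside $\overline{\sM}_1$ handles $\sM_0$. Theorem~\ref{t:uniquelift} now furnishes a tensor isomorphism $\varphi\colon K'\iso K''$ with $P\varphi=\id_K$. Since $K'h_0=h_1=K''h_0$, the restriction $\varphi h_0$ is a monoidal automorphism of $h_1$ above the identity of $\overline{h}_1$; each $h_0(A^n)$ is a commutative algebra with involution $(-1)_{A^n}$ whose image in $\overline{\sM}_1$ is that of the abelian scheme $A^n$, so Theorem~\ref{t:alginvunique} forces $\varphi_{h_0(A^n)}=\id$ for every $n$. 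The components $\varphi_{\I(i)}$ lie in $\End_{\sM_1}(\I(i))=k$ and project to $1$ under $P$, so are $1$; combined with the Tate tensor structure of $K',K''$ and the monoidal naturality of $\varphi$, this forces $\varphi=\id$ on every object of the form $h_0(A^n)(i)$, and hence on all of $\sM_0$, yielding $K'=K''$ and $\alpha'=\alpha''$. The only substantive difficulty in this plan has already been dispatched by Lemma~\ref{l:symdistequiv}: it is precisely what translates symmetric distinguishedness from the cycle level into the existence of the Chow subtheory $C_\alpha$ in which the categorical machinery of Sections~\ref{s:tensor}--\ref{s:mot}, and crucially the involutive algebra structure exploited by Theorem~\ref{t:alginvunique}, can be brought to bear.
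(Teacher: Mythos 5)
Your proof is correct and follows essentially the same route as the paper's: existence via the right inverse from Theorem~\ref{t:Chowsplit} (the paper states this directly; you pass through Lemma~\ref{l:symdistequiv}, which also works since $\pi$ restricted to $\tau(\overline{C}_{\overline{\alpha}}) \supset C_\alpha$ is injective), and uniqueness by translating via Theorem~\ref{t:Poinfunctor} into a pair of liftings $K', K''$ of Poincar\'e duality theories, applying Theorem~\ref{t:uniquelift} to get a tensor isomorphism $\varphi$ over the identity, and killing $\varphi$ on $h_0(A^n)$ with Theorem~\ref{t:alginvunique} and on $\I(i)$ by $\End(\I(i)) = k$. One small point of precision: to see that the pseudo-abelian hull of $\sM_0$ is Kimura, the paper does not argue ``inside $\overline{\sM}_1$'' via $K$ alone, but uses the faithful composite $EK'$ into $\sM_{S,k}^{\mathrm{ab}}/\sJ$ together with the fact that the involution $h_0((-1)_{A^n})$, already present in $\sM_0$, yields the positive/negative splitting once one knows its image splits $\widehat{h}(A^n)$ that way; your phrasing is compatible with this but should be made explicit that the involution provides the Kimura decomposition at the level of $\sM_0$.
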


\begin{proof}
Let $\overline{\alpha}$
be a homogeneous element of $\overline{C}(A)$.
By Theorem~\ref{t:Chowsplit}, the projection $C \to \overline{C}$ has a right
inverse $\tau$.
Then $\tau$ sends $\overline{\alpha}$ to a symmetrically distinguished element
of $C(A)$ lying above $\overline{\alpha}$.
It remains to show that if $\alpha'$ and $\alpha''$ in $C(A)$  are homogeneous
and symmetrically distinguished and lie above
$\overline{\alpha}$, then \mbox{$\alpha' = \alpha''$}.
By Lemma~\ref{l:symdistequiv}, the projection
$\pi:C_A \to \overline{C}_A$ defines by restriction isomorphisms from
$C_{\alpha'}$ and $C_{\alpha''}$ to $\overline{C}_{\overline{\alpha}}$.
The composites $\kappa'$ and $\kappa''$ of their inverses with the
embeddings of $C_{\alpha'}$ and $C_{\alpha''}$
into $C_A$ are then morphisms $\overline{C}_{\overline{\alpha}} \to C_A$ for which
\[
\pi \circ \kappa' = \kappa = \pi \circ \kappa'',
\]
with $\kappa:\overline{C}_{\overline{\alpha}} \to \overline{C}(A)$ the embedding.
Since $\kappa'$ and $\kappa''$ send $\overline\alpha$ in
$\overline{C}_A(A) = \overline{C}(A)$ to
$\alpha'$ and $\alpha''$ in $C_A(A) = C(A)$,
it will suffice to show that $\kappa' = \kappa''$.

Denote by $(\sM_0,h_0,\nu_0)$ and $(\sM_1,h_1,\nu_1)$ the
Poincar\'e duality theories with source $\sE_A$ associated respectively to
$\overline{C}_{\overline{\alpha}}$ and $C_A$.
The push forward of $(\sM_1,h_1,\nu_1)$ along the projection
$P:\sM_1 \to \overline{\sM}_1$ is then the Poincar\'e duality theory
associated to $\overline{C}_A$, and $P$ is the morphism
of  Poincar\'e duality theories associated to $\pi$.
If $K'$ and $K''$ are the morphisms $(\sM_0,h_0,\nu_0) \to (\sM_1,h_1,\nu_1)$ associated
to $\kappa'$ and $\kappa''$ and $K$ is the morphism associated to $\kappa$, then
\[
PK' = K =  PK''
\]
with $K:\sM_0 \to \overline{\sM}_1$ faithful, and
\[
K' h_0 = h_1 = K'' h_0.
\]
We show that $K' = K''$.
By Theorem~\ref{t:Poinfunctor}, this will imply that
$\kappa' = \kappa''$ as required.

If $(\sM,h,\nu)$ is the Poincar\'e duality theory associated to $C$,
then the universal property of $(\sM_1,h_1,\nu_1)$ defines a morphism $\sM_1 \to \sM$
from $(\sM_1,h_1,\nu_1)$ to the restriction of $(\sM,h,\nu)$ to $\sE_A$.
It is fully faithful.
Hence its composite
\[
E:\sM_1 \to \sM_{S,k}^\mathrm{ab}/\sJ.
\]
with \eqref{e:MMJ} is fully faithful.
Also $Eh_1$ is the restriction to $\sE_A$ of the composite $\widehat{h}$
of $h^\mathrm{ab}$ with the projection onto $\sM_{S,k}^\mathrm{ab}/\sJ$.
Thus $E$ sends the algebra $h_1(A^n)$ with involution $h_1((-1)_{A^n})$
to the algebra $\widehat{h}(A^n)$ with involution
$\widehat{h}((-1)_{A^n}) = (-1)_{\widehat{h}(A^n)}$.
Let $\widetilde{\sM}_0$ and $\widetilde{\sM}_1$
be pseudo-abelian hulls of $\sM_0$ and  $\sM_1$.
Since $E$ is fully faithful,
$\widetilde{\sM}_1$ is a Kimura category.
By Theorem~\ref{t:abeliansym}, $(-1)_{\widehat{h}(A^n)}$
splits $\widehat{h}(A^n)$ as the direct sum of a positive object
on which it acts as $1$ and a negative object on which it acts as $-1$.
Since the composite of either $K'$ or $K''$ with $E$ is faithful
and sends $h_0(A^n)$ to $\widehat{h}(A^n)$ and $h_0((-1)_{A^n})$
to $(-1)_{\widehat{h}(A^n)}$,
it follows that ${h_0((-1)_{A^n}})$ similarly splits
$h_0(A^n)$ in $\widetilde{\sM}_0$.
Thus $h_0(A^n)$ is a Kimura object in  $\widetilde{\sM}_0$ for every $n$.
Hence $\widetilde{\sM}_0$ is a Kimura category, because the objects of
$\sM_0$ are the
$h_0(A^n)(i)$.

By Theorem~\ref{t:uniquelift} with $\sD = \sM_0$
and $\sC = \sM_1$, there is a tensor isomorphism
\[
\varphi:K' \iso K''
\]
with $P\varphi$ the identity.
Then $\varphi  h_0$ is a monoidal automorphism of $h_1$.
The automorphism $\varphi_{h_0(A^n)}$ of $h_1(A^n)$ respects
the algebra structure of $h_1(A^n)$ by monoidal naturality of $\varphi h_0$,
and the involution $h_1((-1)_{A^n})$ of $h_1(A^n)$ by naturality of $\varphi h_0$.
Applying $E$ and using
Theorem~\ref{t:alginvunique} thus shows that $\varphi_{h_0(A^n)}$ is the identity
for every $n$.
Also $\varphi_{\I(i)}$ is the identity for every $i$,
because $P$ induces an isomorphism from $\End_{\sM_1}(\I(i)) = k$
to $\End_{\overline{\sM}_1}(\I(i)) = k$.
Since every object of $\sM_0$ is of the form
$h_0(A^n)(i)$ and $K'$ and $K''$
are Tate $k$\nd tensor functors, the tensor naturality of $\varphi$
thus shows that $\varphi$ is the identity.
Thus $K' = K''$ as required.
\end{proof}

\begin{cor}\label{c:symdist}
Let $C$ be as in Definition~\textnormal{\ref{d:symdist}}.
Then the assignment to each $A$ in $\sA\sS_S$ of the set of symmetrically
distinguished elements of $C(A)$ defines a Chow subtheory $C_{sd}$ of $C$, and
the projection $C \to \overline{C}$ induces an isomorphism $C_{sd} \to \overline{C}$.
\end{cor}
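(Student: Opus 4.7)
The plan is to identify $C_{sd}$ with the image of the unique right inverse $\tau\colon\overline{C}\to C$ to the projection $C\to\overline{C}$ provided by Theorem~\ref{t:Chowsplit}. Once this identification is established, the corollary follows mechanically: the image of a morphism of Chow theories is a Chow subtheory, and the restriction of $C\to\overline{C}$ to this image is inverse to $\tau$.

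First I would show the inclusion $\tau(\overline{C}(A))\subseteq C_{sd}(A)$ for every $A$ in $\sA\sS_S$. Fix a homogeneous $\overline{\alpha}\in\overline{C}(A)$ and set $\alpha=\tau(\overline{\alpha})$. Since $\tau$ is a morphism of Chow theories, it preserves products, pullbacks, push forwards, and the grading. Consequently each generator
\[
p_*(\alpha^{r_1}\otimes\alpha^{r_2}\otimes\dots\otimes\alpha^{r_n})
\]
of $V_m(\alpha)$ equals $\tau$ applied to the analogous expression in $\overline{C}(A^m)$, so $V_m(\alpha)\subseteq\tau(\overline{C}(A^m))$. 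Because $\tau$ is right inverse to the projection, the latter restricts to a bijection on $\tau(\overline{C}(A^m))$, hence is injective on $V_m(\alpha)$. Thus $\alpha$ is symmetrically distinguished. Allowing $\overline{\alpha}$ to be arbitrary and decomposing into homogeneous components by $k$\nd linearity gives $\tau(\overline{C}(A))\subseteq C_{sd}(A)$.

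For the reverse inclusion, let $\alpha\in C_{sd}(A)$ with homogeneous decomposition $\alpha=\sum_i\alpha_i$. By definition each $\alpha_i$ is symmetrically distinguished, and by the previous step so is $\tau(\overline{\alpha}_i)$, where $\overline{\alpha}_i$ denotes the image of $\alpha_i$ in $\overline{C}(A)$. Since both $\alpha_i$ and $\tau(\overline{\alpha}_i)$ are symmetrically distinguished cycles lying above $\overline{\alpha}_i$, Theorem~\ref{t:Chowunique} forces $\alpha_i=\tau(\overline{\alpha}_i)$, whence $\alpha=\tau(\overline{\alpha})\in\tau(\overline{C}(A))$. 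This establishes $C_{sd}(A)=\tau(\overline{C}(A))$ for every $A$.

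Finally I conclude. The image of the morphism $\tau$ of Chow theories with source $\sA\sS_S$ defines a Chow subtheory of $C$, so $C_{sd}$ is a Chow subtheory. The restriction of $C\to\overline{C}$ to $C_{sd}$ is bijective on each component with inverse $\tau$, and hence is an isomorphism of Chow theories. The essential content of the corollary is already contained in Theorems~\ref{t:Chowsplit} and~\ref{t:Chowunique}; the only verification required here is the containment $\tau(\overline{C}(A))\subseteq C_{sd}(A)$, which reduces to the observation that $\tau$ commutes with all the operations used to build the subspaces $V_m$. This step is essentially formal, so no new obstacle is expected beyond the work already carried out in the proofs of those two theorems.
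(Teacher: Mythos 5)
Your proof is correct and takes essentially the same approach as the paper: identify $C_{sd}$ with the image of the right inverse $\tau$ from Theorem~\ref{t:Chowsplit}, using Theorem~\ref{t:Chowunique} for the reverse containment. You are slightly more explicit than the paper in verifying that $\tau(\overline{\alpha})$ is symmetrically distinguished (the observation that $\tau$ commutes with the operations defining $V_m$ and that $\pi$ is injective on the image of $\tau$), but this is the same argument the paper invokes.
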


\begin{proof}
By Theorem~\ref{t:Chowsplit}, the projection $\pi:C \to \overline{C}$
has a right inverse
$\tau:\overline{C} \to C$.
If $\overline{\alpha}$ is an element of $\overline{C}(A)$, then
$\tau_A(\overline{\alpha})$ is a symmetrically distinguished element of $C(A)$
lying above $\overline{\alpha}$, and by Theorem~\ref{t:Chowunique} it is the
unique such element of $C(A)$.
An element $\alpha$ of $C(A)$ is thus symmetrically distinguished if and only
if $(\tau \circ \pi)_A(\alpha) = \alpha$.
The result follows.
\end{proof}

We note finally a criterion for symmetrically distinguished elements
which follows from Theorem~\ref{t:Chowunique}.
Write $(\sM,h,\eta)$ for the push forward of
$(\sM_{S,k}^\mathrm{ab},h^\mathrm{ab},\nu^\mathrm{ab})$ along the projection
onto a quotient $\sM$ of $\sM_{S,k}^\mathrm{ab}$
corresponding to the quotient $C$ of $CH(-)_k$ on $\sA\sS_S$.
We have a canonical isomorphism
$\gamma$ from $C$ to $\sM(\I,h(-)(\cdot))$,
which induces an isomorphism $\overline{\gamma}$ from
$\overline{C}$ to $\overline{\sM}(\I,\overline{h}(-)(\cdot))$,
where $\overline{h} = Ph$ with $P:\sM \to \overline{\sM}$ the projection.
If $T$ is right inverse to $P$,
then for any $A$ in $\sA\sS_S$ there is by
Theorem~\ref{t:alginvunique} a unique
isomorphism of algebras
\[
u:T(\overline{h}(A)) \iso h(A)
\]
with $P(u)$ the identity
which respects the involutions induced by $(-1)_A$.
Then $\alpha \in C^i(A)$ above $\overline{\alpha}$ in $\overline{C}{}^i(A)$ is
symmetrically distinguished if and only if
\begin{equation}\label{e:TPA}
\gamma_{S,A,-i,0}(\alpha) = u \circ T(\overline{\gamma}_{S,A,-i,0}(\overline{\alpha})).
\end{equation}
By Theorem~\ref{t:abisounique}, this condition
is independent of the choice of $T$.
That for given $\overline{\alpha}$ there is a unique
$\alpha$ above $\overline{\alpha}$ for which
\eqref{e:TPA} holds is clear.
To see that this $\alpha$ is symmetrically distinguished, we may suppose by
Lemma~\ref{l:compatibletwist} that the restriction of $T$ to the full $k$\nd pretensor
subcategory of $\overline{\sM}$ consisting of twists of objects in the image
of $\overline{h}$ is a Tate $k$\nd pretensor functor.
By Theorem~\ref{t:habiso}, $u$ is the component at $A$ of a monoidal isomorphism
$\varphi:T\overline{h} \iso h$ with $P(\varphi)$ the identity.
If we write $\beta$ for \eqref{e:symdist},
$\overline{\beta}$ for the image of $\beta$ in $\overline{C}(A^m)$,
and $r$ for $r_1 + \dots + r_n$, then
\[
\gamma_{S,A^m,-ri,0}(\beta) = \varphi_{A^m} \circ
T(\overline{\gamma}_{S,A^m,-ri,0}(\overline{\beta}))
\]
by Proposition~\ref{p:PDhom}.
The injectivity of $V_m(\alpha) \to \overline{C}(A^m)$ follows.

\subsection{Concluding remarks}\label{ss:conclrem}

Let $C$ be a quotient of $CH(-)_k$ on $\sV_S$ by a proper ideal.
It is natural to consider, instead of the condition of Definition~\ref{d:symdist},
the weaker condition on an $\alpha$ in $C^i(A)$ where
the $p$ in \eqref{e:symdist} run only over those
closed immersions $A^n \to A^m$ for which each component $A^n \to A$ is a projection.
Call such an $\alpha$ distinguished.
Since the involution $(-1)_A$ is no longer used, we have also a notion of distinguished
element in $C^i(X)$ for an arbitrary $X$ in $\sV_S$.
Denote by $D^i(X)$ the set of such elements.
It is non-empty if and only if $0$ is distinguished.
The sets $D^i(X)$ are stable under multiplication
by an element of $k$ and pullback and push forward along
isomorphisms in $\sV_S$, and the $r$th power of an element of $D^i(X)$
lies in $D^{ri}(X)$ and the tensor product of elements
of $D^i(X)$ and $D^j(Y)$ lies in $D^{i+j}(X \times _S Y)$.
Distinguished elements are stable under change of base scheme, in the same sense as
for  symmetrically distinguished elements.
For an abelian scheme $A$ over $S$, the symmetrically
distinguished elements of $C^i(A)$ and their translates are distinguished.

Write $(\sM,h,\eta)$ for the push forward of $(\sM_{S,k},h_{S,k},\nu_{S,k})$
along the projection onto the quotient $\sM$ of $\sM_{S,k}$
corresponding to the quotient $C$ of $CH(-)_k$.
There is a canonical isomorphism
$\gamma$ from $C$ to $\sM(\I,h(-)(\cdot))$,
which induces an isomorphism $\overline{\gamma}$ from
$\overline{C}$ to $\overline{\sM}(\I,\overline{h}(-)(\cdot))$,
where $\overline{h}$ is $h$ composed with the projection.
If $\sM^0$ is the strictly full subcategory of $\sM$ with objects isomorphic to
those in the image of
$\sM_{S,k}^\mathrm{ab}$, then the projection $\sM^0 \to \overline{\sM}{}^0$
has a right inverse $T$.
Denote by $\sV^0_S$ the set of those $X$ in $\sV_S$ with $h(X)$ in $\sM^0$,
and by $\sV^{00}_S$ the set of those $X$ in $\sV^0_S$ with $h(X)$ isomorphic as an
algebra in $\sM^0$ to $T(\overline{h}(X))$.
By Theorem~\ref{t:abisounique}, the last condition is independent of the choice of $T$.
Both $\sV^0_S$ and $\sV^{00}_S$ are stable under products.
If $X$ lies in $\sV^0_S$ and if $\alpha$ in $C^i(X)$ above
$\overline{\alpha}$ in $\overline{C}{}^i(X)$ is such that
\begin{equation}\label{e:TPX}
\gamma_{S,X,-i,0}(\alpha) = v \circ T(\overline{\gamma}_{S,X,-i,0}(\overline{\alpha})),
\end{equation}
for some isomorphism of algebras
\[
v:T(\overline{h}(X)) \iso h(X)
\]
in $\sM^0$ lying above the identity in $\overline{\sM}{}^0$,
then it can be seen in a similar way as for \eqref{e:TPA} above that $\alpha$ is distinguished.
Thus for $X$ in $\sV^{00}_S$, every fibre of the projection
$C^i(X) \to \overline{C}{}^i(X)$ contains at least one distinguished element.

A sufficient condition for $X$ in $\sV_S$ to lie in $\sV^{00}_S$ is that the geometric generic fibre
of $X$ should be an abelian variety.
If $X$ in $\sV_S$ is of relative dimension $1$, then the necessary and sufficient condition
for $X$ to lie in $\sV^{00}_S$ is that the algebra $h(X)$ should have a $\Z$\nd grading
lifting the canonical $\Z$\nd grading of $\overline{h}(X)$.
In particular if $S$ is the spectrum of an algebraic extension of a finite field, then any
curve in $\sV_S$ lies in $\sV^{00}_S$.
If $S$ is the spectrum of an arbitrary field, then any hyperelliptic curve in $\sV_S$
lies in $\sV^{00}_S$.
On the other hand, suppose that $S = \Spec(\C)$, and that $C$ is a finer quotient of $CH(-)_k$
than that defined by algebraic equivalence of cycles modulo torsion.
Then by the result of Ceresa \cite{Cer},
if $X$ is a sufficiently general connected proper smooth curve of genus $\ge 3$ over $S$
and if $A$ is the Jacobian of $X$, the class of $X$ in $C(A)$ does not coincide with its
pullback along $(-1)_A$.
Any such $X$ lies outside $\sV^{00}_S$.

As was mentioned at the end of \ref{ss:splitlift}, it seems very plausible that Theorem~\ref{t:uniquelift}
can be proved in the stronger form where $\sD$ is an arbitrary $k$\nd pretensor category.
If this were so, it would follow that for every $X$ in $\sV^{0}_S$,
the distinguished elements of $C^i(X)$
above $\overline{\alpha}$ in $\overline{C}{}^i(X)$ are exactly the elements $\alpha$ for
which there is an isomorphism of algebras $v$ lying above the identity such that \eqref{e:TPX} holds.
This would imply that the $X$ in $\sV^{00}_S$ are exactly the $X$ in $\sV^{0}_S$ for which
$0$ in $C(X)$ is distinguished.
For an abelian scheme $A$ over $S$, it would imply that the symmetrically distinguished
elements of $C^i(A)$ are exactly those which are symmetric and distinguished.
If we assume for simplicity that $S$ is the spectrum of an algebraically closed field,
it would further imply that any two distinguished elements of $C^i(A)$ lying above the
same element of $\overline{C}{}^i(A)$ are translates of one another.

\end{document}